\documentclass[11pt,english,a4paper]{smfart}
\usepackage[english]{babel}
\marginparwidth=10 true mm
\oddsidemargin=0 true mm
\evensidemargin=0 true mm
\marginparsep=5 true mm
\topmargin=0 true mm
\headheight=8 true mm
\headsep=4 true mm
\topskip=0 true mm
\footskip=15 true mm

\setlength{\textwidth}{150 true mm}
\setlength{\textheight}{220 true mm}
\setlength{\hoffset}{8 true mm}
\setlength{\voffset}{2 true mm}

\usepackage{enumerate}
\usepackage{amsmath}
\usepackage{extarrows}
\usepackage{stmaryrd}

\selectlanguage{english}

\newtheorem{theorem}{Theorem}[section]
\newtheorem{lemma}[theorem]{Lemma}

\newtheorem{corollary}[theorem]{Corollary}
\newtheorem{proposition}[theorem]{Proposition}

\newtheorem*{coro*}{Corollary}

{\theoremstyle{definition}}

{\theoremstyle{definition}}

\theoremstyle{definition}
\newtheorem{definition}[theorem]{Definition}
\newtheorem{question}[theorem]{Question}
\newtheorem{fact}[theorem]{Fact}
\newtheorem{claim}[theorem]{Claim}
\newtheorem*{fact*}{Fact}

\newtheorem*{claim*}{\rm Claim}

{\theoremstyle{definition}\newtheorem{remark}[theorem]{Remark}}

\def\NN{\mathbb N}
\def\ZZ{\mathbb Z}
\def\RR{\mathbb R}

\def\TT{\mathbb T}
\def\PP{\mathbb P}

\def\T{\ensuremath{\mathbb T}}

\def\Z{\ensuremath{\mathbb Z}}

\def\N{\ensuremath{\mathbb N}}
\def\P{\ensuremath{\mathbb P}}

\newcommand{\fhy}{frequently hypercyclic}

\newcommand{\op}{operator}

\newcommand{\inv}{invariant}

\newcommand{\erg}{ergodic}
\newcommand{\mea}{measure}
\newcommand{\prob}{probability}

\newcommand{\To}{\longrightarrow}

\begin{document}

\date{\today}

\title[Invariant measures for frequently hypercyclic operators]{Invariant measures for\\ frequently hypercyclic operators}

\author{Sophie Grivaux}
\address{CNRS,
Laboratoire Paul Painlev\' e, UMR 8524, Universit\'e Lille 1, Cit\' e Scientifique, 59655 Villeneuve d'Ascq
Cedex, France}
\email{grivaux@math.univ-lille1.fr}

\author{\'Etienne Matheron}
\address{Laboratoire de Math\'ematiques de Lens, Universit\'e d'Artois, rue Jean Souvraz S. P. 18, 62307 Lens (France).}
\email{etienne.matheron@euler.univ-artois.fr}

\subjclass{47A16, 47A35, 37A05}

\keywords{Linear dynamical systems, frequently hypercyclic operators, chaotic operators, invariant and ergodic measures}

\thanks{We are grateful to B. Weiss for pointing out to us the example mentioned in Section \ref{basic}.}

\begin{abstract} We investigate frequently hypercyclic and chaotic linear operators from a measure-theoretic point of view. Among other things, we show that any frequently hypercyclic operator $T$ acting on a reflexive Banach space admits an invariant probability measure with full support, which may be required to vanish on the set of all periodic vectors for $T$; that there exist frequently hypercyclic operators on the sequence space $c_0$ admitting no ergodic measure with full support; and that if an operator admits an ergodic measure with full support, then it has a comeager set of distributionally irregular vectors. We also give some necessary and sufficient conditions (which are are satisfied by all the known chaotic operators) for an operator $T$ to admit an invariant measure supported on the set of its hypercyclic vectors and belonging to the closed convex hull of its periodic measures. Finally, we give a Baire category proof of the fact that any operator with a perfectly spanning set of unimodular eigenvectors admits an ergodic measure with full support.
\end{abstract}

\maketitle

\section{Introduction and main results}\label{sec1}

\subsection{General background} Let $X$ be a separable, infinite-dimensional Banach space, and let us denote by $\mathfrak L(X)$ the set of all continuous linear operators on $X$. If $T\in\mathfrak L(X)$,  the pair $(X,T)$ is  called a \emph{linear dynamical system}. 
For every $x\in X$, we denote by $Orb(x,T)$ the orbit of $x$ under the action of $T$, 
$$Orb(x,T)=\{ T^nx;\; n\in\N\}\, .$$

\par\smallskip
A linear dynamical system $(X,T)$ is a special case of a \emph{Polish dynamical system}, \mbox{i.e.} a continuous map acting on a Polish space. As such, it can be studied 
from different points of view. 

- One can adopt a purely topological viewpoint, investigating in particular the individual behaviour of orbits. A basic notion in this context is that of \emph{hypercyclicity}: the 
operator $T$ is said to be hypercyclic if there exists a vector $x\in X$, called a hypercyclic vector for $T$, whose orbit $Orb(x,T)$ is dense in $X$. It is well-known that $T$ is 
hypercyclic if and only if it is topologically transitive, \mbox{i.e.} for each pair $(U,V)$ of nonempty open sets in $X$, one can find $n\in\mathbb N$ such that 
$T^n(U)\cap V\neq\emptyset$; and in this case, the set of hypercyclic vectors for $T$, denoted by $HC(T)$, is a dense $G_\delta$ subset of $X$.

- One can also adopt a measure-theoretic point of view and try to find conditions on $T$, and possibly on $X$, ensuring that $T$ admits an \emph{invariant measure} $m$ with some interesting property. 
(Note that since $T(0)=0$, the Dirac mass $\delta_0$ is an invariant measure for $T$, arguably not very interesting). For example, one may look for an invariant measure $m$  with \emph{full support} (\mbox{i.e.} such that $m(U)>0$ for every open set $U\neq\emptyset$), or even an invariant measure $m$ with full support with respect to which $T$ has some \emph{ergodicity} property, 
\mbox{e.g.} ergodicity in the usual sense, weak mixing or strong mixing. This kind of questions goes back at least to the classical work of Oxtoby and Ulam \cite{OU}. 

\smallskip
All \mea s considered in this paper will be finite Borel \mea s, and we will most of the time consider only \prob\ \mea s without writing explicitly so. We will denote by $\mathcal P_T(X)$ the set of all $T$-$\,$invariant Borel probability measures on $X$.  

\smallskip
The study of ergodic properties of linear dynamical systems was launched by Flytzanis in the paper \cite{Fl} and pursued in 
\cite{BG1}, leading to the obtention of necessary and sufficient conditions for operators on complex Hilbert spaces to admit \emph{Gaussian} invariant measures with one of the above properties. (A Borel probability measure on a complex Banach space $X$ is said to be Gaussian if every continuous linear functional $x^*\in X^*$ has a complex Gaussian distribution when considered as a random variable 
on $(X,{\rm Bor}(X),m)$; see \mbox{e.g} \cite[Chapter 5]{BM1} for more on Gaussian measures). The quest for similar results for operators acting on general Banach spaces was begun in \cite{BG2} and culminated in \cite{BM2}, where a very general condition (valid on any complex Banach space $X$) for an operator to admit a Gaussian ergodic measure with full support was obtained. 

Since we shall refer to this result from \cite{BM2} below and use some relevant terminology, we now state it precisely. Assume that $X$ is a complex Banach space. By a \emph{unimodular eigenvector} for an operator $T\in\mathfrak L(X)$, we mean an eigenvector whose associated eigenvalue has modulus $1$. We say that $T$ has a \emph{perfectly spanning set of unimodular eigenvectors} if, for every countable set $D\subset\TT$, we have 
$$\overline{\rm span}\,\Bigl[ \ker(T-\lambda I);\; \lambda\in\TT\setminus D\Bigr]= X.$$
This notion was introduced in \cite{Fl} in a Hilbert space setting. A formally stronger property was considered in \cite{BG1}, and it was shown later on in \cite{G} that the two notions are in fact equivalent. The aforementioned result from 
\cite{BM2} states that any operator with a perfectly spanning set of unimodular eigenvectors admits a Gaussian ergodic measure with full support. Without the requirement that the measure should be Gaussian, this was essentially proved earlier in \cite{G}.

\par\smallskip
We refer the reader to the book \cite{BM1} for more about linear dynamical systems, both from the topological and the ergodic theoretical points of view. Other interesting references 
are the book \cite{GEP} (which is more concerned with hypercyclicity issues), and the recent paper \cite{DG} for its point of view on linear systems as special cases of Polish dynamical systems.
\par\smallskip
In the present paper, we shall consider measure-theoretic properties of linear operators outside of the Gaussian framework. 

\smallskip
Our main interest lies with the so-called \emph{frequently hypercyclic} operators. Frequent hypercyclicity, which was introduced in \cite{BG1}, is a strengthening of  hypercyclicity which quantifies the frequency with which the orbit of a vector $x\in X$ visits a given nonempty open set.

The precise definition reads as follows. Let $T\in\mathfrak L(X)$. For any set $B\subset X$ and any $x\in X$, let us set $$\mathcal N_T(x,B):=\{ n\in\mathbb N;\; T^nx\in B\}\, .$$

The operator $T$ is said to be \emph{frequently hypercyclic} if there exists a vector $x\in X$ such that for each nonempty 
open set $V\subset X$, the set of integers 
$\mathcal N_T(x,V)$
has positive lower density; in other words,
$$\liminf_{N\to\infty}\frac1N \#\{ n\in [1,N];\; T^nx\in V\} >0\, .$$

Such a vector $x$ is said to be a frequently hypercyclic vector for $T$, and the set of all frequently hypercyclic vectors is denoted by $FHC(T)$. 

\smallskip
We have defined frequent hypercyclicity for linear dynamical systems only, but the notion obviously makes sense for an arbitrary Polish dynamical system $(X,T)$. In fact, in the 
\emph{compact} case such systems were studied long before frequently hypercyclic operators -- see \mbox{e.g.} Furstenberg's book \cite{Fu}. Likewise, what could be called hypercyclic systems (i.e. Polish dynamical systems admitting dense orbits) are of course central objects in topological dynamics. In this paper, we will use the linear terminology (hypercyclic, frequently hypercyclic) for both linear dynamical systems and general Polish dynamical systems.

\subsection{Two basic questions}\label{basic}

\par\smallskip
One of the interests of frequent hypercyclicity is that, although its definition is purely topological, its is naturally and deeply linked to measure-theoretic considerations about the dynamical system $(X,T)$. This is for instance testified by the following two facts.

\smallskip
- If $(X,T)$ is a Polish dynamical system and if $T$ admits an ergodic measure $m$ with full support, then $T$ is frequently hypercyclic and $FHC(T)$ has full $m\,$-$\,$measure. This 
follows easily from the pointwise ergodic theorem.

- If $(X,T)$ is a \emph{compact} dynamical system and if $T$ is frequently hypercyclic, then $T$ admits an invariant \prob\ measure with full support. A proof of this statement can be found \mbox{e.g.} in \cite{Fu}; it will also be briefly recalled at the beginning of Section 2.
\par\smallskip
These observations make it natural to wonder whether an arbitrary frequently hypercyclic Polish dynamical system always admits an invariant \prob\ measure with full support. That the answer to this question is negative was kindly pointed out to us by B. Weiss. Let $\rho$ be any irrational rotation of $\T$, and let $m$ be the normalized Lebesgue \mea\ on $\T$. 
Then $(\T, \rho)$ is a uniquely \erg\ dynamical system whose unique invariant probability measure is $m$ and for which all points are \fhy. Let $C$ be a nowhere dense compact subset of $\T$ such that $m(C)>0$, and set $X:=\T\setminus\bigcup_{n\in\Z}\rho^{-n}(C)$. Then $X$ is a $\rho\,$-$\,$\inv, dense $G_{\delta }$ subset of $\T$, and hence $(X,\rho)$ is a \fhy\ Polish dynamical system. On the other hand, we have $m(X)=0$ because $m(X)<1$ and $m$ is an ergodic measure for $(\TT, \rho)$.  Since $\rho$ is uniquely \erg, it follows that $(X,\rho)$ admits no \inv\ \mea\ at all.
\par\smallskip
This example is however highly non linear, and one may naturally ask what happens in the linear setting.

\begin{question}\label{Q1} Let $(X,T)$ be a frequently hypercyclic linear dynamical system. Is it true that $T$ admits an invariant \prob\ measure with full support? 
\end{question}

One may also ask whether a stronger result holds true:

\begin{question}\label{Q2} Let $(X,T)$ be a frequently hypercyclic linear dynamical system. Is it true that $T$ admits an \emph{ergodic} \prob\ measure with full support? 
\end{question}

\smallskip
Our first main result is concerned with Question \ref{Q1}. We obtain a positive answer when $T$ is a linear operator acting on a \emph{reflexive} Banach space $X$. 

\smallskip
Recall that a measure $m$ on $X$ is said to be \emph{continuous} if $m(\{ a\})=0$ for every $a\in X$. Denoting by ${\rm Per}(T)$ the set of all \emph{periodic points} of $T$, it is not hard to see that if $m$ is a $T\,$-$\,$invariant measure such that $m({\rm Per}(T))=0$, then $m$ is necessarily continuous.

\begin{theorem}\label{th1} If $X$ is a reflexive Banach space, then any frequently hypercyclic operator $T$ on $X$ admits a continuous invariant \prob\ measure $m$ with full support. In fact, one may require that the measure $m$ satisfies 
$m({\rm Per} (T))=0$. 
\end{theorem}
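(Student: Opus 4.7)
Take an FHC vector $x$ for $T$ and form the empirical probability measures $m_N:=\frac1N\sum_{n=0}^{N-1}\delta_{T^n x}$. These are asymptotically $T$-invariant, since $\|T_\ast m_N-m_N\|_{\mathrm{TV}}\le 2/N\to 0$, so any weak cluster point $m$ in $\mathcal P(X)$ will be $T$-invariant. Moreover, the definition of frequent hypercyclicity is designed exactly to give, for every nonempty open $V\subset X$,
$$\liminf_N m_N(V)\;\ge\;\underline d(\mathcal N_T(x,V))\;>\;0,$$
which we intend to upgrade to $m(V)>0$, i.e., full support of $m$.

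Reflexivity provides the right topological setup. Since $X$ is separable and reflexive, $(X,w)$ is a Polish space, closed balls $\overline B(0,R)$ are weakly compact metrizable (Banach--Alaoglu), the weak and norm Borel $\sigma$-algebras coincide, $T\in\mathfrak L(X)$ is $w$--$w$ continuous, and---crucially---every norm-closed convex set is weakly closed by Hahn--Banach; in particular every closed ball $\overline B(x_0,r)$ is weakly closed. Granted a weak subsequential limit $m_{N_k}\to m$ in $\mathcal P(X,w)$, the weak continuity of $T_\ast$ together with the near-invariance of $m_N$ gives $T_\ast m=m$. Full support then follows from Portmanteau's upper bound on weakly closed sets: for any nonempty open $V\supset \overline B(x_0,r)$,
$$m(V)\;\ge\;m(\overline B(x_0,r))\;\ge\;\limsup_k m_{N_k}(\overline B(x_0,r))\;\ge\;\underline d(\mathcal N_T(x,B(x_0,r)))\;>\;0.$$

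The main obstacle, which I expect to be the technical core of the proof, is tightness of $(m_N)$ on $(X,w)$: we need that for every $\varepsilon>0$ there is $R$ with $m_N(X\setminus\overline B(0,R))<\varepsilon$ for all large $N$, i.e., the upper density of the escape times $\{n:\|T^n x\|>R\}$ must tend to $0$ as $R\to\infty$. This is not automatic from FHC and requires a deeper use of reflexivity. A possible route is to bypass Prokhorov: fix a Banach limit $L$ on $\ell^\infty(\mathbb N)$, set $\Lambda(f):=L\bigl((m_N(f))_N\bigr)$, restrict $\Lambda$ to each weakly compact ball $\overline B(0,R)$ and apply Riesz representation to obtain a Radon sub-probability $\nu_R$, then control via a diagonal argument in $R$ that the masses $\nu_R(\overline B(0,R))$ approach $1$. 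Finally, to arrange $m(\mathrm{Per}(T))=0$: every FHC vector avoids $\mathrm{Per}(T)=\bigcup_p\ker(T^p-I)$ (otherwise the orbit becomes eventually periodic, contradicting density), so $m_N(\mathrm{Per}(T))=0$ for all $N$; transferring this to the limit is subtle because each $\ker(T^p-I)$ is weakly closed and Portmanteau's inequality goes the wrong way for closed sets, so one must build $m$ more carefully---for instance by averaging the construction over a countable dense set of FHC vectors, or by subtracting off afterwards the at-most-countable atomic part $m$ might place on $\mathrm{Per}(T)$ while preserving full support via a small perturbation.
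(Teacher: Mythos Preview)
Your sketch for the existence of a full-support invariant measure is essentially the paper's approach, but you are mislocating the difficulty. You correctly identify the Banach-limit route restricted to weakly compact balls; this is exactly what the paper does (with an invariant mean $\mathfrak m$ and the Riesz representation theorem on each weakly compact set $K$, yielding sub-probability measures $\mu_K$). However, tightness --- i.e.\ showing the masses $\nu_R(\overline B(0,R))$ tend to $1$ --- is \emph{not} needed and in fact cannot be obtained in general: the paper explicitly observes that the resulting measure $\mu=\sup_K\mu_K$ may have $\mu(X)<1$ (this is the origin of the parameter $c(T)$ studied later). What matters is only that $\mu(X)>0$, which follows immediately from frequent hypercyclicity applied to a single ball, and then one normalizes. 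The key technical step you have not identified is the monotonicity $K\subset L\Rightarrow \mu_K\le\mu_L$, which makes $\mu:=\sup_K\mu_K$ a genuine countably additive measure, together with a separate verification of $T$-invariance comparing $\mu_K$ with $\mu_{T(K)}$.

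For the second part, $m(\mathrm{Per}(T))=0$, there is a genuine gap. You conflate this requirement with continuity of $m$. Subtracting the atomic part of $m$ yields a continuous measure, but $\mathrm{Per}(T)=\bigcup_{N\ge1}\ker(T^N-I)$ is a countable union of (possibly infinite-dimensional) linear subspaces, and a continuous measure can certainly charge such a set. Neither of your proposed fixes works: averaging over a dense set of FHC vectors does not prevent the limit from concentrating on these subspaces, and removing atoms addresses only the countable part. The paper's argument here is of a completely different nature: for each proper closed $T$-invariant subspace $F$ one shows, via a convolution-and-dilation argument exploiting the linear structure, that the invariant probability measures supported on $X\setminus F$ form a dense $G_\delta$ in $\mathcal P_T(X)$; then Baire category applied to $F_N=\ker(T^N-I)$ gives the result. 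The crucial observation is that $\delta_0$ can be approximated by invariant measures $\nu_k$ supported off $\overline{F-F}$ (using dilation and the fact that $F-F=F$ is a proper subspace), and then $\nu_k\ast m$ approximates $m$ while being supported off $F$.
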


Applying the \emph{ergodic decomposition theorem} (see \mbox{e.g} \cite[Ch. $2$, Sec. $2.2$]{Aa}), we easily deduce the following result:
\begin{corollary} If $X$ is a reflexive Banach space and if $V\subset X$ is a nonempty open set, then any frequently hypercyclic operator $T$ on $X$ admits a continuous ergodic probability measure $\mu$ such that $\mu(V)>0$. 
\end{corollary}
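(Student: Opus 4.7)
The plan is to combine Theorem \ref{th1} with the ergodic decomposition theorem, and then show that the property ``$\mu(\mathrm{Per}(T))=0$'' is automatically inherited by almost every component of the decomposition and forces continuity.

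First I would invoke Theorem \ref{th1} to produce a $T$-invariant probability measure $m$ with full support satisfying $m(\mathrm{Per}(T))=0$. Since $V$ is nonempty open and $\mathrm{supp}(m)=X$, we get $m(V)>0$. Next I would apply the ergodic decomposition theorem (as cited, \cite[Ch.\,2, Sec.\,2.2]{Aa}) to the Polish dynamical system $(X,T)$ and the measure $m$: this gives a measurable family $(\mu_x)_{x\in X}$ of ergodic $T$-invariant probability measures such that $m=\int_X \mu_x\, dm(x)$. Integrating the relations $m(V)=\int_X \mu_x(V)\,dm(x)>0$ and $0=m(\mathrm{Per}(T))=\int_X \mu_x(\mathrm{Per}(T))\,dm(x)$ shows that the set
\[
E:=\{x\in X:\mu_x(V)>0\text{ and }\mu_x(\mathrm{Per}(T))=0\}
\]
has positive $m$-measure. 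Pick any $x_0\in E$ and set $\mu:=\mu_{x_0}$; then $\mu$ is ergodic, $\mu(V)>0$, and $\mu(\mathrm{Per}(T))=0$.

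It remains to check that $\mu$ is continuous. I would argue by contradiction: suppose $\mu$ has an atom. Since $\mu$ is a probability measure, only finitely many points can have mass at least any given $\varepsilon>0$, so there exists a point $a_0$ with maximal mass $c_0:=\mu(\{a_0\})>0$. By $T$-invariance,
\[
\mu(\{T(a_0)\})=\mu\bigl(T^{-1}\{T(a_0)\}\bigr)\geq \mu(\{a_0\})=c_0,
\]
and maximality of $c_0$ forces equality. Iterating, every point of the forward orbit $\{T^n(a_0):n\geq 0\}$ is an atom of mass exactly $c_0$. Since the total mass of $\mu$ is $1$, this orbit must be finite, so there are integers $m>n\geq 0$ with $T^m(a_0)=T^n(a_0)$; in particular $T^n(a_0)$ is periodic and still has mass $c_0$, giving $\mu(\mathrm{Per}(T))\geq c_0>0$, a contradiction. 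Hence $\mu$ has no atoms, i.e.\ $\mu$ is continuous.

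The main (minor) subtlety is the last step: showing that vanishing on $\mathrm{Per}(T)$ upgrades to continuity; everything else is a direct application of Theorem \ref{th1} together with the ergodic decomposition. One should only verify that the decomposition theorem is applicable in our Polish (not necessarily compact) setting, which is standard since $X$ is a separable Banach space and $m$ is a Borel probability measure on the Polish space $X$.
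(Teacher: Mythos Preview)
Your proof is correct and follows essentially the same approach as the paper: apply Theorem \ref{th1} to get an invariant $m$ with full support and $m(\mathrm{Per}(T))=0$, use the ergodic decomposition to extract an ergodic component $\mu$ with $\mu(V)>0$ and $\mu(\mathrm{Per}(T))=0$, and conclude continuity. The paper simply cites the observation (stated just before Theorem \ref{th1}, and proved later as Fact \ref{obvious}) that any $T$-invariant measure vanishing on $\mathrm{Per}(T)$ is continuous, whereas you supply your own variant of that argument; both are valid.
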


\smallskip
Indeed, let $m$ be an invariant measure with full support for $T$ such that $m({\rm Per}(T))=0$. By the ergodic decomposition theorem, one may write 
$$m=\int_S \mu^s \,d\mathbf p(s)\, ,$$
where the $\mu^s$ are ergodic probability measures for $T$ and  the integral is taken over some probability space $(S,\mathbf p)$. The meaning of the above formula is that 
$m(A)=\int_S \mu^s(A)d\mathbf p(s)$ for every Borel set $A\subset X$. As $m$ has full support, we have 
$m(V)>0$ and hence $\mathbf p\left(\{ s\in S; \mu^s(V)>0\}\right)>0$; and since $m({\rm Per}(T))=0$ we have $\mu^s({\rm Per}(T))=0$ for $\mathbf p\,$-$\,$almost every $s\in S$. So one can find $s$ such that the measure $\mu=\mu^s$ has the required properties.

\medskip
The existence of an invariant measure with full support in Theorem \ref{th1} relies on the following ``nonlinear" statement, which is valid for a rather large class  of Polish dynamical systems.  

\begin{theorem}\label{th2} Let $(X,T)$ be a Polish dynamical system. Assume that $X$ is endowed with a topology $\tau$ which is Hausdorff, coarser than 
the original topology $\tau_X$, and such that every point of $X$ has a neighbourhood basis with respect to $\tau_X$ consisting of $\tau\,$-$\,$compact sets. Moreover, assume that
\begin{itemize}
\item[\rm (i)] $T$ is frequently hypercyclic with respect to $\tau_X$;
\item[\rm (ii)] $T$ is a continuous self-map of $(X,\tau)$.
\end{itemize}
Then $T$ admits an invariant \prob\ measure with full support.
\end{theorem}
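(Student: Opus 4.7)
The strategy is a Krylov--Bogolyubov construction via Ces\`aro averages of Dirac masses along a frequently hypercyclic orbit, using the $\tau$-compact neighbourhoods provided by the hypothesis as a substitute for the global compactness that is unavailable in $\tau_X$.

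First, the separability of $(X,\tau_X)$ together with the neighbourhood basis assumption yield, via the Lindel\"of property, that $X$ is $\sigma$-compact for $\tau$: there exists an increasing sequence $(L_j)_{j\geq 1}$ of $\tau$-compact subsets of $X$ with $\bigcup_j L_j = X$. Enlarging the $L_j$ inductively and using that $T$ is $\tau$-continuous (so $T(L_j)$ is $\tau$-compact), one may further arrange that $T(L_j)\subset L_{j+1}$. Now fix a frequently hypercyclic vector $x$ for $T$ (in $\tau_X$), and set $m_N = \frac{1}{N}\sum_{n=1}^N \delta_{T^n x}$. Each $(L_j,\tau|_{L_j})$ is a compact Hausdorff space, so the unit ball of $M(L_j)$ (signed Borel measures on $L_j$) is weak-$*$-compact by Banach--Alaoglu. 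A diagonal extraction then produces a subsequence $(N_k)$ such that, for every $j$, the restrictions $m_{N_k}|_{L_j}$ converge weak-$*$ in $M(L_j)$ to a positive measure $\mu_j$; a consistency check then lets one glue the $(\mu_j)$ into a single finite positive Borel measure $m$ on $X$ with $m|_{L_j}=\mu_j$ modulo boundary effects.

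To verify the required properties: \emph{Full support} --- given any nonempty $\tau_X$-open set $V\subset X$, the hypothesis yields a $\tau$-compact $K\subset V$ with nonempty $\tau_X$-interior $U$, and for $j$ large enough that $K\subset L_j$ the set $K$ is $\tau|_{L_j}$-closed (as $\tau$-compact in a Hausdorff space). The closed-set form of the Portmanteau theorem in the compact Hausdorff space $L_j$ then gives $\mu_j(K) \geq \limsup_k m_{N_k}(K) \geq \liminf_k m_{N_k}(U)$, which is bounded below by the positive lower density of $\mathcal N_T(x,U)$; hence $m(V)\geq m(K) > 0$. \emph{Invariance} --- the elementary identity $T_*m_N - m_N = \frac{1}{N}(\delta_{T^{N+1}x}-\delta_{Tx})$ shows that $T_*m_N - m_N \to 0$ in total variation, and the $\tau$-continuity of $T$ combined with $T(L_j)\subset L_{j+1}$ allows this asymptotic invariance to be transferred to the limit by testing against $\tau$-continuous functions on each $L_{j+1}$, yielding $T_*m=m$. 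Finally, the nontriviality $m(X)\geq\mu_1(L_1)>0$ follows from the full-support computation applied to $L_1$, and one normalises $m/m(X)$ to obtain the desired $T$-invariant probability measure with full support.

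The main obstacle is the gluing/consistency step: weak-$*$ convergence in $M(L_{j+1})$ does not \emph{a priori} restrict continuously to $M(L_j)$, because a $\tau$-continuous function on $L_j$ need not extend to a $\tau$-continuous function on $L_{j+1}$ without altering its values on $L_j$. Showing that the family $(\mu_j)$ nevertheless determines a single Borel measure $m$ on $X$ requires Tietze's extension theorem in the compact Hausdorff (hence normal) spaces $L_{j+1}$, together with careful control of the boundary effects in the $\tau$-topology. A closely related subtlety is the possible escape of mass to infinity, which can make $m(X)$ strictly less than $1$; it is precisely the relation $T(L_j)\subset L_{j+1}$ built into the exhaustion, ensuring that $T_*$ respects the filtration, which guarantees that this escape does not destroy $T$-invariance after renormalisation.
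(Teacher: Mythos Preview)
Your overall strategy is the paper's, but the step you flag as the ``main obstacle'' is a genuine gap, and your proposed fix (Tietze extension plus ``boundary control'') does not close it. The consistency $\mu_{j+1}|_{L_j}=\mu_j$ simply fails in general: take $L_1=[0,1]\subset L_2=[0,2]$ and $m_k=\tfrac12\delta_{1-1/k}+\tfrac12\delta_{1+1/k}$; then $m_k|_{L_1}\to\tfrac12\delta_1$ weak-$*$ in $M(L_1)$ while $m_k|_{L_2}\to\delta_1$ in $M(L_2)$, so $\mu_2|_{L_1}\neq\mu_1$. Mass from outside $L_j$ can accumulate on $L_j$ in the limit taken in $M(L_{j+1})$; restriction $M(L_{j+1})\to M(L_j)$ is not weak-$*$ continuous, and extending test functions cannot repair this. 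Nothing in the hypotheses of the theorem prevents such accumulation on the $\tau$-boundary of $L_j$ inside $L_{j+1}$.

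The paper bypasses the problem. Instead of a diagonal subsequence along a countable exhaustion, it fixes once and for all an invariant mean $\mathfrak m$ on $\ell^\infty(\NN)$ dominating Ces\`aro $\liminf$'s, and defines $\mu_K$ for \emph{every} $\tau$-compact $K$ simultaneously via $\int_K f\,d\mu_K=\mathfrak m\bigl((\mathbf 1_Kf)(T^ix_0)\bigr)$. The key step is then \emph{monotonicity}, not consistency: $\mu_K\leq\mu_L$ whenever $K\subset L$, proved by approximating the indicator of a $\tau$-compact set from above by $\tau$-continuous functions and using $\mathbf 1_K\leq\mathbf 1_L$ inside $\mathfrak m$. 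One then sets $\mu:=\sup_K\mu_K$, which is a Borel measure by the filtering property; full support comes from the choice of $\mathfrak m$ applied to $\tau$-compact sets with nonempty $\tau_X$-interior, and $T$-invariance from the inequality $\mu_K(T^{-1}(E))\leq\mu_{T(K)}(E)$, which uses the translation-invariance of $\mathfrak m$. Your scheme can be salvaged along the same lines---prove $\mu_j\leq\mu_{j+1}$ rather than equality, and take $m=\sup_j\mu_j$---but note that you also implicitly need each $(L_j,\tau)$ to be metrizable so that $\mathcal C(L_j,\tau)$ is separable and sequential weak-$*$ compactness is available; the paper establishes this as a preliminary fact.
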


Forgetting the continuity requirement on $m$, the first part of Theorem \ref{th1} follows at once from this result, taking for $\tau$ the weak topology of the reflexive Banach space 
$X$ and using the fact that the closed balls of $X$ are weakly compact.

\smallskip
Since compactness is rather crucial in all our arguments, and although we are not able to provide a counterexample, it seems reasonable to believe that Theorem \ref{th1} breaks down when the space $X$ is not assumed to be reflexive. Good candidates for counterexamples could be bilateral weighted shifts on $c_0(\mathbb Z)$ in the spirit of those constructed by Bayart and Rusza in \cite{BR}. So we restate Question \ref{Q1} as

\begin{question}\label{Q4} Does there exist a frequently hypercyclic operator (necessarily living on some non-reflexive Banach space) which does not admit any invariant measure with full support?
\end{question}

\smallskip
Regarding Question \ref{Q2}, we are able to answer it in the negative. Let us first note that this question was already investigated in \cite{BG2}, where it was proved that there exists a bounded operator on the sequence space $X=c_0(\mathbb Z_+)$ (actually, a weighted backward shift) which is frequently hypercyclic but does not admit any ergodic \emph{Gaussian} measure with full support. The proof given in \cite{BG2} can be easily extended to show that this frequently hypercyclic operator does not admit any ergodic measure $m$ with full support such that $\int_X \Vert x\Vert^2 dm(x)<\infty$; but the existence of a second order moment seems to be crucial in the argument.
 Our second main results shows that this aditional assumption can be dispensed with.

\begin{theorem}\label{th3} There exists a frequently hypercyclic operator on the space $c_0(\mathbb Z)$ which does not admit any ergodic measure with full support.
\end{theorem}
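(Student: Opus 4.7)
The plan is to construct an explicit bilateral weighted shift $T = B_w$ on $c_0(\mathbb{Z})$, defined by $B_w e_n = w_n e_{n-1}$ for a suitable bounded weight sequence $(w_n)_{n \in \mathbb{Z}}$, in the spirit of the shifts of Bayart--Ruzsa \cite{BR}. Writing $W_n := w_1 w_2 \cdots w_n$ for $n \geq 1$, the key design goal is that $T$ is frequently hypercyclic while the set $S_R := \{n \geq 1 : |W_n| > R\}$ has upper density $\delta(R)$ satisfying $\delta(R) \to 0$ as $R \to \infty$. Concretely, on the positive side of $\mathbb{Z}$ I would arrange that $|W_n|$ takes a discrete sequence of values $R_1 < R_2 < \cdots \to \infty$, with the sets $A_k := \{n : |W_n| = R_k\}$ of density $\delta_k > 0$ chosen so that $\sum_k \delta_k = 1$ and $\sum_{j \geq k} \delta_j \to 0$ (for instance $\delta_k = C/k^2$ and $R_k = k$); then $\delta(R) = \sum_{R_k > R} \delta_k \to 0$ automatically. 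Negative-side weights can be set to $1$, keeping $T$ bounded.

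\textbf{Frequent hypercyclicity.} To verify that $T$ is frequently hypercyclic, I would follow the explicit Bayart--Ruzsa strategy. Given a dense sequence $(v^{(j)})_{j \geq 1}$ of finitely supported vectors in $c_0(\mathbb{Z})$, build a frequently hypercyclic vector $x^{\ast}$ as a sum of approximation increments targeted at each $v^{(j)}$, each increment using blocks drawn from those $A_k$ with $R_k$ large compared to $\|v^{(j)}\|_\infty$. Each family of blocks contributes a positive lower density of times $n$ at which $T^n x^{\ast} \approx v^{(j)}$, and the blocks are spread so that the series defining $x^{\ast}$ converges in $c_0(\mathbb{Z})$ and different targets do not interfere.

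\textbf{Non-existence of an ergodic full-support measure.} Suppose for contradiction that $m$ is an ergodic $T$-invariant probability measure on $c_0(\mathbb{Z})$ with full support. Consider the bounded continuous function $\phi(x) := \min(|x_0|, 1)$. Full support of $m$ forces $c := \int \phi \, dm > 0$. Birkhoff's ergodic theorem yields, for $m$-almost every $x$,
\[
\frac{1}{N} \sum_{n=1}^{N} \phi(T^n x) \longrightarrow c.
\]
On the other hand $\phi(T^n x) = \min(|W_n x_n|, 1) \leq \mathbf{1}_{S_R}(n) + R |x_n|$ for every $R > 0$, hence
\[
\frac{1}{N} \sum_{n=1}^{N} \phi(T^n x) \leq \frac{|S_R \cap [1,N]|}{N} + R \cdot \frac{1}{N} \sum_{n=1}^{N} |x_n|.
\]
The first summand has $\limsup_N$ bounded above by $\delta(R)$, while the second tends to $0$, since $x \in c_0(\mathbb{Z})$ gives $|x_n| \to 0$ and Cesaro averages of null sequences vanish. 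Therefore $c \leq \delta(R)$ for every $R > 0$; letting $R \to \infty$ gives $c \leq 0$, contradicting $c > 0$.

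\textbf{Main obstacle.} The delicate point is achieving the two properties of the construction simultaneously: frequent hypercyclicity forces $|W_n|$ to reach arbitrarily large values along sets of positive density (to permit approximation of vectors of arbitrary norm), while the tail condition $\delta(R) \to 0$ requires such large values to become increasingly rare. The Bayart--Ruzsa philosophy of using isolated thin amplification blocks surrounded by long quiet stretches where $|W_n|$ stays constant is precisely what makes both demands compatible. The substantive technical work lies in the inductive choice of block lengths, weight magnitudes, and spacings, and in the verification that the resulting operator is both frequently hypercyclic and satisfies the tail-density bound $\delta(R) \to 0$.
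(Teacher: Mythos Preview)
Your non-existence argument is elegant, but the two design goals you set are incompatible: no frequently hypercyclic bilateral shift on $c_0(\mathbb Z)$ can satisfy $\delta(R)\to 0$. Indeed, let $x^\ast\in FHC(T)$ and consider the open ball $V=B(e_0,1/2)$. The set $D:=\{n\geq 1:T^n x^\ast\in V\}$ has positive lower density $d>0$, and for each $n\in D$ one has $|(T^n x^\ast)_0|=|W_n\, x_n^\ast|>1/2$. Since $x^\ast\in c_0(\mathbb Z)$, we have $x_n^\ast\to 0$ as $n\to+\infty$; so given any $R>0$ there exists $N$ with $|x_n^\ast|<1/(2R)$ for all $n>N$, whence $|W_n|>R$ for every $n\in D$ with $n>N$. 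Thus $D\setminus[1,N]\subset S_R$, which forces $\delta(R)=\overline{\rm dens}\,(S_R)\geq\underline{\rm dens}\,(D)=d>0$ for every $R$. Your contradiction $c\leq\delta(R)\to 0$ therefore never materialises; at best you obtain $c\leq d$, which gives nothing. The ``main obstacle'' you flag is thus not merely delicate but insurmountable in the form stated.

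The paper proceeds differently. It introduces the parameter $c(T)$ (roughly, the maximal upper density with which a hypercyclic orbit visits a fixed ball centred at $0$) and proves that any operator admitting an ergodic measure with full support must satisfy $c(T)=1$ (Theorem~\ref{th4}). It then invokes a specific Bayart--Ruzsa shift having the property that for some set $A\subset\mathbb N$ of \emph{positive lower density}, $\Vert T^i x\Vert\geq|\langle e_0^*,x\rangle|$ holds for every $x$ and all $i\in A$; combined with Proposition~\ref{distrib0} this yields $c(T)\leq 1-\underline{\rm dens}\,(A)<1$. The crucial point is that one only needs this complementary density to be \emph{strictly less than $1$}, not arbitrarily small---and that weaker requirement \emph{is} compatible with frequent hypercyclicity.
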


However, Question \ref{Q2} remains widely open on all other ``classical" Banach spaces, and in particular for Hilbert space operators.  
In this special case there are several reasons to believe that the answer could be ``Yes", some of which will be outlined in the sequel. More generally, Theorem \ref{th2} 
makes the following instance of Question \ref{Q2} especially interesting.  

\begin{question}\label{Q6} Does any frequently hypercyclic operator $T$ acting on a reflexive Banach space admit an ergodic \prob\ measure with full support? 
\end{question}

\subsection{A natural parameter} 
One of the main ingredients in the proof of Theorem \ref{th3} is a certain parameter $c(T)\in [0,1]$ associated with any hypercyclic operator $T$ acting on a Banach space $X$. 
Very roughly speaking, $c(T)$ is the maximal frequency with which the orbit of a hypercyclic vector $x$ for $T$ can visit a ball centered at $0$. In fact, for any $\alpha>0$ we have 
$$c(T)=\sup\,\left\{ c\geq 0;\; \overline{\rm dens}\,(\mathcal N_T(x,B(0,\alpha))\geq c\;\hbox{ for comeager many}\; x\in HC(T)\right\}. $$
  
The parameter $c(T)$ can bear witness of the existence of an ergodic measure with full support. Indeed, we can prove

\begin{theorem}\label{th4} Let $(X,T)$ be a linear dynamical system. If $T$ admits an ergodic measure with full support, then $c(T)=1$.
\end{theorem}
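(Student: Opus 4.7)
The plan is to show that, for every $\alpha>0$ and every $c\in(0,1)$, the set
\[
A_{c,\alpha}:=\bigl\{x\in X:\overline{\dens}(\mathcal N_T(x,B(0,\alpha)))\geq c\bigr\}
\]
is a dense $G_{\delta}$ subset of $X$. Intersecting with the comeager set $HC(T)$ will then yield $c(T)\geq c$ for every $c<1$, hence $c(T)=1$.

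Checking that $A_{c,\alpha}$ is $G_{\delta}$ is routine: the condition $\overline{\dens}(\mathcal N_T(x,B(0,\alpha)))\geq c$ is equivalent to ``for every rational $\eta>0$ and every $K\geq 1$ there exists $N\geq K$ with $N^{-1}\#\{n\leq N:T^{n}x\in B(0,\alpha)\}>c-\eta$'', which exhibits $A_{c,\alpha}$ as a countable intersection of countable unions of open sets, openness coming from openness of $B(0,\alpha)$ together with continuity of the iterates $T^{n}$.

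The substantive step is density, and here I would use a linear rescaling trick. Let $m$ be an ergodic $T$-invariant probability measure with full support on $X$, and for $\lambda>0$ set $\phi_{\lambda}(x):=\lambda x$. Since $\phi_{\lambda}$ commutes with $T$, the pushforward $m_{\lambda}:=(\phi_{\lambda})_{*}m$ is again $T$-invariant, ergodic and of full support, while
\[
m_{\lambda}(B(0,\alpha))=m(B(0,\alpha/\lambda))\longrightarrow 1\quad\text{as }\lambda\to 0^{+}.
\]
Choose $\lambda$ small enough that $m_{\lambda}(B(0,\alpha))>c$. Birkhoff's ergodic theorem applied to $\mathbf{1}_{B(0,\alpha)}$ and the ergodic measure $m_{\lambda}$ then gives $m_{\lambda}(A_{c,\alpha})=1$; combined with $m_{\lambda}(V)>0$ for every nonempty open $V\subset X$ (full support), this forces $V\cap A_{c,\alpha}\neq\emptyset$, proving density.

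I do not foresee a genuine obstacle. The only ingredient beyond Birkhoff and Baire category is the observation that linearity allows one to rescale the given ergodic measure to concentrate almost all of its mass in an arbitrarily prescribed ball about the origin, while preserving invariance, ergodicity and full support; this is precisely what makes the comeager intersection work.
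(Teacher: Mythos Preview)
Your argument is correct and is essentially the paper's own proof: Birkhoff's theorem yields a set of full measure on which the visit frequency to a ball exceeds any prescribed $c<1$, full support of the measure turns this into a dense set, and Baire category (together with the $G_\delta$ description) produces the required comeager set inside $HC(T)$. The only cosmetic difference is that you rescale the ergodic measure via $\phi_\lambda$ while keeping an \emph{open} ball $B(0,\alpha)$ fixed, whereas the paper keeps $m$ fixed and lets the closed-ball radius $R\to\infty$, choosing $R$ with $m(\partial B_R)=0$ to guarantee openness of the sets $U_{N,\varepsilon}$; your use of the open ball avoids that boundary technicality.
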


Theorem \ref{th3} follows from Theorem \ref{th4} combined with a result from \cite{BR} which (although not stated in this form) yields the existence of frequently hypercyclic bilateral weighted shifts $B_{\bf w}$ on $c_0(\mathbb Z)$ such that $c(B_{\bf w})<1$.

\par\smallskip
As it turns out, the parameter $c(T)$ is also closely connected to the existence of \emph{distributionally irregular vectors} for the operator $T$. A vector $x\in X$ is said to be distributionally irregular for $T$ if there exist two sets of integers $A,B\subset\mathbb N$, both having upper density $1$, such that 
$$\displaylines{\Vert T^ix\Vert\xrightarrow[i\to\infty\atop i\in A]{}0 \\ {\rm and}\\ \Vert T^ix\Vert\xrightarrow[i\to\infty\atop i\in B]{} \infty.}$$
This notion was studied by Bernardes, Bonilla, M\"uller and Peris in  \cite{BBMP}, where it is shown that the existence of a distributionally irregular vector for a linear system $(X,T)$ is equivalent to this system being \emph{distributionally chaotic} in the sense of Schweitzer and Sm\'ital \cite{SS}.  
\par\smallskip
In \cite{BBMP}, the authors ask (a restricted form of) the following question. Assume that $T$ 
is a bounded operator on a complex Banach space $X$ and that $T$ has a perfectly spanning set of unimodular eigenvectors. Is it then true that $T$ admits a distributionally irregular vector? Since any such operator admits an ergodic measure with full support, the following result (whose proof makes use of the parameter $c(T)$ in a crucial way) answers this question in the affirmative.

\begin{theorem}\label{th5} Let $(X,T)$ be a linear dynamical system, and assume that $T$ admits an ergodic measure with full support. Then $T$ admits a comeager set of distributionally irregular vectors. This holds in particular if $X$ is a complex Banach space and $T$ has a perfectly spanning set of unimodular eigenvectors.
\end{theorem}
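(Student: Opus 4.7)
The plan is to realize the set of distributionally irregular vectors as the intersection $G_1\cap G_2$ of two comeager $G_\delta$ subsets of $X$: $G_1$ encodes the ``distributionally null'' condition (orbit approaches $0$ along a set of upper density $1$) and $G_2$ the ``distributionally unbounded'' condition (orbit leaves every ball along a set of upper density $1$). Both will be extracted from the equality $c(T)=1$ provided by Theorem \ref{th4}, via Baire category.

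The first step is straightforward. Applying Theorem \ref{th4}, we have $c(T)=1$. The very definition of $c(T)$, together with the lower semi-continuity in $x$ of the counting functions $x\mapsto|\{n\le N:\|T^n x\|<\alpha\}|$, ensures that for every $\alpha,\eta>0$ the set $\{x:\overline{\rm dens}\{n:\|T^n x\|<\alpha\}>1-\eta\}$ is a comeager $G_\delta$. Intersecting over countable sequences $\alpha_k\downarrow 0$ and $\eta_k\downarrow 0$, the set
\[
G_1=\bigl\{x\in X:\overline{\rm dens}\{n:\|T^n x\|<\alpha\}=1\text{ for every }\alpha>0\bigr\}
\]
is comeager, and a routine diagonal extraction produces, for each $x\in G_1$, a set $A\subset\mathbb N$ of upper density $1$ along which $\|T^n x\|\to 0$.

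The main difficulty is to prove the symmetric statement for
\[
G_2=\bigl\{x\in X:\overline{\rm dens}\{n:\|T^n x\|>M\}=1\text{ for every }M>0\bigr\}.
\]
Each $V_{M,\eta}=\{x:\overline{\rm dens}\{n:\|T^n x\|>M\}>1-\eta\}$ is again a $G_\delta$ by lower semi-continuity, so everything reduces to showing that $V_{M,\eta}$ is \emph{dense}. To achieve this I would revisit the proof of Theorem \ref{th4} and show that its Baire-category mechanism actually yields the following extended statement: for every non-empty open set $W\subset X$ and every $c<1$, a comeager set of vectors $x$ satisfies $\overline{\rm dens}(\mathcal N_T(x,W))\ge c$ (the hypothesis of full support ensures that any non-empty open $W$ automatically has $m(W)>0$, which is what the argument really needs). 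Applying this extended form with $W=\{y:\|y\|>M\}$ and intersecting over $M_k\to\infty$ and $\eta_k\to 0$ yields that $G_2$ is comeager. This extension is the crux of the proof, because the parameter $c(T)$ is tailored to balls centred at the fixed point $0$, where linearity of $T$ plays a natural role, and one must verify that the argument adapts to open sets not centred at $0$.

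Combining the two steps, $G_1\cap G_2$ is a comeager subset of $X$; for each $x$ in it, a second diagonal extraction gives a set $B\subset\mathbb N$ of upper density $1$ along which $\|T^n x\|\to\infty$, so $x$ is distributionally irregular. For the second assertion, any operator $T$ on a complex Banach space with a perfectly spanning set of unimodular eigenvectors admits an ergodic (indeed Gaussian) probability measure with full support by the Bayart--Matheron theorem recalled in the introduction, so the first part applies.
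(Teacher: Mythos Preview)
Your treatment of $G_1$ is correct and matches the paper's Proposition \ref{distrib0} combined with $c(T)=1$.

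The gap is in $G_2$. The mechanism of Theorem \ref{th4} does \emph{not} give ``$\overline{\rm dens}\,\mathcal N_T(x,W)\ge c$ for every $c<1$'' from the mere fact that $m(W)>0$. What the Baire/ergodic argument yields (once you carry it out for an open set $W$ with $m(\partial W)=0$) is that comeager many $x$ satisfy $\overline{\rm dens}\,\mathcal N_T(x,W)\ge m(W)$. For $W=\{y:\Vert y\Vert>M\}$ with $M$ large, $m(W)$ is small, so this does not produce upper density close to $1$. Your parenthetical ``$m(W)>0$ is what the argument really needs'' is precisely where the reasoning breaks.

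Your approach can, however, be repaired by a dilation argument in the spirit of Fact \ref{fait2}. Since $m$ is ergodic with full support on an infinite-dimensional space, $m(\{0\})=0$, so $m(\{\Vert y\Vert>\alpha\})\to 1$ as $\alpha\to 0$. Given $M,\eta$, pick $\alpha$ with $m(\{\Vert y\Vert>\alpha\})>1-\eta$ and $m(\partial B_\alpha)=0$; the ergodic/Baire mechanism then gives comeager many $z$ with $\overline{\rm dens}\{n:\Vert T^nz\Vert>\alpha\}>1-\eta$. Linearity of $T$ gives $\mathcal N_T(x,\{\Vert y\Vert>M\})=\mathcal N_T(\alpha x/M,\{\Vert y\Vert>\alpha\})$, and since $x\mapsto \alpha x/M$ is a homeomorphism, the resulting set of $x$'s is still comeager. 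This fills the hole, and the rest of your outline goes through.

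For comparison, the paper does \emph{not} argue this way for the unbounded side: it observes that an ergodic measure with full support makes $T$ frequently hypercyclic, and then invokes \cite[Proposition 14]{BR} (valid for any frequently hypercyclic operator) to obtain the comeager set $G_2$ directly. Your corrected route is more self-contained---it stays entirely within the ergodic-measure framework and avoids the external \cite{BR} result---but it only works because the sets $\{\Vert y\Vert>M\}$ are dilation-homogeneous; the general ``extended statement'' you formulate is not what one actually proves.
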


\smallskip
The parameter $c(T)$ already appears implicitely in the proof of Theorem \ref{th1} above, and it will be clear from this proof that its exact value is not easy to determine. In view of Theorem \ref{th4}, the following question is quite natural. Note that a positive answer to this question, together with the result from \cite{BR} mentioned above, would answer Question \ref{Q4} affirmatively.

\begin{question}\label{enervante} Let $(X,T)$ be a linear dynamical system. Is it true that if $T$ is hypercyclic and admits an invariant measure with full support, then $c(T)=1$?
\end{question}

 \smallskip
The following special case is worth stating separately.

\begin{question}\label{Q5} Let $T$ be a frequently hypercyclic operator acting on a reflexive Banach space $X$. Is $c(T)$ necessarily equal to $1$?
\end{question}

It is tempting to conjecture that the answer to this question is positive. In any event, since any \op\ $T$ admitting an \erg\ \mea\ with full support satisfies $c(T)=1$ by Theorem \ref{th4}, this is in some sense a ``necessary first step" towards a possibly affirmative answer to Question \ref{Q6}.

\subsection{Baire category arguments} 
The following simple observation will be used throughout the paper: to obtain an ergodic measure with full support for an operator $T$, it is in fact enough to find an invariant measure $m$ for $T$ such that $m (HC(T))>0$. Indeed, the ergodic decomposition theorem then yields the existence of an \emph{ergodic} measure $\mu$ such that  $\mu(HC(T))>0$.  Being $T\,$-$\,$invariant, this measure necessarily has full support because 
$HC(T)\subset \bigcup_{n\geq 0} T^{-n}(U)$ for each nonempty open set $U$. 
\par\smallskip
One of the most tempting roads one could follow in order to find an invariant measure $m$ such that $m(HC(T))>0$ is to try to use Baire Category arguments in the space of $T$-$\,$invariant measures $\mathcal P_T(X)$. This is far from being a new idea. Indeed, this strategy has already proved to be quite successful in a number of nonlinear situations; see for instance \cite{Si1}, \cite{Si2} or \cite{CS}. 

\smallskip
Recall that for any Polish space $X$, the space $\mathcal P(X)$ of all Borel \prob\ \mea s on $X$ is endowed with the so-called \emph{Prokhorov topology}, which is the weak topology generated by the space $\mathcal C_b(X)$ of all real-valued, bounded continuous functions on $X$. In other words, a sequence $(\mu_n)$ of elements of 
$\mathcal P(X)$ converges to $\mu$ in $\mathcal P(X)$ if and only if $\int_X f\, d\mu_n\to\int_X f\, d\mu$ for every $f\in\mathcal C_b(X)$. The topology of $\mathcal P(X)$ is Polish because $X$ is Polish (see \mbox{e.g.} \cite[Chapter 2]{Bo}). The set 
$\mathcal P_T(X)$ of all $T$-$\,$\inv\ Borel \prob\ \mea s on $X$ is easily seen to be closed in $\mathcal P(X)$, and hence  it is a Polish space in its own right.

\smallskip
For any Borel set $A\subset X$, we denote by $\mathcal P(A)$ the set of of all 
Borel probability measures $m$ on $X$ which are supported on $A$ (\mbox{i.e.} such that $m(A)=1$) and we set $\mathcal P_T(A):=\mathcal P_T(X)\cap\mathcal P(A)$.  If $O\subset X$ is open, then $\mathcal P(O)$ is easily seen to be $G_\delta$ in $\mathcal P(X)$; so $\mathcal P_T(O)$ is $G_\delta$ in $\mathcal P_T(X)$. 

\smallskip
Let us say that a Borel set $A\subset X$ is \emph{backward $T$-$\,$invariant} if $T^{-1}(A)\subset A$. One can write $HC(T)$ as a countable intersection of backward $T$-$\,$invariant open sets $O_j$ (just set $O_j:=\bigcup_{n\geq 0} T^{-n}(V_j)$, where $(V_j)_{j\geq 1}$ is a countable basis of nonempty open sets for $X$) so that $\mathcal P_T(HC(T))=\bigcap_{j\geq 1} \mathcal P_T(O_j)$. Hence, a positive answer to the next question would solve Question \ref{Q6} affirmatively.

\begin{question}\label{Q7} Let $T$ be a frequently hypercyclic operator on a reflexive Banach space $X$. Is it true that for every backward $T$-$\,$invariant, nonempty open 
$O\subset X$, the set $\mathcal P_T(O)$ is dense in $\mathcal P_T(X)$? Equivalently, is $\mathcal P_T(HC(T))$ dense in $\mathcal P_T(X)$?
\end{question}

The two formulations of the question are indeed equivalent, because every backward $T$-$\,$invariant open $O\neq\emptyset$ contains $HC(T)$.

\smallskip
This very same question may be considered for \emph{chaotic} operators. Recall that an operator $T$ on $X$ is said to be chaotic if it is hypercyclic and its periodic points are dense in $X$. One of the most exciting open problems in linear dynamics is to determine whether every chaotic operator is frequently hypercyclic. This is widely open even in the Hilbert space setting, and closely related to an older question of Flytzanis \cite{Fl} asking whether a hypercyclic operator on a Hilbert space $H$ whose unimodular eigenvectors of $T$ span a dense linear subspace of $H$ necessarily has uncountably many unimodular eigenvalues (or even a perfectly spanning set of unimodular eigenvectors). A positive answer to the next question would imply that in fact, every chaotic operator admits an ergodic measure with full support.

\begin{question}\label{Q8} Let $T$ be a chaotic operator on a Banach space $X$. Is it true that $\mathcal P_T(O)$ is dense in 
$\mathcal P_T(X)$ for any backward $T$-$\,$invariant open set $O\neq X$? Equivalently, is
$\mathcal P_T(HC(T))$ dense in $\mathcal P_T(X)$?
\end{question}

It is worthing pointing out that the corresponding statement is known to 
\emph{fail} in the nonlinear setting. Indeed, an example is given in \cite{W} of a compact dynamical system $(X,T)$ with $T$ invertible, such that $T$ is topologically transitive with a dense 
set of periodic points, but admits no ergodic measure with full support. What makes this example especially interesting is that the map $T$ is in fact not frequently 
hypercyclic. This leads naturally to the following intriguing question.

\begin{question}\label{Q10} Let $(X,T)$ be a compact dynamical system, and assume that $T$ is frequently hypercyclic. Does it follow that $T$ admits an ergodic measure with full support?
\end{question}

\par\smallskip
Our last result, Theorem \ref{th6} below, shows in particular that a weak form of Question \ref{Q8} does have a positive answer for a large class of chaotic operators which, to the best of our knowledge, contains all known concrete chaotic operators. Note however that Theorem \ref{th6} cannot be of any use for showing that every chaotic operator has an ergodic measure with full support, since the assumption  made therein that the \op\ $T$ has a perfectly spanning set of unimodular eigenvectors already implies the existence of such a measure.
\par\smallskip
Let us say that a measure $\nu\in\mathcal P(X)$ is a \emph{periodic measure for} $T$ if it has the form $$\nu=\frac{1}N\sum\limits_{n=0}^{N-1} \delta_{T^n a},$$ where $a\in X$ and $N\ge 1$ satisfy $T^Na=a$. We will denote by ${\mathcal F}_T(X)$ the convex hull of the set of all 
 periodic measures for $T$. Equivalently, ${\mathcal F}_T(X)$ is the the set of all $T$-$\,$invariant, finitely supported measures (which explains the notation).   The closure of 
 ${\mathcal F}_T(X)$ in $\mathcal P_T(X)$ is denoted by $\overline{\mathcal F}_T(X)$, and for any Borel set $A\subset X$ we set ${\overline{\mathcal F}}_T(A):={\overline{\mathcal F}}_T(X)\cap\mathcal P(A).$
 
Another family of invariant measures will be of interest for us. We shall say that a measure $\mu\in\mathcal P(X)$ is a 
\emph{Steinhaus measure for $T$} if $\mu$ is the distribution of a random variable $\Phi:(\Omega ,\mathfrak{F}, \P)\to X$ defined on some standard probability space 
$(\Omega ,\mathfrak{F}, \P)$, of the form
$$\Phi(\omega)=\sum_{j\in J} \chi_j(\omega)\, x_j\, ,$$
where the $x_j$ are unimodular eigenvectors for $T$ and $(\chi_j)_{j\in J}$ is a finite sequence of independent {Steinhaus variables} (\mbox{i.e.} random variables uniformly distributed on the circle $\TT$). Any Steinhaus measure for $T$ is $T\,$-$\,$invariant, by the rotational invariance of the Steinhaus variables. We denote by $\mathcal S_T(X)$ the family of all Steinhaus measures for $T$, and by $\overline{\mathcal S}_T(X)$ the closure of ${\mathcal S}_T(X)$ in $\mathcal P(X)$. Accordingly, we set 
$\overline{\mathcal S}_T(A)=:\overline{\mathcal S}_T(X)\cap\mathcal P(A)$ for any Borel set $A\subset X$.

\begin{theorem}\label{th6} Let $T$ be a bounded operator on a complex separable Banach space $X$.  

\begin{itemize}
\item[\rm (a)] Assume that $T$ has a perfectly spanning set of unimodular eigenvectors, and that 
the periodic eigenvectors of $T$ are dense in the set of all unimodular eigenvectors. Then  
${\overline{\mathcal F}}_T(HC(T))$ is a dense $G_{\delta }$ subset of ${\overline{\mathcal F}}_T(X)$.
\item[\rm (b)] Assume ``only" that $T$ has a perfectly spanning set of unimodular eigenvectors. Then  
${\overline{\mathcal S}}_T(HC(T))$ is a dense $G_{\delta }$ subset of ${\overline{\mathcal S}}_T(X)$.
\end{itemize}
\end{theorem}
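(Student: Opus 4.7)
The $G_{\delta }$ claim for both $\overline{\mathcal F}_T(HC(T))$ and $\overline{\mathcal S}_T(HC(T))$ is immediate from the observation already recorded in Section~1.4: with $(V_j)_{j\ge 1}$ a countable basis of non-empty open sets and $O_j:=\bigcup_{n\ge 0}T^{-n}(V_j)$, each $\mathcal P(O_j)$ is $G_\delta$ in $\mathcal P(X)$, so $\mathcal P(HC(T))=\bigcap_j\mathcal P(O_j)$ is $G_\delta$, and intersecting with $\overline{\mathcal F}_T(X)$ or $\overline{\mathcal S}_T(X)$ preserves this. Since $\mathcal F_T(X)$ and $\mathcal S_T(X)$ are dense in their closures by definition, the density claim reduces to: given $m_0$ in the inner set and $\varepsilon>0$, produce $m\in\overline{\mathcal F}_T(HC(T))$ (resp.\ $\overline{\mathcal S}_T(HC(T))$) with $d(m,m_0)<\varepsilon$.

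The construction is the same in both cases. Using the perfectly-spanning hypothesis, and, in case~(a), the density of periodic eigenvectors in the unimodular ones, choose unimodular eigenvectors $(z_k)_{k\ge 1}$ of $T$ which are perfectly spanning and satisfy $\sum_k\|z_k\|<\varepsilon$ (eigenvectors can be freely rescaled, which preserves spanning), and which in case~(a) are periodic eigenvectors. On an auxiliary probability space, let $(\chi'_k)_k$ be independent Steinhaus variables independent of an $m_0\,$-$\,$distributed $Y$, let $Z:=\sum_k\chi'_k z_k$ (an a.s.\ convergent series with $\|Z\|\le\varepsilon$), and set $m:=\mathcal L(Y+Z)$. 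Independence and rotational invariance make $m$ a $T\,$-$\,$invariant probability, and the coupling $(Y,Y+Z)$ yields $d(m,m_0)\le\varepsilon$ in the Prokhorov metric.

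Case~(b) is then immediate: writing $m_0=\mathcal L(\sum_{j\in J}\chi_j x_j)$, the measure $m$ is the Steinhaus measure attached to the combined unimodular family $(x_j)_{j\in J}\cup(z_k)_{k\ge 1}$, which remains perfectly spanning because $(z_k)$ alone is. The theorem of \cite{BM2} (or \cite{G}) then gives that $m$ is $T\,$-$\,$ergodic with full support, whence $m(HC(T))=1$; and $m$ is the Prokhorov limit of its finite Steinhaus truncations, so $m\in\overline{\mathcal S}_T(HC(T))$.

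Case~(a) requires extra work and contains the main obstacle. Each truncation $m_K:=\mathcal L\bigl(Y+\sum_{k\le K}\chi'_k z_k\bigr)$ is $T\,$-$\,$invariant and concentrated on a finite union of tori of periodic points (sums of periodic points and finite Steinhaus combinations of periodic eigenvectors); its ergodic decomposition is carried by finitely many $T\,$-$\,$orbits, each contributing a periodic-orbit measure in $\mathcal F_T(X)$, so $m_K\in\overline{\mathcal F}_T(X)$ and, by closedness, so does $m$. The delicate point is to establish $m(HC(T))=1$, since $m$ is not itself a pure Steinhaus measure and \cite{BM2}/\cite{G} do not apply directly. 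To circumvent this, let $N$ be the lcm of the finitely many periods appearing in the support of $m_0$, so that $T^NY=Y$ almost surely, and note that perfect spanning of $(z_k)$ for $T$ transfers to perfect spanning for $T^N$ because the preimage under $\lambda\mapsto\lambda^N$ of a countable subset of $\T$ is countable. Applying \cite{BM2}/\cite{G} to $T^N$, the Steinhaus measure $\mathcal L(Z)$ is $T^N\,$-$\,$ergodic with full support, so $Z\in HC(T^N)$ a.s.; and for any fixed $y$ with $T^Ny=y$, the identity $T^{mN}(y+Z)=y+T^{mN}Z$ together with density in $X$ of $\{T^{mN}Z\}_{m\ge 1}$ shows that $y+Z\in HC(T)$ a.s., yielding $m(HC(T))=1$.
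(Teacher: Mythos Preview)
Your argument has a genuine gap: you invoke \cite{BM2}/\cite{G} to conclude that your specific measure $m=\mathcal L(Y+Z)$ (or $\mathcal L(Z)$) is $T$-ergodic with full support, and hence satisfies $m(HC(T))=1$. But those are \emph{existence} results: under the perfect spanning hypothesis, there exists \emph{some} measure with the desired properties; neither theorem asserts that an arbitrary Steinhaus series built from a ``perfectly spanning'' sequence $(z_k)$ (a notion you leave undefined) has them. In fact the paper records explicitly (see Remark~\ref{secondmoment} following Theorem~\ref{blackbox}) that Steinhaus series measures are \emph{never} ergodic for $T$, so your ergodicity claim is false as stated. The construction in \cite{G} that produces an $m\in\overline{\mathcal S}_T(X)$ with $m(HC(T))=1$ is a delicate inductive choice of the $z_k$, not an arbitrary one. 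For part~(b), even granting $Z\in HC(T)$ a.s.\ by using that specific construction, it does not follow that $Y+Z\in HC(T)$ a.s.\ for an independent finite Steinhaus sum $Y$, since $HC(T)$ is not translation-invariant. For part~(a), your periodic-translation argument via $T^N$ is a nice idea, but it still rests on the unjustified claim that the specific $\mathcal L(Z)$ is supported on $HC(T^N)$; and you would also need to check that the $z_k$ in \cite{G}'s construction can be taken periodic, which is not automatic. A smaller slip: the ergodic decomposition of your $m_K$ is carried by uncountably many periodic orbits, not finitely many, though the conclusion $m_K\in\overline{\mathcal F}_T(X)$ survives since $m_K$ is a mixture of periodic-orbit measures and hence lies in their closed convex hull.

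The paper avoids all of this by never attempting to hit $HC(T)$ in one shot. Theorem~\ref{th7} reduces the density claim to producing, for each backward $T$-invariant open set $\Omega\neq\emptyset$, measures in $\mathcal M$ supported on $\Omega$ and concentrated arbitrarily near $0$; the Baire category theorem then handles the countable intersection $HC(T)=\bigcap_j O_j$. For part~(b) this is done explicitly with \emph{finite} Steinhaus measures, using Kronecker's theorem to ensure support in $\Omega$ and an $\ell^2$-splitting trick (writing each $x_i$ as $\sum_j a_j x_i$ with $\sum_j a_j=1$ but $\sum_j a_j^2$ small) to ensure concentration near $0$; no hypercyclicity of any particular random sum is needed. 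Part~(a) is then obtained by showing $\overline{\mathcal S}_T(X)\subset\overline{\mathcal F}_T(X)$ under the extra hypothesis, via approximation by periodic Steinhaus measures and Weyl equidistribution, and invoking Theorem~\ref{th7} once more.
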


As explained above, the existence of an invariant measure supported on $HC(T)$ implies (and in fact, is equivalent to) the existence of an ergodic measure with full support. Hence, it follows in particular from part (b) that any operator with a perfectly spanning set of unimodular eigenvectors admits an ergodic measure with full support. This result is of course weaker than the one obtained in \cite{BM2} since the ergodic measure has no reason for being Gaussian; but the the \emph{proof} is quite different (being based on the Baire category theorem) and it looks much simpler than the existing ones from \cite{G} and \cite{BM2}.

\subsection{Organization of the paper}
Section 1 is purely ``nonlinear". We first prove Theorem \ref{th2}, and then we add a few simple remarks. In particular, it is shown in Proposition \ref{propcontinuous} that under rather mild assumptions, the existence of an invariant measure with full support implies the existence of a \emph{continuous} measure with these properties. 
Theorem \ref{th1} is proved in Section \ref{sec3}. In the same section (Proposition \ref{continuous2}), we also show that if an operator $T\in\mathfrak L(X)$ admits an invariant measure with full support, then the continuous, $T\,$-$\,$invariant measures with full support form a dense $G_\delta$ subset of $\mathcal P_T(X)$. The parameter $c(T)$ is introduced in Section 
\ref{sec4}. This allows us to prove Theorems \ref{th3}, \ref{th4} and \ref{th5} quite easily,  together with two simple additional results: any frequently hypercyclic operator has a comeager set of ``distributionally null" orbits; and the set of all frequently hypercyclic vectors for a given operator $T$ is always meager in the underlying Banach space $X$ (this was obtained independently in \cite{BR} and \cite{M}). The proof of Theorem \ref{th6} is given in Section \ref{sec5}. It makes use of another result, Theorem \ref{th7}, which provides several necessary and sufficient conditions for the existence of an invariant measure supported on $HC(T)$ and belonging to the closure of a family of invariant measures satisfying some natural assumptions. Using Theorem \ref{th7}, we also prove two additional results similar to Theorem \ref{th6} which give some plausibility to the conjecture that every chaotic operator is frequently hypercyclic and in fact admits an ergodic measure with full support. We conclude the paper by listing several equivalent formulations of the perfect spanning property.

\section{Construction of invariant measures with full support}\label{sec2}

In this section, we prove Theorem \ref{th2}. As already mentioned in the introduction, it is a well-known fact that frequently hypercyclic continuous self-maps of a compact metric space admit invariant measures with full support. Since the proof of Theorem \ref{th2} uses in a crucial way the idea of the proof in the compact case, we first sketch the latter briefly. We refer to 
\cite[Lemma 3.17]{Fu} for more details.

\subsection{The compact case}\label{compactcase} 
Let $T$ be a frequently hypercyclic continuous self-map of a compact metrizable space $X$, and let $x_0\in FHC(T)$. Denote by $\mathcal C(X)$ the 
space of real-valued continuous functions on $X$. 
By the Riesz representation theorem, one can identify 
$\mathcal P(X)$ with the set of all positive linear functionals $L$ on $\mathcal C(X)$ such that $L(\mathbf 1)=1$, where $\mathbf 1$ denotes the function constantly equal to $1$. The latter is $w^*$-$\,$compact as a subset of $\mathcal{C}(X)^{*}$, and it is also metrizable because 
$\mathcal C(X)$ is separable. For each $N\in\mathbb N$, let $\mu _{N}\in \mathcal{P}(X)$ be defined as
$$\mu_N:=\frac1N\sum_{n=1}^N \delta_{T^nx_0}\, ,$$
where, for each $a\in X$, $\delta_a$ is the Dirac mass at $a$. Since all $\mu_N$ are probability measures, one can find an increasing sequence of integers $(N_k)_{k\ge 1}$ and a probability measure $m\in\mathcal P(X)$ such that $\mu_{N_k}$ tends to $ m$ in the $w^{*}$-$\,$topology of $\mathcal{P}(X)$ as $k$ tends to infinity; in other words, 
$$\frac1{N_k} \sum\limits_{n=1}^{N_k} f(T^nx_0)\To \int_X f\, dm\quad\textrm{ for every }\;
f\in\mathcal C(X).$$ 
\par\smallskip
Since $\int_X (f\circ T)\, d\mu_N=\frac{1}{N}\sum\limits_{n=2}^{N+1} f(T^nx_0)$ for any $N\in\NN$, we see that 
$$\int_X (f\circ T)\, d\mu_{N_{k}}-\int_X f\, d\mu_{N_{k}}\To 0 \quad\textrm{ for every }\;
f\in\mathcal C(X).$$ It follows 
that $\int_X (f\circ T)\, dm=\int_X f\, dm$ for every $f\in\mathcal C(X)$, so the measure $m$ is $T$-$\,$invariant.
\par\smallskip
Let $U$ be a nonempty open set in $X$, and choose a nonempty open set $V$ whose closure $\overline V$ is contained in $ U$. Since $\overline V$ is closed in $X$, the map 
$\mu\mapsto \mu(\overline V)$ is upper semi-continuous on $(\mathcal P(X),w^*)$; so we have
$$m(\overline V)\geq \limsup_{k\to\infty} \mu_{N_k}(\overline V).$$
Since $\mu_{N_k}(\overline V)=\frac1{N_k} \#\left\{ n\in[1,N_k];\; T^n x_0\in\overline V\right\}$, and recalling that $x_0\in FHC(T)$, it follows that 
$$m(U)\geq \liminf_{N\to\infty} \frac1N\#\left\{ n\in [1,N]  \textrm{ ; } T^nx_0\in V\right\}>0\, .$$
This shows that the measure $m$ has full support.

\medskip
It is clear that compactness is crucial in the above proof, since essentially everything relies on the Riesz representation theorem. The metrizability of $X$ is also needed for two reasons: it implies that $\mathcal C(X)$ is separable, so that we can extract from the sequence $(\mu _{N})_{N\ge 1}$ a $w^*$-$\,$convergent 
\emph{sequence} $(\mu_{N_k})_{k\ge 1}$ (but this is merely a matter of convenience); and it allows to identify the linear functionals on $\mathcal C(X)$ with the \emph{Borel} measures on $X$.

\subsection{Proof of Theorem \ref{th2}}
In the proof of Theorem \ref{th2}, we will use the same idea as above to associate with each $\tau\,$-$\,$compact set $K$ a Borel probability measure 
$\mu_K$ on $K$. Then the measure $m$ will be obtained as the supremum of all these measures $\mu_K$. In what follows, we denote by $\mathcal K_\tau$ the family of all 
$\tau\,$-$\,$compact subsets of $X$. We also fix a frequently hypercyclic point $x_0$ for $T$.

\par\smallskip
Since we will have to consider simultaneously all sets $K\in\mathcal K_\tau$, a diagonalization procedure will be needed. To avoid extracting infinitely many sequences of integers, it is convenient to consider a suitable \emph{invariant mean} on $\ell^\infty(\mathbb N)$, the space of all bounded sequences of real numbers. Recall that an invariant mean is a positive linear functional $\mathfrak m$ on $\ell^\infty(\mathbb N)$ such that $\mathfrak m(\mathbf 1)=1$ and 
$\mathfrak m\bigl(\phi(\,\cdot +a)\bigr)=\mathfrak m(\phi)$ for every $a\in\mathbb N$ and all $\phi=(\phi(i))_{i\geq 1}\in\ell^\infty(\mathbb N)$, where $\phi(\,\cdot +a)$ is the translated sequence $\bigl(\phi(i+a)\bigr)_{i\geq 1}$. 
\par\smallskip
It is not hard to see that there exists an invariant mean $\mathfrak m$ such that 
$$\mathfrak m(\phi)\geq \liminf_{n\to\infty} \frac1n\sum_{i=1}^n \phi(i)\quad \textrm{ for all }\phi\in\ell^\infty(\mathbb N)\,.$$
 For example, one may take $\mathfrak m(\phi)=\lim_{\mathcal U} \frac1n\sum\limits_{i=1}^n \phi(i)$, where $\mathcal U$ is a non-principal ultrafilter on 
$\mathbb N$. In the sequel, we fix such an invariant mean $\mathfrak m$. In order to emphasize the fact that $\mathfrak m$ should be viewed as a finitely additive measure on $\NN$, we write the result of the action of $\mathfrak m$ on a ``function" $\phi\in\ell^\infty(\NN)$ as an integral:
$$\mathfrak m(\phi)=\int_{\mathbb N} \phi(i)\, d\mathfrak m(i)\, .$$
So the invariance property reads
$$\int_\NN \phi(i+a)\, d\mathfrak m(i)=\int_\NN \phi(i)\, d\mathfrak m (i)\quad \textrm{ for every }a\in\N\,.$$
\par\smallskip
Before really starting the proof of Theorem \ref{th2}, let us observe that the topologies $\tau_X$ and $\tau$ have the same Borel sets. Indeed, since each point $x\in X$ has a neighbourhood basis consisting of $\tau\,$-$\,$compact sets and since the topology $\tau_X$ is Lindel\"of (being separable and metrizable), every $\tau_X\,$-$\,$open set is a countable unions of $\tau\,$-$\,$compact sets and hence is $\tau\,$-$\,$Borel. So it makes sense to speak of Borel measures on $X$ without referring explicitly to one of the topologies $\tau_X$ or $\tau$.

\par\smallskip
We now start the proof of Theorem \ref{th2} with the following fact, that will allow us to deal with \emph{Borel} measures when applying the Riesz representation theorem.

\begin{fact}\label{fact1} Every $\tau\,$-$\,$compact subset of $X$ is $\tau\,$-$\,$metrizable.
\end{fact}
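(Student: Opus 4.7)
The plan is to equip the $\tau$-compact set $K$ with a second-countable Hausdorff topology $\sigma$ coarser than $\tau|_K$. Since any topology coarser than a compact one is automatically compact, $\sigma$ will then be compact Hausdorff; the classical fact that a compact topology admits no strictly coarser Hausdorff topology will force $\sigma = \tau|_K$, so that $\tau|_K$ is second countable and hence, by Urysohn's theorem, metrizable.

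To build $\sigma$, I would fix a countable basis $(V_j)_{j\geq 1}$ of $(X,\tau_X)$. The hypothesis that every point has a $\tau_X$-neighborhood basis of $\tau$-compact sets means that for each $x \in X$ and each basic $V_j \ni x$ there exist a basic $V_i \ni x$ and a $\tau$-compact set $L$ with $V_i \subseteq L \subseteq V_j$. For each pair $(i,j)$ admitting such an $L$, fix one choice and call it $L_{i,j}$; this produces a countable family $\mathcal{L}$ of $\tau$-compact (hence $\tau$-closed) subsets of $X$. Let $\sigma$ be the topology on $K$ generated by the countable subbasis $\{K \setminus L : L \in \mathcal{L}\}$. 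Each subbasic set is $\tau|_K$-open by construction, so $\sigma \subseteq \tau|_K$ and $\sigma$ is second countable.

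The key verification, and the main obstacle, is that $\sigma$ separates points. Given distinct $x, y \in K$, $\tau_X$-Hausdorffness yields a basic $V_j$ with $x \in V_j$ and $y \notin V_j$; the hypothesis then produces $V_i \ni x$ and $L_{i,j} \in \mathcal{L}$ with $V_i \subseteq L_{i,j} \subseteq V_j$, whence $x \in L_{i,j}$ and $y \in K \setminus L_{i,j}$, and symmetrically for $y$. The interlocked inclusion $V_i \subseteq L \subseteq V_j$ provided by the hypothesis is what makes this work: it simultaneously traps $x$ inside the chosen $\tau$-compact $L$ while keeping the prescribed $y$ outside, something that weaker forms of the assumption (such as merely the existence of a $\tau$-compact $\tau_X$-neighborhood at each point) would not obviously yield. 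Once $\sigma$ is Hausdorff, the identity $(K,\tau|_K) \to (K,\sigma)$ is a continuous bijection from a compact space onto a Hausdorff space, hence a homeomorphism, so $\sigma = \tau|_K$ and the argument closes.
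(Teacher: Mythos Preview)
Your overall strategy---produce a countable family of $\tau$-closed subsets of $K$ that separates points and leverage this to conclude metrizability---is close in spirit to the paper's, but there is a genuine gap at the Hausdorff step.

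What your verification actually establishes is that $\sigma$ is $T_1$: for distinct $x,y$ you exhibit some $L\in\mathcal L$ with $x\in L$ and $y\in K\setminus L$, so $K\setminus L$ is a $\sigma$-open set containing $y$ and missing $x$ (and symmetrically). This does \emph{not} give disjoint $\sigma$-neighbourhoods of $x$ and $y$. For that you would need finite subfamilies $A,B\subset\mathcal L$ with $x\notin\bigcup A$, $y\notin\bigcup B$, and $\bigl(\bigcup A\bigr)\cup\bigl(\bigcup B\bigr)\supseteq K$; since $K$ is only $\tau$-compact (not $\tau_X$-compact) and each $L\in\mathcal L$ sits inside a basic $\tau_X$-open set, there is no evident finite-cover argument available. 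And $T_1$ alone is not enough for your homeomorphism step: a compact Hausdorff topology can have strictly coarser $T_1$ topologies (the cofinite topology on $[0,1]$, for instance).

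The paper avoids this difficulty by passing through Urysohn's lemma rather than through an auxiliary topology. It fixes a countable basis $(V_j)$ for $(K,\tau_X)$, lets $E_j$ be the $\tau$-closure of $V_j$ in $K$ (automatically $\tau$-compact), and for each disjoint pair $E_{j_1},E_{j_2}$ chooses a $\tau$-continuous $f_{\mathbf j}:K\to[0,1]$ separating them, using normality of the compact Hausdorff space $(K,\tau)$. The countable family $\{f_{\mathbf j}\}$ separates points of $K$, so $(K,\tau)$ embeds in $[0,1]^{\mathbb N}$ and is therefore metrizable. Your family $\mathcal L$ would serve just as well as the $E_j$'s in this argument: the missing ingredient is precisely the upgrade from separating closed sets to separating continuous functions via Urysohn.
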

\begin{proof} Let us fix $K\in\mathcal K_\tau$, and let $(V_j)_{j\geq 1}$ be a countable basis of (nonempty) open sets for $K$ with respect to the topology $\tau_X$.
 For each $j\geq 1$, the $\tau\,$-$\,$closure $E_j$ of $V_j$ is $\tau\,$-$\,$compact. For each pair $\mathbf j=(j_1,j_2)$ with $E_{j_1}\cap E_{j_2}=\emptyset$, one can find a 
 $\tau\,$-$\,$continuous function $f_{\mathbf j}:K\to \RR$ such that $f_{\bf j}\equiv 1$ on $E_{j_1}$ and $f_{\bf j}\equiv 0$ on $E_{j_{2}}$. It is clear that the (countable) family of all 
 such functions $f_{\bf j}$ separates the points of $K$; and since $K$ is $\tau\,$-$\,$compact, metrizability follows.\end{proof}
 
 \par\smallskip
 For each $K\in\mathcal K_\tau$, let us denote by $\mathcal C(K,\tau)$ the space of all $\tau\,$-$\,$continuous, real-valued functions on $K$. Using the same argument as in 
 Section \ref{compactcase} above, we can now prove
 
 \begin{fact}\label{fact2} For every $K\in\mathcal K_\tau$, there exists a unique positive Borel measure $\mu_K$ on $K$ such that 
 $$\int_K f\, d\mu_K=\int_\NN (\mathbf 1_Kf)(T^i x_0)\, d\mathfrak m(i)\quad
 \textrm{ for every } f\in \mathcal C(K,\tau).$$ The measure $\mu_K$ satisfies $0\leq \mu_K(K)\leq 1$. Moreover, if $K$ has nonempty interior with respect to the topology $\tau_X$, 
 then $\mu_K(K)>0$.
 \end{fact}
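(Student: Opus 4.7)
The plan is to construct $\mu_K$ via the Riesz--Markov--Kakutani theorem. By Fact \ref{fact1}, $(K,\tau)$ is compact metrizable, so its positive Borel measures are in bijection with positive linear functionals on $\mathcal{C}(K,\tau)$. For $f\in\mathcal{C}(K,\tau)$, let $\mathbf{1}_K f$ denote the extension of $f$ by zero to all of $X$; since $\|\mathbf{1}_K f\|_\infty\le\|f\|_\infty<\infty$, the formula $L_K(f):=\int_\N(\mathbf{1}_K f)(T^i x_0)\,d\mathfrak{m}(i)$ defines a positive linear functional on $\mathcal{C}(K,\tau)$. Applying Riesz yields the desired measure $\mu_K$, which may unambiguously be called ``Borel'' since $\tau$ and $\tau_X$ share their Borel sets, as noted just before Fact \ref{fact1}.

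The estimate $0\le\mu_K(K)\le 1$ is immediate: plugging $f\equiv 1$ into the defining identity gives
$$\mu_K(K)=\int_\N\mathbf{1}_K(T^i x_0)\,d\mathfrak{m}(i),$$
and the bounds follow from $0\le\mathbf{1}_K\le\mathbf{1}$ pointwise on $X$, together with the positivity and normalization of $\mathfrak{m}$.

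For the strict positivity when $K$ has nonempty $\tau_X$-interior, I would pick any nonempty $\tau_X$-open set $U\subset K$. Then $\mathbf{1}_U\le\mathbf{1}_K$ pointwise on $X$, and by the monotonicity of $\mathfrak{m}$,
$$\mu_K(K)\ge\int_\N\mathbf{1}_U(T^i x_0)\,d\mathfrak{m}(i)\ge\liminf_{n\to\infty}\frac{1}{n}\sum_{i=1}^n \mathbf{1}_U(T^i x_0)>0,$$
where the middle inequality is the chosen property of the mean $\mathfrak{m}$ and the strict positivity at the end is exactly the frequent hypercyclicity of $x_0\in FHC(T)$ applied to the open set $U$. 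The mildly delicate point is that Riesz only gives control of $\mu_K$ through continuous functions, whereas the lower bound must come from orbit visits to the open (and in general non-closed) set $U$; working at the level of the $\mathfrak{m}$-integral representation of $\mu_K(K)$ and using the monotonicity of $\mathfrak{m}$ on all of $\ell^\infty(\N)$ bypasses this issue without having to re-invoke Riesz.
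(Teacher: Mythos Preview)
Your proof is correct and follows essentially the same approach as the paper: define a positive linear functional on $\mathcal C(K,\tau)$ via the invariant mean, invoke Riesz (using that $(K,\tau)$ is compact metrizable), and obtain the positivity from the liminf property of $\mathfrak m$ combined with frequent hypercyclicity of $x_0$. The only cosmetic difference is the order of the two inequalities in the positivity step (the paper first applies the liminf property to $\mathbf 1_K$ and then compares with $\mathbf 1_V$, whereas you first compare $\mathbf 1_K$ with $\mathbf 1_U$ inside $\mathfrak m$ and then apply the liminf property), which is immaterial.
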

 
 \begin{proof} The first part is obvious by the Riesz representation theorem, since the formula 
 $$L(f)=\int_\NN (\mathbf 1_Kf)(T^i x_0)\, d\mathfrak m(i), \;\; f\in \mathcal C(K,\tau)$$ defines a positive linear functional on $\mathcal C(K,\tau)$. It is also clear that  the 
 \mea\ $\mu _{K}$ thus defined satisfies $\mu_K(K)\leq \mathfrak m(\mathbf 1)=1$. Now, let us denote by $V$ the $\tau_X\,$-$\,$interior of $K$ in $X$, and assume 
 that $V\neq\emptyset$. Then 
 \begin{eqnarray*}\mu_K(K)&=&\int_\NN \mathbf 1_K(T^ix_0)\,d\mathfrak m(i )
 \geq \liminf_{n\to\infty}\frac 1n\sum_{i=1}^n \mathbf 1_K(T^ix_0)
 \geq \liminf_{n\to\infty}\frac 1n\sum_{i=1}^n \mathbf 1_V(T^ix_0)
 >0
 \end{eqnarray*} because $x_0$ is frequently hypercyclic for $T$.
 \end{proof}
 
\par\smallskip
For each $K\in\mathcal K_\tau$, we extend the measure $\mu_K$ to a positive Borel measure on $X$ (still denoted by $\mu_K$) in the usual way; that is,  we set 
$\mu_K(A):=\mu_K(K\cap A)$ 
for every Borel set $A\subset X$. Then $\mu_K(X)\leq 1$.
\par\smallskip
The following simple yet crucial fact will allow us to define a measure $m$ on $X$ as the supremum of all measures $\mu_K$.

\begin{fact}\label{fact3} If $K,L\in\mathcal K_\tau$ and if $K\subset L$, then $\mu_K\leq\mu_L$.
\end{fact}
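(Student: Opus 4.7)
The plan is to work inside the $\tau$-compact metrizable space $L$ (metrizability follows from Fact 1), viewing both $\mu_K$ (supported on $K \subset L$) and $\mu_L$ as finite Borel measures on $L$, and to compare them there.

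The first step is to establish the integral inequality
$$\int_L f \, d\mu_K \leq \int_L f \, d\mu_L$$
for every non-negative $f \in \mathcal C(L, \tau)$. Restricting $f$ to $K$ gives an element of $\mathcal C(K, \tau)$, and the defining formula of Fact 2 rewrites the two sides as $\int_\NN (\mathbf 1_K f)(T^i x_0) \, d\mathfrak m(i)$ and $\int_\NN (\mathbf 1_L f)(T^i x_0) \, d\mathfrak m(i)$ respectively. The expression $(\mathbf 1_K f)(T^i x_0)$ is unambiguous, since $\mathbf 1_K(T^i x_0)$ vanishes whenever $T^i x_0 \notin K$, while on $K \subset L$ the value $f(T^i x_0)$ is defined. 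As $\mathbf 1_K f \leq \mathbf 1_L f$ pointwise on $X$ (because $K \subset L$ and $f \geq 0$) and the invariant mean $\mathfrak m$ is positive, the inequality follows at once.

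The second step upgrades this inequality from continuous test functions to indicators of Borel sets. The cleanest route is via the uniqueness part of the Riesz representation theorem on the compact metrizable space $L$: the functional $f \mapsto \int_L f \, d\mu_L - \int_L f \, d\mu_K$ on $\mathcal C(L, \tau)$ is a positive linear functional by the first step, hence represented by a unique positive Borel measure $\nu$ on $L$; by uniqueness one gets $\mu_L = \mu_K + \nu$ on the Borel $\sigma$-algebra of $L$, so $\mu_L \geq \mu_K$ there. (Alternatively, one could combine regularity of Borel measures on the compact metric space $L$ with Urysohn's lemma to pass directly from continuous functions to indicators: given a Borel $A \subset L$ and $\epsilon > 0$, sandwich $A$ between a $\tau$-closed $F \subset A$ and a $\tau$-open $U \supset A$ with $\mu_K(A) - \mu_K(F) < \epsilon$ and $\mu_L(U) - \mu_L(A) < \epsilon$, pick $f \in \mathcal C(L, \tau)$ with $\mathbf 1_F \leq f \leq \mathbf 1_U$ by Urysohn, and apply the previous step to $f$.)

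Finally, to transfer the inequality to general Borel subsets of $X$: since $\mu_K$ is supported on $K \subset L$,
$$\mu_K(A) \;=\; \mu_K(A \cap K) \;\leq\; \mu_L(A \cap K) \;\leq\; \mu_L(A)$$
for every Borel set $A \subset X$, where the first inequality applies the previous step to the Borel subset $A \cap K$ of $L$. I do not foresee any real obstacle here; the argument is essentially bookkeeping that funnels the definition of $\mu_K$ as an $\mathfrak m$-integral into the standard machinery of Borel measures on a compact metric space.
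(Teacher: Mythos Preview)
Your proof is correct and follows essentially the same approach as the paper: both arguments first establish the integral inequality $\int f\,d\mu_K\le\int f\,d\mu_L$ for nonnegative $f\in\mathcal C(L,\tau)$ directly from the defining formula of Fact~\ref{fact2} and the positivity of $\mathfrak m$, and then pass from continuous test functions to Borel sets. The only difference is cosmetic: the paper carries out this passage by approximating $\mathbf 1_E$ for a $\tau$-compact $E\subset K$ by a decreasing sequence in $\mathcal C(L,\tau)$ and invoking regularity, whereas your primary route packages the same step as an application of the uniqueness part of the Riesz representation theorem on $L$ (and your parenthetical alternative via regularity and Urysohn is precisely the paper's method).
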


\begin{proof} Since $(X,\tau_X)$ is Polish, every Borel measure on $X$ is regular. So it is enough to show that $\mu_K(E)\leq\mu_L(E)$ for every $\tau_X\,$-$\,$compact set 
$E\subset X$. We do this in fact for every $\tau\,$-$\,$compact set $E$. (Recall that the topology $\tau_X$ is finer than $\tau$, so every $\tau_X\,$-$\,$compact set is 
$\tau\,$-$\,$compact).

Since $K\cap E$ is $\tau\,$-$\,$compact, we may replace $E$ with $K\cap E$, \mbox{i.e} we may assume that $E\subset K$. Since $E$ is $\tau\,$-$\,$compact, the function 
$\mathbf 1_E$ (the indicator function of $E$) is upper-semicontinuous with respect to the topology $\tau$, when considered as a function on $L$. So, by the metrizability of 
$(L,\tau)$, one can find a decreasing sequence $(f_j)_{j\geq 1}$ of functions of $\mathcal C(L,\tau)$ such that $f_j$ converges to $ \mathbf 1_E$ pointwise on $L$. Of course, the restrictions of the functions $f_j$ to $K$ belong to $\mathcal C(K,\tau)$. Since $K\subset L$ and $f_j\geq 0$ we have
\begin{eqnarray*}
\int_K f_j\, d\mu_K&=&\int_\NN (\mathbf 1_K f_j)(T^ix_0)\, d\mathfrak m(i)
\leq\int_\NN (\mathbf 1_L f_j)(T^ix_0)\, d\mathfrak m(i)
=\int_K f_j\, d\mu_L
\end{eqnarray*}
for all $j\geq 1$. Letting $j$ tend to infinity on both sides, we get $\mu_K(E)\leq \mu_L(E)$.
\end{proof}

\par\smallskip
From Fact \ref{fact3} and since the family of $\tau$-$\,$compact sets is closed under finite unions, we see that the family $(\mu_K)_{K\in\mathcal K_\tau}$ is what is sometimes called ``filtering increasing". From this, we can easily deduce 

\begin{fact}\label{fact4} If we set
$$\displaystyle \mu(A):=\sup_{K\in\mathcal K_\tau} \mu_K(A)\quad \textrm{ for every Borel set } A\subset X,$$ then $\mu $
is a positive Borel measure on $X$, such that $\mu(X)\leq 1$. 
\end{fact}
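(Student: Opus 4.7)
The plan is to exploit the ``filtering increasing'' nature of the family $(\mu_K)_{K\in\mathcal K_\tau}$ recorded in Fact \ref{fact3}, together with the fact that $\mathcal K_\tau$ is closed under finite unions. First I would observe that $\mu(\emptyset)=0$, that $\mu$ is monotone (trivially, as a supremum of monotone set functions), and that $\mu(X)\leq 1$ since each $\mu_K(X)\leq 1$ by Fact \ref{fact2}. The only non-trivial point is countable additivity.

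For the upper bound, let $(A_n)_{n\geq 1}$ be pairwise disjoint Borel sets and set $A=\bigsqcup_n A_n$. For every $K\in\mathcal K_\tau$ the measure $\mu_K$ is countably additive, so
$$\mu_K(A)=\sum_{n\geq 1}\mu_K(A_n)\leq\sum_{n\geq 1}\mu(A_n),$$
and taking the supremum over $K\in\mathcal K_\tau$ yields $\mu(A)\leq\sum_n\mu(A_n)$.

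For the lower bound, I would argue finitely first. Fix $N\geq 1$ and $\varepsilon>0$. For each $n\in\{1,\dots,N\}$, choose $K_n\in\mathcal K_\tau$ with $\mu_{K_n}(A_n)\geq\mu(A_n)-\varepsilon/N$, and set $K:=K_1\cup\cdots\cup K_N$, which belongs to $\mathcal K_\tau$ because this family is stable under finite unions. By Fact \ref{fact3} we have $\mu_{K_n}\leq\mu_K$ for every $n\leq N$, so
$$\sum_{n=1}^{N}\mu(A_n)-\varepsilon\;\leq\;\sum_{n=1}^{N}\mu_{K_n}(A_n)\;\leq\;\sum_{n=1}^{N}\mu_K(A_n)\;=\;\mu_K\!\Bigl(\bigsqcup_{n=1}^{N}A_n\Bigr)\;\leq\;\mu_K(A)\;\leq\;\mu(A).$$
Letting $\varepsilon\to 0$ and then $N\to\infty$ gives $\sum_{n\geq 1}\mu(A_n)\leq\mu(A)$, which together with the previous inequality proves countable additivity.

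I do not expect any serious obstacle here: all the work has been done in Fact \ref{fact3}, and the argument above is the standard trick for turning a filtering increasing family of measures with uniformly bounded total mass into a single measure via supremum. The only point deserving care is the finite-step truncation in the lower bound, which is what genuinely uses the stability of $\mathcal K_\tau$ under finite unions.
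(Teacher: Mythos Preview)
Your proof is correct and follows essentially the same approach as the paper: both rely on Fact \ref{fact3} together with the stability of $\mathcal K_\tau$ under finite unions to show that the supremum of the filtering family $(\mu_K)$ is a measure. The only cosmetic difference is that the paper splits the work into ``continuity from below on increasing sequences'' plus ``finite additivity'' (the latter proved exactly via your $K\cup K'$ trick), whereas you attack countable additivity directly; the underlying idea is identical.
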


\begin{proof} Obviously $0\leq \mu(A)\leq 1$ for every Borel set $A\subset X$ and $\mu(X)>0$. It is also clear that $\mu\left(\bigcup_n A_n\right)=\sup_n \mu(A_n)=\lim \mu(A_n)$ for every increasing sequence of Borel sets $(A_n)$. So we just have to check that $\mu$ is finitely additive. 

Let $A,A'$ be Borel subsets of $X$ with $A\cap A'=\emptyset$. Since $\mu_K(A\cup A')=\mu_K(A)+\mu_K(A')\leq \mu (A)+\mu (A')$ for all $K\in \mathcal K_\tau$, we get that
$\mu(A\cup A')\leq \mu (A)+\mu (A')$ by the very definition of $\mu$. Conversely, we have by Fact 3 
$$\mu_K(A)+\mu_{K'}(A')\leq \mu_{K\cup K'} (A)+\mu_{K\cup K'} (A')=\mu_{K\cup K'}(A\cup A')\leq \mu(A\cup A')$$
 for any $K,K'\in\mathcal K_\tau$, and hence $\mu(A)+\mu(A')\leq \mu(A\cup A')$.
\end{proof}

We now check that $\mu$ has the required properties.

\begin{fact}\label{fact5} The measure $\mu$ is $T$-$\,$invariant and has full support.
\end{fact}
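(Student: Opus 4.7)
The plan is to establish the two assertions of Fact 5 separately, full support being nearly immediate from earlier facts while $T$-invariance requires a genuine argument.

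\textbf{Full support.} Given a nonempty $\tau_X$-open set $U$, pick $x \in U$ and, using the neighborhood-basis hypothesis, a $\tau$-compact set $K \subset U$ whose $\tau_X$-interior contains $x$. By Fact \ref{fact2}, $\mu_K(K) > 0$, and since $K \subset U$ and $\mu \geq \mu_K$ by Fact \ref{fact4}, we obtain $\mu(U) \geq \mu_K(U) = \mu_K(K) > 0$.

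\textbf{$T$-invariance.} The heart of the argument is a local comparison: for every $K \in \mathcal K_\tau$, the push-forward $T_* \mu_K$ is dominated by $\mu_{T(K)}$ as Borel measures on $X$. Note that $T(K) \in \mathcal K_\tau$ by assumption (ii), and both measures are supported on $T(K)$---for $T_*\mu_K$ this follows from the trivial inclusion $T^{-1}(X \setminus T(K)) \subset X \setminus K$. For any $f \in \mathcal C(T(K), \tau)$ with $f \geq 0$, the composition $f \circ T$ restricts to a nonnegative $\tau$-continuous function on $K$, so the change-of-variables formula and the defining identity for $\mu_K$ (Fact \ref{fact2}) give
$$\int f \, d(T_* \mu_K) = \int (f \circ T) \, d\mu_K = \int_{\N} \mathbf 1_K(T^i x_0) \, f(T^{i+1} x_0) \, d\mathfrak m(i).$$
Applying the shift-invariance of $\mathfrak m$ to the bounded sequence $\phi(i) := \mathbf 1_K(T^{i-1} x_0) f(T^i x_0)$ (with the convention $T^0 = I$) and using the implication $T^{i-1} x_0 \in K \Rightarrow T^i x_0 \in T(K)$, this integral equals
$$\int_{\N} \mathbf 1_K(T^{i-1} x_0) f(T^i x_0) \, d\mathfrak m(i) \leq \int_{\N} \mathbf 1_{T(K)}(T^i x_0) f(T^i x_0) \, d\mathfrak m(i) = \int f \, d\mu_{T(K)}.$$
Since $T(K)$ is $\tau$-metrizable by Fact \ref{fact1}, a routine argument (approximate indicators of $\tau$-open subsets of $T(K)$ from below by $\tau$-continuous functions, apply monotone convergence, and then pass to arbitrary Borel sets via outer regularity) upgrades this integral inequality to the Borel-set inequality $T_* \mu_K(A) \leq \mu_{T(K)}(A)$.

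Taking the supremum over $K$ and invoking Fact \ref{fact4} twice, for every Borel $A \subset X$:
$$T_*\mu(A) = \mu(T^{-1}(A)) = \sup_{K \in \mathcal K_\tau} T_*\mu_K(A) \leq \sup_{K \in \mathcal K_\tau} \mu_{T(K)}(A) \leq \mu(A).$$
Since $T_*\mu(X) = \mu(X)$, this inequality must be an equality on every Borel set, so $\mu$ is $T$-invariant. The main obstacle, which makes the argument delicate, is the passage from an integral inequality against $\tau$-continuous test functions to a Borel-set inequality of measures on each compact $T(K)$: the shift-invariance of $\mathfrak m$ is only a finitely additive statement, so the measure-theoretic approximation step cannot be bypassed, and this is precisely where the $\tau$-metrizability of $\tau$-compact sets provided by Fact \ref{fact1} becomes indispensable.
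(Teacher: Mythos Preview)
Your proof is correct and follows essentially the same approach as the paper: both arguments use the shift-invariance of $\mathfrak m$ together with the pointwise inequality $\mathbf 1_K(T^i x_0)\leq \mathbf 1_{T(K)}(T^{i+1}x_0)$ to compare $T_*\mu_K$ with $\mu_{T(K)}$, then pass to Borel sets via $\tau$-metrizability and take suprema. The only organizational difference is that the paper fixes a $\tau$-compact set $E$ and approximates $\mathbf 1_{E\cap T(K)}$ from above by continuous functions to get $\mu_K(T^{-1}(E))\leq \mu_{T(K)}(E)$ directly, whereas you first establish the inequality $\int f\,d(T_*\mu_K)\leq\int f\,d\mu_{T(K)}$ for all nonnegative $f\in\mathcal C(T(K),\tau)$ and then invoke regularity; these are equivalent routes to the same conclusion.
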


\begin{proof} The fact that $\mu$ has full support is obvious by Fact \ref{fact2}: if $U$ is a nonempty $\tau_X\,$-$\,$open subset of $X$, then $U$ contains a $\tau\,$-$\,$compact set $K$ 
with nonempty $\tau_X\,$-$\,$interior and hence $\mu(U)\geq \mu(K)\geq \mu_K(K)>0$.
\par\smallskip
The main point is to show that $\mu$ is $T$-$\,$invariant. For this, it is in fact enough to show that $\mu(T^{-1}(E))
\le\mu(E)$ for every $\tau\,$-$\,$compact set $E\subset X$. Suppose indeed that it is the case. Then, by the regularity of the Borel measures $\mu$ and $\mu\circ T^{-1}$, we have $\mu(T^{-1}(A))\le\mu(A)$ for every Borel set $A\subset X$. Applying this inequality to $X\setminus A$ now yields that $\mu(X\setminus T^{-1}(A))
\le\mu(X\setminus A)$, and since $\mu (X)<\infty$ this means that $\mu(T^{-1}(A))\ge\mu(A)$. So we get $\mu(T^{-1}(A))=\mu(A)$ for every Borel set $A\subset X$, which proves that $\mu $ is $T$-invariant.
\par\smallskip
Let us fix $E\in\mathcal K_\tau$. 
We are first going to prove that 
\begin{equation*}\label{formule} \mu_K(T^{-1}(E))\leq \mu_{T(K)} (E)\quad \textrm{ for every }K\in\mathcal K_\tau.
\end{equation*}
 Note that this makes sense because, as $T$ is continuous with respect to the topology $\tau$, $T(K)$ belongs to $\mathcal K_\tau$.
Since $E\cap T(K)$ is closed in $(T(K),\tau)$, there exists a decreasing sequence 
$(f_j)_{j\ge 1}$ of functions in $\mathcal C(T(K),\tau)$ such that $f_j$ converges to $\mathbf 1_{E\cap T(K)}$ pointwise on $T(K)$. Then we have
\begin{eqnarray*}
\mu_{T(K)}(E)&=&\int_{T(K)} \mathbf 1_{E\cap T(K)}\, d\mu_{T(K)}
=\lim_{j\to\infty} \int_{T(K)} f_j\, d\mu_{T(K)}\\
&=&\lim_{j\to\infty} \int_\NN (\mathbf 1_{T(K)} f_j)(T^ix_0)\, d\frak m(i)
=\lim_{j\to\infty} \int_\NN (\mathbf 1_{T(K)} f_j)(T^{i+1}x_0)\, d\frak m(i)\, ,
\end{eqnarray*}
the last equality being true because $\mathfrak m$ is an invariant mean on $\ell^{\infty}(\N)$.
\smallskip
Now observe that since $f_j$ is nonnegative on $T(K)$, we have
$$\bigl(\mathbf 1_{T(K)}f_j\bigr)(T^{i+1}x_0)\geq \bigl( \mathbf 1_K \cdot (f_j\circ T)\bigr) (T^ix_0)\, .$$
So we get, using the fact that the function $f_j\circ T$ is $\tau\,$-$\,$continuous on $K$,
\begin{eqnarray*}
 \int_\NN (\mathbf 1_{T(K)} f_j)(T^{i+1}x_0)\, d\frak m(i)&\geq &\int_\NN \bigl(\mathbf 1_{K}\cdot \left(f_j\circ T\right)\bigr)(T^{i}x_0)\, d\frak m(i)
 =\int_{K} (f_j\circ T)\, d\mu_{K}\, .
\end{eqnarray*}

Therefore, we obtain 
\begin{eqnarray*} \mu_{T(K)}(E)&\geq &\lim_{j\to\infty} \int_{K} \left(f_j\circ T\right)\, d\mu_{K}
=\int_{K} \bigl(\mathbf 1_{E\cap T(K)}\circ T\bigr)\, d\mu_{K}
.
\end{eqnarray*}

Since $1_{E\cap T(K)}\circ T\ge  \mathbf 1_{T^{-1}(E)\cap K}$, this yields that 
\begin{eqnarray*}\mu_{T(K)}(E)
&\geq&\int_{K} \mathbf 1_{T^{-1}(E)\cap K}\, d\mu_{K}
=\mu_{K} (T^{-1}(E))\, ,
\end{eqnarray*}
which proves our claim.
Since $T(K)\in\mathcal K_\tau$, it follows  that $\mu_K(T^{-1}(E))\leq \mu(E)$ for every $K\in\mathcal K_\tau$, and hence that $\mu(T^{-1}(E))\leq \mu(E)$. This concludes the proof of Fact \ref{fact5}.
\qed
\par\medskip
If we normalize the \mea\ $\mu$ by setting $m=\frac1{\mu(X)}\, \mu$, we have thus proved that $m$ is a $T$-$\,$\inv\ Borel \prob\ \mea\ on $X$ with full support, and this completes the proof of Theorem \ref{th2}.
\end{proof}

\begin{remark} What we have in fact proved is the following result. \emph{Let $(X,T)$ be a Polish dynamical system and let $\tau$ be any topology on $X$ which is coarser that the original topology but with the same Borel sets, whose compact sets are metrizable, and such that $T$ is continuous with respect to $\tau$. Then, for any invariant mean $\mathfrak m$ on $\ell^\infty(\NN)$ and any point $x_0\in X$, one can find a $T\,$-$\,$invariant finite Borel measure $\mu$ on $X$ such that $\mu(K)\geq \mathfrak m(\mathcal N_T(x_0,K))$ for every $\tau\,$-$\,$compact set $K\subset X$.} 
\end{remark}

\begin{remark}\label{rem1} The measure $\mu$ constructed above may not be a probability measure, so we do have to normalize it. Indeed, we have 
$$\mu(X)=\sup_{K\in\mathcal K_\tau}\mu_K(K)=\sup_{K\in\mathcal K_\tau} \int_\NN \mathbf 1_K (T^ix_0)\, d\mathfrak m(i)$$
and this may very well be smaller than $1$ if $\sup\limits_{K\in\mathcal K_\tau} \underline{\rm dens}\; \mathcal{N}_{T}(x_{0},K)<1\, .$
\end{remark}

\subsection{There are many invariant measures with full support} It is worth mentioning that as soon as there exists at least one $T$-$\,$invariant measure with full support, the set of all such measures is in fact a large subset of $\mathcal P_T(X)$ in the Baire Category sense. This (well-known) observation will be needed below.

\begin{lemma}\label{Gdelta}
Let $(X,T)$ be a Polish dynamical system, and denote by $\mathcal P_{T,*}(X)$ the set of all $T$-$\,$invariant Borel probability measures on $X$ with full support. If 
$\mathcal P_{T,*}(X)\neq\emptyset$, then $\mathcal P_{T,*}(X)$ is a dense $G_\delta$ subset of $\mathcal P_T(X)$.
\end{lemma}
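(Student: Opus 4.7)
The plan is to check separately that $\mathcal{P}_{T,*}(X)$ is a $G_\delta$ subset of $\mathcal{P}_T(X)$ and that it is dense in $\mathcal{P}_T(X)$ whenever it is nonempty. Both are short observations; the main thing is to use the right formulation of full support and to exploit convex combinations for the density.

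For the $G_\delta$ part, I would fix a countable basis $(V_j)_{j \geq 1}$ of nonempty open sets for the Polish space $X$, and note that $m$ has full support if and only if $m(V_j) > 0$ for every $j \geq 1$. The Portmanteau characterization of the Prokhorov topology (already implicit in the definition of the topology on $\mathcal{P}(X)$ recalled in the introduction) says that for each open $V \subset X$ the map $m \mapsto m(V)$ is lower semi-continuous on $\mathcal{P}(X)$, so each set $U_j := \{m \in \mathcal{P}(X) : m(V_j) > 0\} = \bigcup_{n \geq 1} \{m : m(V_j) > 1/n\}$ is open in $\mathcal{P}(X)$. Intersecting with $\mathcal{P}_T(X)$, which is itself closed in $\mathcal{P}(X)$, we conclude that
\[ \mathcal{P}_{T,*}(X) = \mathcal{P}_T(X) \cap \bigcap_{j \geq 1} U_j \]
is a $G_\delta$ subset of $\mathcal{P}_T(X)$.

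For the density, I would use the hypothesis that $\mathcal{P}_{T,*}(X)$ is nonempty to fix some $m_* \in \mathcal{P}_{T,*}(X)$. Given an arbitrary $m \in \mathcal{P}_T(X)$, I would then form the convex combinations
\[ m_\varepsilon := (1-\varepsilon)\, m + \varepsilon\, m_*, \qquad \varepsilon \in (0,1). \]
As a convex combination of $T$-invariant probability measures, each $m_\varepsilon$ lies in $\mathcal{P}_T(X)$; it has full support because $m_\varepsilon(V) \geq \varepsilon\, m_*(V) > 0$ for every nonempty open $V \subset X$; and $\int f\, dm_\varepsilon \to \int f\, dm$ for every $f \in \mathcal{C}_b(X)$ as $\varepsilon \to 0$, so $m_\varepsilon \to m$ in $\mathcal{P}_T(X)$. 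Hence $m$ belongs to the closure of $\mathcal{P}_{T,*}(X)$, which proves density.

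There is really no serious obstacle here: the only subtle point is the lower semi-continuity of $m \mapsto m(V)$ for open $V$, which is standard and is the only ingredient needed to turn ``$m$ has full support'' into a countable conjunction of strict open conditions. The convex-combination trick then handles the density step by manufacturing full-support invariant measures arbitrarily close to any prescribed element of $\mathcal{P}_T(X)$.
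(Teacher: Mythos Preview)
Your proof is correct and essentially identical to the paper's own argument: both use a countable basis $(V_j)$ together with lower semi-continuity of $m\mapsto m(V_j)$ for the $G_\delta$ part, and the convex combinations $(1-\varepsilon)m+\varepsilon m_*$ with a fixed full-support measure $m_*$ for density.
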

\begin{proof} Let $(V_j)_{j\geq 1}$ be a countable basis of (nonempty) open sets for $X$. Then a measure $m\in\mathcal P(X)$ has full support if and only if $m(V_j)>0$ for all 
$j\ge 1$; and since the maps $\mu\mapsto \mu(V_j)$ are lower semi-continuous on 
$\mathcal P(X)$, it follows that $\mathcal P_{T,*}(X)$ is $G_\delta$ in $\mathcal P_T(X)$. Now, assume that $\mathcal P_{T,*}(X)\neq\emptyset$, and let us choose any element
$\mu_0$ of $ \mathcal P_{T,*}(X)$. If $m\in\mathcal P_T(X)$ is arbitrary, then the measure $m_\varepsilon:= (1-\varepsilon)\, m+ \varepsilon\,\mu_0$ is $T$-$\,$invariant for any 
$\varepsilon\in (0,1)$, and it has full support because $\mu_0$ has full support; in other words, $m_\varepsilon\in\mathcal P_{T,*}(X)$. Since $m_\varepsilon\to m$ as 
$\varepsilon\to 0$, this shows that $\mathcal P_{T,*}(X)$ is dense in $\mathcal P_T(X)$.
\end{proof}

\subsection{Ergodic measures with full support}
In view of Questions \ref{Q6} and \ref{Q10}, it is of course natural to wonder whether the above construction can give rise to an \emph{\erg\ } \mea\ with full support. There is no reason at all that the \mea\ $\mu$ constructed in the proof of Theorem \ref{th2} should be \erg. Still, if we were able to prove directly that $\mu(HC(T))=1$, then we would get for free an 
\erg\ \prob\ \mea\ with full support for $T$: indeed, as already explained in the introduction, it would follow directly from the \erg\ decomposition theorem that there exists an 
\erg\ \prob\ \mea\ $\nu $ such that $\nu(HC(T))>0$ (in fact $\nu(HC(T))=1$ by ergodicity), and such a \mea\ $\nu $ would necessarily have full support.
However, we see no reason either that the \mea\ $\mu $ constructed in the proof of Theorem \ref{th2} should satisfy $\mu(HC(T))=1$. The next proposition clarifies this a little bit.

\begin{proposition}\label{rem2}  Let $(X,T)$ be a Polish dynamical system. Assume that $X$ is endowed with a Hausdorff topology $\tau$ coarser than the original topology such that every point of $X$ has a neighbourhood basis (with respect to the original topology) consisting of $\tau\,$-$\,$compact sets, and that $T$ is a \emph{homeomorphism} of $(X,\tau)$. Then, the following assertions are equivalent.
\begin{enumerate}
\item[\rm (1)] $T$ admits an ergodic measure with full support;
\item[\rm (2)] there exists a point $x_0\in X$ such that 
$$\lim_{N\rightarrow\infty}\;\underline{\hbox{\rm dens}}\;\bigcup_{r=0}^{N} (\mathcal{N}_T(x_{0},V)-r)=1\quad \hbox{for\;every\;open\;set\;}V\neq \emptyset \,.$$

\end{enumerate}
Moreover, if a point $x_0\in X$ satisfies {\rm (2)}, then the measure $\mu$ constructed in the proof of Theorem \ref{th2} starting from $x_0$ satisfies $\mu(HC(T))=1$.

\end{proposition}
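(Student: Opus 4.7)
My plan is to prove the equivalence (1)$\Leftrightarrow$(2) by proving (1)$\Rightarrow$(2) directly and obtaining (2)$\Rightarrow$(1) as a consequence of the quantitative ``moreover'' statement $\mu(HC(T))=1$ combined with the ergodic decomposition theorem.

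For (1)$\Rightarrow$(2), I would start from an ergodic $T$-invariant probability measure $m$ with full support. A simultaneous application of Birkhoff's ergodic theorem to a countable dense subfamily of $\mathcal{C}_b(X)$ produces a point $x_0$ that is $m$-generic, and the portmanteau theorem for open sets then yields $\underline{\operatorname{dens}}(\mathcal{N}_T(x_0,O))\ge m(O)$ for every open $O\subset X$. Apply this to $O=U_N^j:=\bigcup_{r=0}^{N}T^{-r}(V_j)$. The set $W_j:=\bigcup_N U_N^j$ is backward $T$-invariant, so the standard ergodicity argument (using $T$-invariance of $m$ together with the fact that $m(W_j)>0$ by full support) forces $m(W_j)=1$; hence $m(U_N^j)\uparrow 1$, and (2) holds with this $x_0$.

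For the ``moreover'' statement, assume $x_0$ satisfies (2) and construct $\mu$ as in the proof of Theorem \ref{th2}. Writing $HC(T)=\bigcap_j W_j$, the claim $\mu(HC(T))=1$ amounts to $\mu(X\setminus W_j)=0$ for every $j$. The construction of $\mu$ supplies two complementary estimates: the lower bound $\mu(K)\ge\mathfrak m(\mathcal{N}_T(x_0,K))$ for $\tau$-compact $K$ (Fact~\ref{fact2}), and the upper bound
\[
\mu(U)\le \mathfrak m(\mathcal{N}_T(x_0,U))\quad\text{for every }\tau\text{-open }U\subset X,
\]
obtained by computing each $\mu_K(U\cap K)$ via the Riesz formula applied to $\tau$-continuous functions $\tau$-compactly supported in $U\cap K$ and then taking the supremum over $K$. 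For $\tau$-compact $L\subset U_N^j$ the set $X\setminus L$ is $\tau$-open, so the upper bound gives $\mu(X\setminus U_N^j)\le 1-\mathfrak m(\mathcal{N}_T(x_0,L))$, and since (2) combined with $\mathfrak m\ge\underline{\operatorname{dens}}$ yields $\mathfrak m(\mathcal{N}_T(x_0,U_N^j))\to 1$, the proof reduces to an $\mathfrak m$-inner-regularity statement: the supremum of $\mathfrak m(\mathcal{N}_T(x_0,L))$ over $\tau$-compact $L\subset U_N^j$ approaches $\mathfrak m(\mathcal{N}_T(x_0,U_N^j))$ as the $L$'s exhaust $U_N^j$.

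This inner-regularity step is the main technical obstacle, because $\mathfrak m$ is only finitely additive and a countable $\tau$-compact exhaustion $L_n\uparrow U_N^j$ need not satisfy $\mathfrak m(\mathcal{N}_T(x_0,L_n))\to\mathfrak m(\mathcal{N}_T(x_0,U_N^j))$. The remedy relies on two assumptions of the proposition: the existence of $\tau$-compact $\tau_X$-neighborhood bases at every point, which lets one trap each orbit point $T^i x_0$ with $i\in\mathcal{N}_T(x_0,U_N^j)$ inside a $\tau$-compact $\tau_X$-neighborhood still contained in $U_N^j$; and the fact that $T$ is a $\tau$-homeomorphism, which ensures the sets $T^{-r}(V_j)$ making up $U_N^j$ decompose nicely into $\tau$-compact pieces. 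One then builds $L$ as a \emph{finite} union of such neighborhoods, chosen so that the translation invariance of $\mathfrak m$ suffices to make $\mathfrak m(\mathcal{N}_T(x_0,L))$ arbitrarily close to $\mathfrak m(\mathcal{N}_T(x_0,U_N^j))$. Once $\mu(X\setminus W_j)=0$ is secured for every $j$, the ergodic decomposition $\mu=\int\mu^s\,d\mathbf p(s)$ yields $\mu^s(HC(T))=1$ for $\mathbf p$-almost every $s$; any such ergodic $\mu^s$ is forced to have full support (else $\mu^s(V_j)=0$ for some basic open $V_j$, whence $\mu^s(W_j)=0$ by $T$-invariance, contradicting $\mu^s(HC(T))=1$), and (2)$\Rightarrow$(1) follows.
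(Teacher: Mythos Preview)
Your argument for $(1)\Rightarrow(2)$ is fine and essentially matches the paper's. The issue is in the ``moreover'' part, where you have created a genuine obstacle for yourself and then not cleared it.

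You correctly identify that $\mathfrak m$-inner-regularity (approximating $\mathfrak m(\mathcal N_T(x_0,U_N^j))$ from inside by $\mathfrak m(\mathcal N_T(x_0,L))$ for $\tau$-compact $L$) is problematic because $\mathfrak m$ is only finitely additive. Your proposed remedy --- trapping orbit points in $\tau$-compact neighbourhoods and invoking ``translation invariance of $\mathfrak m$'' --- is not a proof: a finite union of such neighbourhoods misses all but finitely many orbit points, and translation invariance of $\mathfrak m$ on $\NN$ does not let you recover the lost mass. As written, this step has a real gap.

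The point is that you never needed inner-regularity in the first place. The paper's argument avoids it completely: pick a single $\tau$-compact $K\subset V_j$ with nonempty $\tau_X$-interior $V'$, and set $L_N:=\bigcup_{r=0}^N T^{-r}(K)$. Since $T$ is a $\tau$-homeomorphism, $L_N\in\mathcal K_\tau$, so the lower bound from Fact~\ref{fact2} applies directly:
\[
\mu(L_N)\ \ge\ \mu_{L_N}(L_N)\ \ge\ \underline{\rm dens}\,\mathcal N_T(x_0,L_N)\ \ge\ \underline{\rm dens}\,\bigcup_{r=0}^N\bigl(\mathcal N_T(x_0,V')-r\bigr),
\]
and the right side tends to $1$ by (2) applied to $V'$. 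Hence $\mu\bigl(\bigcup_{r\ge 0}T^{-r}(V_j)\bigr)=1$ for every $j$, giving $\mu(HC(T))=1$. Your upper bound $\mu(U)\le\mathfrak m(\mathcal N_T(x_0,U))$ for $\tau$-open $U$, while correct, is not needed: the lower bound together with $\mu(X)\le 1$ already does the job. The $\tau$-homeomorphism hypothesis is used exactly once, to guarantee that $L_N$ is $\tau$-compact; once you see this, the whole ``moreover'' is a three-line computation rather than an approximation problem.
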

\begin{proof} Assume that $T$ admits an ergodic measure $\nu$ with full support. By the pointwise ergodic theorem and since the space $X$ is second-countable, 
$\nu\,$-$\,$almost every point $x_0\in X$ satisfies 
$$\liminf_{n\to\infty}\frac 1n \sum_{i=1}^n \mathbf 1_{U} (T^ix_0)\geq \nu (U)\quad {\rm for\;every\;open\;set\;}U\neq \emptyset\, . $$ 
Let us fix such a point $x_0$. Since the sum in the left-hand side is equal to the cardinality of the set $\mathcal N_T(x_0,U)\cap [1,n]$, we have $\underline{\rm dens}\;\mathcal N_T(x_0,U)\geq \nu(U)$ for every open set $U\neq \emptyset$. Applying this with $U=\bigcup_{r=0}^N T^{-r}(V)$ for a given open set $V\neq\emptyset$ and $N\in\NN$, and observing that 
$\mathcal N_T(x_0, T^{-r}(V))=\mathcal N_T(x_0,V)-r$ for every $r\in\{ 0,\dots ,N\}$, it follows that 
$$\underline{\hbox{\rm dens}}\;\bigcup_{r=0}^{N} (\mathcal{N}_T(x_{0},V)-r)\geq \nu\left(\bigcup_{r=0}^N T^{-r}(V)\right) 
\quad {\rm for\;every\;open\;set\;}V\neq \emptyset\, .$$
Since $\nu\left(\bigcup_{r=0}^\infty T^{-r}(V)\right)=1$ by ergodicity, this shows that (2) is satisfied.

\smallskip
To conclude the proof, it is now enough to show that if a point $x_0\in X$ satisfies (2), then the measure $\mu$ constructed in the proof of Theorem \ref{th2} starting from $x_0$ satisfies $\mu(HC(T))=1$. 

\smallskip
With the notation of the proof of Theorem \ref{th2}, we have $\bigcup_{r=0}^N T^{-r}(K)\in\mathcal K_\tau$ for any $K\in \mathcal{K}_{\tau}$ and every integer 
$N\ge 0$, because $T$ is assumed to be a homeomorphism of $(X,\tau)$. So we may write
\begin{eqnarray*}
\mu\left(\bigcup_{r=0}^{N}T^{-r}(K)\right)&\ge&
\mu_{\bigcup_{r=0}^{N}T^{-r}(K)}\left(\bigcup_{r=0}^{N}T^{-r}(K)\right)\\
&\ge&
\liminf_{n\to\infty} \frac1n\sum_{i=1}^n \mathbf 1_{\bigcup_{r=0}^{N}T^{-r}(K)}(T^{i}x_{0})\\
&=&\underline{\textrm{dens}}\left\{i\in\N \textrm{ ; } T^{i}x_{0}\in
\bigcup_{r=0}^{N}T^{-r}(K)\right\}\\
&=&\underline{\textrm{dens}}\,\bigcup_{r=0}^{N} (\mathcal{N}_T(x_{0},K)-r)\, .
\end{eqnarray*}

By (2), it follows that $\mu\left(\bigcup_{r=0}^\infty T^{-r}(K)\right)=1$ whenever $K\in\mathcal K_\tau$ has nonempty $\tau_X$-$\,$interior; and since every nonempty open set contains such a set $K$, this means that $\mu\left(\bigcup_{r=0}^\infty T^{-r}(V)\right)=1$ for every open set $V\neq\emptyset$. Since $HC(T)=\bigcap_{j\geq 1} \bigcup_{r=0}^\infty T^{-r}(V_j)$, where $(V_j)_{j\geq 1}$ is a countable basis of (nonempty) open sets for $X$, we conclude that $\mu(HC(T))=1$.
\end{proof}

\begin{remark}
Assuming only the existence of one frequently hypercyclic point $x_0$, we see no a priori reason for condition (2) above to hold true. For example, it is not difficult to construct subsets $D$ of $\N$ with $\underline{\textrm{dens}}(D)>0$ such that  $$\sup_{N\ge 0}\;\underline{\textrm{dens}}\,\bigcup_{r=0}^{N} (D-r)<1.$$
On the other hand, if $D$ is assumed to have  \emph{positive Banach density} (i.e. if its upper Banach density and lower Banach density are equal and positive), then it is a result of Hindman \cite{H} that 
$$\lim_{N\to\infty}\,\underline{\rm dens}\;\bigcup_{r=0}^{N} (D-r)=1.$$
So if assumption (i) in Theorem \ref{th2} is replaced with a much stronger one, namely
\begin{itemize}
\item[\rm (i')] \emph{there exists a point $x_{0}$ in $X$ such that for every non empty $\tau_{X}$-open set $V\subset X$, the set $\mathcal{N}_T(x_{0},V)$ has positive Banach density,}
\end{itemize}
then $T$ admits an \erg\ \prob\ \mea\ with full support by Proposition \ref{rem2}. But assumption (i') does not seem to be realistic at all, because if we stay at the abstract 
measure-theoretic level, the a priori existence of an \erg\ \mea\  with full support does not imply it (see \mbox{e.g.} \cite{JR}).
\end{remark}

\subsection{Frequent recurrence}

Recall that a Polish dynamical system $(X,T)$ is said to be \emph{recurrent} if, for every nonempty open set $V\subset X$, one can find $n\in\NN$ such that $T^n(V)\cap V\neq \emptyset$. Recurrence is of course a central theme in both topological dynamics and ergodic theory; see \mbox{e.g} Furstenberg's book \cite{Fu}. In the linear setting, it has been more particularly studied recently in \cite{CMP}.

\smallskip
Let us say that a Polish dynamical system $(X,T)$ is \emph{frequently recurrent} if, for every nonempty open set $V\subset X$, one can find a point $x_V\in V$ such that 
$\mathcal N_T(x_V, V)$ has positive lower density. Of course, this does not imply frequent hypercyclicity of the system: for example, $(X, id_X)$ is frequently recurrent. However, this notion allows us to characterize, among those Polish dynamical systems considered in Theorem \ref{th2}, the ones that admit an invariant measure with full support.

\begin{proposition}\label{charac} Let $(X,T)$ be a Polish dynamical system. Assume that $X$ is endowed with a Hausdorff topology $\tau$ coarser than the original topology such that every point of $X$ has a neighbourhood basis (with respect to the original topology) consisting of $\tau\,$-$\,$compact sets, and that $T$ is continuous with respect to the topology $\tau$. Then the following assertions are equivalent:

\begin{itemize}
\item[\rm (1)] $T$ admits an invariant measure with full support;
\item[\rm (2)] for each open set $V\neq\emptyset$, there is an ergodic measure $\mu_V$ for $T$ such that $\mu_V(V)>0$;
\item[\rm (3)] $T$ is frequently recurrent.
\end{itemize}
\end{proposition}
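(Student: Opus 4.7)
The plan is to establish the cycle $(1)\Rightarrow(2)\Rightarrow(3)\Rightarrow(1)$, with the last implication being the substantive one.

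For $(1)\Rightarrow(2)$, I would take any invariant probability measure $m$ with full support and apply the ergodic decomposition theorem. If $V\neq\emptyset$ is open, then $m(V)>0$, and writing $m=\int_S\mu^s\,d\mathbf p(s)$ with the $\mu^s$ ergodic we get $\mathbf p(\{s:\mu^s(V)>0\})>0$. Any such $\mu^s$ serves as the required $\mu_V$. For $(2)\Rightarrow(3)$, given $V\neq\emptyset$ and an ergodic $\mu_V$ with $\mu_V(V)>0$, Birkhoff's theorem yields $\mu_V$-a.e.\ $x$ such that $\frac 1n\sum_{i=1}^n \mathbf 1_V(T^ix)\to \mu_V(V)>0$; since $\mu_V(V)>0$, at least one such $x$ lies in $V$, witnessing frequent recurrence at $V$.

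The heart of the proof is $(3)\Rightarrow(1)$, which I would handle by adapting the construction of Theorem~\ref{th2}. Fix a countable basis $(V_j)_{j\ge 1}$ of nonempty open sets for the original topology. For each $j$, use the standing hypothesis on $\tau$ to choose a $\tau$-compact set $K_j\subset V_j$ with nonempty $\tau_X$-interior (pick any point of $V_j$ and a $\tau$-compact neighbourhood inside $V_j$). Frequent recurrence applied to the open set $\mathrm{int}(K_j)$ produces a point $x_j\in\mathrm{int}(K_j)$ such that $\mathcal N_T(x_j,\mathrm{int}(K_j))$ has positive lower density. Now run the entire construction of the proof of Theorem~\ref{th2} with $x_j$ in place of the frequently hypercyclic $x_0$. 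The only place where frequent hypercyclicity of $x_0$ intervenes is in the last part of Fact~\ref{fact2}, to ensure $\mu_K(K)>0$ for \emph{every} $\tau$-compact $K$ with nonempty $\tau_X$-interior; for the present purpose it suffices to know that $\mu_{K_j}(K_j)>0$, which is immediate from
$$\mu_{K_j}(K_j)\;\ge\;\int_{\N}\mathbf 1_{\mathrm{int}(K_j)}(T^ix_j)\,d\mathfrak m(i)\;\ge\;\underline{\mathrm{dens}}\,\mathcal N_T(x_j,\mathrm{int}(K_j))\;>\;0.$$
The rest of the argument (Facts~\ref{fact3}--\ref{fact5}, minus the full-support statement) goes through verbatim and produces a nonzero $T$-invariant finite Borel measure $\nu_j$ on $X$ with $\nu_j(V_j)\ge\nu_j(K_j)\ge\mu_{K_j}(K_j)>0$.

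Finally, I would assemble $m:=\sum_{j\ge 1} 2^{-j}\,\nu_j/\nu_j(X)$. This is a $T$-invariant probability measure and satisfies $m(V_j)\ge 2^{-j}\nu_j(V_j)/\nu_j(X)>0$ for every $j$, so it has full support, completing $(3)\Rightarrow(1)$. The main obstacle is precisely this last implication: the Theorem~\ref{th2} construction uses a single frequently hypercyclic source point, and the technical adjustment is to convince oneself that the same machinery works source-by-source, provided one restricts attention to a chosen $\tau$-compact target around each source, and that the resulting countable family of partial invariant measures can be averaged into one measure with full support.
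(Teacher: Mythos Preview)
Your proof is correct and follows essentially the same approach as the paper: the implications $(1)\Rightarrow(2)$ and $(2)\Rightarrow(3)$ are handled identically via ergodic decomposition and Birkhoff's theorem, and for $(3)\Rightarrow(1)$ both you and the paper rerun the construction of Theorem~\ref{th2} from a recurrence witness inside each basic open set (or inside the interior of a $\tau$-compact subset thereof) and then average the resulting invariant measures. Your write-up is in fact a bit more explicit than the paper's, which simply says ``carry out the construction starting from the point $x_0=x_V$'' and clarifies the choice of $\tau$-compact target only in the subsequent Remark.
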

\begin{proof} The implication $(1)\implies (2)$ follows from the ergodic decomposition theorem, and $(2)\implies (3)$ is a direct consequence of the pointwise ergodic theorem. Finally, assume that $T$ is frequently recurrent. Then, the proof of Theorem \ref{th2} shows that for each nonempty open set $V\subset X$, one can find a $T$-$\,$invariant measure $m_V$ such that $m_V(V)>0$: just carry out the construction starting from the point $x_0=x_V$ given by the frequent recurrence assumption. If we now choose a countable basis of (nonempty) open sets $(V_j)_{j\geq 1}$ for $X$, then $m:=\sum_1^\infty 2^{-j} m_{V_j}$ is an invariant measure with full support.
\end{proof}

\begin{remark} An examination of the proof reveals that (3) can be replaced by a formally weaker assumption, namely that for each open set $V\neq\emptyset$, one can find a point $x_V\in X$ such that $\mathcal N_T(x_V,V)$ has positive \emph{upper} density. Indeed, one just has to note that given any $\phi\in\ell^\infty(\NN)$ and $x_0\in X$, the construction used in the proof of Theorem \ref{th2} can be made with an invariant mean $\mathfrak m$ satisfying 
$\int_\NN \mathbf \phi(i)\, d\mathfrak m(i)\geq \limsup_{n\to\infty} \,\frac 1n\sum_{i=1}^n \phi(i)$ for this particular $\phi\in\ell^\infty(\NN)$. If we start with a nonempty open set $V$ and take $\phi(i):=\mathbf 1_K(T^ix_{V'})$, where $K$ is a $\tau\,$-$\,$compact set contained in $V$ with nonempty interior $V'$, this produces an invariant measure $m_V$ such that $m(V)>0$; so one can repeat the proof of the implication (3)$\implies$(1).
\end{remark}
\subsection{Continuous invariant \mea s with full support}

One might wonder under which conditions a Polish dynamical system $(X,T)$ 
satisfying the assumptions of Theorem \ref{th2} admits a \emph{continuous} \inv\ \prob\ \mea\ $m$ with full support (i.e. an invariant \mea\ $m$ such that $m(\{a\})=0$ for every $a\in X$). It turns out that it is quite often possible to deduce the existence of such a \mea\ directly from the existence of an \inv\ \mea\ with full support. In particular, we will see in Section \ref{sec3} that it is always the case for linear dynamical systems. We first observe 

\begin{fact}\label{obvious} Let $T:X\to X$ be a continuous self-map of a Hausdorff topological space $X$, and let $m$ be a $T$-$\,$invariant Borel \prob\ measure on $X$. If 
$a\in X$ is such that $m(\{ a\})>0$, then $a$ is a periodic point of $T$. In this case, $m(\{T^{k} a\})=m(\{ a\})$ for every $k\geq 0$.
\end{fact}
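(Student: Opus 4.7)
The plan is to prove the fact in two stages: first establish that the positivity of $m(\{a\})$ forces $a$ to be a periodic point, and then propagate mass equalities around its finite orbit using $T$-invariance.

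First I would record the soft preliminaries. Since $X$ is Hausdorff, every singleton is closed, hence Borel, and since $T$ is continuous, $T^{-1}(\{b\})$ is closed (hence Borel) for every $b\in X$. Setting $\alpha:=m(\{a\})>0$, the $T$-invariance of $m$ yields $m(T^{-k}(\{a\}))=\alpha$ for every $k\geq 0$.

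Next I would prove periodicity by a pigeonhole argument in the probability space $(X,m)$. If the preimages $T^{-k}(\{a\})$, $k\in\N$, were pairwise disjoint, their total $m$-mass would be $\sum_k\alpha=\infty$, contradicting $m(X)=1$. Hence there exist integers $0\leq j<k$ and a point $x\in X$ with $T^jx=a=T^kx$, so $T^{k-j}(a)=T^{k-j}(T^jx)=T^kx=a$. Thus $a$ is periodic; denote its minimal period by $N\geq 1$.

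Finally I would close the argument by propagating mass around the finite orbit $\{b_i:=T^ia\,:\,0\leq i\leq N-1\}$. Because $T b_i=b_{(i+1)\bmod N}$, one has $\{b_i\}\subset T^{-1}(\{b_{(i+1)\bmod N}\})$, and then $T$-invariance gives
\[
m(\{b_{(i+1)\bmod N}\})=m(T^{-1}(\{b_{(i+1)\bmod N}\}))\geq m(\{b_i\}).
\]
Iterating once around the $N$-cycle returns to $b_0$, so the chain of inequalities must collapse to equalities throughout. Writing an arbitrary $k\geq 0$ as $k\equiv k\bmod N$ modulo $N$, this yields $m(\{T^ka\})=m(\{a\})$ for every $k\geq 0$.

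There is no genuine obstacle: the argument is essentially formal. The only care needed is to ensure that singletons are Borel (this is where Hausdorffness enters) so that the invariance relation $m(A)=m(T^{-1}(A))$ can be applied to them, and to pigeonhole on \emph{preimages} rather than forward images, as the forward orbit of $a$ need not consist of disjoint sets and carries no a priori positive $m$-mass.
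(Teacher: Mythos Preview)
Your proof is correct and follows essentially the same approach as the paper: the paper also argues that non-periodicity of $a$ would make the sets $T^{-n}(\{a\})$ pairwise disjoint with equal positive mass (a contradiction), and then obtains the mass equalities by the same cyclic chain of inequalities $m(\{a\})\leq m(\{Ta\})\leq\cdots\leq m(\{T^{N-1}a\})\leq m(\{a\})$ coming from $\{T^{i}a\}\subset T^{-1}(\{T^{i+1}a\})$.
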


\begin{proof} If $a$ is not periodic for $T$ then the sets $T^{-n}(\{ a\})$, $n\geq 0$, are pairwise disjoint. Since all these sets have measure $m(\{ a\})$ and $m$ is a \prob\ \mea, it follows that $m(\{ a\})=0$, which is a contradiction. Suppose now that $a$ is a periodic point for $T$ with period $N\ge 1$. Since $T^{N-1}a$ belongs to 
$T^{-1}(\{a\})$, we have $m(\{T^{N-1}a\})\le m(\{ a\})$. In the same way, $m(\{T^{N-2}a\})\le m(\{ T^{N-1}a\})$, etc... so that we finally obtain that 
$m(\{a\})\le m(\{ Ta\})\le\ldots\le m(\{T^{N-1}a\})\le m(\{ a\})$. Hence, all the quantities $m(\{T^{k}a\})$, $0\le k<N$, are equal.
\end{proof}

\begin{remark}\label{bi-obvious} It follows in particular from Fact \ref{obvious} that the finitely supported $T$-$\,$invariant probability measures on $X$ are exactly the convex combinations of periodic measures for $T$.
\end{remark}

We are now ready to prove

\begin{proposition}\label{propcontinuous} Let $(X,T)$ be a Polish dynamical system, where the space $X$ is assumed to have no isolated point. Assume that there exists a finite set $F_0\subset X$ such that, for every $N\in\NN$, the set $\{ x\in X;\; x\not\in F_0\;{\rm and}\; T^Nx=x\}$ has no isolated point. Then, if $T$ admits an invariant  measure with full support, it also admits one which is continuous.
\end{proposition}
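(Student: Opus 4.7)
My plan is to start from the given $T$-invariant probability measure $m$ with full support and to modify it by first decomposing $m = m_c + m_a$ into its continuous and purely atomic parts, and then replacing the atomic part by a genuinely continuous $T$-invariant measure. By Fact \ref{obvious}, every atom of $m$ is periodic and $m$ gives equal mass to all points of any single periodic orbit, so $m_a = \sum_{O \in \mathcal{O}} w_O \,\nu_O$ where $\mathcal{O}$ is a countable collection of periodic orbits, $\nu_O$ is the uniform probability on $O$, and $w_O = m(O)$; an orbit-by-orbit check shows that $m_a$ (hence $m_c$) is $T$-invariant. Before proceeding, I would enlarge $F_0$ (harmlessly, keeping it finite) to contain every periodic orbit meeting it; the ``no isolated points'' hypothesis on $P_N \setminus F_0$ is preserved because removing finitely many points from a set without isolated points leaves a set without isolated points. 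After this reduction, every periodic orbit of $T$ lies either entirely in $F_0$ or entirely outside it, splitting $\mathcal{O}$ into a finite family $\mathcal{O}_2$ of orbits in $F_0$ and a (possibly countable) family $\mathcal{O}_1$ of orbits outside.

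For each $O \in \mathcal{O}_1$ of period $N$ and base point $a_0 \in O$, I would build a continuous $T$-invariant measure $\tilde\nu_O$ of total mass $w_O$ whose support contains $O$: pick an open neighborhood $U_0$ of $a_0$ so small that $U_0, TU_0, \ldots, T^{N-1}U_0$ are pairwise disjoint; construct a continuous probability measure $\beta_O$ supported on $U_0 \cap (P_N \setminus F_0)$ with $a_0 \in \supp(\beta_O)$ (available because this set is an uncountable Polish space accumulating at $a_0$); and set
$$\tilde\nu_O := \frac{w_O}{N}\sum_{i=0}^{N-1} T^i_* \beta_O.$$
The key points are that $\beta_O$ is carried by $P_N$ where $T^N$ is the identity, yielding $T$-invariance of $\tilde\nu_O$ by a one-line telescoping argument, and that $T^i|_{P_N}$ is a homeomorphism of $P_N$ (with inverse $T^{N-i}$), so each pushforward $T^i_*\beta_O$ is continuous and sits inside the disjoint set $T^i(U_0)$, making $\tilde\nu_O$ continuous; also $O \subseteq \supp(\tilde\nu_O)$ by construction. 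I then form $\tilde m_1 := m_c + \sum_{O \in \mathcal{O}_1} \tilde\nu_O$: a continuous $T$-invariant finite measure of total mass $1 - w_2$, where $w_2 = m_a\bigl(\bigcup_{\mathcal{O}_2} O\bigr) < 1$ since $\bigcup_{\mathcal{O}_2} O$ is finite and $m$ has full support on a space with no isolated points, so normalizing $\tilde m_1$ is legitimate.

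The step I expect to be the main obstacle is verifying that $\tilde m_1$ has full support, since the atoms of $m$ carried by the finitely many exceptional orbits $\mathcal{O}_2 \subseteq F_0$ are discarded without being directly replaced. The resolution is the following: if some $a \in F := \bigcup_{\mathcal{O}_2} O$ were outside $\supp(\tilde m_1)$, there would exist an open $U \ni a$ with $m_c(U) = 0$ and $\tilde\nu_O(U) = 0$ for every $O \in \mathcal{O}_1$; by full support of $m$ and the absence of isolated points in $X$, the open set $U \setminus F$ would have positive $m$-measure and hence contain an atom of $m$; after the enlargement of $F_0$ this atom must lie on some $O' \in \mathcal{O}_1$, so $O' \cap U \neq \emptyset$ and, since $O' \subseteq \supp(\tilde\nu_{O'})$, we obtain $\tilde\nu_{O'}(U) > 0$, a contradiction. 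Therefore $\supp(\tilde m_1) \supseteq \supp(m_c) \cup A$, whose closure equals $\supp(m) = X$, and normalizing $\tilde m_1$ produces the required continuous $T$-invariant probability measure with full support.
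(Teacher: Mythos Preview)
Your proof is correct and follows essentially the same strategy as the paper: decompose $m=m_c+m_a$, write $m_a$ as a sum of periodic measures, and replace each periodic orbit outside $F_0$ by a continuous $T$-invariant measure built from the perfect set of nearby $T^N$-fixed points, then check full support. The one organizational difference is that the paper first subtracts the (finitely many) periodic measures touching $F_0$ and observes that the remaining measure still has full support (since $X$ has no isolated points), thereby reducing to the case $D\cap F_0=\emptyset$; you instead enlarge $F_0$ to be orbit-saturated, discard those orbits, and recover the $F_0$-points in $\supp(\tilde m_1)$ by your separate contradiction argument. Both routes work; the paper's reduction is slightly shorter, while yours keeps exact mass bookkeeping ($\tilde m_1(X)=1-w_2$). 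Note that the disjointness of $U_0,TU_0,\dots,T^{N-1}U_0$ is not actually needed---continuity of $\tilde\nu_O$ follows directly from continuity of each $T^i_*\beta_O$, which you correctly derive from $T|_{P_N}$ being a homeomorphism.
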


\begin{proof} Let $\mu$ be an invariant \prob\ measure for $T$ with full support. Denote by $\mu_c$ and $\mu_d$ respectively the continuous  part and  the discrete part of $\mu$. If we set $D=\{ a\in X;\; \mu(\{ a\}) >0\}$, then $D$ is a countable set and $\mu_d$ is supported on $D$. By Fact \ref{obvious} above, each point $a\in D$ is 
$T$-$\,$periodic and we may write
$$\mu_d=\sum_{a\in D} c_a \nu_a,$$ where $c_a> 0$ and $\nu _{a}$ is a periodic \mea\ supported on the orbit of the periodic vector  $a$. 
The measure $\widetilde\mu:=\mu-\sum_{a\in D\cap F_0} c_a\nu_a$ is $T$-$\,$invariant, and it also has full support 
because $D\cap F_0$ is finite and $X$ has no isolated point. So by replacing $\mu$ with $\widetilde\mu$, we may in fact assume that $D\cap F_0=\emptyset$.

Being a sum of (multiples of) periodic measures,  the measure $\mu_d$ is $T$-$\,$invariant, so $\mu_c$ is $T$-$\,$invariant as well. Therefore, in order to prove Proposition \ref{propcontinuous} it is enough to show that for every point $a\in D$, one can find a continuous, $T$-$\,$invariant measure 
$m_a$ whose support contains $a$. Indeed, in this case the \mea\ 
$m$ defined by $m:=\mu_c +\sum_{a\in D} \varepsilon_a m_a$, where the $\varepsilon_a$ are small enough positive coefficients, will be a finite invariant measure for $T$ with full support.

So let us fix $a\in D$ and $N\geq 1$ such that $T^N a=a$. Since $a\not\in F_0$ and $F_0$ is closed in $X$, we can choose a decreasing countable basis $(V_j)_{j\geq 1}$ of open neighbourhoods of $a$ such that $V_j\cap F_0=\emptyset$ for all $j\ge 1$. Then for each $j\geq 1$, the set $C_j=\{ x\in V_j;\; T^N x=x\}$ is nonempty (because it contains the point $a$) and has no isolated point. Moreover, since $C_j$ is defined as the intersection of an open set and a closed set in the Polish space $X$, it is a Polish space as well. Therefore, each set $C_j$ contains a compact set $K_j$ which is homeomorphic to the Cantor space $\{ 0,1\}^\NN$. 

Let us choose for each $j\geq 1$ a continuous probability measure $m_j$ on $K_j$ whose support is the set $K_j$, and consider $m_j$ as a Borel measure on $X$. Now, define the \mea\ $m_{a}$ as
$$m_a:=\sum_{j=1}^\infty 2^{-j} \left( \frac 1N\sum_{n=0}^{N-1} m_j\circ T^{-n}\right) . $$
This is a probability measure, and $m_a$ is easily seen to be $T$-$\,$invariant because $T^Nx \equiv x$ on $K_j$ for all $j\geq 1$. Finally, the support of the measure $m_a$ contains all compact sets $K_j$, and since the sets $K_j$ accumulate to $\{ a\}$, it follows that the support of $m_a$ contains the point $a$. This finishes the proof of Proposition 
\ref{propcontinuous}.
\end{proof}

\section{Invariant measures for linear systems}\label{sec3}

In this section, we prove Theorem \ref{th1}. The first part follows immediately from Theorem \ref{th2}, but our proof of the second part relies on some extra and specifically linear arguments.

\subsection{Continuous invariant measures with full support}

If $X$ is a reflexive separable Banach space with norm topology $\tau_X$, one can apply Theorem \ref{th2} by taking 
as $\tau$ the weak topology of $X$. Since $X$ is reflexive, all closed balls are $\tau\,$-$\,$compact and hence each point $x\in X$ has a neighbourhood basis consisting of 
$\tau\,$-$\,$compact sets. If $T:X\to X$ is a bounded linear operator on $X$, then $T$ is continuous with respect to the weak topology; that is, assumption (ii) in Theorem \ref{th2} is satisfied. Thus, we immediately get that if $T$ is a frequently hypercyclic operator on $X$, then $T$ admits an invariant measure with full support. 

\smallskip
Moreover, without assuming that $X$ is reflexive, one can deduce immediately from Proposition \ref{propcontinuous} that if an operator $T\in\mathfrak L(X)$ admits an invariant measure with full support, then it also admits one which is continuous: just take $F_0=\{ 0\}$ in Proposition \ref{propcontinuous}, and observe that for every $N\ge 1$, the set 
$\{ x\in X;\; x\not=0 \;{\rm and}\; T^Nx=x\}$ is either empty or a nonempty open set in a closed linear subspace of $X$ of dimension 
 at least $1$.

\smallskip
So, if $X$ is reflexive and $T$ is frequently hypercyclic, then $T$ admits at least one continuous invariant measure with full support. In fact, one can say a little bit more:

\begin{proposition}\label{continuous2} Let $T\in\mathfrak L(X)$, where $X$ is a Polish topological vector space. If $T$ admits an invariant measure with full support, then the continuous, $T$-$\,$invariant \prob\ measures with full support form a dense $G_\delta$ subset of $\mathcal P_T(X)$. 
 This holds in particular if $X$ is reflexive and $T$ is frequently hypercyclic.
 \end{proposition}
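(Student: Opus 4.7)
Plan: The existence of at least one continuous invariant probability measure $m_0$ with full support is already given by the discussion preceding the proposition (apply Proposition \ref{propcontinuous} with $F_0=\{0\}$, using that for each $N\geq 1$ the set $\{x\neq 0 : T^Nx=x\}$ is either empty or an open subset of the linear subspace $\ker(T^N-I)$, and hence has no isolated point). It remains to show that the set $\mathcal{P}_{T,c,*}(X)$ of continuous $T$-invariant probability measures with full support is both a $G_\delta$ subset of $\mathcal{P}_T(X)$ and dense in it.

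For the $G_\delta$ assertion, by Lemma \ref{Gdelta} it suffices to show that the set $\mathcal{P}_{T,c}(X)$ of continuous $T$-invariant measures is $G_\delta$ in $\mathcal{P}_T(X)$. The key point I would verify is that for each $n\geq 1$ the set $F_n:=\{\mu\in\mathcal{P}(X):\mu(\{a\})\geq 1/n\text{ for some }a\in X\}$ is closed in $\mathcal{P}(X)$. Indeed, if $\mu_k\to\mu$ weakly with $\mu_k\in F_n$, then by Prokhorov's theorem the convergent sequence $(\mu_k)$ is tight, so there exists a compact $K\subset X$ with $\mu_k(K)\geq 1-1/(2n)$ for every $k$; this forces any atom $a_k$ of $\mu_k$ of mass $\geq 1/n$ to lie in $K$. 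Extracting a convergent subsequence $a_{k_j}\to a\in K$ and using the upper semicontinuity of $\nu\mapsto\nu(F)$ on any closed neighborhood $F$ of $a$ then yields $\mu(\{a\})\geq 1/n$. Consequently $\mathcal{P}_{T,c}(X)=\bigcap_n (\mathcal{P}(X)\setminus F_n)\cap\mathcal{P}_T(X)$ is $G_\delta$ in $\mathcal{P}_T(X)$.

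For density, this is where the linear structure enters essentially. Given $m\in\mathcal{P}_T(X)$, decompose $m=m_c+m_d$; both parts are $T$-invariant, and $m_d=\sum_a c_a\nu_a$ is a (possibly infinite) convex combination of periodic measures (Fact \ref{obvious}). I would approximate each $\nu_a$ by a continuous invariant measure as follows. For $a=0$ (so $\nu_0=\delta_0$), take $(D_\epsilon)_*m_0$, where $D_\epsilon(x):=\epsilon x$; since $D_\epsilon$ commutes with $T$ by linearity and is a homeomorphism, this measure is continuous, $T$-invariant, and tends weakly to $\delta_0$ as $\epsilon\to 0$ by dominated convergence. For $a\neq 0$ of period $N$, use that by linearity $T^N(ta)=ta$ for every scalar $t$ and that $T^ka\neq 0$ for all $k\geq 0$ (otherwise $T^Na=0\neq a$): let $\rho_\epsilon$ be the image of the normalized Lebesgue measure on $[1-\epsilon,1+\epsilon]$ under $t\mapsto ta$, and set $\tilde\nu_a^\epsilon:=\frac{1}{N}\sum_{k=0}^{N-1}(T^k)_*\rho_\epsilon$. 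Since $(T^N)_*\rho_\epsilon=\rho_\epsilon$, this measure is $T$-invariant by cyclicity; it is continuous because each $T^k$ is injective on the line $\R a$; and it converges weakly to $\nu_a$ as $\epsilon\to 0$. Setting $\tilde m_\epsilon:=m_c+\sum_a c_a\tilde\nu_a^\epsilon$ and truncating the sum by a tail of small total mass gives $\tilde m_\epsilon\to m$ weakly. Finally, to secure full support, replace $\tilde m_\epsilon$ by $(1-\epsilon)\tilde m_\epsilon+\epsilon m_0$, which is still continuous and $T$-invariant, has full support, and converges to $m$.

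The main technical hurdle is the $G_\delta$ statement for continuous measures in the non-compact setting, where the tightness argument is precisely what prevents atoms of approximating measures from escaping to infinity. The density part is relatively mechanical once one has the two approximation schemes — the dilation of $m_0$ to handle the atom at $0$, and the orbit-averaging along the line $\R a$ to handle other periodic orbits — both of which rely essentially on the linearity of $T$.
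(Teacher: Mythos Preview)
Your proof is correct, but both halves take a different route from the paper's.

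For the $G_\delta$ part, the paper embeds $X$ into a metrizable compactification $\widehat X$ and, in the compact setting, describes continuity of a measure via finite covers by small-diameter sets. Your Prokhorov argument (a convergent sequence in $\mathcal P(X)$ is tight, so the big atoms cannot escape) is an equally clean alternative and arguably more direct in the Polish, non-compact setting.

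For density, the paper uses a single convolution trick: having shown (exactly as you do for $a=0$, via dilations of $m_0$) that $\delta_0$ lies in the closure of $\mathcal P^c_{T,*}(X)$, it takes any $m\in\mathcal P_T(X)$ and sets $\mu_k:=\nu_k*m$ with $\nu_k\in\mathcal P^c_{T,*}(X)$, $\nu_k\to\delta_0$. Linearity of $T$ gives $T$-invariance of $\mu_k$, continuity and full support of $\nu_k$ transfer to $\mu_k$, and $\mu_k\to m$. This avoids the discrete/continuous decomposition of $m$ and the separate orbit-averaged construction you carry out for each nonzero periodic atom. Your approach is more hands-on and perfectly fine; the convolution argument is shorter and, more importantly, is reused verbatim later in the paper (Lemma \ref{residual}, Remark \ref{rem5}) to push invariant measures off more general closed sets than $\{a\}$. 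So the paper's method buys reusability, while yours is more explicit about what the approximating measures look like. A minor remark: your ``truncation'' step is unnecessary, since dominated convergence (each $\tilde\nu_a^\epsilon$ has total mass~$1$ and $\sum_a c_a<\infty$) already gives $\tilde m_\epsilon\to m$ directly.
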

\begin{proof}[Proof of Proposition \ref{continuous2}] We first show that the set $\mathcal P^c(X)$ of all continuous probability measures on $X$ is $G_\delta$ in $\mathcal P(X)$. This is in fact true for any Polish space $X$:

\begin{fact}\label{Gdelta2} If $X$ is a Polish space, then 
$\mathcal P^c(X)$ is a $G_\delta$ subset of $\mathcal P(X)$.
\end{fact} 
\begin{proof}
Le $\widehat X$ be a metrizable compactification of $X$. Then any measure $\mu\in\mathcal P(X)$ can be identified in a canonical way with a measure 
$\widehat\mu\in\mathcal P(\widehat X)$, namely the measure defined by $\widehat\mu (A):=\mu(A\cap X)$ for every Borel set $A\subset\widehat X$. The map $\mu\mapsto\widehat\mu$ is continuous from $\mathcal P(X)$ into $\mathcal P(\widehat X)$, and a measure $\mu\in\mathcal P(X)$ is continuous if and only if $\widehat\mu$ is. So it is enough to show only that $\mathcal P^c(\widehat X)$ is $G_\delta$ in $\mathcal P(\widehat X)$. In other words, we may assume from the beginning that $X$ is \emph{compact}.

\smallskip
Having fixed a compatible metric for $X$, we can find for every 
$n\in\NN$ a finite covering $(V_{n,i})_{i\in I_n}$ of $X$ by open sets $V_{n,i}$ with diameter less than $2^{-n}$.
Then it is easy to check that a measure $\mu\in\mathcal P(X)$ belongs to $\mathcal P^{c}(X)$ if and only if 
$$\forall k\in\NN\;\Bigl(  \exists n\;\forall i\in I_n\;:\;\mu(\overline V_{n,i})<\frac1k\Bigr) . $$
For every fixed $k\in\NN$, the condition under brackets defines an open subset of $\mathcal P(X)$, because the sets $\overline V_{n,i}$ are closed in $X$. So the above formula shows that $\mathcal P^{c}(X)$ is $G_\delta$.
\end{proof}

\smallskip
Now, assume that $T$ admits an invariant measure with full support, and let us denote by $\mathcal P_{T,*}^c(X)$ the family of all continuous, $T$-$\,$invariant \prob\ measures with full support (the star symbol is here to remind that measures in $\mathcal P_{T,*}^c(X)$ are required to have full support). By Facts \ref{Gdelta2} and \ref{Gdelta}, 
$\mathcal P_{T,*}^c(X)=\mathcal P_{T,*}(X)\cap\mathcal P_{T}^c(X)$ is a $G_\delta$ subset of $\mathcal P_T(X)$; so we just have to show that 
$\mathcal P_{T,*}^c(X)$ is dense in $\mathcal P_T(X)$. In fact, by the Baire category theorem it would be enough to show that $\mathcal P^c_T(X)$ is dense in $\mathcal P_T(X)$, but it is not harder to prove directly that $\mathcal P_{T,*}^c(X)$ is dense. Note that $\mathcal P_{T,*}^c(X)$ is \emph{nonempty} by Proposition \ref{propcontinuous}.

\smallskip
We first show that one can at least approximate the Dirac mass 
$\delta_0$ by measures in $\mathcal P_{T,*}^c(X)$.
\begin{fact}\label{Dirac0} The Dirac mass $\delta_0$ belongs to the closure of $\mathcal P_{T,*}^c(X)$ in $\mathcal P_T(X)$.
\end{fact}
\begin{proof}[Proof of Fact \ref{Dirac0}] We start with the following
 
\begin{claim}\label{claim0} For every $\varepsilon\in (0,1)$ and every neighbourhood $W$ of $0$ in $X$, there exists a measure $\nu\in \mathcal P_{T,*}(X)$ such that 
$\nu(W)>1-\varepsilon.$
\end{claim}

\begin{proof}[Proof of Claim \ref{claim0}]  Let $m$ be any continuous $T$-$\,$invariant probability measure with full support. For any 
$\eta>0$, consider the ``dilated" measure $m^\eta$ defined 
by setting $m^\eta(A):=m\left(\frac1\eta\cdot A\right)$ for any Borel set $A\subset X$. This measure is still continuous, it is 
$T$-$\,$invariant by the linearity of $T$, and it has full support. 
Choose a compact set $K\subset X$ such that $m(K)>1-\varepsilon$, and then $\eta>0$ such that $K\subset \frac 1\eta W$. Then $m^\eta(W) >1-\varepsilon$, so that the measure 
$\nu=m^\eta$ satisfies the conclusion of the Claim.
\end{proof}

The deduction of the above fact from Claim \ref{claim0} is standard, but we give the details for convenience of the reader. Let $(W_k)_{k\ge 1}$ be a decreasing countable basis of open neighbourhoods of $0$ in $X$, and let $(\varepsilon_k)_{k\ge 1}$ be a sequence of positive numbers tending to $0$ as $k$ tends to infinity. For each $k\ge 1$, one can apply Claim \ref{claim0} to get a measure 
$\nu_k\in \mathcal P_{T,*}^c(X)$ such that $\nu_k(W_k)>1-\varepsilon_k$. If $f$ is any bounded, real-valued continuous function on $X$, then 
$$\int_X f \, d\nu_k=\int_{W_k} f\, d\nu_k+\int_{X\setminus W_k} f\, d\nu_k\, .$$
The second term on the right-hand side clearly tends to $0$ as $k\to\infty$, whereas 
$$\frac1{\nu_k(W_k)} \int_{W_k} f\, d\nu_k -f(0)=\frac1{\nu_k(W_k)}\int_{W_k} \bigl(f-f(0)\bigr)\, d\nu_k\To 0\quad \textrm{ as } k\to\infty$$
by the continuity of $f$ at $0$ (and the fact that $\nu_k(W_k)$ is bounded below). Since $\nu_k(W_k)\to 1$, it follows that $\int_Xf\, d\nu_k\to f(0)$ as $k\to\infty$, for any 
$f\in\mathcal C_b(X)$. In other words, $\nu_k\to \delta_0$ in $\mathcal P(X)$. 
This finishes the proof of Fact \ref{Dirac0}.
\end{proof} 

\par\smallskip
Proposition \ref{continuous2} can now be proved by combining the above fact and a simple \emph{convolution} argument.

\smallskip
Let us fix $m\in\mathcal P_T(X)$. We want to show that $m$ belongs to the closure of $\mathcal P_{T,*}^c(X)$ in $\mathcal P(X)$; in other words, that one can find a sequence 
$(\mu_k)_{k\ge 1}$ of elements of $\mathcal P_{T,*}^c(X)$ such that $\mu_k\to m$.

By Fact \ref{Dirac0}, there exists a sequence $(\nu_k)_{k\ge 1}$ of elements of $\mathcal P_{T,*}^c(X)$ such that $\nu_k\to\delta_0$. Then set $\mu_k:=\nu_k*m$, the convolution product of $\nu_k$ and $m$. For any bounded Borel function $f:X\to \RR$, we have by definition
$$ \int_X f\, d\mu_k=\int_{X\times X} f(x+y)\, d\nu_k(x)\, dm(y)\, .$$
\par\smallskip
Since $\nu_k\to\delta_0$, it is easily checked that $\mu_k\to m$ in $\mathcal P(X)$. Indeed, for any $f\in\mathcal C_b(X)$, the function $f*m$ defined by 
$(f*m)(x)= \int_X f(x+y) \, dm(y)$ belongs to $\mathcal C_b(X)$, so that
$$\int_X f\, d\mu_k=\int_X (f*m)\, d\nu_k\To (f*m)(0)=\int_X f\, dm.$$
In order to conclude the proof, it remains to check that each \mea\ $\mu_k$ belongs to 
$\mathcal P_{T,*}^c(X)$.
\par\smallskip
The $T$-$\,$invariance of $\mu_k$ follows from the linearity of $T$ and the $T$-$\,$invariance of $m$ and $\nu_k$. Indeed, for each bounded Borel function $f:X\to\RR$ we have
\begin{eqnarray*}
 \int_X (f\circ T)\, d\mu_k&=&\int_{X\times X} f(Tx+Ty)\, d\nu_k(x)\, dm(y)
 =\int_{X\times X} f(Tx+y)\, d\nu_k(x)\, dm(y)\\
 &=&\int_{X\times X} f(x+y)\, d\nu_k(x)\, dm(y)
 =\int_X f\, d\mu_k\, .
\end{eqnarray*}

The measure $\mu_k$ has full support because $\nu_k$ does: if $U$ is a nonempty open set in $X$, then $\nu_k(U-y)>0$ for all $y\in X$ and hence
$$\mu_k(U)=\int_X \nu_k(U-y)\, dm(y)>0\, .$$

Finally, the measures $\mu_k$ are continuous because the $\nu_k$ are: for any point $a\in X$ we have $\mu_k(\{ a\})=\int_X \nu_k(\{ a-y\})\, dm(y)=0$. This finishes the proof of Proposition \ref{continuous2}.
\end{proof}

\begin{remark}\label{dual} The assumption of the ``in particular" part of Proposition \ref{continuous2} can be relaxed: it is enough to assume that the separable Banach space $X$ is a dual space, and that the operator $T:X\to X$ is an adjoint operator. Indeed,  Theorem \ref{th1} applies in the same way if we take as $\tau$ the $w^*$ topology of the dual space $X$, and all the remaining arguments are unchanged.
\end{remark}

\begin{remark}\label{rem5} There is an abstract statement lying behind the above proof, which reads as follows: \emph{if $\mathcal M$ is a nonempty subset of $\mathcal P_T(X)$ which is stable under dilations and such that $\mathcal P_T(X)*\mathcal M\subset \mathcal M$, then $\mathcal M$ is dense in $\mathcal P_T(X)$.}
\end{remark}

\subsection{Invariant measures with $m({\rm Per}(T))=0$} 

In this sub-section, our aim is to show that if $T$ is a frequently hypercyclic operator on a reflexive Banach space $X$, then one can find a 
$T$-$\,$invariant Borel probability measure $m$ with full support such that $m({\rm Per}(T))=0$, where 
 ${\rm Per}(T)$ is the set of all periodic points of $T$. Such a \mea\ is necessarily continuous by Fact \ref{obvious}, but the requirement that $m({\rm Per}(T))=0$ is of 
 course much stronger. 
 
 \smallskip
In fact, we will use neither the frequent hypercyclicity of $T$, nor the reflexivity of $X$, but only the fact that $T$ admits an invariant measure with full support. The result we shall prove is a strengthening of Proposition \ref{continuous2}. Here and afterwards, we use the following notation: for any Borel set $A\subset X$, we set 
$\mathcal P_{T,*}(A):=\mathcal P_{T,*}(X)\cap \mathcal P(A)$. In words, $\mathcal P_{T,*}(A)$ is the family of all $T$-$\,$invariant probability measures $m$ on $X$ with full support such that $m(A)=1$.

\begin{proposition}\label{residual2}
If $T$ is a continuous linear operator on a Polish topological vector space $X$ such that $\mathcal P_{T,*}(X)\neq\emptyset$ and $T^N\neq Id$ for all $N\geq 1$, then 
$\mathcal P_{T,*}(X\setminus{\rm Per}(T))$ is a dense $G_\delta$ subset of $\mathcal P_T(X)$.
\end{proposition}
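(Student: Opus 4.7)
The claim decomposes into a $G_\delta$ part and a density part. Setting $E_N:=\ker(T^N-I)$, which is a proper closed subspace by the hypothesis $T^N\ne Id$, one has ${\rm Per}(T)=\bigcup_{N\ge 1}E_N$ and
$$\mathcal P_{T,*}(X\setminus{\rm Per}(T))=\mathcal P_{T,*}(X)\cap\bigcap_{N\ge 1}\{m\in\mathcal P_T(X):m(E_N)=0\}.$$
The set $\mathcal P_{T,*}(X)$ is $G_\delta$ by Lemma \ref{Gdelta}, and each $\{m:m(E_N)=0\}=\bigcap_k\{m:m(E_N)<1/k\}$ is $G_\delta$ by the upper semi-continuity of $m\mapsto m(E_N)$ (since $E_N$ is closed). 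Using Lemma \ref{Gdelta} and the Baire category theorem, the density of $\mathcal P_{T,*}(X\setminus{\rm Per}(T))$ will follow once we establish that, for every fixed $N\ge 1$, the set $\{m\in\mathcal P_T(X):m(E_N)=0\}$ is dense in $\mathcal P_T(X)$.

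Fix $N$ and write $E=E_N$. Since $E$ is a proper closed subspace, it has empty interior, so any $m^*\in\mathcal P_{T,*}(X)$ satisfies $m^*(E)<1$. The forward-invariance $T(E)\subseteq E$ combined with the $T$-invariance of $m^*$ yields $m^*(T^{-1}(E)\setminus E)=0$, from which one checks that both $m^*|_E$ and $m^*|_{X\setminus E}$ are $T$-invariant. Hence $m_E:=m^*|_{X\setminus E}/m^*(X\setminus E)$ is a well-defined element of $\mathcal P_{T,*}(X\setminus E)$ (full support because any nonempty open $U$ is not contained in $E$, so $m^*(U\setminus E)>0$). The crucial step is now to produce a $T$-invariant approximation of $\delta_0$ which annihilates \emph{every} affine translate of $E$, not only $E$ itself. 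Fix a continuous probability density $\rho$ on $(0,\infty)$ (for instance $\rho(\eta)=e^{-\eta}$) and, for $\alpha>0$, set
$$\nu_\alpha:=\int_0^\infty m_E^{\alpha\eta}\,\rho(\eta)\,d\eta.$$
Then $\nu_\alpha$ is $T$-invariant, has full support, satisfies $\nu_\alpha(E)=0$, and the key property
$$\nu_\alpha(E-y)=\int_0^\infty m_E\bigl(E-y/(\alpha\eta)\bigr)\,\rho(\eta)\,d\eta=0\quad\hbox{for every }y\notin E,$$
because the affine subspaces $\{E-y/(\alpha\eta):\eta>0\}$ are pairwise disjoint whenever $y\notin E$ (as $E$ is a subspace and $y$ is a nonzero vector outside $E$), so $m_E$ assigns positive mass to at most countably many of them, a set of Lebesgue measure zero. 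A standard dominated-convergence argument shows $\nu_\alpha\to\delta_0$ in $\mathcal P(X)$ as $\alpha\to 0$.

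For any target $m\in\mathcal P_T(X)$, the convolution $\mu_\alpha:=\nu_\alpha*m$ is $T$-invariant, has full support (since $\nu_\alpha$ does), and tends to $m$ in $\mathcal P(X)$ as $\alpha\to 0$ by the same convolution argument used in the proof of Proposition \ref{continuous2}. Finally,
$$\mu_\alpha(E)=\int_X\nu_\alpha(E-y)\,dm(y)=0,$$
since $\nu_\alpha(E-y)=\nu_\alpha(E)=0$ for $y\in E$ and $\nu_\alpha(E-y)=0$ for $y\notin E$ by the key property. This shows $\{m:m(E)=0\}$ is dense in $\mathcal P_T(X)$ and completes the argument. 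The main obstacle was thus the construction of an invariant approximate identity killing all affine translates of $E$; integrating dilations against a continuous density does the trick, because for $y\notin E$, scaling $y$ by different positive factors sweeps out an uncountable pairwise disjoint family of affine translates of $E$, forcing all but countably many of them to be $m_E$-null.
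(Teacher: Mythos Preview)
Your argument is correct and follows the same overall architecture as the paper's: decompose ${\rm Per}(T)=\bigcup_N E_N$ with $E_N=\ker(T^N-I)$, establish the $G_\delta$ property directly, and for density use a convolution with an invariant approximation of $\delta_0$ that annihilates the relevant set. The difference lies in how that approximate identity is built.

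The paper proceeds via a more general lemma (Lemma~\ref{residual}/Corollary~\ref{residualcoro}): it takes a \emph{single} dilation of a measure supported off $\overline{E-E}=E$, but then must first reduce to continuous $m$ and split an arbitrary target $m$ into its $E$-supported part and its $(X\setminus E)$-supported part before convolving. Your construction instead mixes dilations $m_E^{\alpha\eta}$ against a Lebesgue-absolutely-continuous weight in $\eta$; the point is that for $y\notin E$ the cosets $E-y/(\alpha\eta)$ are pairwise disjoint as $\eta$ varies, so only countably many carry positive $m_E$-mass and the integral vanishes. This yields $\nu_\alpha(E-y)=0$ for \emph{every} $y\in X$, so you can convolve directly with an arbitrary $m$ and skip both the continuity reduction and the decomposition step. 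This is precisely the strengthening the paper flags as desirable in Remark~\ref{rem4} (``what we really need to know on the set $F$ is that $\delta_0$ can be approximated by invariant measures $\nu_k$ satisfying $\nu_k(F-y)=0$ for every $y\in F$''); your averaging trick delivers even more than that in the subspace case. The trade-off is that the paper's route, while slightly longer here, is stated for a broader class of closed sets $F$ (those with nowhere dense difference set), whereas your disjointness argument is specific to linear subspaces.
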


\smallskip
The final part of Theorem \ref{th1} follows at once from this result. In fact, Proposition \ref{residual2} even allows us to characterize the operators $T$ acting on a reflexive space $X$ for which
$\mathcal P_{T,*}(X\setminus{\rm Per}(T))\neq\emptyset$:

\begin{corollary}\label{charac2}
Let $T$ be a bounded operator acting on a reflexive separable Banach space $X$ or, more generally, an adjoint operator acting on a separable dual space. Then $T$ admits an invariant measure $m$ with full support such that $m({\rm Per}(T))=0$ if and only if $T$ is frequently recurrent and $T^N\neq Id$ for every $N\geq 1$.
\end{corollary}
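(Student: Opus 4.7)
The plan is to view the corollary as a consequence of Proposition \ref{charac} (characterization of existence of an invariant measure with full support) combined with Proposition \ref{residual2} (upgrading such a measure to one vanishing on the periodic points). Crucially, both propositions require the ambient space to carry the right coarser topology $\tau$, and in both the reflexive and the adjoint--on--dual cases this is supplied either by the weak topology of $X$ or, as noted in Remark \ref{dual}, by the $w^*$-topology (since adjoint operators are $w^*$-continuous and norm-closed balls are $w^*$-compact, every point has a neighborhood basis of $\tau$-compact sets).

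For the forward direction, suppose $m \in \mathcal P_{T,*}(X\setminus {\rm Per}(T))$. First, if $T^N = Id$ for some $N \ge 1$ then ${\rm Per}(T) = X$, forcing $m({\rm Per}(T)) = 1$, a contradiction; so $T^N \neq Id$ for every $N \ge 1$. Next, the existence of $m$ with full support implies frequent recurrence by the implication $(1)\Rightarrow (3)$ of Proposition \ref{charac}. I would emphasize that this implication uses only the ergodic decomposition theorem and the pointwise ergodic theorem, so it holds for any Polish dynamical system and does not require the topology $\tau$ at all.

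For the reverse direction, assume $T$ is frequently recurrent and $T^N \neq Id$ for every $N \ge 1$. Endow $X$ with the topology $\tau$ chosen as above. The hypotheses of Proposition \ref{charac} are then met, and its implication $(3)\Rightarrow(1)$ yields an element of $\mathcal P_{T,*}(X)$, i.e.\ an invariant probability measure with full support. With $\mathcal P_{T,*}(X)\neq\emptyset$ and $T^N\neq Id$ for every $N\ge 1$, Proposition \ref{residual2} applies and shows that $\mathcal P_{T,*}(X\setminus {\rm Per}(T))$ is in fact a dense $G_\delta$ subset of $\mathcal P_T(X)$; in particular it is nonempty, and any of its elements is a measure $m$ of the required type.

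The proof is essentially an assembly of earlier results, so there is no real technical obstacle; the only point that demands care is the verification that the adjoint hypothesis in the dual space case is exactly what makes the $w^*$-topology a legitimate choice of $\tau$ in Propositions \ref{charac} and \ref{residual2}, which is precisely the content of Remark \ref{dual}. The only conceptually new input is the observation $T^N \neq Id \Leftrightarrow m({\rm Per}(T))$ can vanish (for $m$ with full support), which is the elementary link closing the equivalence.
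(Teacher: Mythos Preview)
Your proof is correct and follows exactly the approach of the paper, which simply says ``Combine Propositions \ref{charac} and \ref{residual2}.'' One small inaccuracy: you write that both propositions require the coarser topology $\tau$, but in fact only Proposition \ref{charac} needs it (for the implication $(3)\Rightarrow(1)$); Proposition \ref{residual2} is stated for an arbitrary Polish topological vector space and uses only the hypothesis $\mathcal P_{T,*}(X)\neq\emptyset$, which you correctly obtain from Proposition \ref{charac}.
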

\begin{proof} Combine Propositions \ref{charac} and \ref{residual2}.
\end{proof}

\smallskip
Proposition \ref{residual2} will follow easily from the next Lemma. This Lemma is a little bit more than what is really needed for our purpose, but the generality might be useful elsewhere. In what follows, we denote by $\ker^*(T)$ 
the \emph{generalized kernel} of the operator $T$, \mbox{i.e.} 
$$\ker^*(T)=\bigcup_{k\in\NN} \ker (T^k)\, .$$

Recall also that a set $A\subset X$ is said to be \emph{dilation-invariant} if $r\cdot A=A$ for every $r>0$.

\begin{lemma}\label{residual} 
Let $T$ be a continuous linear operator on $X$ such that $\mathcal P_{T,*}(X)\neq\emptyset$, and let $F$ be a closed subset of $X$ such that
\begin{itemize}
\item[\rm{(i)}] $F-F$ is dilation-invariant and nowhere dense in $X$;
 
\item[\rm{(ii)}] $T(F\setminus \ker^*(T))\subset F$;
 
\item[\rm{(iii)}] $T\bigl(\,\overline{F-F}\,\setminus\ker^*(T)\bigr)\subset \overline{F-F}$.
\end{itemize}
Then,
$\mathcal P_{T,*}(X\setminus F)$ is a dense $G_\delta$ subset of $\mathcal P_T(X)$.
\end{lemma}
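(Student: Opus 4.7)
The plan is to establish the $G_\delta$ structure and the density of $\mathcal{P}_{T,*}(X \setminus F)$ separately and then conclude via Baire's theorem. For the $G_\delta$ part, write $\mathcal{P}_{T,*}(X \setminus F) = \mathcal{P}_{T,*}(X) \cap \{m \in \mathcal{P}_T(X) : m(F) = 0\}$. The first factor is $G_\delta$ (and dense, since nonempty by hypothesis) by Lemma~\ref{Gdelta}; the second factor is $G_\delta$ because $F$ is closed, making $m \mapsto m(F)$ upper semicontinuous, so $\{m : m(F) = 0\} = \bigcap_{n \geq 1}\{m : m(F) < 1/n\}$. Since the intersection of two dense $G_\delta$ subsets of the Polish space $\mathcal{P}_T(X)$ is dense $G_\delta$, it is enough to prove that $\mathcal{P}_T(X \setminus F)$ is dense in $\mathcal{P}_T(X)$.

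The heart of the argument is the construction of a $T$-invariant probability measure $\mu' \in \mathcal{P}_T(X)$ with $\mu'(E) = 0$, where $E := \overline{F - F}$. Start from some $\mu \in \mathcal{P}_{T,*}(X)$ and write its ergodic decomposition $\mu = \int_S \mu_s \, d\mathbf{q}(s)$. For ergodic $\mu_s \neq \delta_0$ one has $\mu_s(\{0\}) = 0$, hence $\mu_s(\ker^*(T)) = 0$ (since $\mu_s(\ker(T^k)) = \mu_s(\{0\})$); condition (iii) provides $E \setminus T^{-1}(E) \subset \ker^*(T)$, so combined with $T$-invariance of $\mu_s$ this yields $\mu_s(E \Delta T^{-1}(E)) = 0$, and ergodicity then forces $\mu_s(E) \in \{0,1\}$. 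Because $\mu$ has full support and $E$ is nowhere dense (by (i)), $\mu(E) < 1$, so $S_0 := \{s : \mu_s(E) = 0\}$ has positive $\mathbf{q}$-measure and $\mu' := \mathbf{q}(S_0)^{-1}\int_{S_0} \mu_s \, d\mathbf{q}$ is $T$-invariant with $\mu'(E) = 0$, whence in particular $\mu'(F - F) = 0$.

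To approximate an arbitrary $m_0 \in \mathcal{P}_T(X)$, set $m_\eta := {\mu'}^\eta * m_0$, where ${\mu'}^\eta$ denotes the dilation of $\mu'$ by $\eta > 0$. By the linearity of $T$ (as in the proof of Proposition~\ref{continuous2}), $m_\eta \in \mathcal{P}_T(X)$; and since ${\mu'}^\eta \to \delta_0$ weakly as $\eta \to 0$, we have $m_\eta \to m_0$ in $\mathcal{P}_T(X)$. The key geometric observation is that for any $y \in X$ and any $\eta_1 \neq \eta_2$, a point $x$ lying in the intersection $(\eta_1^{-1}F - \eta_1^{-1}y) \cap (\eta_2^{-1}F - \eta_2^{-1}y)$ has $\eta_i x + y \in F$ for $i = 1,2$, so $(\eta_1 - \eta_2)x \in F - F$; by the dilation-invariance and symmetry of $F - F$, we conclude $x \in F - F$. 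Hence the sets $(\eta^{-1}F - \eta^{-1}y) \setminus (F - F)$ are pairwise disjoint as $\eta$ varies, so only countably many of them can carry positive $\mu'$-measure. Combined with $\mu'(F - F) = 0$, Fubini's theorem gives
$$\int_0^1 m_\eta(F) \, d\eta = \int_X \int_0^1 \mu'\bigl(\eta^{-1}F - \eta^{-1}y\bigr) \, d\eta \, dm_0(y) = 0,$$
so $m_\eta(F) = 0$ for almost every $\eta \in (0,1)$, placing $m_0$ in the closure of $\mathcal{P}_T(X \setminus F)$.

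The main obstacle I anticipate is the production of the auxiliary measure $\mu'$: neither dilation nor self-convolution of the given $\mu \in \mathcal{P}_{T,*}(X)$ can decrease $\mu(E)$, since $E$ is itself dilation-invariant, and there is no translation-type symmetry preserving $T$-invariance. This is precisely where hypothesis (iii) is essential, allowing the ergodic decomposition to split $\mu$ into components living on or off $E$; once the ``good'' part $\mu'$ is isolated, the dilation-convolution-Fubini scheme handles the density straightforwardly.
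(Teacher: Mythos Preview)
Your proof is correct, and it takes a genuinely different route from the paper's.

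For the construction of the auxiliary measure $\mu'$ with $\mu'(\overline{F-F})=0$, the paper proceeds by first producing a \emph{continuous} measure in $\mathcal P_{T,*}(X)$ (via Proposition~\ref{propcontinuous}) and then restricting it to the open set $O=X\setminus\overline{F-F}$, using Fact~\ref{variante} together with condition~(iii) to see that this restriction is $T$-invariant. Your ergodic-decomposition argument achieves the same goal more conceptually: condition~(iii) makes $E=\overline{F-F}$ almost-invariant for every ergodic component that misses $\ker^*(T)$, so the $0$--$1$ law applies and the nowhere-density forces a good piece to survive.

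The more striking difference is in the convolution step. The paper first reduces (via Fact~\ref{complementaire} and condition~(ii)) to the case where the target measure $m$ is concentrated on $F$; then, for $\nu_k\to\delta_0$ with $\nu_k(\overline{F-F})=0$, one has $(\nu_k*m)(F)=\int_F\nu_k(F-y)\,dm(y)=0$ simply because $F-y\subset\overline{F-F}$ whenever $y\in F$. You bypass this reduction entirely: your disjointness observation---that $(\eta_1-\eta_2)x\in F-F$ whenever $\eta_i x+y\in F$ for $i=1,2$---combined with dilation-invariance shows that for \emph{every} $y\in X$ the sets $(\eta^{-1}F-\eta^{-1}y)\setminus(F-F)$ are pairwise disjoint in $\eta$, hence Lebesgue--a.e.\ $\eta$ gives $m_\eta(F)=0$ via Fubini. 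This is an elegant trick, and it has a noteworthy payoff: your argument never uses hypothesis~(ii). So you have in fact shown that~(ii) is redundant given~(i) and~(iii), which the paper's proof does not reveal.
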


We shall in fact use this result only through the following immediate consequence.
\begin{corollary}\label{residualcoro} If $\mathcal P_{T,*}(X)\neq \emptyset$ and if $F$ is any proper closed linear subspace of $X$ such that $T(F)\subset F$, then  
$\mathcal P_{T,*}(X\setminus F)$ is a dense $G_\delta$ subset of $\mathcal P_T(X)$.
\end{corollary}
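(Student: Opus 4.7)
The plan is to simply verify the three hypotheses (i), (ii), (iii) of Lemma \ref{residual} for $F$ a proper closed $T$-invariant linear subspace, and then apply the lemma directly. There is no real obstacle here; all the actual work is bundled into Lemma \ref{residual}, and the corollary is a matter of routine checking.

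For hypothesis (i), since $F$ is a linear subspace we have $F-F=F$. Being a linear subspace, $F$ is closed under scalar multiplication, hence $F-F=F$ is dilation-invariant. Furthermore, a proper closed linear subspace of a topological vector space is always nowhere dense: if the interior of $F$ were nonempty it would contain a point $x_0$ together with a balanced open neighbourhood of $0$ (after translation), forcing $F$ to contain a neighbourhood of $0$; since $F$ is a linear subspace this would yield $F=X$, contradicting the assumption that $F$ is proper. So (i) holds.

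For hypothesis (ii), the $T$-invariance assumption $T(F)\subset F$ gives immediately $T(F\setminus\ker^*(T))\subset T(F)\subset F$. For hypothesis (iii), the closedness of $F$ makes $\overline{F-F}=F$, so (iii) reduces to exactly the same statement as (ii). All three hypotheses being verified, Lemma \ref{residual} applies and yields that $\mathcal{P}_{T,*}(X\setminus F)$ is a dense $G_\delta$ subset of $\mathcal{P}_T(X)$, which is the desired conclusion.
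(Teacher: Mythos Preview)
Your proof is correct and matches the paper's approach: the paper simply calls the corollary an ``immediate consequence'' of Lemma \ref{residual}, and your verification that a proper closed $T$-invariant linear subspace satisfies hypotheses (i)--(iii) is exactly the routine check intended. The key observations---that $F-F=\overline{F-F}=F$ since $F$ is a closed linear subspace, and that a proper closed subspace is nowhere dense---are all in order.
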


\par\smallskip
Taking this result for granted, it is easy to prove Proposition \ref{residual2} and hence to conclude the proof of Theorem \ref{th1}.

\begin{proof}[Proofs of Proposition \ref{residual2} and Theorem \ref{th1}] Let $T$ be such that 
$\mathcal P_{T,*}(X)\neq\emptyset$ and $T^N\neq Id$ for all $N\geq 1$. The set ${\rm Per}(T)$ of all periodic points of $T$ can be written as ${\rm Per}(T)=\bigcup_{N\ge 1} F_N$, where $F_N=\ker(T^N-I)$. Each $F_N$ is a proper closed subspace of $X$, and it is clear that $T(F_N)\subset F_N$. By Corollary \ref{residualcoro}, $\mathcal P_{T,*}(X\setminus F_N)$ is  a dense $G_\delta$ subset of $\mathcal P_T(X)$ for each $N\geq 1$. By the Baire Category theorem, it follows that $\bigcap_{N\ge 1}  \mathcal P_{T,*}(X\setminus F_N)$ is nonempty. This means exactly that one can find a measure $m\in\mathcal P_T(X)$ with full support such that 
$m({\rm Per}(T))=0$, which concludes the proof of Proposition \ref{residual2} (and hence of Theorem \ref{th1}).
\end{proof}

Before starting the proof of Lemma \ref{residual}, we first state three simple facts. 

\begin{fact}\label{P(O)}
If $O$ is an open subset of the Polish space $X$, then $\mathcal P_T(O)$ is a $G_\delta$ subset of $\mathcal P_T(X)$.
\end{fact}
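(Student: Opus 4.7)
The plan is to observe that $\mathcal P_T(O) = \mathcal P(O) \cap \mathcal P_T(X)$, so it suffices to show that $\mathcal P(O) := \{\mu \in \mathcal P(X) : \mu(O) = 1\}$ is $G_\delta$ in $\mathcal P(X)$: intersecting with the closed set $\mathcal P_T(X)$ then gives a $G_\delta$ subset of $\mathcal P_T(X)$.

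The key tool is the portmanteau characterization of the Prokhorov topology: since $O$ is open, the evaluation map $\mu \mapsto \mu(O)$ is lower semi-continuous on $\mathcal P(X)$. Consequently, for every $t \in \mathbb R$, the set $\{\mu \in \mathcal P(X) : \mu(O) > t\}$ is open.

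Since every $\mu \in \mathcal P(X)$ is a probability measure, the condition $\mu(O) = 1$ is equivalent to the countable conjunction $\mu(O) > 1 - 1/n$ for all $n \geq 1$. Therefore
$$\mathcal P(O) = \bigcap_{n \geq 1} \{\mu \in \mathcal P(X) : \mu(O) > 1 - 1/n\},$$
which exhibits $\mathcal P(O)$ as a countable intersection of open sets, i.e. as a $G_\delta$ subset of $\mathcal P(X)$. Intersecting with $\mathcal P_T(X)$ yields the desired conclusion.

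There is no real obstacle here; the only thing to justify (if one wants to be scrupulous) is the lower semi-continuity of $\mu \mapsto \mu(O)$ for open $O$, which is standard and follows from the fact that $\mathbf 1_O$ is a pointwise supremum of an increasing sequence of non-negative bounded continuous functions on the Polish space $X$ (take for instance $f_k(x) = \min(1, k\, d(x, X \setminus O))$), so that $\mu(O) = \sup_k \int_X f_k\, d\mu$ is a supremum of continuous functions of $\mu$.
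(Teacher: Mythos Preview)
Your proof is correct and follows essentially the same approach as the paper: both use the lower semi-continuity of $\mu\mapsto\mu(O)$ for open $O$ to write $\mathcal P(O)$ as a countable intersection of open superlevel sets, then restrict to $\mathcal P_T(X)$. Your version is slightly more detailed in justifying the lower semi-continuity, but the argument is the same.
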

\begin{proof}
A measure $m\in\mathcal P(X)$ is supported on 
$O$ if and only if $m( O)>1-2^{-k}$ for all $k\ge 1$. Since $O$ is open, the map $m\mapsto \mu(O)$ is lower semi-continuous on 
$\mathcal P(X)$. It follows that $\mathcal P(O)$ is a $G_\delta$ subset of $\mathcal P(X)$, and hence that $\mathcal P_T(O)$ is a
$G_\delta$ subset of $\mathcal P_T(X)$. 
\end{proof}

\begin{fact}\label{complementaire} Let $(X,T)$ be a Polish dynamical system, and let $m\in\mathcal P_T(X)$. If $E\subset X$ is  a Borel set such that either $T(E)\subset E$ or 
$T^{-1}(E)\subset E$, then the measure $\mathbf 1_{E}\, m$ is $T$-$\,$invariant.
\end{fact}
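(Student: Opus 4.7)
My plan is to unwind the definition of $\mathbf 1_E\, m$ directly: for any Borel set $A\subset X$, one has $(\mathbf 1_E\,m)(A)=m(A\cap E)$, so what must be proved is $m(T^{-1}(A)\cap E)=m(A\cap E)$ for every Borel $A$. The key observation is that $T^{-1}(A)\cap T^{-1}(E)=T^{-1}(A\cap E)$, whose $m$-measure equals $m(A\cap E)$ by the $T\,$-$\,$invariance of $m$. Hence everything reduces to showing that the sets $T^{-1}(A)\cap E$ and $T^{-1}(A)\cap T^{-1}(E)$ coincide modulo $m$, i.e.\ that the symmetric difference $E\,\triangle\,T^{-1}(E)$ is $m\,$-$\,$null.

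I would then handle the two cases separately using $T\,$-$\,$invariance of $m$. If $T^{-1}(E)\subset E$, then $m(E)=m(T^{-1}(E))$ forces $m(E\setminus T^{-1}(E))=0$, and the discrepancy $T^{-1}(A)\cap(E\setminus T^{-1}(E))$ is an $m\,$-$\,$null set. If instead $T(E)\subset E$, this is equivalent to $E\subset T^{-1}(E)$ (up to a set contained in $T^{-1}(E)\setminus E$), and the same argument gives $m(T^{-1}(E)\setminus E)=0$; again the discrepancy is null. In both cases one concludes $m(T^{-1}(A)\cap E)=m(T^{-1}(A)\cap T^{-1}(E))=m(A\cap E)$.

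There is essentially no obstacle here: the only mild subtlety is the second case, where $T(E)\subset E$ does not literally imply $E\subset T^{-1}(E)$ unless $T$ is injective, but one has $E\subset T^{-1}(T(E))\subset T^{-1}(E)$ without any injectivity hypothesis, so the argument goes through uniformly. The whole proof fits in a few lines and uses nothing beyond finite additivity and the defining identity $m(T^{-1}(\cdot))=m(\cdot)$.
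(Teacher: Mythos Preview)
Your proof is correct and follows essentially the same route as the paper's. Both arguments reduce to the inclusion $E\subset T^{-1}(E)$ (respectively $T^{-1}(E)\subset E$) together with the $T$-invariance of $m$; the paper packages this as the inequality $\mu(T^{-1}(A))\leq\mu(A)$ and then applies it to $X\setminus A$, while you phrase it as $m(E\,\triangle\,T^{-1}(E))=0$, which is the same content. One minor remark: your parenthetical worry that ``$T(E)\subset E$ does not literally imply $E\subset T^{-1}(E)$ unless $T$ is injective'' is unfounded---as you yourself then observe, $E\subset T^{-1}(T(E))\subset T^{-1}(E)$ holds for any map $T$, so there is no subtlety at all in that case.
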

\begin{proof} Set $\mu:=\mathbf 1_E\, m$, and assume for example that $T(E)\subset E$, \mbox{i.e.} $E\subset T^{-1}(E)$. For any Borel set $A\subset X$, we have
\begin{eqnarray*} \mu\bigl(T^{-1}(A)\bigr)=\mathbf 1_E\, m\bigl(T^{-1}(A)\bigr)
\leq\mathbf 1_{T^{-1}(E)}\,m\bigl(T^{-1}(A)\bigr)
=(m\circ T^{-1})(E\cap A).
\end{eqnarray*}
Since $m$ is $T$-$\,$invariant, this means that $\mu\bigl(T^{-1}(A)\bigr)\leq \mu(A)$ for every Borel set $A$. Applying this with $X\setminus A$ in place of $A$, it follows 
that in fact $\mu(T^{-1}(A))= \mu(A)$ for every Borel set $A$, and hence that $\mu$ is $T$-$\,$invariant. If $T^{-1}(E)\subset E$, we obtain in the same way that  
$\mu(A)\geq \mu( T^{-1} (A))$ for every Borel set $A$, from which it follows that $\mu$ is $T$-$\,$invariant.
\end{proof}

\begin{fact}\label{variante} Let $T$ be a continuous linear operator on $X$ such that $\mathcal P_{T,*}(X)\neq\emptyset$. If $O\subset X$ is a nonempty open set such that 
$T^{-1}(O)\setminus \ker^*(T)\subset O$, then $\mathcal P_T(O)\neq\emptyset$.
\end{fact}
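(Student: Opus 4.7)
The plan is to exploit Fact \ref{complementaire} by enlarging $O$ to a backward-invariant set, then stripping away an "exceptional" part that will turn out to be concentrated at the origin. Specifically, I would work with $\widehat O := O\cup\ker^*(T)$ and an arbitrary $\mu_0\in\mathcal P_{T,*}(X)$ (which exists by hypothesis).

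First I would check that $\widehat O$ is backward $T$-invariant: since $T^{-1}(\ker^*(T))=\ker^*(T)$ (because $T^{-1}(\ker(T^k))=\ker(T^{k+1})$), the hypothesis $T^{-1}(O)\setminus\ker^*(T)\subset O$ yields $T^{-1}(\widehat O)=T^{-1}(O)\cup\ker^*(T)\subset O\cup\ker^*(T)=\widehat O$. Fact \ref{complementaire} then gives that $\mathbf 1_{\widehat O}\mu_0$ is $T$-invariant.

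The crux is then to analyze the "kernel part" $\sigma:=\mathbf 1_{\ker^*(T)}\mu_0$, which is itself $T$-invariant by Fact \ref{complementaire}. The key observation I would prove is that any finite positive $T$-invariant Borel measure supported on $\ker^*(T)$ is a scalar multiple of $\delta_0$. Indeed, by $T$-invariance, $\sigma(\ker(T^{k+1}))=\sigma(T^{-1}\ker(T^k))=\sigma(\ker(T^k))$, so by induction $\sigma(\ker(T^k))=\sigma(\{0\})$ for every $k\ge 0$. Passing to the limit gives $\sigma(\ker^*(T))=\sigma(\{0\})$, so $\sigma$ assigns all of its mass to $\{0\}$: $\sigma=\mu_0(\{0\})\,\delta_0$.

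Finally I would decompose $\mathbf 1_O\mu_0=\mathbf 1_{\widehat O}\mu_0-\mathbf 1_{\ker^*(T)\setminus O}\mu_0$ and observe that the subtracted piece equals $\mathbf 1_{O^c}\sigma$, which is either $0$ (if $0\in O$) or $\mu_0(\{0\})\,\delta_0$ (if $0\notin O$); either way it is $T$-invariant. Therefore $\mathbf 1_O\mu_0$ is a $T$-invariant positive measure, and since $\mu_0$ has full support, $\mu_0(O)>0$, so its normalization lies in $\mathcal P_T(O)$.

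The main obstacle I anticipate is the structural lemma in the second paragraph: recognizing that the only obstruction to getting a measure supported on $O$ (rather than $\widehat O$) is a possible Dirac mass at $0$, and proving it by iterating the identity $T^{-1}\ker(T^k)=\ker(T^{k+1})$. Once that rigidity fact is in hand, the rest of the argument is a routine additive decomposition.
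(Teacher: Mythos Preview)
Your argument is correct, and it takes a genuinely different route from the paper's proof. The paper first invokes the earlier discussion (Proposition \ref{propcontinuous} applied with $F_0=\{0\}$) to produce a \emph{continuous} invariant measure $m$ with full support; continuity immediately gives $m(\ker^*(T))=m\bigl(\bigcup_k T^{-k}(\{0\})\bigr)=0$, so $\mathbf 1_O\,m=\mathbf 1_{O\setminus\ker^*(T)}\,m$, and since $O\setminus\ker^*(T)$ is backward $T$-invariant, Fact \ref{complementaire} applies directly. Your approach instead works with an \emph{arbitrary} $\mu_0\in\mathcal P_{T,*}(X)$ and proves the structural observation that any $T$-invariant finite measure supported on $\ker^*(T)$ must be a multiple of $\delta_0$; this neutralizes the only possible obstruction and makes the argument entirely self-contained, at the cost of a slightly longer computation. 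The paper's version is shorter precisely because it outsources the ``atom at $0$'' issue to the previously-established existence of continuous full-support invariant measures, whereas your rigidity lemma handles it in place and is a pleasant observation in its own right.
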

\begin{proof} By the discussion of the previous sub-section, we know that $T$ admits a continuous invariant measure $m$ with full support. Then $m(O)>0$, and multiplying $m$ by a suitable constant we may assume that $m(O)=1$. Set 
$\mu:= \mathbf 1_O\, m$. Since $m$ is continuous we have $m(\ker^*(T))=m\bigl( \bigcup_{k\in\NN} T^{-k}(\{ 0\})\bigr)=0$, so that in fact 
$\mu=\mathbf 1_{O\setminus\ker^*(T)}\, m$. Since $T^{-1}(\ker^*(T))=\ker^*(T)$, we have
$T^{-1}(O\setminus \ker^*(T))=T^{-1}(O)\setminus\ker^*(T)\subset O\setminus \ker^*(T)$, so the measure $\mu$ is $T$-$\,$invariant by Fact \ref{complementaire} above. Hence
$\mu$ belongs to $\mathcal P_T(O)$.
\end{proof}

\begin{proof}[Proof of Lemma \ref{residual}] By Fact \ref{P(O)} and Lemma \ref{Gdelta}, $\mathcal P_{T,*}(X\setminus F)=\mathcal P_T(X\setminus F)\cap \mathcal P_{T,*}(X)$ is a $G_\delta$ subset of $\mathcal P_T(X)$. The main point is to show that if $F$ satisfies the assumptions of Lemma \ref{residual}, then $\mathcal P_{T,*}(X\setminus F)$ is dense in 
$\mathcal P_T(X)$. Moreover, since $\mathcal P_{T,*}(X)$ is a dense $G_\delta$ subset of $\mathcal P_T(X)$ by Lemma \ref{Gdelta} again, it is in fact enough to show that the $G_\delta$ set $\mathcal P_T(X\setminus F)$ is dense in $\mathcal P_T(X)$. 
The proof will be quite similar to that of Proposition \ref{continuous2}. 

\smallskip
\begin{fact}\label{factDirac} The Dirac mass $\delta_0$ belongs to the closure of $\mathcal P_{T}(X\setminus \overline{F-F}\,)$ in $\mathcal P_T(X)$.
\end{fact}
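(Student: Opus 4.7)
The plan is to mimic the strategy of Fact \ref{Dirac0}, producing measures that approximate $\delta_{0}$ via dilations, while ensuring they are supported on $X\setminus\overline{F-F}$. The key observation is that the dilation-invariance of $F-F$ (hypothesis (i)) is exactly what will allow a dilation argument to preserve the constraint of avoiding $\overline{F-F}$.

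First, I would produce a single $T$-invariant probability measure supported on $X\setminus\overline{F-F}$ by applying Fact \ref{variante} to the open set $O:=X\setminus\overline{F-F}$, which is nonempty because $F-F$ (and hence $\overline{F-F}$) is nowhere dense. The hypothesis $T^{-1}(O)\setminus\ker^{*}(T)\subset O$ required by Fact \ref{variante} is the contrapositive of hypothesis (iii): if $y\in T^{-1}(O)$ and $y\in\overline{F-F}\setminus\ker^{*}(T)$, then (iii) forces $Ty\in\overline{F-F}$, contradicting $Ty\in O$. Fact \ref{variante} then yields some $\mu_{0}\in\mathcal{P}_{T}(O)=\mathcal{P}_{T}(X\setminus\overline{F-F})$.

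Next, I would dilate. For each $\eta>0$, define the measure $\mu_{\eta}\in\mathcal{P}(X)$ by $\mu_{\eta}(A):=\mu_{0}(\eta^{-1}A)$ for every Borel set $A\subset X$. Since $F-F$ is dilation-invariant by (i) and dilations are homeomorphisms of $X$, the closure $\overline{F-F}$ is also dilation-invariant, so
$$\mu_{\eta}(\overline{F-F})=\mu_{0}(\eta^{-1}\overline{F-F})=\mu_{0}(\overline{F-F})=0,$$
which means $\mu_{\eta}\in\mathcal{P}(X\setminus\overline{F-F})$. The $T$-invariance of $\mu_{\eta}$ follows from the linearity of $T$, which makes $T$ commute with dilations: for any Borel $A\subset X$,
$$\mu_{\eta}(T^{-1}A)=\mu_{0}\bigl(\eta^{-1}T^{-1}A\bigr)=\mu_{0}\bigl(T^{-1}(\eta^{-1}A)\bigr)=\mu_{0}(\eta^{-1}A)=\mu_{\eta}(A).$$

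Finally, I would let $\eta\to 0^{+}$. For any $f\in\mathcal{C}_{b}(X)$, the change-of-variables formula gives $\int_{X}f\,d\mu_{\eta}=\int_{X}f(\eta x)\,d\mu_{0}(x)$, and dominated convergence (using continuity of $f$ at $0$ together with the uniform bound $\|f\|_{\infty}$) yields $\int f\,d\mu_{\eta}\to f(0)=\int f\,d\delta_{0}$, so $\mu_{\eta}\to\delta_{0}$ in $\mathcal{P}(X)$. This completes the sketch. There is no serious obstacle; the mild subtlety is that one must use hypothesis (iii) (and not merely (ii)) to get the initial measure $\mu_{0}$ off $\overline{F-F}$, and hypothesis (i) to ensure that the dilates $\mu_{\eta}$ remain in $\mathcal{P}_{T}(X\setminus\overline{F-F})$.
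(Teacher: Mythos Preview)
Your proof is correct and follows essentially the same approach as the paper: apply Fact \ref{variante} to $O=X\setminus\overline{F-F}$ (using hypothesis (iii) to verify $T^{-1}(O)\setminus\ker^*(T)\subset O$), then dilate the resulting measure (using hypothesis (i) to preserve the support condition and linearity of $T$ to preserve invariance) and let the dilation parameter tend to $0$. The only cosmetic difference is that the paper routes the convergence $\mu_\eta\to\delta_0$ through an intermediate claim of the form ``$\nu(W)>1-\varepsilon$'' (as in Claim \ref{claim0}), whereas you argue directly via dominated convergence; both are equivalent.
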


\begin{proof}[Proof of Fact \ref{factDirac}] Exactly as for the proof of Fact \ref{Dirac0} above, this will follow from the next
 
\begin{claim}\label{claim} For every $\varepsilon\in (0,1)$ and every neighbourhood $W$ of $0$ in $X$, there exists a measure 
$\nu\in \mathcal P_{T}(X\setminus \overline{F-F}\,)$ such that 
$\nu(W)>1-\varepsilon.$
\end{claim}

\begin{proof}[Proof of Claim \ref{claim}]  Set $O:=X\setminus \overline{F-F}$. Then $O$ is a dense open subet of $X$, and since 
$T\bigl(\,\overline{F-F}\,\setminus\ker^*(T)\bigr)\subset \overline{F-F}$ we have $T^{-1}(O)\setminus\ker^*(T)\subset O$. Moreover, $O$ is dilation-invariant because $\overline{F-F}$ is. By Lemma \ref{variante}, one can find a $T$-$\,$invariant probability measure $\mu$ such that $\mu(O)=1$. For any 
$\eta>0$, the dilated measure $\mu^\eta$ defined as in Fact \ref{Dirac0} above is $T$-$\,$invariant (by the linearity of $T$) and still satisfies 
$\mu^\eta(O)=1$ (by the dilation-invariance of $O$). The same argument as in the proof of
Fact \ref{Dirac0} shows that if $\eta$ is sufficiently small, the measure 
$\nu:=\mu^\eta$ has the required properties.
\end{proof}
This proves Fact \ref{factDirac}.
\end{proof} 

Having established Fact \ref{factDirac}, we can now conclude the proof of Lemma \ref{residual} by the same kind of convolution argument as in the proof of Proposition \ref{continuous2}.
\par\smallskip
By Proposition \ref{continuous2} (or more accurately, Remark \ref{rem5}), it is enough to show that any \emph{continuous} $T$-$\,$invariant measure belongs to the closure of $\mathcal P_{T}(X\setminus F)$ in 
$\mathcal P(X)$. So let us fix a continuous measure $m\in\mathcal P_T(X)$. We have to find a sequence $(\mu_k)_{k\ge 1}$ of elements of $\mathcal P_{T}(X\setminus F)$ such that $\mu_k\to m$.
\par\smallskip
First observe that since $T(F\setminus \ker^*(T))\subset F$ and $T^{-1}(\ker^*(T))=\ker^*(T)$, we have $T(F\setminus\ker^*(T))\subset F\setminus\ker^*(T)$ and 
$T^{-1}\bigl((X\setminus F)\setminus \ker^*(T)\bigr)\subset (X\setminus F)\setminus \ker^*(T)$. Moreover, since $m$ is continuous and $T$-$\,$invariant, we also have 
$m(\ker^*(T))=m\bigl(\bigcup_{k\in\NN} T^{-k}(\{ 0\})\bigr)=0$. By Fact \ref{complementaire}, it follows that we can decompose $m$ as a convex combination of two measures $m_1,m_2\in\mathcal P_T(X)$ with $m_1(F)=0=m_2(X\setminus F)$: just set 
$$m_1:= \frac1{\scriptstyle m\left((X\setminus F)\setminus\ker^*(T)\right)} \,\mathbf 1_{\scriptstyle (X\setminus F)\setminus\ker^*(T)}\, m \quad \textrm{ and } 
\quad m_2:=\frac1{\scriptstyle m \left(F\setminus\ker^*(T)\right)} \,\mathbf 1_{\scriptstyle F\setminus\ker^*(T)}\, m $$ 
if  $m(F) m(X\setminus F)\neq0$, and $m_1=m$ or $m_2=m$ if $m(F)=0$ or $m(X\setminus F)=0$.  
\par\smallskip
Now $\mathcal P_{T}(X\setminus F)$ is a convex subset of 
$\mathcal P(X)$, and  the measure $m_1$ already belongs to $\mathcal P_T(X\setminus F)$.  So it is enough to approximate the measure $m_2$ by elements of $\mathcal P_T(X\setminus F)$. In other words, we may assume from the beginning that $m(X\setminus F)=0$.
\par\smallskip
By Fact \ref{factDirac}, there exists a sequence $(\nu_k)_{k\ge 1}$ of elements of $\mathcal P_{T}(X\setminus \overline{F-F}\,)$ such that $\nu_k\to\delta_0$. 
As in the proof of Proposition \ref{continuous2}, set $\mu_k:=\nu_k*m$, the convolution product of $\nu_k$ and $m$. Then $\mu_k\to m$ in $\mathcal P(X)$ because 
$\nu_k\to\delta_0$; and the measures $\mu_k$ are $T$-$\,$invariant because $m$ and $\nu_k$ are $T$-$\,$invariant and $T$ is linear. Finally, let us check that $\mu_k(F)=0$. Since $m(X\setminus F)=0$, we can write 
\begin{eqnarray*}\mu_k(F)&=&\int_X \nu_k\bigl(F -y\bigr)\, dm(y)
=\int_{F} \nu_k \bigl(F -y\bigr)\, dm(y)\, .
\end{eqnarray*}
Now recall that $\nu _{k}$ is supported on $X\setminus \overline{F-F}$, so that in particular $\nu_k(F-y)=0$ for all $y\in F$. So we do have $\mu_k(F)=0$, which concludes the proof of Lemma \ref{residual}.
\end{proof}

\begin{remark}\label{rem4} The linear structure has been used very heavily in the above proof. The linearity of $T$ is needed to show that the measures $\mu_k$ are 
$T$-$\,$invariant, dilations are necessary to approximate the Dirac mass $\delta_0$, and the difference set $F-F$ is essential in the convolution argument. 
As will become clear in the sequel, it 
would be very interesting to find weaker conditions on the set $F$ under which the above reasoning could be carried out, so as to yield that $\mathcal P_{T,*}(X\setminus F)$ is a dense $G_{\delta }$ subset of $\mathcal P_T(X)$. One may observe that what we really need to know on the set $F$ is that  the Dirac mass $\delta_0$ can be approximated by invariant measures $\nu_k$ satisfying $\nu_k(F-y)=0$ for every $y\in F$. This is of course much weaker than requiring $\nu_k(\,\overline{F-F}\,)=0$, but we have not been able to take this into account in order to weaken the assumptions of Proposition \ref{residual} in a satisfactory way.
\end{remark}

\subsection{Further remarks and questions}
One may wonder whether it is always possible to write the set of hypercyclic vectors for a given operator $T$ as $HC(T)=\bigcap_{k\in\NN} (X\setminus F_k)$, where the $F_k$ are closed subsets of $X$ satisfying the assumptions of Lemma \ref{residual}. If it were so, then it would follow immediately that any frequently hypercyclic operator acting on a reflexive Banach space admits an invariant measure $m$ with full support such that $m(HC(T))=1$, and hence and ergodic measure with full support. However, this is \emph{not} always possible. Indeed, the requirement that $F-F$ is nowhere dense in Lemma \ref{residual} is rather strong, because it implies that the set $F$ has to be \emph{Haar-null} in the sense of Christensen, which means that there exists a Borel probability measure $\mu$ on $X$ such that every translate of $F$ has $\mu\,$-$\,$measure $0$. So, if $HC(T)$ can be written as above, then $X\setminus HC(T)$ has to be Haar-null. However, although operators on reflexive Banach spaces for which $X\setminus HC(T)$ is Haar-null do exist (see \cite{GR}), the ``expected" behaviour is rather the opposite one (see \cite{BMM}).

Nevertheless, a positive answer to the next question would imply that any frequently hypercyclic operator acting on a reflexive Banach space admits an ergodic measure with full support. 

\begin{question} Let $T$ be a hypercyclic operator on a Banach space $X$. Is it possible to write $HC(T)$ as $HC(T)=\bigcap_{n\in\NN} (X\setminus F_n)$, where the 
$F_n$ are closed, $T$-$\,$invariant subsets of $X$ and, for each fixed $n\in\NN$, the set $F_n$ has the following smallness property: 
there exists a sequence $(\nu_{k,n})_{k\in\NN}\subset \mathcal P_T(X)$ converging to the Dirac mass $\delta_0$ as $k\to\infty$ such that $\nu_{k,n}(F_n-y)=0$ for all $k$ and every $y\in F_n$?
\end{question}

More modestly, one may also ask the following question.
\begin{question}\label{Haar} Let $T$ be an operator on $X$ admitting an invariant measure with full support. Assume that $X\setminus HC(T)$ is Haar-null. Does it follow that $T$ admit an ergodic measure with full support?
\end{question}

It should be pointed out, however, that the scope of this question may be rather limited. Indeed, even though operators $T$ for which $X\setminus HC(T)$ is 
Haar-null do exist, we do not know whether such operators can admit invariant measures with full support. 

\medskip
To end-up this section, we note that the conclusion of Theorem \ref{th1} is easily seen to be satisfied (without assuming that $X$ is reflexive nor that $T$ is frequently hypercyclic) if the unimodular eigenvectors of $T$ span a dense linear subspace of $X$ and $T^N\neq Id$ for all $N\geq 1$. Indeed, in this case $T$ admits an invariant Gaussian measure $m$ with full support (see \mbox{e.g.} \cite{BM1}), and since any proper Borel subspace of $X$ is \emph{Gauss null}, \mbox{i.e.} negligible for every Gaussian measure with full support (see \mbox{e.g.} \cite{BL}), we have $m({\rm Per}(T))=0$.  
So Theorem \ref{th1} is interesting only for frequently hypercyclic operators having few unimodular eigenvectors. This is an additional motivation for the following question, which was already mentioned in \cite{BG2} in a Hilbert space setting.

\begin{question} Does there exist an operator acting on a reflexive Banach space which is frequently hypercyclic but has no unimodular eigenvalues?
\end{question}

\section{Quantifying the frequent hypercyclicity of an operator}\label{sec4}

\subsection{Introductory remarks} Our starting point in this section is Remark \ref{rem1} at the end of Section \ref{sec2}, which we restate in a linear setting. Let $T$ be a frequently hypercyclic operator on a Banach space 
$X$. For each $R>0$, denote by $B_R$ the closed ball with radius $R$ centered at $0$.
Recall that if $x\in X$ and $B\subset X$, we set
$$\mathcal N_T(x,B):=\{ i\in\NN;\; T^ix\in B\}\, .$$
\par\smallskip
Although this may look counter-intuitive, it is quite possible that a hypercyclic vector $x_0$ for $T$ satisfies
\begin{equation*}
\sup_{R>0}\; \underline{\rm dens}\, \mathcal N_T(x_{0},B_{R}) <1\, , \tag{$*$}
\end{equation*}
or even that
\begin{equation*}
\sup_{R>0}\; \overline{\rm dens}\, \mathcal N_T(x_{0},B_{R}) <1\, . \tag{$**$}
\end{equation*}

Observe that $(*)$ is equivalent to the following property: there exists a set $D\subset\N$ with positive upper density such that $\Vert T^ix_0\Vert\to\infty$ as $i\to\infty$ along 
$D$. Indeed, assume first that $(*)$ holds true, and denote by $c$ the involved supremum (so that $c<1$).  Then, for each positive integer $R$, one can
find infinitely many integers $N\in\N$ such that 
$$\frac1N\,\#\bigl\{ i\in [1,N];\; \Vert T^ix_0\Vert \leq R\bigr\} \leq c +2^{-R}\, .$$
For all such $N$ we have
$$\frac1N\,\#\bigl\{ i\in [1,N];\; \Vert T^ix_0\Vert > R\bigr\}\geq 1-c-2^{-R}\, ,$$ from which it follows that one can find an increasing sequence of integers $(N_{R})_{R\ge 1}$ 
such that
$$\frac{1}{N_{R}}\,\#\bigl\{ i\in (N_{R-1},N_{R}];\; \Vert T^ix_0\Vert > R\bigr\}\geq 1-c-2^{-(R-1)}\, \quad \textrm{ for each }R\ge 1.$$
Then, the set $$D=\bigcup_{R\ge 1}\{i\in (N_{R-1},N_{R}];\; \Vert T^ix_0\Vert > R\}$$
has upper density $\overline{\rm dens} (D)\geq 1-c>0$, and $\Vert T^ix_0\Vert\to\infty$ as $i\to\infty$ along $D$. Conversely, the same argument shows that if there exists a set $D\subset \NN$ with upper density $\overline{\rm dens}(D)=d>0$ such that 
$\Vert T^ix_0\Vert\rightarrow \infty$ as $i\to\infty$ along $D$, then $(*)$ holds true and the involved supremum is not greater than $1-d$. 

This phenomenon turns out to happen quite often. Indeed, building on an argument of 
\cite{BBMP}, Bayart and Ruzsa have shown in \cite[Proposition 14]{BR} that whenever $T$ is a frequently hypercyclic operator on a Banach space $X$, there exists a comeager set of vectors $x\in X$ such that $\Vert T^ix\Vert\to \infty $ as $i\to\infty$ along some set $D_x\subset\NN$ with $\overline{\rm dens}(D_x)=1$. It follows that if $T$ is frequently hypercyclic, then the supremum involved in $(*)$ is actually equal to $0$ for a comeager set of vectors $x_0\in HC(T)$. (Note that such a vector $x_0$ cannot be frequently hypercyclic for $T$, so this  implies in particular that \emph{the set $FHC(T)$ is meager in $X$}; a somewhat different proof of this fact will be given below).

On the other hand, it seems much harder to find operators $T$ such that $(*)$ holds true for some \emph{frequently hypercyclic} vector $x_0$. 
The only known examples are the frequently hypercyclic weighted backward 
shifts on $c_0(\ZZ)$ constructed in \cite[Theorem 7]{BR}, where in fact $(**)$ holds true for all hypercyclic vectors $x_{0}\in X$. 

Observe also that an argument similar  to the one presented above shows the following: $(**)$ holds true for a given vector $x_{0}$, with the involved supremum denoted by $c$, if and only if there exists a set $D\subset\N$ of integers with $\underline{\rm dens}(D)\ge 1- c$ such that 
$\Vert T^ix_0\Vert\rightarrow \infty$ as $i\to\infty$ along $D$.
\par\smallskip
Our aim in this section is to investigate these phenomena a little bit further.

\subsection{The parameter $c(T)$} In this sub-section, we introduce a parameter $c(T)\in [0,1]$ associated with any hypercyclic operator $T$ acting on a Banach space $X$. This parameter measures the maximal frequency with which the orbit of a hypercyclic vector $x$ for $T$ can visit a ball centered at $0$. 

\begin{definition} Let $T$ be a hypercyclic operator on $X$. For each $R>0$, we set $c_R(T):=\sup\limits_{x\in HC(T)}\, \overline{\rm dens}\; \mathcal N_T (x, B_R)$, and we define 
$c(T):=\sup_{R>0}\, c_R(T)$. In other words,
$$c(T)=\sup\limits_{R>0}\; \sup_{x\in HC(T)}\; \overline{\rm dens}\, \mathcal N_{T}(x, B_R)\, .$$
\end{definition}
Note that it is quite natural to exclude the vectors $x$ whose orbits have a ``trivial" behaviour in this definition, since otherwise $c(T)$ would just have no interest at all: for example, 
if $x$ has a bounded $T$-$\,$orbit, then $\mathcal N_T(x,B_R)=\NN$ for all sufficiently large $R$. Note also that if $T$ is frequently hypercyclic (or just upper frequently hypercyclic) then $c(T)>0$.

\smallskip
The following lemma shows that the suprema in the definition of $c(T)$ are attained at a comeager set of points, and also that the above quantity $c_R(T)$ is in fact 
independent of $R>0$.

\begin{lemma}\label{c(T)} For any $\alpha>0$, there exists a comeager set of vectors $x\in HC(T)$ such that $\overline{\rm dens}\, \mathcal N_{T}(x, B_{\alpha})=c(T)$. 
\end{lemma}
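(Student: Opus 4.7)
The plan is to exploit \emph{lower} semicontinuity by replacing closed balls with open balls in the definition of the counting functional, which then allows a clean Baire category argument. As a first step, using the dilation-invariance of $HC(T)$ and the observation that $\lambda T^ix\in \mathring{B}_\beta$ iff $T^ix\in \mathring{B}_{\beta/\lambda}$ for any $\lambda>0$, I would check that for any $\beta>0$,
$$\sup_{x\in HC(T)}\,\overline{\rm dens}\,\mathcal N_T(x,\mathring{B}_\beta)=c(T).$$
The point is that a hypercyclic vector nearly achieving $c(T)$ on some $B_R$ can be scaled so that its orbit hits $\mathring{B}_\beta$ with at least the same frequency.

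Next, for each $N\ge 1$, introduce the counting function $g_N(x):=\#\{1\le i\le N\,:\,T^ix\in\mathring{B}_\alpha\}$. Since $\mathring{B}_\alpha$ is open and each $T^i$ is continuous, $g_N$ is a finite sum of indicator functions of open sets, hence integer-valued and lower semicontinuous. In particular, the superlevel sets $\{g_N\ge s\}$ are open for every $s\in\RR$, so that for each $j,N_0\in\NN$ the set
$$U_{j,N_0}:=\bigcup_{N\ge N_0}\Bigl\{x\in X\,:\,g_N(x)\ge\bigl(c(T)-\tfrac1j\bigr)N\Bigr\}$$
is open in $X$.

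I would then show that each $U_{j,N_0}$ is dense. Given a nonempty open set $V\subset X$, pick $y\in HC(T)$ with $\overline{\rm dens}\,\mathcal N_T(y,\mathring{B}_\alpha)\ge c(T)-\tfrac{1}{2j}$ by the first step, and then $n_0\in\NN$ with $T^{n_0}y\in V$ using the hypercyclicity of $y$. Since $\mathcal N_T(T^{n_0}y,\mathring{B}_\alpha)$ differs from $\mathcal N_T(y,\mathring{B}_\alpha)$ only by a shift of the index set, it has the same upper density, so infinitely many $N$ satisfy $g_N(T^{n_0}y)/N\ge c(T)-\tfrac1j$; in particular some such $N\ge N_0$ exists, placing $T^{n_0}y$ in $V\cap U_{j,N_0}$.

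Finally, the Baire category theorem gives that $\bigcap_{j,N_0}U_{j,N_0}$ is a dense $G_\delta$ subset of $X$ on which $\overline{\rm dens}\,\mathcal N_T(x,B_\alpha)\ge\overline{\rm dens}\,\mathcal N_T(x,\mathring{B}_\alpha)\ge c(T)$. Intersecting with the comeager set $HC(T)$ and using the trivial opposite inequality $\overline{\rm dens}\,\mathcal N_T(x,B_\alpha)\le c(T)$ for $x\in HC(T)$ yields the desired comeager set, and as a byproduct also shows that $c_R(T)=c(T)$ for every $R>0$. The main technical point to navigate is precisely the passage from closed to open balls: working directly with $B_\alpha$ gives only upper semicontinuity and hence $F_\sigma$ sets $U_{j,N_0}$, where denseness does not upgrade to comeagerness; the monotonicity $\mathring{B}_\alpha\subset B_\alpha$ lets us sidestep this obstruction.
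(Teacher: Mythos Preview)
Your proof is correct and follows the same overall Baire category strategy as the paper: both define open sets of the form ``the orbit hits the ball with frequency at least $c(T)-\varepsilon$ on some block $[1,N]$ with $N\ge N_0$'', show these are dense by pushing a well-chosen hypercyclic vector into a target open set via an iterate (using shift-invariance of upper density), and conclude by Baire. The dilation-invariance of $HC(T)$ is used in both to compare different radii.

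The one genuine technical difference is how each handles the discontinuity of the ball's indicator. The paper keeps the closed ball $B_R$ but first chooses the radius $R$ in a suitable interval so that the orbits of a fixed countable family $(z_p)\subset HC(T)$ avoid $\partial B_R$; this makes each $\mathbf 1_{B_R}\circ T^i$ continuous at the relevant points and forces the sets $U_{N,\varepsilon}$ to be open. You instead replace $B_\alpha$ by the open ball $\mathring B_\alpha$ from the outset, so that your counting function $g_N$ is lower semicontinuous and (being integer-valued) has open superlevel sets automatically; the monotonicity $\mathring B_\alpha\subset B_\alpha$ then recovers the closed-ball statement at the end. Your route is arguably a bit cleaner, since it avoids the ``choose a radius missing countably many spheres'' step; the paper's version, in turn, isolates the fact that $c_R(T)$ is independent of $R$ as a separate one-line observation before the main Baire argument.
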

\begin{proof} 
The proof relies on the next two facts.

\begin{fact}\label{fait1} Let $z\in HC(T)$, and let $R>0$ be such that $\Vert T^m z\Vert\neq R$ for all $m\geq 0$. Then the set 
$G_{z,R}=\{ x\in HC(T);\; \overline{\rm dens}\, \mathcal N_T(x, B_R)\geq \overline{\rm dens}\, \mathcal N_T(z, B_R)\}$ is comeager in $X$.
\end{fact}

\begin{proof} Set $d_{z,R}=\overline{\rm dens}\, \mathcal N_T(z, B_R)$. For $N\in\NN$ and $\varepsilon>0$, define 
$$ U_{N,\varepsilon}:=\left\{ x\in X;\; \exists n\geq N\; :\; T^ix\not\in\partial B_R\;\hbox{for $i=1,\dots n$ and } 
\frac1n\sum_{i=1}^{n} \mathbf 1_{B_R}(T^ix)> d_{z,R}-\varepsilon\right\}. $$
We denote here by $\partial B_R$ the boundary of the ball $B_R$. The set $U_{N,\varepsilon}$ is open in $X$, because if $n\ge N$ is fixed, any vector $x\in X$ such that 
$T^ix\not\in\partial B_R$ for $i=1,\dots ,n$ is a point of continuity of all functions $\mathbf 1_{B_R}\circ T^i$, $1\leq i\leq n$. Moreover, $U_{N,\varepsilon}$ is also dense in $X$ because it contains the orbit of $z$ under the action of $T$ (and $z$ is hypercyclic). Indeed, if $k\ge 1$ is fixed, then  $T^i(T^kz)$ does not belong to $\partial B_R$ for every 
$i\in\NN$ by the choice of $R$, and also $\mathcal N_T(T^kz,B_R)=\mathcal N_T(z,B_R)-k$, so that
$\overline{\rm dens} \,\mathcal N_T(T^kz,B_R)=\overline{\rm dens} \,\mathcal N_T(z,B_R)=d_{z,R}$.
It follows that $T^kz$ belongs to $ U_{N,\varepsilon}$. By the Baire Category theorem, the set 
$$G:=HC(T)\cap \bigcap_{N,q\ge 1} U_{N,2^{-q}}$$ is a dense $G_\delta$ subset of $X$. 
Since $G$ is obviously contained in $ G_{z,R}$, this concludes the proof.
\end{proof}

\begin{fact}\label{fait2} The quantity $c_R(T)=\sup\limits_{z\in HC(T)}\, \overline{\rm dens}\, \mathcal N_{T}(z, B_R)$ does not depend on $R>0$.
\end{fact}

\begin{proof} By the linearity of $T$ we have $\mathbf 1_{B_R} (T^iz)=\mathbf 1_{B_1} (R^{-1} T^iz)$, so that 
$\mathcal N_{T}(z,B_R)=\mathcal N_T(R^{-1}z, B_1)$ for all $z\in HC(T)$. Since $HC(T)$ is dilation-invariant, this yields at once that $c_R(T)=c_1(T)$ for all $R>0$.
\end{proof}

We are now ready to prove Lemma \ref{c(T)}.
\par\smallskip
Let $\alpha >0$.
By Fact \ref{fait2}, there exists a sequence $(z_p)_{p\ge 1}$ of vectors of $ HC(T)$ such that $\overline{\rm dens}\, \mathcal N_{T}(z_p, B_{\alpha/2})\to c(T)$ as $p\to\infty$. Then, we can choose $R\in (\alpha/2, \alpha)$ such that $\Vert T^m z_p\Vert\neq R$ for each $p$ and $m\geq 0$. By Fact \ref{fait1} and the Baire Category theorem, there exists a comeager set $G\subset HC(T)$ such that 
$\overline{\rm dens}\, \mathcal N_T(x, B_R)\geq \overline{\rm dens}\, \mathcal N_T(z_p, B_R)$ for every $x\in G$ and $p\geq 1$. Since 
$\overline{\rm dens}\, \mathcal N_T(x, B_R)\leq \overline{\rm dens}\, \mathcal N_T(x, B_\alpha)\leq c(T)$ and 
$\overline{\rm dens}\, \mathcal N_T(z_p, B_R)\geq \overline{\rm dens}\, \mathcal N_T(z_p, B_{\alpha/2})$, we obtain by letting $p$ tend to infinity that 
$\overline{\rm dens}\, \mathcal N_T(x, B_\alpha)=c(T)$ for all $x\in G$.
\end{proof}

\begin{remark}\label{reformul} By Lemma \ref{c(T)}, the parameter $c(T)$ may be defined equivalently as follows: for any $\alpha >0$, 
$$c(T)= \max \Bigl\{ c\in [0,1];\; \overline{\rm dens} \;\mathcal{N}_{T}(x,B_{\alpha })\geq c\;\hbox{ for comeager many $x\in X$}\Bigr\}\, .$$
\end{remark}

\begin{remark}\label{remarque} Since the family $(B_R)_{R>0}$ is monotonic with respect to $R$, it follows from Lemma \ref{c(T)} and the Baire Category theorem  that  there is in fact a comeager set of vectors $x\in HC(T)$ such that the following holds true: for every  $\alpha >0$, $\overline{\rm dens}\, \mathcal N_T(x, B_\alpha)=c(T)$.
\end{remark}

\smallskip

\subsection{Two simple applications} In this sub-section, we prove two simple results which illustrate the relevance of the parameter $c(T)$ for the study of the dynamics of an operator $T$.

\smallskip
Our first result shows that any frequently hypercyclic operator has ``distributionally null" orbits. This is a kind of counterpart to \cite[Proposition 14]{BR}, where ``distributionally unbounded" orbits are considered.

\begin{proposition}\label{distrib0} Let $T$ be a hypercyclic operator on a Banach space $X$ with $c(T)>0$. There is then a comeager set of vectors $x\in X$ such that 
$\Vert T^ix\Vert\to 0$ as $i\to \infty$ along some set $D_x\subset\NN$ with $\overline{\rm dens} (D_x)\geq c(T)$.
\end{proposition}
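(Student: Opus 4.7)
The plan is to use Lemma \ref{c(T)} (or more precisely Remark \ref{remarque}) to obtain a comeager set of good vectors $x$, and then for each such $x$ extract $D_x$ by a diagonal thinning of the sets $\mathcal N_T(x,B_{1/k})$.

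First I would invoke Remark \ref{remarque} to fix a comeager set $G\subset HC(T)$ such that, for every $x\in G$ and every $\alpha>0$,
$$\overline{\rm dens}\,\mathcal N_T(x,B_\alpha)=c(T).$$
In particular, for every $x\in G$ and every $k\geq 1$ one has $\overline{\rm dens}\,\mathcal N_T(x,B_{1/k})=c(T)$. It will suffice to produce, for each $x\in G$, a set $D_x\subset\NN$ with $\overline{\rm dens}(D_x)\geq c(T)$ along which $\Vert T^ix\Vert\to 0$; the comeagerness in the statement then follows from the comeagerness of $G$.

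Fix $x\in G$. The idea is a standard diagonal construction: choose inductively integers $0=N_0<N_1<N_2<\cdots$ with $N_k\geq 2^k N_{k-1}$ and
$$\frac{1}{N_k}\,\#\bigl(\mathcal N_T(x,B_{1/k})\cap[1,N_k]\bigr)\geq c(T)-2^{-k},$$
which is possible because the upper density of $\mathcal N_T(x,B_{1/k})$ equals $c(T)$. Then set
$$D_x:=\bigcup_{k\geq 1}\Bigl(\mathcal N_T(x,B_{1/k})\cap (N_{k-1},N_k]\Bigr).$$
By construction, if $i\in D_x$ and $i>N_{k-1}$, then $i$ lies in $(N_{j-1},N_j]$ for some $j\geq k$, and hence $\Vert T^ix\Vert\leq 1/j\leq 1/k$. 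This shows $\Vert T^ix\Vert\to 0$ as $i\to\infty$ along $D_x$.

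For the density estimate, observe that
$$\#\bigl(D_x\cap[1,N_k]\bigr)\geq \#\bigl(\mathcal N_T(x,B_{1/k})\cap(N_{k-1},N_k]\bigr)\geq (c(T)-2^{-k})N_k-N_{k-1},$$
and since $N_{k-1}/N_k\leq 2^{-k}$, dividing by $N_k$ gives $\#(D_x\cap[1,N_k])/N_k\geq c(T)-2^{1-k}$. Letting $k\to\infty$ yields $\overline{\rm dens}(D_x)\geq c(T)$, as required. There is no serious obstacle here; the only point that requires a bit of care is the choice of $N_k$ growing fast enough so that the portion of $\mathcal N_T(x,B_{1/k})$ lost in $[1,N_{k-1}]$ is negligible, and this is guaranteed by the condition $N_k\geq 2^kN_{k-1}$.
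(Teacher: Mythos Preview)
Your proof is correct and follows essentially the same approach as the paper's own proof: invoke Remark \ref{remarque} to obtain the comeager set $G$, and then for each $x\in G$ perform a diagonal construction on the sets $\mathcal N_T(x,B_{1/k})$, choosing a fast-growing sequence $(N_k)$ so that the loss on $[1,N_{k-1}]$ is negligible. The only cosmetic difference is that the paper works with an arbitrary decreasing sequence $(\varepsilon_r)\to 0$ where you use the specific choice $\varepsilon_k=2^{-k}$; the arithmetic is otherwise identical.
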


\begin{proof} By Remark \ref{remarque}, we know that the set 
$$G=\{ x\in X;\; \forall \alpha>0\;:\; \overline{\rm dens} \,\mathcal{N}_{T}(x,B_{\alpha })\geq c(T)\}$$ is comeager in $X$. 
It is thus enough to show that all points $x\in G$ have the required property.
Let us fix a vector $x\in G$, and let $(\varepsilon _r)_{r\ge 1}$ be a decreasing sequence of positive numbers tending to $0$. By the definition of $G$, one can find an increasing sequence of integers 
$(n_r)_{r\ge 0}$ (with $n_0=1$) such that 
$$\frac1{n_r}\,  \#\left\{ i\in [1,n_r];\; \Vert T^ix\Vert \leq 1/r\right\} \geq c(T)-\varepsilon _r\quad \textrm{ for all } r\ge 1.$$
 Moreover, extracting if necessary a subsequence from the sequence $(n_r)_{r\ge 0}$, we may also assume  that $(n_r)$ increases very fast; for example that 
$n_{r-1}/n_r\leq \varepsilon _r$ for all $r\geq 1$. This will ensure that 
$$\frac1{n_r}\,  \#\left\{ i\in (n_{r-1},n_r];\; \Vert T^ix\Vert \leq 1/r\right\} \geq c(T)-2\varepsilon _r \quad \textrm{ for all } r\ge 1.$$
If we set 
$$ D_x=\bigcup_{r\geq 1} \bigl\{ i\in (n_{r-1},n_r];\; \Vert T^ix\Vert \leq 1/r\bigr\}\, ,$$
it follows that 
\begin{eqnarray*}
\frac1{n_r}\,\#\Bigl([1,n_r]\cap D_x\Bigr)&\geq &\frac1{n_r} \#\Bigl( (n_{r-1},n_r]\cap D_x\Bigr)
\geq c(T)-2\varepsilon _r \quad \textrm{ for all } r\ge 1,
\end{eqnarray*}
so that $\overline{\rm dens}(D_x)\geq c(T)$. Since obviously $\Vert T^ix\Vert\to 0$ as $i\to\infty$ along $D_x$, this concludes the proof.
\end{proof}

\smallskip
One may wonder if the point $0$ plays any special role in the statement of Proposition \ref{distrib0}. More precisely, is it possible to show that that given any vector $a\in X$, there is a comeager set of vectors $x\in X$ such that $T^ix\to a$ as $i\to\infty$ along some set with positive upper density? This is not so, as shown by the following simple remark.

\begin{remark}\label{proposition} Let $X$ be a Hausdorff topological space, and let $T:X\to X$ be a continuous self-map of $X$. Let also $a\in X$. Assume that one can find 
$x\in X$ such that $T^ix\to a$ as $i\to\infty$ along some set $D\subset\NN$ with $\overline{\rm dens}(D)>0$. Then $a$ is a periodic point of $T$.
\end{remark}

\begin{proof} The key point is  observe that, under the assumptions of Remark \ref{proposition}, one can find an integer $q\ge 1$ such that $D\cap (q+D)$ is infinite. Suppose indeed that it is not the case, and consider the sets $R_k=(k+ D)\setminus \bigcup_{1\leq q<k} (q+D)$, $k\ge 2$.
These sets are pairwise disjoint, and our assumption that $D\cap (k+D)$ is finite for every $k\ge 1$ implies that each set $R_{k}$
is a translate of a cofinite subset of $D$. But this contradicts the fact that $\overline{\rm dens}(D)=c>0$. Indeed, choosing an invariant mean 
$\mathfrak m$  on $\ell^\infty(\NN)$ such that $\mathfrak m(D)=c$, we would then have $\mathfrak m (R_k)\geq c$ for all $k\ge 2$, and hence 
$\mathfrak m\left(\bigcup_k R_k\right)=\infty$, which is impossible.
\par\smallskip
So let $q$ be such that $D\cap (q+D)$ is infinite.
Since $T^ix\to a$ as $i\to\infty$ along $D$ and $T^{i}x\to T^q a$ as $i\to\infty$ along $q+D$, we immediately deduce that $T^q a=a$, so that $a$ is a periodic point of $T$.
\end{proof}

\smallskip
Our second result shows that if $T$ is a frequently hypercyclic operator, then the set $FHC(T)$ is rather small even though it is of course dense in the underlying space $X$. As  mentioned at the beginning of this section, this was already obtained independently in \cite{BR}, and it is also proved 
in Moothathu \cite{M}.

\begin{proposition}\label{FHCmeager} If $T$ is any bounded operator on a Banach space $X$, then $FHC(T)$ is meager in $X$.
\end{proposition}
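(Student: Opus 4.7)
My plan is the following. We may assume $T$ is frequently hypercyclic, since otherwise $FHC(T) = \emptyset$ is trivially meager. I will construct a comeager set $G \subset X$ disjoint from $FHC(T)$; specifically, $G$ will consist of those $x$ satisfying $\underline{\rm dens}\,\mathcal{N}_T(x, B_R) = 0$ for every $R > 0$. Such vectors cannot be frequently hypercyclic: every nonempty bounded open set $V$ lies in some $B_R$, whence $\underline{\rm dens}\,\mathcal{N}_T(x, V) \leq \underline{\rm dens}\,\mathcal{N}_T(x, B_R) = 0$, contradicting the FHC condition.

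The first ingredient will be a lower-density analogue of Fact \ref{fait1}: for every $z \in HC(T)$ and every $R > 0$ with $\|T^m z\| \neq R$ for all $m \geq 0$, the set $\{x \in HC(T) : \underline{\rm dens}\,\mathcal{N}_T(x, B_R) \leq \underline{\rm dens}\,\mathcal{N}_T(z, B_R)\}$ is comeager in $X$. The argument will parallel the proof of Fact \ref{fait1}: setting $d = \underline{\rm dens}\,\mathcal{N}_T(z, B_R)$, one defines
$$U_{N,\varepsilon} := \Bigl\{x \in X : \exists\, n \geq N,\ T^i x \notin \partial B_R\ \text{for}\ i = 1, \dots, n,\ \tfrac{1}{n}\sum_{i=1}^n \mathbf{1}_{B_R}(T^i x) < d + \varepsilon \Bigr\}.$$
These sets are open (the $\partial B_R$-avoidance makes $\mathbf{1}_{B_R} \circ T^i$ continuous at the relevant points) and dense (they contain every iterate $T^k z$, since $\mathcal{N}_T(T^k z, B_R) = \mathcal{N}_T(z, B_R) - k$ still has lower density $d$); any $x$ in $\bigcap_{N, \varepsilon} U_{N, \varepsilon}$ then satisfies $\underline{\rm dens}\,\mathcal{N}_T(x, B_R) \leq d$.

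The second, more delicate step will be to exhibit, for each $R > 0$, a hypercyclic vector $z_R \in HC(T)$ with $\|T^m z_R\| \neq R$ for all $m$ and $\underline{\rm dens}\,\mathcal{N}_T(z_R, B_R) = 0$ --- equivalently, a dense orbit visiting $B_R$ only at a zero-lower-density set of times. The set of such vectors is readily seen to be $G_\delta$ in $X$, but proving it is nonempty (let alone dense) is the hard part. I expect this to require a direct hypercyclicity-criterion-type construction, building $z_R$ as a carefully controlled superposition whose orbit realizes a dense sequence of $X$ along a very sparse set of times while otherwise remaining outside $B_R$.

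With both ingredients in hand, the proof will conclude by applying the first step to $z_n$ for each $R = n \in \N$ and intersecting the resulting comeager sets via the Baire category theorem. This will yield a comeager $G \subset X$ on which $\underline{\rm dens}\,\mathcal{N}_T(x, B_n) = 0$ for every $n$; as noted in the first paragraph, $G \cap FHC(T) = \emptyset$, so $FHC(T)$ is meager. The main obstacle, as indicated, is the explicit construction of the vectors $z_R$ from only the frequent hypercyclicity of $T$, which is precisely the substance of the Bayart--Ruzsa and Moothathu arguments referenced at the beginning of this section.
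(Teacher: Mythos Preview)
Your proposal has a genuine gap: the ``second step'' --- producing, for each $R>0$, a hypercyclic vector $z_R$ with $\underline{\rm dens}\,\mathcal N_T(z_R,B_R)=0$ --- is never carried out. You acknowledge this yourself, calling it ``the hard part'' and deferring to \cite{BR} and \cite{M}. But then your argument is no longer a proof of the proposition; it is a reduction to an external result that is itself at least as deep as what you are trying to prove. (The Bayart--Ruzsa argument that comeager many $x$ satisfy $\sup_R\underline{\rm dens}\,\mathcal N_T(x,B_R)=0$ already immediately implies $FHC(T)$ is meager, so your Baire-category scaffolding around it adds nothing.)

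The paper's proof avoids this difficulty entirely by a short, self-contained trick using the parameter $c(T)$ and Lemma~\ref{c(T)}. The point is that you do \emph{not} need to force the lower density of visits to a ball down to $0$; you only need to compare two concentric balls. By Lemma~\ref{c(T)}, comeager many $x\in HC(T)$ satisfy $\overline{\rm dens}\,\mathcal N_T(x,B_1)=c(T)$. For any such $x$ and any nonempty open $V\subset B_2\setminus B_1$, the elementary inequality
\[
\overline{\rm dens}\,\mathcal N_T(x,B_2)\ \geq\ \overline{\rm dens}\,\mathcal N_T(x,B_1)+\underline{\rm dens}\,\mathcal N_T(x,V)
\]
combined with $\overline{\rm dens}\,\mathcal N_T(x,B_2)\leq c(T)$ (by definition of $c(T)$) forces $\underline{\rm dens}\,\mathcal N_T(x,V)=0$, so $x\notin FHC(T)$. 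The whole argument is four lines once $c(T)$ is in hand: the maximality of $c(T)$ at radius $1$, applied simultaneously at radius $2$, squeezes the lower density of visits to the annulus down to zero. No construction of special hypercyclic vectors is needed.
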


\begin{proof} We may of course assume that $T$ is hypercyclic. By Lemma \ref{c(T)}, there exists a comeager set of vectors $G\subset HC(T)$ such that  
$\overline{\rm dens}\, \mathcal N_T(x, B_1)=c(T)$ for every $x\in G$. It is enough to show that no vector 
$x\in G$ can be frequently hypercyclic for $T$; so let us fix $x\in G$. Let $V$ be any nonempty open set such that $V\cap B_1=\emptyset$ and 
$V\subset B_2$. Then
\begin{eqnarray*}
\overline{\rm dens} \,\mathcal{N}_{T}(x,B_{2})&\geq&\overline{\rm dens} \,\mathcal{N}_{T}(x,B_{1}) +\underline{\rm dens} \,\mathcal{N}_{T}(x,V)
=c(T)+\underline{\rm dens} \,\mathcal{N}_{T}(x,V)\, .
\end{eqnarray*}
On the other hand, $\overline{\rm dens} \,\mathcal{N}_{T}(x,B_{2})\leq c(T)$ by the definition of $c(T)$. This shows that $\underline{\rm dens} \,\mathcal{N}_{T}(x,V)=0$, and hence that $x$ does not belong to $ FHC(T)$.
\end{proof}

\begin{remark} Essentially the same proof shows that $FHC(T)$ is meager in $X$ for any continuous linear operator $T$ acting on an arbitrary Polish topological vector space 
$X$ (which is the result obtained in \cite{M}). To prove this, one has to modify the definition of the parameter $c(T)$. Take a neighbourhood $B$ of $0$ in $X$ such that $L\cap \partial B$ contains at most one point for every half-line $L$ starting at $0$. Set $B_R:=R\cdot B$ for any $R>0$, and define 
$$c_{B}(T):= \sup\limits_{R>0}\; \sup_{x\in HC(T)}\; \overline{\rm dens}\, \mathcal N_{T}(x, B_R)\, .$$
Then Lemma \ref{c(T)} still holds as stated, with $c_B(T)$ in place of $c(T)$: the proof is exactly the same, once it has been observed that for any sequence 
$(z_p)\subset X$ and every nontrivial interval $I\subset (0,\infty)$, one can find $R\in I$ such that $T^mz_p\not\in\partial B_R$ for each $p$ and all $m\geq 0$ (which is clear because for any countable set $D\subset X$, the set of all $R>0$ such that $a/R\in\partial B$ for some $a\in D$ is countable). Now, just copy out the proof of Proposition 
\ref{FHCmeager}.
\end{remark}

\begin{remark} Proposition \ref{FHCmeager} is not true for general Polish dynamical systems. For example, if $\rho$ is any irrational rotation of the circle $\TT$, then $FHC(\rho)=\TT$.
\end{remark}

\subsection{Proofs of Theorems \ref{th3}, \ref{th4} and \ref{th5}} We are now in position to prove Theorems \ref{th3}, \ref{th4} and \ref{th5} quite easily. We start with Theorem 
\ref{th4}.

 \begin{proof}[Proof of Theorem \ref{th4}] Assume that $T$ admits an ergodic measure $m$ with full support. By the pointwise ergodic theorem, there exists
for each $R>0$ a Borel set $\Omega_R \subset X$ with $m(\Omega_R)=1$ such that, for each $x\in\Omega_R$,
$$\frac1n\sum_{i=1}^n \mathbf 1_{B_R}(T^ix)\To m(B_R)\quad \textrm{ as } {n\to\infty}\, .$$

Let us fix $R>0$ such that $m(\partial B_{R})=0$. As in the proof of Lemma \ref{c(T)}, consider for $N\in\NN$ and $\varepsilon >0$ the open set
\begin{eqnarray*}
 U_{N,\varepsilon}&:=&\Bigl\{ x\in X;\; \exists n\geq N \textrm{ : } T^ix\not\in\partial B_R \textrm{ for } i=1,\dots ,n \\
&&\qquad\qquad\qquad\qquad \textrm{ and } \;\;  
\frac1n\sum_{i=1}^{n} \mathbf 1_{B_R}(T^ix)>m(B_R)- \varepsilon\Bigr\}.
\end{eqnarray*}
Since $m\left(\bigcup_{i\ge 1}T^{-i}(\partial B_{R})\right)=0$ and since $\Omega_R\setminus \bigcup_{i\ge 1}T^{-i}(\partial B_{R})$ is contained in $U_{N\varepsilon}$, we have 
$m(U_{N,\varepsilon})=1$; in particular, the open set  $U_{N,\varepsilon}$ is dense in $X$ because the measure $m$ has full support. By the Baire Category theorem and since any point $x\in \bigcap_{N,q\in\NN} U_{N,2^{-q}}$ satisfies $\overline{\rm dens}\, \mathcal N_T(x,B_R)\geq m(B_R)$, it follows that 
$\overline{\rm dens}\, \mathcal N_T(x,B_R)\geq m(B_R)$ for a comeager set of vectors $x\in X$. Hence we get by Remark \ref{reformul} that 
$c(T)\geq m(B_R)$ for all $R>0$ such that $m(\partial B_{R})=0$. The set of radii $R>0$ such that $m(\partial B_{R})>0$ being at most countable, it follows that 
$c(T)\geq \sup_{R>0}m(B_R)$, i.e. that $c(T)=1$.
\end{proof}

Theorem \ref{th3} can now be deduced from Theorem \ref{th4} and Proposition \ref{distrib0}.

\begin{proof}[Proof of Theorem \ref{th3}] In \cite[Theorem 7]{BR}, the authors construct a frequently hypercyclic bilateral shift $T=B_{\bf w}$ on $c_0(\ZZ)$ enjoying the following property: 
there exists a set $A\subset\NN$ with positive lower density such that, for any $x\in c_0(\ZZ)$, it holds that
$$\forall i\in A\;:\; \Vert T^i x\Vert\geq \vert \langle e_0^*,x\rangle\vert\, .$$ Here 
 $e_0^*$ denotes as usual the $0\,$-$\,$th coordinate functional on $c_0(\ZZ)$, which associates to a vector $x=(x_{n})_{n\in\Z}$ of $c_{0}(\Z)$ the coordinate $x_{0}$.
\par\smallskip
It follows easily that $c(T)\leq 1-\underline{\rm dens} (A)<1$. Indeed, suppose that it is not the case. One can then apply Proposition \ref{distrib0} to get a comeager set of vectors $x\in c_0(\ZZ)$ such that 
$\Vert T^ix\Vert\to 0$ as $i\to\infty$ along some set $D_x\subset\NN$ with $\overline{\rm dens}(D_x)>1-\underline{\rm dens} (A)$. For any such vector $x$, the set $D_x\cap A$ 
has positive upper density, and in particular $D_x\cap A$ is infinite. Since $\Vert T^ix\Vert \geq \vert \langle e_0^*,x\rangle\vert$ for all $i\in A$, it follows that 
$\langle e_0^*,x\rangle=0$. But this implies that the set of all such vectors $x$ cannot be dense in $c_0(\ZZ)$, a contradiction.
\par\smallskip
By Theorem \ref{th4}, we conclude that the operator $T=B_{\bf w}$ does not admit any ergodic measure with full support, even though it is frequently hypercyclic.
\end{proof}

Finally, let us prove Theorem \ref{th5}.
\begin{proof}[Proof of Theorem \ref{th5}] Assume that $T$ admits an ergodic measure with full support. Then $T$ is frequently hypercyclic and $c(T)=1$. By \cite[Proposition 14]{BR}, and since $T$ is frequently hypercyclic, there exists a comeager set $G$ of vectors $x\in X$ such that 
$\Vert T^ix\Vert\to\infty$ as $i\to\infty$ along some set $E_x\subset \NN$ with $\overline{\rm dens} (E_x)=1$. On the other hand, by Proposition \ref{distrib0}, and since $c(T)=1$, there exists also a comeager set $G'$ of vectors $x\in X$ such that $\Vert T^ix\Vert\to 0$ as $i\to\infty$ along some set $D_x\subset\NN$ with $\overline{\rm dens} (D_x)=1$. Then any vector $x$ belonging to the comeager set $ G\cap G'$ is distributionally irregular, which concludes the proof.
\end{proof}

\begin{remark}\label{remarquebis} The weighted shift $B_{\bf w}$ constructed in \cite[Theorem 7]{BR} is in fact not distributionally irregular. So one can alternatively use Theorem \ref{th5} to show that $B_{\bf w}$ does not admit any ergodic measure with full support.
\end{remark}

\section{Invariant measures supported on the set of hypercyclic vectors}\label{sec5} 
In this section, we shall prove Theorem \ref{th6} by using another result (Theorem \ref{th7} below) giving several characterizations, for an operator $T$, of the existence of invariant measures supported on $HC(T)$ and belonging to the closure of some given family $\mathcal M$ of invariant measures. 

\subsection{Adequate families of invariant measures} Let $T$ be a continuous linear operator on a Polish topological vector space $X$. We shall say that a nonempty family of $T\,$-$\,$invariant measures 
$\mathcal M\subset\mathcal P_T(X)$ is \emph{adequate} if it satisfies the following assumptions:
\begin{enumerate}
\item[$\bullet$] $\mathcal M$ is \emph{dilation-invariant}, \mbox{i.e.} for any measure $\mu\in\mathcal M$ and every $r>0$, the measure $\mu^r$ defined by $\mu^r(A)=\mu (r\cdot A)$ for every Borel set $A\subset X$ still belongs to $\mathcal M$;
\item[$\bullet$] $\mathcal M$ is \emph{convolution-invariant}, \mbox{i.e} $\mu_1*\mu_2\in\mathcal M$ whenever $\mu_1,\mu_2\in\mathcal M$;
\item[$\bullet$] each measure $\mu\in\mathcal M$ has \emph{compact support}.
\end{enumerate}

\smallskip
The next lemma gives natural examples of adequate families.

\begin{lemma} The following families of $T\,$-$\,$invariant measures are adequate:
\begin{enumerate}
\item[\rm (a)] $\mathcal F_T(X)$, the convex hull of the family of all periodic measures for $T$;
\item[\rm (b)] $\mathcal S_T(X)$, the family of all Steinhaus measures for $T$;
\item[\rm (c)] the family of all compactly supported $T\,$-$\,$invariant measures.
\end{enumerate}
\end{lemma}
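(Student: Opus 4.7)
The plan is to check the three defining properties---dilation-invariance, convolution-invariance, and compactness of support---separately for each of the three families. In all cases compactness of support is either built into the definition or is immediate from the fact that the relevant measures are images of compact probability spaces under continuous maps. The key identities that will be used throughout are the linearity relation $r\cdot T^{-1}(A)=T^{-1}(rA)$ (so that dilations of $T\,$-$\,$invariant measures are again $T\,$-$\,$invariant) and the elementary fact that if $\mu$ is the distribution of a random variable $\Xi$, then $\mu^r$ is the distribution of $r^{-1}\Xi$; this reduces all dilation computations to pushing the scalar $r^{-1}$ inside the expression defining $\Xi$.

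Case (c) is almost immediate: compactness is by definition, $T\,$-$\,$invariance of $\mu^r$ follows from the identity above, and the convolution $\mu_1*\mu_2$ of two $T\,$-$\,$invariant measures is $T\,$-$\,$invariant by a direct Fubini computation using the linearity of $T$ (writing $(\mu_1*\mu_2)(T^{-1}A)=\iint \mathbf 1_A(Tx+Ty)d\mu_1(x)d\mu_2(y)$ and invoking $T\,$-$\,$invariance of $\mu_1$ and $\mu_2$), while $\supp(\mu_1*\mu_2)\subset\supp(\mu_1)+\supp(\mu_2)$ is compact.

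Case (b) is also straightforward. A Steinhaus measure is supported on the compact set $\{\sum_{j\in J}\lambda_j x_j;\ |\lambda_j|=1\}$ (image of the torus $\TT^J$ under a continuous map, $J$ finite). If $\mu$ is the law of $\Phi=\sum_{j\in J}\chi_j x_j$, then $\mu^r$ is the law of $r^{-1}\Phi=\sum_{j\in J}\chi_j(r^{-1}x_j)$, and each $r^{-1}x_j$ is still a unimodular eigenvector for $T$, hence $\mu^r\in\mathcal S_T(X)$. For convolution, realize $\mu_1$ and $\mu_2$ as laws of independent sums $\Phi_1=\sum_{j\in J_1}\chi_j^{(1)}x_j^{(1)}$ and $\Phi_2=\sum_{j\in J_2}\chi_j^{(2)}x_j^{(2)}$ on a product probability space; then $\mu_1*\mu_2$ is the law of $\Phi_1+\Phi_2$, which is again a finite linear combination of unimodular eigenvectors with independent Steinhaus coefficients, hence lies in $\mathcal S_T(X)$.

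Case (a) requires a bit more bookkeeping, and I expect the convolution step to be the main obstacle. Compact support is obvious (finite support) and dilation-invariance is easy: if $\nu=\frac1N\sum_{n=0}^{N-1}\delta_{T^n a}$ with $T^Na=a$, then $\nu^r=\frac1N\sum_{n=0}^{N-1}\delta_{T^n(r^{-1}a)}$ by linearity of $T$, and $T^N(r^{-1}a)=r^{-1}a$, so $\nu^r$ is again a periodic measure; by linearity this passes to convex combinations. For convolution, it is enough to handle two periodic measures $\nu_1=\frac{1}{N_1}\sum_{i}\delta_{T^ia}$ and $\nu_2=\frac{1}{N_2}\sum_{j}\delta_{T^jb}$, for which the product measure gives $\nu_1*\nu_2=\frac{1}{N_1N_2}\sum_{(i,j)}\delta_{T^ia+T^jb}$. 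Set $N:=\mathrm{lcm}(N_1,N_2)$; the $\ZZ\,$-$\,$action $(i,j)\mapsto(i+1,j+1)$ on $\{0,\dots,N_1{-}1\}\times\{0,\dots,N_2{-}1\}$ has all orbits of size exactly $N$, and there are $\gcd(N_1,N_2)$ of them. For each orbit, write a representative $(i_0,j_0)$ and set $c:=T^{i_0}a+T^{j_0}b$; then $T^Nc=c$ and the orbit contributes $\sum_{k=0}^{N-1}\delta_{T^kc}=N\cdot(\tfrac{1}{N}\sum_{k=0}^{N-1}\delta_{T^kc})$, a scalar multiple of a periodic measure. Summing over orbits and comparing coefficients shows that $\nu_1*\nu_2$ is exactly the average of these $\gcd(N_1,N_2)$ periodic measures, hence lies in $\mathcal F_T(X)$. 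Bilinearity of convolution then gives convolution-invariance of the full convex hull $\mathcal F_T(X)$.
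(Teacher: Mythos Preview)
Your proof is correct. Parts (b) and (c) match the paper's argument essentially verbatim. The only genuine difference is in part (a): the paper invokes the alternative description of $\mathcal F_T(X)$ as the set of all \emph{finitely supported} $T$-invariant measures (established earlier as Remark~\ref{bi-obvious}), after which both dilation- and convolution-invariance are immediate, since dilating or convolving finitely supported measures yields finitely supported measures, and $\mathcal P_T(X)$ itself is closed under both operations. Your route instead stays with the ``convex hull of periodic measures'' definition and gives an explicit orbit decomposition of $\nu_1*\nu_2$ under the diagonal $\ZZ$-action on $\ZZ/N_1\ZZ\times\ZZ/N_2\ZZ$. This is longer but has the mild advantage of being self-contained (no appeal to the earlier remark) and of exhibiting $\nu_1*\nu_2$ concretely as an average of $\gcd(N_1,N_2)$ periodic measures. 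Either approach is perfectly adequate here.
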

\begin{proof} We first note that $\mathcal P_T(X)$ is convolution-invariant; see the proof of Proposition \ref{continuous2}.

\smallskip
(a) Recall that a periodic measure for $T$ is a measure of the form
$$\nu_a =\frac1N\sum_{i=0}^{N-1} \delta_{T^ia}\, ,$$
where $N\geq 1$ and $a\in X$  satisfy $T^Na=a$. So ${{\mathcal F}}_T(X)={\rm conv}\, \{ \nu_a;\; a\in{\rm Per}(T)\}$. 
By Remark \ref{bi-obvious}, ${{\mathcal F}}_{T}(X)$ can be equivalently defined as the set of all finitely supported, $T$-$\,$invariant measures. Hence (and since $\mathcal P_T(X)$ is convolution-invariant) it is clear that $\mathcal F_T(X)$ is adequate.

\smallskip
(b) Recall that a measure $\mu\in\mathcal P(X)$ is a 
{Steinhaus measure for $T$} if $\mu$ is the distribution of an $X$-valued random variable of the form
$\Phi(\omega)=\sum_{j\in J} \chi_j(\omega)\, x_j$,
where the $x_j$ are unimodular eigenvectors for $T$ and $(\chi_j)_{j\in J}$ is a finite sequence of independent {Steinhaus variables}. In this case we write 
$\mu\sim \sum_{j\in J} \chi_j x_j$. 

It is clear that Steinhaus measures have compact support and that $\mathcal S_T(X)$ is dilation-invariant. So we just have to check that 
$\mathcal S_T(X)$ is convolution-invariant. But this is also clear. Indeed, if $\mu_1, \mu_2\in\mathcal S_T(X)$, we may write $\mu_1\sim\sum_{j\in J_1}\chi_j x_j$ and 
$\mu_2\sim\sum_{j\in J_2}\chi_j x_j$, where the index sets $J_1$, $J_2$ are disjoint and the Steinhaus variables $\chi_j$, $j\in J_1\cup J_2$, are independent. Then the random variables $\Phi_1=\sum_{j\in J_1}\chi_j x_j$ and $\Phi_2=\sum_{j\in J_2}\chi_j x_j$ are independent; so the measure $\mu=\mu_1*\mu_2$ is the distribution of the random variable 
$\Phi=\Phi_1+\Phi_2=\sum_{j\in J_1\cup J_2} \chi_j x_j\, ,$
and hence $\mu_1*\mu_2$ is a Steinhaus measure for $T$.

\smallskip
(c) This is clear since $\mathcal P_T(X)$ is convolution-invariant and the convolution of two compactly supported measures is again compactly supported.
\end{proof}

\subsection{Invariant measures in the closure of an adequate family} For any family of measures $\mathcal M\subset\mathcal P(X)$, we shall denote by $\overline{\mathcal M}$ the closure of $\mathcal M$ in $\mathcal P(X)$.  Recall also that for any Borel set $A\subset X$, we denote by $\mathcal P(A)$ the family of all probability measures $m$ on $X$ such that $m(A)=1$. Accordingly, we set
$${\overline{\mathcal M}}(A):={\overline{\mathcal M}}\cap \mathcal P(A)=\{ m\in{\overline{\mathcal M}};\; m(A)=1\}\, .$$

Note that this is \emph{not} the closure of $\mathcal M(A):=\{m\in\mathcal M;\; m(A)=1\}$ in $\mathcal P(X)$.

\smallskip
The following result, which will be our main tool for proving Theorem \ref{th6}, gives several equivalent conditions for the existence of an invariant measure supported on $HC(T)$ and belonging to the closure of some given adequate family of $T\,$-$\,$invariant measures.  Here and afterwards, we denote by $\mathcal O_T$ the family of all backward $T\,$-$\,$invariant, nonempty open subsets of $X$.

\begin{theorem}\label{th7} Let $T$ be a continuous linear operator on a Polish topological vector space $X$, and let $\mathcal M\subset\mathcal P_T(X)$ be an adequate family of $T\,$-$\,$invariant measures. Consider the following assertions.
\begin{enumerate}
\item[\rm (1)] $T$ admits an ergodic measure $m$ with full support such that $m\in {\overline{\mathcal M}}$;
\item[\rm (2)] $T$ admits an invariant measure $m$ such that $m\in{\overline{\mathcal M}}$ and $m(HC(T))=1$;
\item[\rm (3)] for any $\Omega\in\mathcal O_T$,  the Dirac measure $\delta_0$ belongs to the closure of
$\overline{\mathcal M}(\Omega)$ in $\mathcal P(X)$;
\item[\rm (4)] for any $O\in\mathcal O_T$,  the set $\overline{\mathcal M}(O)$ is a dense $G_{\delta }$ subset of $\overline{\mathcal M}$;
\item[\rm (5)] the set $\overline{\mathcal M}(HC(T))$ is a dense $G_{\delta }$ subset of $\overline{\mathcal M}$.
\end{enumerate}
Then, assertions $(2)$ to $(5)$ are equivalent and implied by {\rm (1)}. Moreover, if assertions $(2)$ to $(5)$ hold, then $T$ admits an ergodic measure with full support.
\end{theorem}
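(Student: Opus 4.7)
The plan is to close the ring (1)$\Rightarrow$(2)$\Rightarrow$(3)$\Rightarrow$(4)$\Leftrightarrow$(5)$\Rightarrow$(2) and then extract the final ergodic-measure statement from (2) via the ergodic decomposition theorem. Throughout, I will use two structural facts: first, $\overline{\mathcal M}$ is closed in the Polish space $\mathcal P(X)$ and thus itself Polish, so Baire category is available in it; second, $\overline{\mathcal M}$ inherits both dilation- and convolution-invariance from $\mathcal M$, since the maps $\mu\mapsto\mu^r$ and $(\mu,\nu)\mapsto\mu*\nu$ are (jointly) continuous on $\mathcal P(X)$ endowed with the Prokhorov topology.

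For (1)$\Rightarrow$(2), applying the pointwise ergodic theorem to the indicators of a countable basis $(V_j)$ of nonempty open sets shows that $m$-almost every orbit visits each $V_j$ with positive frequency, hence $m(HC(T))=1$. For (2)$\Rightarrow$(3), given $\Omega\in\mathcal O_T$ (so $HC(T)\subset\Omega$) and $m\in\overline{\mathcal M}(HC(T))$, the dilates $m^r$ remain in $\overline{\mathcal M}$ and, because $HC(T)$ is dilation-invariant, satisfy $m^r(HC(T))=m(HC(T))=1$, so $m^r\in\overline{\mathcal M}(\Omega)$; by tightness $m^r\to\delta_0$ as $r\to\infty$. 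The equivalence (4)$\Leftrightarrow$(5) follows from the fact that each $\overline{\mathcal M}(O)$ is $G_\delta$ in $\overline{\mathcal M}$ (by lower semicontinuity of $\mu\mapsto\mu(O)$) combined with the decomposition $HC(T)=\bigcap_j O_j$, where $O_j=\bigcup_n T^{-n}(V_j)\in\mathcal O_T$: Baire in $\overline{\mathcal M}$ then turns simultaneous density of the $\overline{\mathcal M}(O_j)$ into density of the intersection $\overline{\mathcal M}(HC(T))$. The implication (5)$\Rightarrow$(2) is immediate.

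The technical heart, and what I expect to be the main obstacle, is (3)$\Rightarrow$(4). For density of $\overline{\mathcal M}(O)$ in $\overline{\mathcal M}$, one fixes $\mu\in\overline{\mathcal M}$, invokes (3) to pick $\nu_k\in\overline{\mathcal M}(O)$ with $\nu_k\to\delta_0$, and considers the convolutions $\mu*\nu_k$: they lie in $\overline{\mathcal M}$ by convolution-invariance and converge to $\mu*\delta_0=\mu$ by joint continuity of convolution. The delicate point is to ensure that $\mu*\nu_k$ — or a suitably dilated variant such as $\mu*\nu_k^{r_k}$ with $r_k\to\infty$ — actually belongs to $\overline{\mathcal M}(O)$, i.e.\ satisfies $(\mu*\nu_k)(O)=1$. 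This is where one must combine all three axioms of adequacy (compact support, dilation invariance, convolution invariance), tuning the dilation parameter $r_k$ so that the effective support of the perturbing measure is small enough that $\supp(\mu)+\supp(\nu_k^{r_k})$ fits into $O$ up to null sets.

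The final sentence then follows from the ergodic decomposition theorem: given any $m\in\overline{\mathcal M}(HC(T))$ from (2), write $m=\int\mu^s\,d\mathbf p(s)$ with $\mu^s$ ergodic; since $HC(T)$ is a $T$-invariant Borel set with $m(HC(T))=1$, ergodicity forces $\mu^s(HC(T))=1$ for $\mathbf p$-almost every $s$, and any such $\mu^s$ has full support because $HC(T)\subset\bigcup_n T^{-n}(U)$ for every nonempty open $U$, yielding $\mu^s(U)>0$ by $T$-invariance.
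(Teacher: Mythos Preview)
Your outline of (1)$\Rightarrow$(2)$\Rightarrow$(3), of (4)$\Leftrightarrow$(5)$\Rightarrow$(2), and of the final ergodic-decomposition step matches the paper's argument essentially verbatim. The gap is in (3)$\Rightarrow$(4), and it is a genuine one.

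You fix $\mu\in\overline{\mathcal M}$, apply (3) \emph{to $O$ itself} to get $\nu_k\in\overline{\mathcal M}(O)$ with $\nu_k\to\delta_0$, and then hope that $\mu*\nu_k$ (or a dilated variant $\mu*\nu_k^{r_k}$) lies in $\overline{\mathcal M}(O)$. But $(\mu*\nu_k)(O)=\int\mu(O-y)\,d\nu_k(y)$, and there is no reason at all for $\mu(O-y)=1$: the measure $\mu$ you are trying to approximate is completely arbitrary and need not be supported on $O$ nor on any translate of it. Dilating $\nu_k$ does not help: shrinking $\nu_k^{r_k}$ toward $\delta_0$ makes $\mu*\nu_k^{r_k}$ look like $\mu$, which is precisely what you do \emph{not} want; and in any case $\nu_k^{r_k}$ will in general no longer be supported on $O$, so even the heuristic ``$\supp(\mu)+\supp(\nu_k^{r_k})\subset O$'' has no content here.

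The missing idea is to reverse the roles and to change the set on which (3) is invoked. First, it is enough to approximate measures $\nu\in\mathcal M$ (not $\overline{\mathcal M}$), and these have \emph{compact} support $K$ by adequacy. Second, one introduces the auxiliary set $\Omega:=\bigcap_{y\in K}(O-y)$ and proves that $\Omega\in\mathcal O_T$: openness comes from compactness of $K$, and backward $T$-invariance uses the fact that $T(K)=K$ (which follows from $T$-invariance of $\nu$) together with linearity of $T$. Third, one applies (3) to $\Omega$, not to $O$, obtaining $m_n\in\overline{\mathcal M}(\Omega)$ with $m_n\to\delta_0$. Then $(m_n*\nu)(X\setminus O)=\int_K m_n\bigl((X\setminus O)-y\bigr)\,d\nu(y)=0$ because $\Omega\cap\bigl((X\setminus O)-y\bigr)=\emptyset$ for every $y\in K$; so $m_n*\nu\in\overline{\mathcal M}(O)$ and $m_n*\nu\to\nu$. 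This is where the compact-support axiom of adequacy is actually used, and it is the step your sketch does not contain.
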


\begin{proof}
The most interesting part of the proof is to show that (3) implies (4). We start with the straightforward implications.
\par\smallskip
$(1)\implies (2)$. It is enough to show that if $m$ is any ergodic measure for $T$ with full support, then $m(HC(T))=1$. This is standard, but we repeat the argument anyway. Let 
$(V_j)_{j\geq 1}$ be a countable basis of (nonempty) open sets for $X$, and set $O_j=\bigcup_{n\geq 0} T^{-n} (V_j)$. Then $HC(T)=\bigcap_{j\geq 1} O_j$. Moreover, we have $T^{-1}(O_j)\subset O_j$, so that $m(T^{-1}(O_j)\Delta O_j)=0$ by the $T$-$\,$invariance of $m$. By ergodicity and since $m(O_j)\geq m(V_j)>0$, it follows that $m(O_j)=1$ for all 
$j\ge 1$, and hence $m(HC(T))=1$. 
\par\smallskip
$(2)\implies (3)$. Assume that (2) holds true for some measure $m\in \overline{\mathcal M}$. For each $n\ge 1$, consider the measure $\mu_n$ defined by setting 
$\mu_n(A)=m(2^n \cdot A)$ for every Borel set $A\subset X$. Then $\mu_n\in\overline{\mathcal M}$ because $\mathcal M$ is dilation-invariant and dilations are continuous on $\mathcal P(X)$. Moreover, since $HC(T)$ is dilation-invariant we have $\mu_n(HC(T))=m(HC(T))=1$. Hence $\mu_n(\Omega )=1$ for any set $\Omega \in\mathcal O_{T}$ as well, since any (nonempty) backward $T$-$\,$invariant open set necessarily contains $HC(T)$. Finally, since 
$\int_X f\, d\mu_n=\int_X f(2^{-n} x)\, dm(x)$ for every $f\in\mathcal C_b(X)$, it is clear that $\mu_n\to\delta_0$ as $n\to\infty$.
\par\smallskip
$(4)\implies (5)$. We use the same notation as in the proof of the implication $(1)\implies (2)$. For each $j\ge 1$, the open set 
$O_j$ belongs to $ \mathcal O_T$, so that $\overline{\mathcal M} (O_j)$ is a dense $G_{\delta }$ subset of $\overline{\mathcal M}$. Hence, 
$\overline{\mathcal M}(HC(T))=\bigcap_{j\geq 1} \overline{\mathcal M}(O_j)$ is a dense $G_{\delta }$ subset of $\overline{\mathcal M}$ as well.
\par\smallskip
$(5)\implies (2)$ is obvious.

\smallskip
Finally, let us recall why (5) implies the existence of an ergodic measure with full support. If (5) holds then $\overline{\mathcal M}(HC(T))$ is in particular non empty. Applying the ergodic decomposition theorem to any measure
$\mu\in\overline{\mathcal M}(HC(T))$, we see that there exists at least one ergodic measure $m$ for $T$ such that $m(HC(T))>0$. By $T\,$-$\,$invariance and since 
$HC(T)\subset \bigcup_{n\geq 0} T^{-n}(U)$ for every open set $U\neq\emptyset$, the measure $m$ has full support.
\par\smallskip
We now turn to the implication $(3)\implies (4)$. As in the proof of Theorem \ref{th2}, this will rely on a specifically linear convolution argument. 
\par\smallskip
Let us fix $O\in\mathcal O_T$. By Remark \ref{P(O)}, we know that $\mathcal P(O)$ is a $G_\delta$ subset of $\mathcal P(X)$, so that $\overline{\mathcal M}(O)$ is a 
$G_\delta$ subset of $\overline{\mathcal M}$. What has to be shown is that $\overline{\mathcal M}(O)$ is dense in $\overline{\mathcal M}$; and to do this, it is enough to prove that  any measure $\nu\in \mathcal M$ belongs to the closure of $\overline{\mathcal M}(O)$ in $\mathcal P(X)$. Let us fix such a measure $\nu$, and set $K:={\rm supp}(\nu)$, so that $K$ is a compact subset of $X$ by the definition of an adequate family.

\begin{fact}\label{OT} The set $\Omega:=\bigcap_{y\in K} (O-y)$ belongs to $\mathcal O_T$.
\end{fact}
\begin{proof} We first check that $\Omega$ is open in $X$. Set $F:=X\setminus O$ and $$C:=\{ (x,y)\in X\times K;\; x+y\in F\}\, .$$ Then $C$ is a closed subset of $X\times K$, so the set 
$\pi_X(C)=\{ x\in X;\; \exists y\in K\;:\; (x,y)\in C\}$ is closed in $X$, being the projection of a closed subset of $X\times K$ along the compact factor $K$. Since 
$\pi_X(C)=X\setminus\Omega$, this shows that 
$\Omega$ is indeed open in $X$.

\smallskip
Next, we observe that $T(K)=K$. Indeed, if $V$ is an open set such that $V\cap T(K)\neq\emptyset$, then $T^{-1}(V)$ is an open set such that $T^{-1}(V)\cap K\neq\emptyset$ and hence $\nu(T^{-1}(V))>0$. Since $\nu$ is $T\,$-$\,$invariant,  it follows that $\nu(V)>0$ for every open set $V$ such that $V\cap T(K)\neq\emptyset$; in other words, 
$T(K)\subset K$. Conversely, since $T(K)$ is compact the set $V:=X\setminus T(K)$ is open in $X$, and we have $\nu(V)=\nu(T^{-1}(V))=0$ because 
$T^{-1}(V)\cap K=\emptyset$. Hence, $X\setminus T(K)\subset X\setminus K$, \mbox{i.e} $K\subset T(K)$.

\smallskip Using this, it is easy to show that $T^{-1}(\Omega)\subset \Omega$. Indeed, since $T(K)=K$ we can write the open set $\Omega$ as $\Omega=\bigcap_{y\in K} (O-T(y))$. Hence, 
$$
T^{-1}(\Omega)=\bigcap_{y\in K} T^{-1}(O-T(y))=\bigcap_{y\in K} (T^{-1}(O)-y)\subset \bigcap_{y\in K} (O-y)=\Omega\, .
$$
(Note that we have used the linearity of $T$ for the second equality). This proves Fact \ref{OT}.
\end{proof}

\begin{fact}\label{convol} If $m\in\mathcal P(\Omega)$, then $m*\nu\in\mathcal P(O)$.
\end{fact}
\begin{proof} Set $F:=X\setminus O$. Then $m(F-y)=0$ for every $y\in K$ because $\Omega\cap (F-y)=\emptyset$, and hence
$$(m*\nu)(F)=\int_K m(F-y)\, d\nu(y)=0\, , $$ which proves Fact \ref{convol}.
\end{proof}
It is now easy to conclude the proof. By (3) and Fact \ref{OT}, one can find a sequence $(m_n)\subset \overline{\mathcal M}(\Omega)$ such that $m_n\to\delta_0$. By the convolution-invariance of $\mathcal M$, the separate continuity of the convolution product  and Fact \ref{convol}, the measures $\nu_n:= m_n*\nu$ belong to 
$\overline{\mathcal M}(O)$. Hence, $\nu=\lim\nu_n$ belongs to the closure of $\overline{\mathcal M}(O)$ in $\mathcal P(X)$.
\end{proof}

The proof of Theorem \ref{th7} is now complete.
\qed

\smallskip
Applying Theorem \ref{th7} to the family $\mathcal M$ of all $T\,$-$\,$invariant measures with compact support, we get the following result.
\begin{corollary}\label{shouldbeuseful0} Let $T$ be a continuous linear operator on $X$. Assume that for any backward $T\,$-$\,$invariant open set $\Omega\neq\emptyset$ and any neighbourhood $W$ of $0$, one can find a $T\,$-$\,$invariant probability measure $\mu$ with compact support such that $\mu(\Omega)=1$ and $\mu(W)$ is arbitrarily close to $1$. Then $T$ admits an ergodic measure with full support.
\end{corollary}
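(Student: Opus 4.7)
The plan is to invoke Theorem \ref{th7} with the adequate family $\mathcal M$ consisting of all compactly supported $T$-invariant probability measures on $X$ (shown to be adequate in part (c) of the lemma preceding Theorem \ref{th7}). Since assertions (2)--(5) of Theorem \ref{th7} are equivalent and, when they hold, imply the existence of an ergodic measure with full support, it is enough to verify condition (3): for every backward $T$-invariant open set $\Omega \neq \emptyset$, the Dirac mass $\delta_0$ lies in the closure of $\overline{\mathcal M}(\Omega)$ in $\mathcal P(X)$.

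To verify (3), I would fix $\Omega \in \mathcal O_T$ and a decreasing countable basis $(W_k)_{k\ge 1}$ of open neighbourhoods of $0$ in $X$. The hypothesis of the corollary supplies, for each $k \ge 1$, a compactly supported $T$-invariant probability measure $\mu_k$ with $\mu_k(\Omega) = 1$ and $\mu_k(W_k) > 1 - 2^{-k}$. In particular each $\mu_k$ belongs to $\mathcal M \cap \mathcal P(\Omega) \subset \overline{\mathcal M}(\Omega)$, so it only remains to show that $\mu_k \to \delta_0$ in $\mathcal P(X)$.

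This last step is the standard computation already carried out in the proof of Fact \ref{Dirac0}: for any $f \in \mathcal C_b(X)$, split
$$\int_X f\, d\mu_k = \int_{W_k} f\, d\mu_k + \int_{X \setminus W_k} f\, d\mu_k.$$
The second integral is bounded in modulus by $\|f\|_\infty\, \mu_k(X \setminus W_k) \le \|f\|_\infty\, 2^{-k}$, hence tends to $0$. For the first, the continuity of $f$ at $0$ yields $\sup_{x \in W_k} |f(x) - f(0)| \to 0$ as $k \to \infty$, so that $\int_{W_k} f\, d\mu_k = f(0)\mu_k(W_k) + o(1) \to f(0)$. Altogether $\int_X f\, d\mu_k \to f(0) = \int_X f\, d\delta_0$ for every $f \in \mathcal C_b(X)$, so $\mu_k \to \delta_0$ in $\mathcal P(X)$, which establishes (3).

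There is no genuine obstacle in this argument: the corollary is essentially a repackaging of Theorem \ref{th7} in which the hypothesis has been tailored to say exactly that condition (3) holds for the family of compactly supported $T$-invariant measures. The only minor point worth checking is that the family chosen is indeed adequate, which is done in the lemma; once this is in place, the chain of implications $(3) \Rightarrow (4) \Rightarrow (5) \Rightarrow (2)$ together with the concluding statement of Theorem \ref{th7} delivers an ergodic measure with full support.
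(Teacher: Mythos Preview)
Your proposal is correct and follows exactly the paper's approach: apply Theorem \ref{th7} with $\mathcal M$ the adequate family of all compactly supported $T$-invariant measures, and observe that the hypothesis of the corollary is precisely condition (3). The paper states this in a single sentence, whereas you have helpfully spelled out the verification that the hypothesis yields $\mu_k\to\delta_0$ (via the same argument as in Fact \ref{Dirac0}).
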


\smallskip
Thus, the operator $T$ admits an ergodic measure with full support as soon as it admits compactly supported invariant measures concentrated around $0$ and supported on any backward  $T\,$-$\,$invariant open set. This condition does not appear to be that strong. In particular, it may possibly be satisfied by all \emph{chaotic} operators, since a chaotic operator admits a great supply of finitely supported invariant measures (namely, the periodic measures). So we cannot exclude the possibility that every chaotic operator admits an ergodic measure with full support (and hence is frequently hypercyclic). Note that if $T$ is chaotic and if $(a_r)_{r\geq 1}$ is a dense sequence of periodic vectors for $T$, then 
$\nu=\sum_{r\geq 1} 2^{-r} \nu_{a_r}$
is a $T$-$\,$invariant measure with full support. However, this measure is obviously not ergodic!

\subsection{Ergodic measures with a finite moment} Recall that a Banach space $X$ is said to have \emph{type $p\in [1,2]$} if there exists some constant $C<\infty$ such that
$$ \mathbb E\left\Vert \sum_n\varepsilon_n x_n\right\Vert^p\leq C \sum_n \Vert x_n\Vert^p$$
for any finite sequence $(x_n)$ in $X$, where the $\varepsilon_n$ are independent Rademacher variables. It is a well known fact that if $X$ has {type $2$}, then the search for invariant \emph{Gaussian} measures for an operator $T\in\mathfrak L(X)$ is in some sense equivalent to the search for invariant measures $m$ admitting a second order moment. More precisely, to any 
$T\,$-$\,$invariant measure $m$ such that $\int_X \Vert x\Vert^2 dm(x)<\infty$ one can associate in a canonical way a Gaussian invariant measure $\widetilde m$, which has the same support and the same ergodicity properties as $m$; see \mbox{e.g.} \cite[Remark 8.2]{BM2}.  

\smallskip
Therefore (and even without assuming that $X$ has type $2$) it is quite natural to look for conditions ensuring that an operator 
$T\in\mathfrak L(X)$ admits an ergodic measure $m$ such that $\int_X \Vert x\Vert^2 dm(x)<\infty$ or, more generally, such that $\int_X\Vert x\Vert^p dm(x)$ for some given $p\in (0,\infty)$. This is the content of the next theorem.

\begin{theorem}\label{th8} Let $T$ be a bounded operator on a Banach space $X$, and let $\mathcal M\subset\mathcal P_T(X)$ be an adequate family of $T\,$-$\,$invariant measures.  Let also $p\in (0,\infty)$. Assume that there exists some finite constant $C$ such that the following holds: 

for every backward $T\,$-$\,$invariant open set $\Omega\neq\emptyset$, one can find a measure $\mu\in{\mathcal M}(\Omega)$ such that $\int_X\Vert x\Vert^p d\mu(x)\leq C$. 

Then, there exists a measure $m\in \overline{\mathcal M}(HC(T))$ such that 
$\int_X \Vert x\Vert^p dm(x)<\infty$. In particular, the operator $T$ admits an ergodic measure $\mu$ with full support such that $\int_X \Vert x\Vert^p d\mu(x)<\infty$.
\end{theorem}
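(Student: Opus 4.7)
The plan is to imitate the proofs of the implications $(3)\Longrightarrow(4)\Longrightarrow(5)$ of Theorem \ref{th7}, but to carry out the whole construction explicitly in a single inductive process so that the $p$-th moment can be tracked throughout. The first observation is that, by the dilation-invariance of $\mathcal M$, the hypothesis is in fact equivalent to the following small-moment form: for every $\Omega\in\mathcal O_T$ and every $\varepsilon>0$, there exists $\tilde\mu\in\mathcal M(\Omega)$ with $\int_X\|x\|^p\,d\tilde\mu\le\varepsilon$. Indeed, $\lambda\cdot\Omega$ still lies in $\mathcal O_T$ for every $\lambda>0$ (backward $T$-invariance survives dilation by linearity of $T$), so the hypothesis applied to $\lambda\cdot\Omega$ produces $\mu_\lambda\in\mathcal M(\lambda\cdot\Omega)$ with $\int\|x\|^p\,d\mu_\lambda\le C$; the rescaled measure $(\mu_\lambda)^\lambda$ then lies in $\mathcal M(\Omega)$ (its support is $\lambda^{-1}\,\mathrm{supp}(\mu_\lambda)\subset\Omega$) and has $p$-th moment $\lambda^{-p}\int\|x\|^p\,d\mu_\lambda\le C\lambda^{-p}$, arbitrarily small for $\lambda$ large.

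Fix a countable basis $(V_j)_{j\ge 1}$ of nonempty open sets for $X$ and set $O_j:=\bigcup_{i\ge 0}T^{-i}(V_j)\in\mathcal O_T$, so that $HC(T)=\bigcap_{j\ge 1}O_j$. I construct recursively $\mu_n\in\mathcal M$ such that $\nu_n:=\mu_1*\cdots*\mu_n$ always belongs to $\mathcal M(O_1\cap\cdots\cap O_n)$. At stage $n$, the support $K_n:=\mathrm{supp}(\nu_n)$ is a compact, $T$-invariant subset of $O_1\cap\cdots\cap O_n$, and the geometric margin $\rho_n:=\min_{j\le n}d(K_n,X\setminus O_j)$ is strictly positive. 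By Fact \ref{OT}, the open set $\Omega_n:=\bigcap_{y\in K_n}\bigl((O_1\cap\cdots\cap O_{n+1})-y\bigr)$ belongs to $\mathcal O_T$, so the first paragraph produces $\mu_{n+1}\in\mathcal M(\Omega_n)$ of $p$-th moment $\varepsilon_{n+1}$ as small as wanted; Fact \ref{convol} then ensures $\nu_{n+1}\in\mathcal M(O_1\cap\cdots\cap O_{n+1})$. The $\varepsilon_{n+1}$ are chosen recursively, once $\rho_1,\dots,\rho_n$ are known, so small that (a) $\mathbb P(\|X_{n+1}\|\ge 2^{-(n+1)})\le 2^{-(n+1)}$ for $X_{n+1}\sim\mu_{n+1}$ (by Markov), and (b) the cumulative future tail $\mathbb E\|\sum_{k>n}X_k\|^p\le 2^{-(n+1)p}\rho_n^p$.

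Let $(X_k)$ be independent random variables on some probability space with $X_k\sim\mu_k$, and set $Y_n:=X_1+\cdots+X_n$. By (a) and Borel--Cantelli, $\|X_k\|\le 2^{-k}$ almost surely for all large $k$, so $Y_n$ converges a.s.\ to some $Y_\infty$ and the distributions $\nu_n$ converge in $\mathcal P(X)$ to the distribution $m$ of $Y_\infty$. Then $m\in\overline{\mathcal M}$, and Fatou's lemma applied to the lower semi-continuous functional $\mu\mapsto\int\|x\|^p\,d\mu$ gives $\int\|x\|^p\,dm<\infty$. For every $j$ and every $n\ge j$, the set $K_n$ is contained in $O_j$ with margin at least $\rho_n$, so by Markov's inequality
\[\mathbb P(Y_\infty\notin O_j)\le\mathbb P(\|Y_\infty-Y_n\|\ge\rho_n)\le\rho_n^{-p}\,\mathbb E\|Y_\infty-Y_n\|^p,\]
and (b) drives this quantity to $0$ as $n\to\infty$. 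Hence $m(O_j)=1$ for every $j$, so $m\in\overline{\mathcal M}(HC(T))$ with finite $p$-th moment. The ``in particular'' assertion follows by ergodic decomposition: $P$-almost every ergodic component $\mu$ of $m$ is concentrated on $HC(T)$, hence of full support by $T$-invariance, and has finite $p$-th moment.

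The main technical hurdle is arranging (b): one must juggle the margins $\rho_n$, which become known only after $\nu_n$ has been fixed, against the moment tails of all \emph{future} measures $\mu_k$, so that Markov's bound closes the argument uniformly in $j$ while the series $\sum X_k$ still converges almost surely.
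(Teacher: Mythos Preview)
Your proof is correct, and it uses the same core ingredients as the paper's argument: the dilation trick to pass from a uniform moment bound to arbitrarily small moments (the paper's Fact~\ref{autorenforcement}), Markov's inequality to turn small moments into concentration near $0$ (Fact~\ref{Markov}), and the convolution mechanism via Facts~\ref{OT} and~\ref{convol}. The difference lies in how the intersection $HC(T)=\bigcap_j O_j$ is handled.

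The paper restricts attention to the sub-family $\mathcal M^1:=\{\mu\in\mathcal M:\int\|x\|^p\,d\mu<1\}$ and re-runs the Baire category argument of Theorem~\ref{th7} inside $\overline{\mathcal M^1}$: for each single $O_j\in\mathcal O_T$ one shows that $\overline{\mathcal M^1}(O_j)$ is dense $G_\delta$ in $\overline{\mathcal M^1}$ (the key new point being Fact~\ref{fact2deplus}, that the convolution step stays inside $\mathcal M^1$), and then Baire takes the intersection over $j$ for free. The moment bound passes to the closure because $\{\mu:\int\|x\|^p\,d\mu\le 1\}$ is closed in $\mathcal P(X)$.

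You instead carry out an explicit nested construction, building $\nu_n=\mu_1*\cdots*\mu_n\in\mathcal M(O_1\cap\cdots\cap O_n)$ directly and coupling the $\nu_n$ as partial sums of independent random vectors so as to exhibit a concrete limit $m$. This avoids Baire category entirely but forces you to manage the margins $\rho_n$ against the tails of all later $\mu_k$ simultaneously (your condition~(b)); as you note, this can be done by choosing, say, $\varepsilon_{k}^{1/\max(p,1)}\le 2^{-k}\min_{m<k}2^{-(m+1)}\rho_m$, so that the relevant tail sums are dominated geometrically. The paper's route is shorter and yields the stronger Baire-generic statement ($\overline{\mathcal M^1}(HC(T))$ is residual, not merely nonempty), while your route is more constructive and produces a specific limit measure. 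Either way, the ergodic-decomposition endgame is identical.
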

\begin{proof} The proof is very similar to that of Theorem \ref{th7}, so we just briefly indicate the modifications that are needed.

\smallskip
We first note that a simple dilation argument allows us to strengthen the assumption made on $T$.

\begin{fact}\label{autorenforcement}
For every backward $T\,$-$\,$invariant open set $\Omega\neq\emptyset$ and any $\varepsilon>0$, one can find a measure $\mu\in{\mathcal M}(\Omega)$ such that 
$\int_X\Vert x\Vert^p d\mu(x)< \varepsilon$
\end{fact}
\begin{proof} Let us choose $\eta>0$ such that $C\eta^p<\varepsilon$. Then, consider the backward $T\,$-$\,$invariant open set $\Omega_\eta:= \eta\cdot \Omega$. By assumption, one can find a measure $\mu_\eta\in\overline{\mathcal M}(\Omega_\eta)$ such that $\int_X\Vert x\Vert^p d\mu_\eta(x)\leq C$. Now, let $\mu$ be the dilated measure defined by 
$\mu(A):=\mu_\eta (\frac1\eta\cdot A)$ for every Borel set $A\subset X$. Then $\mu\in{\mathcal M}$ because ${\mathcal M}$ is dilation-invariant, and $\mu$ is supported on 
$\Omega$. Finally, we have 
$$\int_X\Vert x\Vert^p d\mu(x)=\int_X \left\Vert \eta x\right\Vert^p d\mu_\eta(x)\leq C\eta^p<\varepsilon\, . $$
\end{proof}

\smallskip
Now, let us denote by ${\mathcal M}^1$ the set of all measures $\mu\in{\mathcal M}$ such that $\int_X \Vert x\Vert^p d\mu(x)< 1$. This set is nonempty by the previous Fact.   We are going to show that $\overline{\mathcal M^1}(HC(T))$ is a dense $G_\delta$ subset of $\overline{\mathcal M^1}$, where $\overline{\mathcal M^1}$ is the closure of 
${\mathcal M}^1$ in $\mathcal P(X)$.

\smallskip
To this end, it is enough to show that for any backward $T\,$-$\,$invariant open set $O\neq \emptyset$, the set ${\mathcal M}^1(O)$ is dense in $\overline{\mathcal M^1}$.
Following the proof of Theorem \ref{th7}, this will be done in two steps: we first show that for any backward $T\,$-$\,$invariant open set $\Omega\neq \emptyset$, the Dirac mass 
$\delta_0$ belongs to the closure of ${\mathcal M^1}(\Omega)$; and then we prove the required result by using a convolution argument.

\smallskip
The first step relies on the following simple consequence of Markov's inequality.
\begin{fact}\label{Markov}
Let $W$ be a neighbourhood of $0$ in $X$, and let $\varepsilon >0$. Then one can find $\eta >0$ such that the following holds: if $\mu\in\mathcal P(X)$ satisfies 
$\int_X \Vert x\Vert^pd\mu(x)<\eta$, then $\mu(W)>1-\varepsilon$.
\end{fact}
\begin{proof} Choose $\alpha >0$ such that $B(0,\alpha)\subset W$. If $\mu\in\mathcal P(X)$ then, by Markov's inequality, we have $\mu(X\setminus B(0,\alpha))\leq \frac1{\alpha^p}\times \int_X \Vert x\Vert^pd\mu(x)$. 
So one can take $\eta:=\alpha^p\varepsilon$.
\end{proof}

\smallskip
From Facts \ref{autorenforcement} and \ref{Markov}, we immediately get 
\begin{fact}\label{fact1deplus} Given a backward $T\,$-$\,$invariant open set $\Omega\neq \emptyset$ and $\varepsilon >0$, one can find a sequence $(m_n)\subset {\mathcal M^1}(\Omega)$ 
such that $m_n\to\delta_0$ and $\int_X \Vert x\Vert^p dm_n(x)<\varepsilon$ for all $n\in\NN$.
\end{fact}

For the second step, an examination of the proof of the implication $(3)\implies (4)$ in Theorem \ref{th7} reveals that it is enough to prove the following Fact.
\begin{fact}\label{fact2deplus} Let $\nu\in \mathcal M^1$. Then one can find $\varepsilon >0$ such that $m*\nu\in{\mathcal M^1}$ for any measure $m\in {\mathcal M^1}$ satisfying 
$\int_X \Vert x\Vert^p dm(x)<\varepsilon$. 
\end{fact}
\begin{proof} Since $\mathcal M$ is convolution-invariant, it is enough to find $\varepsilon>0$ such that, whenever $m\in\mathcal P(X)$ satisfies $\int_X \Vert x\Vert^p dm(x)<\varepsilon$, it follows that $\int_X\Vert x\Vert^p d(m*\nu)(x)<1$. 

Set $\alpha:=\int_X \Vert y\Vert^pd\nu(y)$, so that $\alpha <1$. If $p\geq 1$ then, for any $m\in\mathcal P(X)$, we have
\begin{eqnarray*}
\int_X\Vert x\Vert^p d(m*\nu)(x)&=&\int_{X\times X} \Vert x+y\Vert^p dm(x)d\nu(y)\\
&\leq&\int_{X\times X} (\Vert x\Vert+ \Vert y\Vert)^p\, dm(x)d\nu(y)\\
&\leq& \left[\left(\int_{X\times X} \Vert x\Vert^p dm(x)d\nu(y)\right)^{1/p}+\left( \int_{X\times X} \Vert y\Vert^p dm(x)d\nu(y)\right)^{1/p}\right]^p\\
&<&\left(\varepsilon^{1/p}+\alpha^{1/p}\right)^p\, .
\end{eqnarray*}
Hence, it is enough to take $\varepsilon$ such that $\varepsilon^{1/p}+\alpha^{1/p}\leq 1$. 

If $p<1$ this is even simpler: since $(\Vert x\Vert+\Vert y\Vert)^p\leq \Vert x\Vert^p+\Vert y\Vert^p$ we get 
$$ \int_X\Vert x\Vert^p d(m*\nu)(x)\leq \int_{X\times X} \Vert x\Vert^p\, dm(x)d\nu(y)+\int_{X\times X} \Vert y\Vert^p\, dm(x)d\nu(y)<\varepsilon +\alpha\, ,$$
so we may take $\varepsilon:=1-\alpha$.
\end{proof}

\smallskip
By Fact \ref{fact1deplus}, Fact \ref{fact2deplus} and the proof of Theorem \ref{th7}, we can now conclude that $\overline{\mathcal M^1}(HC(T))$ is a dense $G_\delta$ subset of 
$\overline{\mathcal M^1}$. In particular, $\overline{\mathcal M^1}(HC(T))$ is nonempty. Moreover, since the set 
$\{ \mu\in\mathcal P(X);\; \int_X \Vert x\Vert^p d\mu(x)\leq 1\}$ is closed in $\mathcal P(X)$, any measure $m\in \overline{\mathcal M^1}$ satisfies 
$\int_X \Vert x\Vert^p dm(x)\leq 1$. Finally, applying the ergodic decomposition theorem to any $m\in \overline{\mathcal M^1}(HC(T))$, we get an ergodic measure $\mu$ with full support such that $\int_X \Vert x\Vert^p d\mu(x)\leq 1$.
\end{proof}

\smallskip
If we take $\mathcal M$ to be the family of all compactly supported $T\,$-$\,$invariant measures, we get the following variant of Corollary \ref{shouldbeuseful0}.
\begin{corollary}\label{shouldbeuseful0'} Let $p\in (0,\infty)$. Assume that there exists some finite constant $C$ such that the following holds:
 for every backward $T\,$-$\,$invariant open set $\Omega\neq\emptyset$, one can find a $T\,$-$\,$invariant, compactly supported probability measure $\mu$ such that $\mu(\Omega)=1$ and $\int_X\Vert x\Vert^p d\mu(x)\leq C$. 
 Then $T$ admits an ergodic measure with full support $m$ such that 
$\int_X \Vert x\Vert^p dm(x)<\infty$.
\end{corollary}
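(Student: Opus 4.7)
The plan is essentially to invoke Theorem \ref{th8} with the right choice of adequate family. Specifically, I would take $\mathcal M$ to be the family of all compactly supported $T\,$-$\,$invariant probability measures on $X$. By item (c) of the lemma preceding Theorem \ref{th7}, this family is already known to be adequate: it is dilation-invariant (a dilation of a compactly supported measure is compactly supported and remains $T\,$-$\,$invariant by the linearity of $T$), convolution-invariant (since $\mathcal P_T(X)$ is convolution-invariant and the convolution of two compactly supported measures is supported on the sum of their supports, which is compact), and all its elements have compact support by definition.

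With this choice of $\mathcal M$, the hypothesis of Corollary \ref{shouldbeuseful0'} is nothing else than the hypothesis of Theorem \ref{th8}: it asserts exactly that for every backward $T\,$-$\,$invariant open set $\Omega\neq\emptyset$, one can find a measure $\mu\in\mathcal M(\Omega)$ with $\int_X\Vert x\Vert^p d\mu(x)\le C$. Applying Theorem \ref{th8} then yields an ergodic measure $m$ with full support satisfying $\int_X\Vert x\Vert^p dm(x)<\infty$, which is the desired conclusion.

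There is no real obstacle here: the corollary is a straightforward specialization of Theorem \ref{th8}, obtained by choosing the largest natural adequate family in which compactly supported measures with a uniformly bounded $p$-th moment are guaranteed to sit. The only thing worth emphasizing in the write-up is the verification that this family $\mathcal M$ is indeed adequate in the sense of Section \ref{sec5}, which is immediate from the three bullet points defining the notion.
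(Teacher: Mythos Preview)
Your proposal is correct and follows exactly the paper's own argument: the corollary is obtained by applying Theorem \ref{th8} with $\mathcal M$ taken to be the family of all compactly supported $T$-invariant measures, which is adequate by part (c) of the lemma preceding Theorem \ref{th7}.
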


\subsection{Steinhaus measures and proof of Theorem \ref{th6} (b)} As it turns out, it is shown in \cite{G} that if an operator $T\in\mathfrak L(X)$ has a perfectly spanning set of unimodular eigenvectors, then the equivalent conditions $(2)$ to $ (5)$ of  Theorem \ref{th7} \emph{are satisfied} for the family $\mathcal S_T(X)$ of all Steinhaus measures for $T$:

\begin{theorem}[\cite{G}]\label{blackbox} Let $T$ be an operator on a complex Banach space $X$, and assume that $T$ has a perfectly spanning set of unimodular eigenvectors. Then $T$ admits an invariant measure with full support $m$ which satisfies $m(HC(T))=1$ and belongs to $\overline{\mathcal S}_T(X)$.
\end{theorem}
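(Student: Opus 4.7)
The plan is to construct the required measure $m$ directly, as the law of an almost-surely convergent random series $\Phi = \sum_n \chi_n x_n$, where the $x_n$ are unimodular eigenvectors of $T$ and the $\chi_n$ are independent Steinhaus variables. Once $m$ is built, I would verify the three required properties in turn: membership in $\overline{\mathcal S}_T(X)$ and $T$-invariance (both easy), full support (easy modulo perfect spanning), and $m(HC(T))=1$ (the substantive step).

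I would begin by exploiting perfect spanning to produce a countable family $(x_n)_{n\ge 1}$ of unimodular eigenvectors with eigenvalues $(\lambda_n)$ such that $\overline{\mathrm{span}}(x_n : n\ge 1)=X$ and the $\lambda_n$ are \emph{rationally independent} in $\mathbb T$, i.e.\ no nontrivial finite product $\prod_n \lambda_n^{k_n}$ equals $1$. This is the crucial use of perfect spanning: the set of ``bad'' eigenvalues (roots of unity, or rationally dependent on the already chosen ones) is countable at every stage, and can thus be avoided while preserving density. After rescaling I may further assume $\sum_n \|x_n\| < \infty$. On a standard probability space $(\Omega,\mathbb P)$ carrying an independent sequence of Steinhaus variables $(\chi_n)$, I set $\Phi(\omega):=\sum_n \chi_n(\omega)x_n$; this sum converges absolutely a.s.\ in $X$. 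Let $m$ be the law of $\Phi$. The partial sums $\Phi_N=\sum_{n\le N}\chi_n x_n$ have distributions in $\mathcal S_T(X)$ and converge weakly to $m$, whence $m\in \overline{\mathcal S}_T(X)$. $T$-invariance of $m$ follows from $T\Phi = \sum_n \lambda_n\chi_n\,x_n$ combined with the rotational invariance (and independence) of the $\chi_n$: $\lambda_n\chi_n$ is again Steinhaus for each $n$. Full support is a consequence of $\overline{\mathrm{span}}(x_n) = X$: given $x\in X$ and a neighbourhood $W$ of $x$, one truncates at level $N$ with $\sum_{n>N}\|x_n\|$ small, and uses that $(\chi_1,\dots,\chi_N)$ has a density on $\mathbb T^N$ to exhibit a positive-probability event on which $\Phi$ lies in $W$.

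The heart of the proof, and the main obstacle, is to establish $m(HC(T))=1$. I would argue via ergodicity on the infinite torus. Identify $(\Omega,\mathbb P)$ with $(\mathbb T^{\mathbb N}, \mathbf{m})$, where $\mathbf{m}$ is the Haar measure and $\chi_n$ is the $n$-th coordinate. The map $\Phi:\mathbb T^{\mathbb N}\to X$ intertwines $T$ with the rotation $R:(\theta_n)\mapsto(\lambda_n\theta_n)$ on $\mathbb T^{\mathbb N}$, since $T(\Phi(\theta)) = \Phi(R\theta)$. Rational independence of the $\lambda_n$ makes $R$ ergodic on $(\mathbb T^{\mathbb N},\mathbf{m})$, by the standard Fourier characterization of ergodicity of rotations on compact abelian groups. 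For every open $V\ne\emptyset$ in $X$, the set $\Phi^{-1}(V)$ has positive $\mathbf{m}$-measure because $m$ has full support; Birkhoff's theorem applied to $R$ then yields that, for $\mathbf{m}$-a.e.\ $\theta$, the $R$-orbit visits $\Phi^{-1}(V)$, i.e.\ the $T$-orbit of $\Phi(\theta)$ visits $V$. Intersecting over a countable basis of nonempty open sets in $X$ gives $m(HC(T))=1$. Alternatively, one could invoke Theorem \ref{th7} applied to $\mathcal M = \mathcal S_T(X)$: having produced one measure satisfying $m(HC(T))=1$, the dilation-invariance of $\mathcal S_T(X)$ easily supplies the approximants of $\delta_0$ required by condition (3) for arbitrary backward $T$-invariant open sets, and the conclusion propagates back.
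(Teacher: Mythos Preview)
There is a genuine gap: your measure $m$ does \emph{not} have full support, and consequently the argument for $m(HC(T))=1$ breaks down as well. Once you rescale so that $\sum_n \|x_n\|<\infty$, the map $\Phi:\TT^{\NN}\to X$, $\Phi(\theta)=\sum_n\theta_n x_n$, is continuous on the compact space $\TT^{\NN}$, so its range is a \emph{compact} subset of $X$ and $m=\Phi_*\mathbf m$ is supported there. In an infinite-dimensional Banach space this support cannot be all of $X$. Your full-support argument implicitly assumes that an arbitrary $x\in X$ can be approximated by points of the form $\sum_{n\le N}\theta_n x_n$ with $\theta_n\in\TT$; but $\overline{\rm span}(x_n)=X$ only gives approximation by \emph{linear} combinations $\sum a_n x_n$ with $a_n\in\C$, and the coefficients $a_n$ may be large. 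The subsequent step ``$\Phi^{-1}(V)$ has positive $\mathbf m$-measure for every open $V\neq\emptyset$'' is therefore false, and since $m(HC(T))=1$ for a $T$-invariant measure forces full support, your $m$ cannot satisfy $m(HC(T))=1$ either. The alternative route via Theorem~\ref{th7} is circular at this point, since condition~(3) there is established precisely by first exhibiting such an $m$.

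The construction in \cite{G} (and the paper's own Baire-category proof of Theorem~\ref{th6}\,(b)) gets around this with an extra idea you are missing: rather than fix a single spanning sequence $(x_n)$ once and for all, one repeatedly \emph{splits} eigenvectors. Concretely, a vector $x=\sum_i x_i$ lying in a given backward $T$-invariant open set $\Omega$ is rewritten as $x=\sum_{i,j} a_j x_i$ with $\sum_j a_j=1$ and $\sum_j a_j^2$ arbitrarily small, and each $a_j x_i$ is then perturbed to a nearby eigenvector $u_{i,j}$ with all the eigenvalues jointly independent. The resulting Steinhaus measure $\mu\sim\sum\chi_{i,j}u_{i,j}$ is supported on $\Omega$ (by Kronecker's theorem, since $\sum u_{i,j}\in\Omega$) and has $\int\|x\|^2\,d\mu$ as small as one wishes. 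It is this combination --- small second moment \emph{and} support inside any prescribed $\Omega\in\mathcal O_T$ --- that feeds into Theorem~\ref{th7} (or, in \cite{G}, into a direct inductive construction of $m$). Your single-sequence ansatz cannot achieve both simultaneously.
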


\smallskip
Actually, the result is not stated exactly in this way in \cite{G}. What is precisely proved in \cite{G} is that $T$ admits an invariant measure $m$ with full support such that $m(HC(T))=1$ and, moreover, $m$ is the distribution of an $X$-$\,$valued almost surely convergent random series 
$$\Phi(\omega)=\sum_{n=1}^\infty \Phi_n(\omega)$$
defined on some standard probability space $(\Omega,\mathfrak F,\PP)$, where the random variables $\Phi_n$ have the following form: 
$$\Phi_n(\omega)=\sum_{j\in J_n} \chi_j(\omega )\, x_{j}$$
where the sets $J_n$ are pairwise disjoint finite subsets of $\NN$, the vectors $x_j$, $j\in J_n$, are unimodular eigenvectors of $T$, and $(\chi_j)_{j\ge 1}$ is a sequence of independent Steinhaus variables defined on $(\Omega ,\mathfrak{F}, \P)$. To get Theorem \ref{blackbox} as stated, it remains to show that $m\in\overline{\mathcal S}_T(X)$; but this is clear. Indeed, for each $N\geq 1$, set $S_N(\omega):=\sum_{n=1}^N \Phi_n(\omega)$ and denote by $m_N$ the distribution of the random variable $S_N$. Then $m_N$ is a Steinhaus measure for $T$,  and since almost sure convergence implies convergence in distribution, the measures $m_N$ converge to $m$ in $\mathcal P(X)$ as $N\to\infty$. Hence $m$ belongs to $\overline{\mathcal S}_T(X)$.

\begin{remark}\label{secondmoment} It is shown in \cite{G} that a measure $m$ of the above form, \mbox{i.e.} the distribution of a random Steinhaus series $\sum \chi_j x_j$ where the $x_j$ are unimodular eigenvectors for $T$,  can never be ergodic for $T$. However, by the ergodic decomposition theorem one can deduce directly from the existence of $m$ that $T$ admits an ergodic measure $\mu$ with full support. Of course, this measure $\mu$ has no reason at all for being Gaussian (whereas we \emph{know}  by \cite{BM2} that under these assumptions ergodic Gaussian measures with full support do exist), and $T$ may not be weakly mixing with respect to it. Still, an interesting observation is that
 the \mea\ $m$ satisfies $\int_X \Vert x\Vert^2 dm(x)<\infty$.
 Indeed, using the notation above, the construction of \cite{G} is carried out in such a way that $\mathbb{E}\Vert\Phi_n\Vert^{2}<4^{-n}$ for each $n\ge 1$. So $$\mathbb{E}\Vert\Phi\Vert^{2}\le\sum_{n\ge 1} 2^{n}\,\mathbb{E}\Vert\Phi_n\Vert^{2}<\sum_{n\ge 1}2^{-n}<\infty.$$ It follows from this observation that
 the \erg\ \mea\ $\mu$   can be chosen in such a way that  $\int_X \Vert x\Vert^2 d\mu(x)<\infty$ as well; so we may in fact be not very far from Gaussian measures.
\end{remark}

\smallskip
By Theorem \ref{th7}, part (b) of Theorem \ref{th6} follows immediately from Theorem \ref{blackbox}. However, the construction of the above measure $m$ performed in \cite{G} is rather complicated. Moreover, it is somewhat unsatisfactory that only the ``trivial" implications in Theorem \ref{th7} have been used in this argument. 

We now show that in fact, one can prove Theorem \ref{th6} (b) as stated in a much simpler way. By the ``nontrivial" part of Theorem \ref{th7}, this gives in turn a new (and in our opinion, simpler) proof of Theorem \ref{blackbox}. Altogether, we therefore obtain a ``soft" Baire category proof of the fact that any operator with a perfectly spanning set of unimodular eigenvectors admits an ergodic measure 
with full support.

\begin{proof}[Baire category proof of Theorem \ref{th6} (b)] Let $T\in\mathfrak L(X)$ have a perfectly spanning set of unimodular eigenvectors. By Theorem \ref{th7}, condition (3) applied with $\mathcal M=\mathcal S_T(X)$, it is enough to show that 
for any backward $T\,$-$\,$invariant open set $\Omega\neq\emptyset$ and any neighbourhood $W$ of $0$, there exists a Steinhaus measure $\mu$ for $T$ supported on 
$\Omega$ such that $\mu(W)$ is arbitrarily close to $1$. This will rely on the next three facts.

\begin{fact}\label{factperf} Let us denote by $\mathcal E_{\rm cond}(T)$ the set of all unimodular eigenvectors $v$ for $T$ satisfying the following property: for every neighbourhood $V$ of $v$, there are uncountably many 
$\lambda\in\TT$ such that $V\cap\ker(T-\lambda I)\neq\emptyset$. Then the linear span of $\mathcal E_{\rm cond} (T)$ is dense in $X$.
\end{fact}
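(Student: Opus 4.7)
The plan is to exhibit a countable set $D\subset\mathbb{T}$ such that every unimodular eigenvector whose eigenvalue lies outside $D$ automatically belongs to $\mathcal{E}_{\rm cond}(T)$, and then to apply the perfect spanning hypothesis to this particular $D$. The key idea is to exploit the separability of $X$ to tame the a priori uncountable family of ``bad" neighbourhoods that witness non-membership in $\mathcal{E}_{\rm cond}(T)$.

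First, I would fix a countable basis $\mathcal{B}$ of open sets for $X$ (which exists because $X$ is separable). For each open set $U\subset X$, let $\Lambda(U):=\{\lambda\in\mathbb{T}\,:\,U\cap\ker(T-\lambda I)\neq\emptyset\}$, and define $\mathcal{B}_{0}:=\{B\in\mathcal{B}\,:\,\Lambda(B)\text{ is countable}\}$. Unwinding the definition of $\mathcal{E}_{\rm cond}(T)$, a unimodular eigenvector $v$ fails to lie in $\mathcal{E}_{\rm cond}(T)$ precisely when it admits some open neighbourhood $V$ with $\Lambda(V)$ countable; picking a basic neighbourhood $B\in\mathcal{B}$ with $v\in B\subset V$ gives $B\in\mathcal{B}_{0}$ and $v\in B$. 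Thus $\mathcal{E}(T)\setminus\mathcal{E}_{\rm cond}(T)\subset\bigcup_{B\in\mathcal{B}_{0}}B$, where $\mathcal{E}(T)$ denotes the set of all unimodular eigenvectors of $T$.

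Next, set $D:=\bigcup_{B\in\mathcal{B}_{0}}\Lambda(B)$. Since $\mathcal{B}_{0}$ is countable and each $\Lambda(B)$ with $B\in\mathcal{B}_{0}$ is countable by definition, $D$ is a countable subset of $\mathbb{T}$. Now fix any $\lambda\in\mathbb{T}\setminus D$ and any nonzero $v\in\ker(T-\lambda I)$. If $v$ did not belong to $\mathcal{E}_{\rm cond}(T)$, the inclusion above would force $v\in B$ for some $B\in\mathcal{B}_{0}$, whence $\lambda\in\Lambda(B)\subset D$, contradicting $\lambda\notin D$. Consequently $\ker(T-\lambda I)\setminus\{0\}\subset\mathcal{E}_{\rm cond}(T)$, and so $\ker(T-\lambda I)\subset\overline{\rm span}\,\mathcal{E}_{\rm cond}(T)$ for every $\lambda\in\mathbb{T}\setminus D$.

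Applying the perfect spanning assumption with this countable set $D$ then yields
$$X=\overline{\rm span}\bigl[\ker(T-\lambda I)\,;\,\lambda\in\mathbb{T}\setminus D\bigr]\subset\overline{\rm span}\,\mathcal{E}_{\rm cond}(T),$$
which is the desired conclusion. I do not expect any genuine obstacle: the whole argument is essentially a bookkeeping exercise, and the only point that deserves attention is the passage from the a priori uncountable family of witnessing neighbourhoods to the countable sub-family $\mathcal{B}_{0}$, which is precisely where separability of $X$ enters.
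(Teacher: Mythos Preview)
Your proof is correct and follows essentially the same route as the paper's argument (given in the proof of $(1)\Rightarrow(2)$ in Proposition~\ref{psp}): both construct a countable exceptional set $D\subset\TT$ by a Lindel\"of/second-countability argument and then apply the perfect spanning hypothesis. The only cosmetic difference is that you work directly with a countable basis of $X$, whereas the paper passes through the graph $\mathbf V=\{(x,\lambda):x\neq 0,\ Tx=\lambda x\}\subset X\times\TT$ and invokes the Lindel\"of property there; the resulting set $D$ and the conclusion are the same.
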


This follows from, and in fact is equivalent to, the assumption that $T$ has a perfectly spanning set of unimodular eigenvectors. See \mbox{e.g.} \cite[Lemma 3.6]{BM2}, or the proof of \cite[Proposition 4.1]{G}; see also Proposition 
\ref{psp} below. The strange notation $\mathcal E_{\rm cond}$ is explained in Section \ref{finalsection}.

\begin{fact}\label{factind} Let $(v_1,\dots ,v_N)$ be a finite family of vectors in $\mathcal E_{\rm cond} (T)$. Then one can find a family of unimodular eigenvectors 
$(u_1,\dots ,u_N)$ as close as we wish to $(v_1,\dots ,v_N)$ such that the associated family of eigenvalues $(\lambda_1,\dots ,\lambda_N)$ is \emph{independent}; that is, 
the only solution of the equation $\lambda_1^{m_1}\cdots \lambda_{N}^{m_N}=1$ with $m_i\in\ZZ$ is $m_1=\dots =m_N=0$.
\end{fact}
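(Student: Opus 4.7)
My plan is to prove Fact~\ref{factind} by induction on $N$, where the key observation at each step is that the set of eigenvalues $\lambda\in\TT$ that would destroy multiplicative independence is countable, whereas the property $v_i\in\mathcal E_{\rm cond}(T)$ provides \emph{uncountably} many candidate eigenvalues in any prescribed neighborhood.

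For the base case $N=1$, I need $\lambda_1$ such that $\lambda_1^{m}=1$ forces $m=0$, i.e.\ $\lambda_1$ is not a root of unity. Since $v_1\in\mathcal E_{\rm cond}(T)$, any neighborhood $V_1$ of $v_1$ meets $\ker(T-\lambda I)$ for uncountably many $\lambda\in\TT$, and only countably many of those $\lambda$ are roots of unity; so a suitable $u_1\in V_1$ and $\lambda_1\in\TT$ exist.

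For the inductive step, assume $u_1,\dots,u_{k-1}$ have been chosen close to $v_1,\dots,v_{k-1}$ with $(\lambda_1,\dots,\lambda_{k-1})$ independent. Fix a small neighborhood $V_k$ of $v_k$. I call a candidate eigenvalue $\lambda\in\TT$ \emph{bad} if $(\lambda_1,\dots,\lambda_{k-1},\lambda)$ fails to be independent, i.e.\ if there exist integers $(m_1,\dots,m_{k-1},m)\in\ZZ^k\setminus\{0\}$ with $\lambda_1^{m_1}\cdots\lambda_{k-1}^{m_{k-1}}\lambda^{m}=1$. By the independence of $(\lambda_1,\dots,\lambda_{k-1})$, any such relation must have $m\neq 0$; for each fixed tuple $(m_1,\dots,m_{k-1},m)$ with $m\neq 0$, the equation $\lambda^{m}=\lambda_1^{-m_1}\cdots\lambda_{k-1}^{-m_{k-1}}$ has exactly $|m|$ solutions on $\TT$. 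Taking the union over the countable set $\ZZ^k$, the set of bad $\lambda$'s is countable.

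Since $v_k\in\mathcal E_{\rm cond}(T)$, the set $\{\lambda\in\TT\,;\;V_k\cap\ker(T-\lambda I)\neq\emptyset\}$ is uncountable, so I can choose $\lambda_k$ in this set that is not bad, together with $u_k\in V_k\cap\ker(T-\lambda_k I)$. The enlarged family $(\lambda_1,\dots,\lambda_k)$ is then independent, which completes the induction. The argument is essentially mechanical; the only substantive point is the definition of $\mathcal E_{\rm cond}(T)$ guaranteeing an uncountable supply of eigenvalues in any neighborhood of $v_k$, which is exactly what is needed to dominate the countably many forbidden values.
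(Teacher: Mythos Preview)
Your proof is correct and follows essentially the same approach as the paper's own proof: an inductive choice of the $u_k$, using at each step that the set of ``bad'' $\lambda$'s destroying independence is countable while $\mathcal E_{\rm cond}(T)$ supplies uncountably many candidate eigenvalues in any neighbourhood of $v_k$. You have simply spelled out in more detail why the bad set is countable (via the finitely many solutions of $\lambda^m=\lambda_1^{-m_1}\cdots\lambda_{k-1}^{-m_{k-1}}$ for each nonzero tuple) and why $m\neq 0$ is forced by the inductive hypothesis.
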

\begin{proof} Since $v_1\in\mathcal E_{\rm cond} (T)$ and the set of all roots of unity is countable, one can find a unimodular eigenvector $u_1$ arbitrarily close to $v_1$ whose associated eigenvalue 
$\lambda_1$ is not a 
root of unity, \mbox{i.e.} the $1\,$-$\,$element family $(\lambda_1)$ is independent. Likewise, since $v_2\in\mathcal E_{\rm cond} (T)$ and the set $\{ \lambda\in\TT;\; (\lambda_1,\lambda)\;\hbox{is not independent}\}$ is countable, one can find $u_2\in\mathcal E(T)$ with associated eigenvalue $\lambda_2$ such that $u_2$ is very close to $v_2$ and the family $(\lambda_1,\lambda_2)$ is independent. Continuing in this way, the result follows.
\end{proof}

\begin{fact}\label{factomega} Let $\Omega\subset X$ be a backward $T\,$-$\,$invariant open set, and let $(u_k)_{k\in K}$ be a finite family of unimodular eigenvectors for $T$. Assume that the associated family of eigenvalues $(\lambda_k)_{k\in K}$ is independent, and that $\sum_{k\in K} u_k\in \Omega$. Then $\sum_{k\in K} \mu_k u_k\in \Omega$ for every $(\mu_k)_{k\in K}\in\TT^K$.
\end{fact}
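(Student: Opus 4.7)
The plan is to exploit the independence of the eigenvalues via Kronecker's equidistribution theorem, together with the backward $T$-invariance of $\Omega$, to transport the hypothesis $\sum_k u_k \in \Omega$ to arbitrary twists $\sum_k \mu_k u_k$.

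First I would observe that since each $u_k$ is an eigenvector with eigenvalue $\lambda_k$, one has the explicit formula
$$T^n\Bigl(\sum_{k \in K} \mu_k u_k\Bigr) = \sum_{k \in K} \mu_k \lambda_k^n \, u_k \qquad \text{for every } n \geq 0 \text{ and every } (\mu_k) \in \TT^K.$$
Next, by the hypothesis that the finite family $(\lambda_k)_{k \in K}$ is independent in $\TT$, Kronecker's theorem (equivalently, Weyl's equidistribution theorem for linear flows on the torus) tells us that the set $\{(\lambda_k^n)_{k \in K} ; n \in \N\}$ is dense in $\TT^K$. In particular, given any target $(\mu_k)_{k \in K} \in \TT^K$, one can find a sequence of integers $(n_j)_{j \geq 1}$ such that $\lambda_k^{n_j} \to \mu_k^{-1}$ as $j \to \infty$ for every $k \in K$.

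Fix now $(\mu_k)_{k \in K} \in \TT^K$, and let $(n_j)$ be as above. Applying the formula with these multipliers,
$$T^{n_j}\Bigl(\sum_{k \in K} \mu_k u_k\Bigr) = \sum_{k \in K} \mu_k \lambda_k^{n_j}\, u_k \xrightarrow[j \to \infty]{} \sum_{k \in K} u_k.$$
By assumption, the limit lies in the open set $\Omega$, so $T^{n_j}\bigl(\sum_k \mu_k u_k\bigr) \in \Omega$ for all sufficiently large $j$.

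Finally, the backward $T$-invariance $T^{-1}(\Omega) \subset \Omega$ iterates trivially to give $T^{-n}(\Omega) \subset \Omega$ for every $n \geq 1$; hence $T^{n_j}(x) \in \Omega$ forces $x \in \Omega$. Applied with $x = \sum_k \mu_k u_k$, this concludes the proof. I do not expect a substantive obstacle here: the only point that requires any care is correctly parsing ``backward invariant'' (the complement is forward invariant, so one must use $T^{-n}(\Omega) \subset \Omega$, not the reverse inclusion), which is why the Kronecker approximation is aimed at $\mu_k^{-1}$ rather than $\mu_k$.
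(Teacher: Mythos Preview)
Your proof is correct and follows essentially the same approach as the paper: both use Kronecker's theorem on the independent family $(\lambda_k)$ to make $\lambda_k^{n}$ close to $\mu_k^{-1}$, so that $T^{n}\bigl(\sum_k \mu_k u_k\bigr)=\sum_k \mu_k\lambda_k^{n}u_k$ lands in the open set $\Omega$, and then conclude via backward $T$-invariance. The only cosmetic difference is that the paper selects a single $n$ directly, whereas you pass through a sequence $(n_j)$ and a limit argument.
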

\begin{proof} Let us fix $(\mu_k)_{k\in K}\in\TT^K$. Since the family $(\lambda_k)_{k\in K}$ is independent, one can apply {Kronecker's theorem} to find $n\in\NN$ such that $\lambda_k^{-n}$ is so close to $\mu_k$ for all $k\in K$  that $\sum_{k\in K} \lambda_k^n\mu_k u_k\in\Omega$. Since $\sum_{k\in K} \lambda_k^n\mu_k u_k=T^n\left( \sum_{k\in K} \mu_k u_k\right)$ and $\Omega$ is backward $T\,$-$\,$invariant, it follows that $\sum_{k\in K} \mu_k u_k\in\Omega$.
\end{proof}

\smallskip
Now let us fix a backward $T\,$-$\,$invariant open set $\Omega\neq\emptyset$ and a neighbourhood $W$ of $0$. Recall that we are looking for a Steinhaus measure $\mu$ for $T$ supported on $\Omega$ such that $\mu(W)$ is close to $1$. By Markov's inequality (see Fact \ref{Markov} in the proof of Theorem \ref{th8}) it is enough to find $\mu\in\mathcal S_T(\Omega)$ such that $\int_X \Vert x\Vert^2d\mu(x)$ is 
 small; say $\int_X \Vert x\Vert^2d\mu(x)<\eta$ for some given $\eta >0$.

\smallskip
 The proof makes use of a simple, yet crucial idea from \cite{G}, namely that of replacing one unimodular eigenvector $x$ by a finite sum $\sum_j a_j u_j$, where the $u_j$ are unimodular eigenvectors associated with independent eigenvalues and $\sum_j\vert a_j\vert^2$ is very small.

\smallskip
By Fact \ref{factperf} and since $\mathcal E_{\rm cond} (T)$ is obviously invariant under scalar multiplication, one can find $x_1,\dots ,x_I\in \mathcal E_{\rm cond} (T)$ such that $x:=\sum_{i} x_i$ lies in $\Omega$. 

\smallskip
Since the linear span of the vectors $x_1,\dots x_I$ is finite-dimensional, one can find some finite constant $M=M(x_1,\dots x_I)$ such that $\left\Vert\sum_{i} \theta_i x_i\right\Vert^2\leq M\, \sum_{i} \vert\theta_i\vert^2$ for every sequence of scalars $(\theta_i)_{i=1}^I$. Having fixed $C$ in this way, we choose $a_1,\dots ,a_J\in(0,\infty)$ such that 
$\sum_j a_j=1$ and $ MI\, \sum_j\vert a_j\vert^2<\eta$ (for example, take $a_j=1/J$ for large enough $J$). Then we may write $x$ as
$$x=\sum_{i=1}^I\sum_{j=1}^J a_j x_i=\sum_{i,j} v_{i,j}\, ,$$
where the vectors $v_{i,j}:=a_{j}x_{i}$ belong to $\mathcal E_{\rm cond}(T)$. 

\smallskip
By Fact \ref{factind}, one can find unimodular eigenvectors $u_{i,j}$ such that $u_{i,j}$ is very close to $v_{i,j}$ for all $(i,j)$, so that in particular $\sum_{i,j}u_{i,j}\in\Omega$, and the associated family of eigenvalues 
$(\lambda_{i,j})$ is independent. 

\smallskip
Now, consider the Steinhaus measure $\mu\sim \sum \chi_{i,j} u_{i,j}$. By Fact \ref{factomega}, the measure $\mu$ is supported on $\Omega$ (actually, its closed support is contained in $\Omega$). Moreover, $\int_X \Vert x\Vert^2 d\mu(x)=\mathbb E\left\Vert \sum_{i,j} \chi_{i,j} u_{i,j}\right\Vert^2$ can be made as close to $\mathbb E\left\Vert \sum_{i,j} \chi_{i,j} v_{i,j}\right\Vert^2$ as we wish, provided that the $u_{i,j}$ are close enough to the $v_{i,j}$ for all $(i,j)$. So it is enough to check that $\mathbb E\left\Vert \sum_{i,j} \chi_{i,j} v_{i,j}\right\Vert^2<\eta$.

\smallskip
We have 
\begin{eqnarray*}
\mathbb E \left\Vert \sum_{i,j} \chi_{i,j} v_{i,j}\right\Vert^2&=&\mathbb E\left\Vert \sum_{i=1}^I\left(\sum_{j=1}^J \chi_{i,j}a_j \right) x_i\right\Vert^2\\
&\leq & M\, \sum_{i=1}^I \mathbb E\left\vert \sum_{j=1}^J \chi_{i,j} a_j\right\vert^2\\
&=&  MI\, \sum_{j=1}^J a_j^2\, ,
\end{eqnarray*}
where at the third line we have used the fact that the Steinhaus variables $\chi_{i,j}$ are orthogo\-nal and normalized in $L^2$. Since $MI\, \sum_j a_j^2<\eta$, this concludes the proof.
\end{proof}

\begin{remark} The above proof shows that if $T$ admits a perfectly spanning set of unimodular eigenvectors, then it satisfies the assumptions of Theorem \ref{th8}. Hence, we have in fact a Baire category proof of the following result:  \emph{if $T$ admits a perfectly spanning set of unimodular eigenvectors, then it admits an ergodic measure $\mu$ with full support such that $\int_X\Vert x\Vert^2d\mu(x)<\infty$}. As mentioned above, this is equivalent to the existence of an ergodic Gaussian measure with full support if the Banach space $X$ has type $2$.
\end{remark}

\begin{remark} With minor adjustments, the same proof works for an operator acting on an arbitrary Polish topological vector space $X$. This may have some interest, since it was not quite clear from the existing proofs that the implication (perfect spanning property)$\implies$(ergodic measure with full support) holds true in this general setting.

Here are a few more words about the modifications needed in the proof. Start with a neighbourhood $W$ of $0$ in $X$, and for safeness  take another neighbourhood $W'$ of $0$ such that $W'+W'\subset W$. We are looking for a suitable Steinhaus measure ``almost" supported on $W$. 
Having found the vectors $x_1,\dots ,x_I$ as above, let $\Vert\,\cdot\,\Vert$ be any norm on the finite-dimensional space 
$E:={\rm span}(x_1,\dots ,x_I)$. Define $M$ and $a_1,\dots ,a_J$ as above. If the $u_{i,j}$ are chosen close enough to the $v_{i,j}$, the following implication holds for any 
$\mu_{i,j}\in \TT$: if $\sum_{i,j}\mu_{i,j}v_{i,j}\in W'$, then $\sum_{i;j}\mu_{i,j} u_{i,j}\in W$. So it is enough to ensure that $\sum_{i,j} \chi_{i,j} v_{i,j}$ belongs to $W'$ with large probability, which is done as above.
\end{remark}

\smallskip
The following result can be deduced from the proof of Theorem \ref{th6} (b).
\begin{proposition}\label{book} Let the Banach space $X$ have type $p\in (1,2]$. Assume that there exists some finite constant $C$ such that the following holds:
\par\smallskip
for any backward $T\,$-$\,$invariant open set $\Omega\neq\emptyset$, one can find a finite sequence of unimodular eigenvectors $(u_k)_{k\in K}$ whose associated eigenvalues $\lambda_k$ form an independent family, such that $\sum_k u_k\in \Omega$ and $\sum_k\Vert u_k\Vert^p\leq C$.
\par\smallskip
Then $\overline{\mathcal S_T}(HC(T))$ is a dense $G_\delta$ subset of 
$\overline{\mathcal S_T}(X)$.
\end{proposition}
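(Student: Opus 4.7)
The plan is to apply Theorem \ref{th7} with $\mathcal M := \mathcal S_T(X)$ and to verify its condition~(3): for every backward $T\,$-$\,$invariant open set $\Omega\neq\emptyset$, the Dirac mass $\delta_{0}$ lies in the closure of $\overline{\mathcal S_T}(\Omega)$ in $\mathcal P(X)$. In view of Fact \ref{Markov}, this reduces to exhibiting, for each such $\Omega$ and every $\eta>0$, a Steinhaus measure $\mu_\eta\in\mathcal S_T(\Omega)$ whose $p$-th moment $\int_X\Vert x\Vert^p\, d\mu_\eta(x)$ can be made arbitrarily small.

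To produce $\mu_\eta$ I would first observe that $\eta^{-1}\Omega$ is again open and backward $T\,$-$\,$invariant (since $T^{-1}(\eta^{-1}\Omega)=\eta^{-1}T^{-1}(\Omega)\subset\eta^{-1}\Omega$ by linearity of $T$), and apply the hypothesis of the proposition to $\eta^{-1}\Omega$. This yields a finite sequence $(u_k)_{k\in K}$ of unimodular eigenvectors of $T$ with independent associated eigenvalues $(\lambda_k)_{k\in K}$ such that $\sum_k u_k\in\eta^{-1}\Omega$ and $\sum_k\Vert u_k\Vert^p\le C$. The rescaled vectors $v_k:=\eta u_k$ remain unimodular eigenvectors for the same $\lambda_k$, they satisfy $\sum_k v_k\in\Omega$ and $\sum_k\Vert v_k\Vert^p\le C\,\eta^p$. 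Let $\mu_\eta\sim\sum_k\chi_k v_k$ be the corresponding Steinhaus measure. Since the family $(\lambda_k)$ is independent and $\sum_k v_k\in\Omega$ with $\Omega$ backward $T\,$-$\,$invariant, Fact \ref{factomega} ensures that $\sum_k\theta_k v_k\in\Omega$ for \emph{every} $(\theta_k)\in\TT^K$; applying this pointwise shows that $\mu_\eta$ is supported on $\Omega$.

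It then remains to control $\mathbb{E}\bigl\Vert\sum_k\chi_k v_k\bigr\Vert^p$ using the type $p$ of $X$. By the rotational invariance of the Steinhaus distribution, the family $(\chi_k)$ has the same joint law as $(\varepsilon_k\chi_k)$, where $(\varepsilon_k)$ is a family of independent Rademacher variables independent of $(\chi_k)$. Conditioning on the $\chi_k$ and applying the type $p$ inequality of $X$ to the vectors $(\chi_k v_k)$, whose norms coincide with those of the $v_k$, I obtain
\begin{equation*}
\int_X\Vert x\Vert^p d\mu_\eta(x)=\mathbb{E}\Bigl\Vert\sum_k\chi_k v_k\Bigr\Vert^p
=\mathbb{E}\Bigl\Vert\sum_k\varepsilon_k(\chi_k v_k)\Bigr\Vert^p
\le T_p\sum_k\Vert v_k\Vert^p\le T_p\,C\,\eta^p,
\end{equation*}
where $T_p$ denotes the type $p$ constant of $X$. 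Letting $\eta\to 0$ gives, via Fact \ref{Markov}, that $\mu_\eta\to\delta_0$ in $\mathcal P(X)$. Thus condition~(3) of Theorem \ref{th7} holds for $\mathcal M=\mathcal S_T(X)$, and its equivalent condition~(5) yields the desired conclusion that $\overline{\mathcal S_T}(HC(T))$ is a dense $G_\delta$ subset of $\overline{\mathcal S_T}(X)$. The only point requiring a small comment is the transfer from the Rademacher formulation of type $p$ to the analogous inequality for Steinhaus (complex) variables, which is exactly what the rotational invariance trick above provides; everything else is essentially dictated by the framework of Theorems \ref{th7} and \ref{th6}~(b).
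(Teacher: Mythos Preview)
Your proof is correct and follows essentially the same route as the paper. The paper packages the dilation and Markov steps into Theorem~\ref{th8} (whose hypothesis is verified via the type~$p$ inequality and Fact~\ref{factomega}, exactly as you do) and then implicitly passes from condition~(2) to~(5) of Theorem~\ref{th7}; you simply bypass Theorem~\ref{th8} and verify condition~(3) of Theorem~\ref{th7} directly, which amounts to unrolling the same ingredients.
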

\begin{proof} Since $X$ has type $p$, there exists a finite constant $C_p$ such that, for every finite family of vectors $(u_k)_{k\in K}\subset X$, we have
$$\mathbb E\left\Vert \sum_{k\in K} \chi_k u_k\right\Vert^p\leq C_p \sum_{k\in K} \Vert u_k\Vert^p .$$
By Fact \ref{factomega}, it follows that for any for any backward $T\,$-$\,$invariant open set $\Omega\neq\emptyset$, one can find a measure 
 $\mu\in\mathcal S_T(\Omega)$ such that $\int_X \Vert x\Vert^pd\mu(x)\leq C':=CC_p$. So we may apply Theorem \ref{th8}.
\end{proof}

\begin{remark} The assumption of Proposition \ref{book} is satisfied if there exists a sequence of unimodular eigenvectors 
$(x_k)_{k\in\NN}$ for the operator $T$ such that
\begin{itemize}
\item[\rm (i)] the eigenvalues $\lambda_k$ associated to the eigenvectors $x_{k}$ are independent;
\item[\rm (ii)] $\sum\limits_{n\in\NN} \Vert x_k\Vert^p<\infty$;
\item[\rm (iii)] $\sum\limits_{n\in\NN} \vert\langle x^*,x_k\rangle\vert=\infty$ for every nonzero linear functional $x^*\in X^*$.
\end{itemize}
 
 Indeed, assumption (iii) implies that the set 
$$\left\{ \sum_{k=1}^K a_k x_k;\; K\in\NN\;,\; \vert a_N\vert,\dots ,\vert a_K\vert\leq 1\right\}$$
is dense in $X$ (see \mbox{e.g.} \cite[Lemma 11.11]{BM1}). So, given a backward $T$-$\,$invariant nonempty open set $\Omega$,
one can find complex numbers $a_1,\dots , a_K$ with 
$0<\vert a_k\vert\leq 1$ such that $\sum_{k} a_kx_k\in \Omega$. Setting $u_k:=a_kx_k$, the $u_k$ are unimodular eigenvectors whose eigenvalues form an independent family,  such that $\sum_k u_k\in\Omega$ and $\sum_k\Vert u_k\Vert^p\leq C:=\sum_{1}^\infty \Vert x_k\Vert^p$.

\end{remark}

\subsection{Periodic measures and proof of Theorem \ref{th6} (a)} Recall that we denote by $\mathcal F_T(X)$ the convex hull of the set of all periodic measures for an operator $T\in\mathfrak L(X)$. Equivalently, $\mathcal F_T(X)$ is the family of all finitely supported $T\,$-$\,$invariant measures on $X$. 

\smallskip
Assume that $T$ satisfies the assumptions of Theorem \ref{th6} (a); that is, $T$ has a perfectly spanning set of unimodular eigenvectors and, moreover, any unimodular eigenvector can be approximated as close as we wish by a periodic eigenvector. Under these assumptions, we are going to show that the measure $m$ given by Theorem \ref{blackbox} belongs to ${\overline{\mathcal F}}_T(X)$. Once this is done, we may apply Theorem \ref{th7} with $\mathcal M=\mathcal F_T(X)$ to conclude that
${\overline{\mathcal F}}_T(HC(T))$ is a dense $G_{\delta }$ subset of ${\overline{\mathcal F}}_T(X)$. 
\par\smallskip

Let us say that a measure $\nu\in\mathcal P(X)$ is a \emph{periodic Steinhaus measure} for $T$ if $\nu$ is the distribution of a random variable $\Psi=\sum \chi_j u_j$, where the sum is finite and the $u_j$ are periodic unimodular eigenvectors for $T$. 

\begin{fact}\label{closure1} Any Steinhaus measure for $T$ lies in the closure of the periodic Steinhaus measures.
\end{fact}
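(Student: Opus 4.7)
The plan is to write any Steinhaus measure $\mu \in \mathcal{S}_T(X)$ explicitly as $\mu \sim \Phi = \sum_{j=1}^N \chi_j x_j$, where the $x_j$ are unimodular eigenvectors for $T$ and $(\chi_j)_{j=1}^N$ is a finite sequence of independent Steinhaus variables defined on some standard probability space $(\Omega,\mathfrak{F},\mathbb{P})$, and then to approximate each $x_j$ in norm by periodic unimodular eigenvectors. Using the hypothesis of Theorem \ref{th6} (a), for each $j\in\{1,\dots,N\}$ one can choose a sequence $(u_j^{(n)})_{n\ge 1}$ of periodic unimodular eigenvectors of $T$ such that $u_j^{(n)}\to x_j$ in $X$ as $n\to\infty$.

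For each $n\ge 1$, set $\Psi_n(\omega):=\sum_{j=1}^N \chi_j(\omega)\, u_j^{(n)}$ and denote by $\nu_n$ its distribution. By construction, $\nu_n$ is a periodic Steinhaus measure for $T$. It remains to check that $\nu_n\to\mu$ in $\mathcal{P}(X)$. This is straightforward: since $|\chi_j(\omega)|=1$ almost surely, for every $\omega\in\Omega$ we have
\[
\Vert \Psi_n(\omega)-\Phi(\omega)\Vert \le \sum_{j=1}^N \Vert u_j^{(n)}-x_j\Vert \xrightarrow[n\to\infty]{} 0,
\]
so $\Psi_n$ converges to $\Phi$ uniformly in $\omega$. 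In particular, for every $f\in\mathcal{C}_b(X)$, $f\circ \Psi_n$ converges pointwise to $f\circ\Phi$ on $\Omega$ and is uniformly bounded by $\Vert f\Vert_\infty$. By the dominated convergence theorem,
\[
\int_X f\, d\nu_n = \mathbb{E}[f(\Psi_n)] \To \mathbb{E}[f(\Phi)] = \int_X f\, d\mu,
\]
which means exactly that $\nu_n\to\mu$ in $\mathcal{P}(X)$, establishing Fact \ref{closure1}.

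There is really no obstacle here: the whole argument is a one-line consequence of the density hypothesis on periodic eigenvectors combined with the fact that Steinhaus variables have modulus $1$, which promotes norm-approximation of the deterministic vectors $x_j$ to uniform (hence distributional) approximation of the random vectors $\Phi$. The only point worth keeping in mind is that the sum defining $\Phi$ is \emph{finite}, so no issue of convergence of the series arises and the estimate above is valid termwise.
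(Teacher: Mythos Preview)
Your proof is correct and follows essentially the same approach as the paper: approximate each unimodular eigenvector $x_j$ by periodic eigenvectors $u_j^{(n)}$ using the density hypothesis, and observe that the resulting periodic Steinhaus measures $\nu_n\sim\sum_j\chi_j u_j^{(n)}$ converge to $\mu$. The paper states this more tersely, while you spell out the uniform convergence and dominated convergence details, but the argument is identical.
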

\begin{proof} Let $\mu\sim \sum_{j\in J} \chi_j  x_j$ be an arbitrary Steinhaus measure for $T$. Our assumption on $T$ implies that for each $j\in J$, one can find a sequence of periodic eigenvectors $(u_{j,n})_{n\in\NN}$ for $T$ converging to $x_j$. Then the periodic Steinhaus measures $\nu_n\sim \sum_{j\in J} \chi_j u_{j,n}$ converge to $\mu$ 
in $\mathcal P(X)$.
\end{proof}

\begin{fact}\label{closure2} Any periodic Steinhaus measure for $T$ belongs to ${\overline{\mathcal F}}_T(X)$.
\end{fact}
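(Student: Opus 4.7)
The plan is to approximate an arbitrary periodic Steinhaus measure $\nu\sim \sum_{j\in J}\chi_j u_j$ by a sequence $(\nu_M)_{M}$ of \emph{finitely supported} $T\,$-$\,$invariant measures, obtained by replacing each continuous Steinhaus variable $\chi_j$ by a uniform random variable on the $M$-th roots of unity. Once this is done, Remark \ref{bi-obvious} identifies each $\nu_M$ with an element of $\mathcal{F}_T(X)$ and Fact \ref{closure2} follows by letting $M\to\infty$.

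First I would extract the key arithmetic observation that each eigenvalue $\lambda_j$ associated with the periodic eigenvector $u_j$ is a root of unity. Indeed, by definition of a periodic Steinhaus measure, each $u_j$ satisfies $T^{N_j}u_j=u_j$ for some $N_j\ge 1$, and combined with $T u_j = \lambda_j u_j$ this forces $\lambda_j^{N_j}=1$. Let $N\ge 1$ be a common multiple of the integers $N_j$, $j\in J$, so that $\lambda_j^N=1$ for every $j\in J$. For every integer $M$ which is a multiple of $N$, let $(\chi_j^{(M)})_{j\in J}$ be a family of independent random variables, each uniformly distributed on the finite group $\mathbb{U}_M:=\{e^{2\pi i k/M};\ 0\le k<M\}$ of $M$-th roots of unity, and let $\nu_M$ denote the distribution of $\sum_{j\in J}\chi_j^{(M)}u_j$.

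Next I would check the three properties needed for the conclusion. The measure $\nu_M$ is obviously finitely supported, since it is carried by the finite set $\bigl\{\sum_{j\in J}\zeta_j u_j;\ (\zeta_j)\in\mathbb{U}_M^J\bigr\}$. To see that $\nu_M$ is $T\,$-$\,$invariant, observe that $T\bigl(\sum_j \chi_j^{(M)} u_j\bigr)=\sum_j \lambda_j\chi_j^{(M)} u_j$; since $N\mid M$ the scalar $\lambda_j$ belongs to $\mathbb{U}_M$, so multiplication by $\lambda_j$ permutes $\mathbb{U}_M$ and $\lambda_j\chi_j^{(M)}$ has the same law as $\chi_j^{(M)}$. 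By independence the joint distribution of $(\lambda_j\chi_j^{(M)})_{j\in J}$ coincides with that of $(\chi_j^{(M)})_{j\in J}$, and hence $T_*\nu_M=\nu_M$. By Remark \ref{bi-obvious}, $\nu_M\in\mathcal{F}_T(X)$.

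Finally I would verify that $\nu_M\to\nu$ in $\mathcal{P}(X)$. Since the map $\Phi:\mathbb{T}^J\to X$ defined by $\Phi\bigl((z_j)_{j\in J}\bigr):=\sum_{j\in J} z_j u_j$ is continuous, pushforward under $\Phi$ is weakly continuous, and it suffices to show that the joint distribution of $(\chi_j^{(M)})_{j\in J}$ on $\mathbb{T}^J$ converges weakly, as $M\to\infty$ along multiples of $N$, to the Haar measure on $\mathbb{T}^J$. By independence and the product structure, this reduces to the one-dimensional fact that the uniform measure on $\mathbb{U}_M$ converges weakly to the normalized Lebesgue measure on $\mathbb{T}$, which is a direct consequence of the Riemann sum approximation $\frac{1}{M}\sum_{k=0}^{M-1}f(e^{2\pi ik/M})\to \int_{\mathbb{T}}f\,dm$ for every $f\in\mathcal{C}(\mathbb{T})$.

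Of the three steps the only delicate point is the $T\,$-$\,$invariance of $\nu_M$, but once the periodicity of the $u_j$ is translated into $\lambda_j\in\mathbb{U}_N$ and one chooses $M$ to be a multiple of $N$, the invariance reduces to the trivial statement that the Haar measure of the finite group $\mathbb{U}_M^J$ is invariant under translations by elements of that group; everything else is formal.
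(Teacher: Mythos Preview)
Your proof is correct, and it takes a genuinely different route from the paper's argument. The paper fixes an auxiliary \emph{independent} tuple $(\lambda_1,\dots,\lambda_s)\in\TT^s$ (unrelated to the eigenvalues of the $u_j$), chooses a common period $Q$ with $T^Qu_j=u_j$, and approximates $\nu$ by the explicit finitely supported invariant measures
\[
\nu_N=\frac{1}{N^s}\sum_{n_1,\dots,n_s=0}^{N-1}\frac{1}{Q}\sum_{j=0}^{Q-1}\delta_{T^j(\lambda_1^{n_1}u_1+\cdots+\lambda_s^{n_s}u_s)}\, .
\]
Convergence $\nu_N\to\nu$ is then obtained via the multi-dimensional Weyl equidistribution theorem for the orbit of $(\lambda_1,\dots,\lambda_s)$ in $\TT^s$, combined with the $T$-invariance of $\nu$ to handle the averaging over $j$.

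Your approach instead discretizes the Steinhaus variables themselves to the $M$-th roots of unity, and the $T$-invariance of $\nu_M$ reduces to the translation invariance of the Haar measure on the finite group $\mathbb U_M^{J}$ (using that the eigenvalues of the $u_j$ lie in $\mathbb U_N\subset\mathbb U_M$). This is more elementary: it avoids Weyl equidistribution entirely, replacing it by the trivial Riemann-sum convergence of the uniform measure on $\mathbb U_M$ to Lebesgue measure on $\TT$, together with the stability of weak convergence under finite products and continuous pushforwards. The paper's construction, on the other hand, makes the periodic-measure structure of the approximants more explicit (each $\nu_N$ is visibly a convex combination of orbit averages), but at the cost of invoking a deeper equidistribution result.
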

\begin{proof} Let $\nu$ be a periodic Steinhaus measure for $T$, and write
$$\nu\sim \sum\limits_{j=1}^s \chi_j\, u_j\, ,$$
where the $u_j$ are periodic eigenvectors for $T$.

Let us fix an independent subset $\{ \lambda_1,\dots ,\lambda_s\}$ of $\TT$. 
 Let us also choose $Q\geq 1$ such that $T^Qu_j=u_j$ for all $j\in\{ 1,\dots ,s\}$. 

For each $N\geq 1$, consider the measure $\nu_N\in\mathcal P(X)$ defined by
$$\nu_N:=\frac{1}{N^s} \sum_{n_1,\dots ,n_s=0}^{N-1}\left( \frac1Q \sum_{j=0}^{Q-1} \delta_{T^j(\lambda_1^{n_1} u_1+\cdots  +\lambda_s^{n_s} u_s)}\right).$$
Since any point $a\in X$ of the form $a=\lambda_1^{n_1} u_1+\cdots  +\lambda_s^{n_s} u_s$ satisfies $T^Qa=a$, the measures $\nu_N$ belong to ${{\mathcal F}}_T(X)$.

If $f$ is a bounded continuous function on $X$ and $N\ge 1$, then
\begin{eqnarray*}
\int_X f\, d\nu_N&=&\frac1{N^s} \sum_{n_1,\dots ,n_s=0}^{N-1}\left(\frac1Q \sum_{j=0}^{Q-1} f\bigl( T^j(\lambda_1^{n_1} u_1+\cdots  +\lambda_s^{n_s} u_s)\bigr)\right)\\
&=&\frac1Q\sum_{j=0}^{Q-1} \left( \frac1{N^s} \sum_{n_1,\dots ,n_s=0}^{N-1} f\bigl(\lambda_1^{n_1} T^ju_1+\cdots  +\lambda_s^{n_s} T^ju_s\bigr)\right) .
\end{eqnarray*}
By the (multi-dimensional) Weyl equidistribution theorem, it follows that
$$\int_X f\, d\nu_N\To \frac1Q\sum_{j=0}^{Q-1} \int_{\TT^s}  f\bigl(\alpha_1 T^ju_1+\cdots  +\alpha_s T^ju_s\bigr)\, d\alpha_1\cdots d\alpha_s\quad 
\textrm{ as } {N\to\infty}.$$

Observe now that 
$$\int_X f\, d\nu=\mathbb E\left[ f\left(\sum_{j=1}^s \chi_j u_j\right)\right]=\int_{\TT^s} f(\alpha_1 u_1+\cdots +\alpha_s u_s)\, d\alpha_1\cdots d\alpha_s $$
and that, by the very same computation,
$$\int_X (f\circ T^j)\, d\nu=\int_{\TT^s}  f\bigl(\alpha_1 T^ju_1+\cdots  +\alpha_s T^ju_s\bigr)\, d\alpha_1\cdots d\alpha_s $$
for all $j\geq 0$. Since $\nu$ is $T$-$\,$invariant, it follows that 
$$\int_X f\, d\nu=\int_{\TT^s}  f\bigl(\alpha_1 T^ju_1+\cdots  +\alpha_s T^ju_s\bigr)\, d\alpha_1\cdots d\alpha_s$$
for all $j\in\{ 0,\dots ,Q-1\}$.

Altogether, we conclude that 
$$\int_X f\, d\nu_N\To \int_X f\, d\nu\quad \textrm{ for every }f\in \mathcal C_b(X),$$  \mbox{i.e.} that $\nu_N$ converges to $\nu$ in $\mathcal P(X)$ as 
$N$ tends to infinity. This shows that $\nu$ belongs to ${\overline{\mathcal F}}_T(X)$.
\end{proof}

\par\smallskip
By Facts \ref{closure1} and \ref{closure2}, we see that $\overline{\mathcal S}_T(X)\subset \overline{\mathcal F}_T(X)$. In particular, the measure $m$ given by Theorem \ref{blackbox} belongs to 
$\overline{\mathcal F}_T(X)$, which concludes the proof of Theorem 
\ref{th6}.
\qed
\par\smallskip
\begin{remark} The proof of Theorem \ref{th6} (a) consists in showing that any Steinhaus measure for $T$ belongs to $\overline{\mathcal F}_T(X)$. It would be interesting to know if this is holds true for every operator $T$ admitting a dense set of periodic points.
\end{remark}

\begin{remark} Theorem \ref{th6} (a) still holds, with the same proof, under the following formally weaker assumption: for every countable set $D\subset\TT$, the linear span of the spaces
$ \ker(T-\lambda )\cap \overline{\mathcal E_{{\rm per}}(T)}$, ${\lambda\in \TT\setminus D}$, is dense in $X$, where $\mathcal E_{{\rm per}} (T)$ is the set of all periodic eigenvectors of $T$.
This assumption is satisfied for all known chaotic operators in the literature. For example, it is extremely easy to
check if $T$ admits a 
``continuous and spanning field of unimodular eigenvectors" defined on a nontrivial arc of $\TT$; 
in other words, if there exists a a nontrivial arc $\Lambda\subset \TT$ and a continuous map $E:\Lambda\to X$ such that $TE(\lambda)=\lambda E(\lambda)$ for every 
$\lambda\in\Lambda$ and $\overline{\rm span}\, \{ E(\lambda);\; \lambda\in\Lambda\}=X$. This in particular 
to all operators satisfying the so-called \emph{Chaoticity Criterion} (which is the same as the Frequent Hypercyclicity Criterion, see \cite{BM1} or \cite{GEP}). Regarding this criterion, it is worth recalling here that if an operator $T$ satisfies it, then $T$ admits a mixing measure with full support (see \cite{MAP} or \cite{BM2}).
\end{remark}

\smallskip
It is quite plausible that in fact any chaotic operator with a perfectly spanning set of unimodular eigenvectors satisfies the additional assumption of Theorem \ref{th6} (a). 
We state this as
\begin{question}\label{Q11} Is it true that if $T$ is a chaotic operator with a perfectly spanning set of unimodular eigenvectors, then the periodic \emph{eigenvectors} of $T$ are dense in the set of all unimodular eigenvectors? Is it true at least that the conclusion of Theorem \ref{th6} (a) is valid for all chaotic operators with a perfectly spanning set of unimodular eigenvectors?
\end{question} 

\smallskip
We also note that Theorem \ref{th6} does not say that the \emph{ergodic} measures supported on $HC(T)$ are dense in ${\overline{\mathcal F}}_T(X)$. So one may ask

\begin{question}\label{Q9} Under the assumptions of Theorem \ref{th6}, is it true that the ergodic measures $m\in{\overline{\mathcal F}}_T(X)$ supported on $HC(T)$ form a dense $G_{\delta }$ subset of ${\overline{\mathcal F}}_T(X)$? Is it true at least that $T$ admits an ergodic measure $m$ with full support such that 
$m\in\overline{\mathcal F}_T(X)$?
\end{question}

\subsection{Another class of invariant measures}\label{blablaenplus} Let $T$ be a bounded operator on a complex Banach space $X$, and let us denote by $\mathcal S^*_T(X)$ the family of all probability measures $\mu$ on $X$ of the following form: 
$$\mu\sim\sum_{k\in K} \theta_k u_k\, ,$$
where $u_k$ is a unimodular eigenvector for $T$ whose associated eigenvalue $\lambda_k$ is a root of unity, $\theta_k$ is a random variable uniformly distributed on the (finite) subgroup 
$\Gamma_k$ of $\TT$ generated by $\lambda_k$, and the $\theta_k$ are independent. 

\smallskip
It is clear that any measure $\mu\in \mathcal S^*_T(X)$ is $T\,$-$\,$invariant, and that $S^*_T(X)$ is an adequate family of measures.
We are going to prove two analogues of Theorem \ref{th6} for this particular family.

\medskip
Let us denote by $\mathcal E^*(T)$ the set of all unimodular eigenvectors $v$ for $T$ satisfying the following property: for any neighbourhood $V$ of $v$ and any positive integer $d$, one can find a periodic unimodular eigenvector $u$ with associated eigenvalue $\lambda$ such that $u\in V$ and $\gcd({\rm o}(\lambda), d)=1$, where  ${\rm o}(\lambda)$ is the order of the root of unity $\lambda$. Our first result reads as follows.

\begin{proposition}\label{chinese} If the linear span of $\mathcal E^*(T)$ is dense in $X$, then $\overline{\mathcal S^*_T}(HC(T))$ is a dense $G_\delta$ subset of $\overline{\mathcal S^*_T}(X)$ and hence $T$ admits an ergodic measure with full support.
\end{proposition}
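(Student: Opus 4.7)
My strategy is to apply Theorem \ref{th7} to the adequate family $\mathcal{M}=\mathcal{S}^*_T(X)$. By condition (3) of that theorem, it suffices to verify that for every backward $T$-invariant open set $\Omega\neq\emptyset$, the Dirac mass $\delta_0$ lies in the closure of $\overline{\mathcal{S}^*_T}(\Omega)$ in $\mathcal{P}(X)$. Via Markov's inequality (as in Fact \ref{Markov}), this reduces to a concrete task: given $\eta>0$, construct a measure $\mu\in\mathcal{S}^*_T(X)$ supported on $\Omega$ with $\int_X\|x\|^2\,d\mu<\eta$.

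I would follow the blueprint of the proof of Theorem \ref{th6}(b), substituting the Chinese Remainder Theorem for Kronecker's theorem. First, since $\vect(\mathcal{E}^*(T))$ is dense in $X$, pick $x_1,\dots,x_I\in\mathcal{E}^*(T)$ with $\sum_i x_i\in\Omega$, and let $M=M(x_1,\dots,x_I)$ satisfy $\bigl\|\sum_i\theta_i x_i\bigr\|^2\leq M\sum_i|\theta_i|^2$ for all scalars $\theta_i$. Choose $a_1,\dots,a_J>0$ with $\sum_j a_j=1$ and $MI\sum_j a_j^2<\eta$, and set $v_{i,j}:=a_j x_i$; then $v_{i,j}\in\mathcal{E}^*(T)$ (as this set is scalar-invariant) and $\sum_{i,j}v_{i,j}=\sum_i x_i\in\Omega$. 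Next, enumerating the pairs $(i,j)$, I use the defining property of $\mathcal{E}^*(T)$ inductively to pick a periodic unimodular eigenvector $u_{i,j}$ very close to $v_{i,j}$ whose eigenvalue $\lambda_{i,j}$ has order $N_{i,j}$ coprime to the product of all previously chosen $N_{i',j'}$. The orders $\{N_{i,j}\}$ are then pairwise coprime.

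Define $\mu\sim\sum_{i,j}\theta_{i,j}u_{i,j}$ with independent $\theta_{i,j}$ uniform on $\Gamma_{i,j}=\langle\lambda_{i,j}\rangle$; thus $\mu\in\mathcal{S}^*_T(X)$. To check $\mu(\Omega)=1$, fix any realization $\mu_{i,j}\in\Gamma_{i,j}$ and write $\mu_{i,j}=\lambda_{i,j}^{m_{i,j}}$. Since the $N_{i,j}$ are pairwise coprime, the Chinese Remainder Theorem yields $n\geq 1$ with $\lambda_{i,j}^n=\mu_{i,j}^{-1}$ in $\Gamma_{i,j}$ for every $(i,j)$, so that $T^n\bigl(\sum_{i,j}\mu_{i,j}u_{i,j}\bigr)=\sum_{i,j}u_{i,j}$, which lies in $\Omega$ provided the $u_{i,j}$ were chosen close enough to the $v_{i,j}$; backward $T$-invariance then gives $\sum_{i,j}\mu_{i,j}u_{i,j}\in T^{-n}(\Omega)\subset\Omega$, exactly as in Fact \ref{factomega}. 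For the moment bound, assuming each $\lambda_{i,j}\neq 1$, every $\theta_{i,j}$ is centered with $\mathbb{E}|\theta_{i,j}|^2=1$, the $\theta_{i,j}$ are orthonormal in $L^2$, and hence
$$\int_X\|x\|^2\,d\mu=\mathbb{E}\Bigl\|\sum_{i,j}\theta_{i,j}u_{i,j}\Bigr\|^2\approx\mathbb{E}\Bigl\|\sum_i\Bigl(\sum_j\theta_{i,j}a_j\Bigr)x_i\Bigr\|^2\leq M\sum_i\mathbb{E}\Bigl|\sum_j\theta_{i,j}a_j\Bigr|^2=MI\sum_j a_j^2<\eta,$$
where the approximation becomes arbitrarily sharp as $u_{i,j}\to v_{i,j}$.

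The principal obstacle is securing $\lambda_{i,j}\neq 1$ at every step, for the orthogonality argument collapses if the $\theta_{i,j}$ fail to be centered: the condition $\gcd(o(\lambda),d)=1$ does not by itself exclude $o(\lambda)=1$. I expect this to be manageable by refining the inductive selection, for instance by restricting each approximating neighborhood of $v_{i,j}$ away from the closed subspace $\ker(T-I)$ of fixed points (after a mild perturbation so that $v_{i,j}\notin\ker(T-I)$, which is possible except in the trivial case $T=I$ ruled out by hypercyclicity); the periodic eigenvector $u_{i,j}$ is then forced to be non-fixed, so $o(\lambda_{i,j})\geq 2$. Once this point is resolved, the construction delivers the desired $\mu\in\overline{\mathcal{S}^*_T}(\Omega)$ approaching $\delta_0$, and Theorem \ref{th7} yields both the density of $\overline{\mathcal{S}^*_T}(HC(T))$ in $\overline{\mathcal{S}^*_T}(X)$ and, via the ergodic decomposition theorem, the existence of an ergodic measure for $T$ with full support.
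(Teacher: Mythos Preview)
Your argument follows the paper's line exactly: the paper isolates two facts---the analogue of Fact~\ref{factind} (approximate the $v_{i,j}$ by periodic eigenvectors with pairwise coprime orders) and Fact~\ref{factcoprime} (the Chinese Remainder Theorem replaces Kronecker's theorem)---and then simply writes ``proceed exactly as in the proof of Theorem~\ref{th6}(b)'', which is precisely what you do.

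You have in fact been more careful than the paper on one point. The $L^2$-orthogonality of the $\theta_{i,j}$ requires them to be centred, which fails exactly when $o(\lambda_{i,j})=1$; the paper does not address this, and indeed the proposition is literally false for $T=I$ (every nonzero vector lies in $\mathcal E^*(I)$ since $\gcd(1,d)=1$ for all $d$, yet $HC(I)=\emptyset$). However, your proposed fix---perturb $v_{i,j}$ off $\ker(T-I)$ while staying in $\mathcal E^*(T)$---is not quite sound as written: nothing in the definition of $\mathcal E^*(T)$ ensures that such a perturbation exists within $\mathcal E^*(T)$, since the periodic eigenvectors furnished by the defining property need not themselves belong to $\mathcal E^*(T)$. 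The clean repair is to build the requirement $o(\lambda)>1$ into the definition of $\mathcal E^*(T)$ (equivalently, to work with $\mathcal E^*(T)\setminus\ker(T-I)$ and assume its span is dense); then the degenerate case disappears, one may shrink each approximating neighbourhood to miss the closed subspace $\ker(T-I)$, and the inductive selection automatically produces centred variables, so both your argument and the paper's go through verbatim.
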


From this, we immediately deduce
\begin{corollary}\label{corochinese} Assume that the periodic vectors for $T$ are dense in $X$, and that for any periodic eigenvector $v$ and any positive integer $d$, one can find a periodic eigenvector $u$ arbitrarily close to $v$ whose associated eigenvalue $\lambda$ satisfies $\gcd({\rm o}(\lambda),d)=1$. Then $T$ admits an ergodic measure with full support.
\end{corollary}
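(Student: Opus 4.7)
The plan is to deduce Corollary \ref{corochinese} directly from Proposition \ref{chinese}. Thus the goal is to verify, under the stated hypotheses, that the linear span of $\mathcal{E}^*(T)$ is dense in $X$.

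First I would observe that the hypothesis of the corollary is exactly tailored to ensure that every \emph{periodic eigenvector} of $T$ belongs to $\mathcal{E}^*(T)$: indeed, given a periodic unimodular eigenvector $v$, a neighbourhood $V$ of $v$, and an integer $d\ge 1$, the assumption produces a periodic eigenvector $u \in V$ whose eigenvalue $\lambda $ satisfies $\gcd(\mathrm{o}(\lambda),d)=1$, which is precisely the condition defining $\mathcal{E}^*(T)$. So the span of the periodic eigenvectors of $T$ is contained in the span of $\mathcal{E}^*(T)$, and it suffices to show that the periodic eigenvectors span a dense subspace of $X$.

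Since periodic vectors are assumed to be dense in $X$, this would follow if every periodic vector is in fact a (finite) linear combination of periodic eigenvectors. For this I would use a standard discrete Fourier-type decomposition. Let $x\in X$ satisfy $T^{N}x=x$ for some $N\ge 1$. For each $N$-th root of unity $\lambda$, set
$$P_\lambda x := \frac{1}{N}\sum_{k=0}^{N-1} \overline{\lambda}^{\,k}\, T^{k}x.$$
A direct computation, using $T^{N}x=x$ and $\lambda^{N}=1$, gives $T(P_\lambda x)=\lambda\, P_\lambda x$, so each nonzero $P_\lambda x$ is a unimodular eigenvector of $T$ associated with the root of unity $\lambda$, hence is a periodic eigenvector. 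Moreover the character orthogonality relation $\sum_{\lambda^N=1}\overline{\lambda}^k = N\,\delta_{k,0}$ yields
$$\sum_{\lambda^{N}=1} P_\lambda x \;=\; \frac{1}{N}\sum_{k=0}^{N-1}T^{k}x \sum_{\lambda^{N}=1} \overline{\lambda}^{\,k} \;=\; x.$$
So $x$ is a finite sum of periodic eigenvectors of $T$, as required.

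Combining the two steps, the span of $\mathcal{E}^*(T)$ contains all periodic vectors of $T$, hence is dense in $X$, and Proposition \ref{chinese} applies to give an ergodic measure with full support. There is essentially no obstacle here: the only non-routine ingredient is the decomposition of a periodic vector into periodic eigenvectors via the projections $P_\lambda$, which is automatic since the minimal polynomial of $T$ restricted to $\ker(T^{N}-I)$ divides $z^{N}-1$ and therefore has simple roots.
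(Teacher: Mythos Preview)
Your proof is correct and follows the same approach as the paper, which simply states that the corollary is deduced ``immediately'' from Proposition \ref{chinese} without giving details. You have correctly spelled out the two routine verifications the paper leaves implicit: that the hypothesis forces every periodic eigenvector to lie in $\mathcal E^*(T)$, and that every periodic vector decomposes as a finite sum of periodic eigenvectors via the projections $P_\lambda$, so that density of periodic vectors yields $\overline{\rm span}\,\mathcal E^*(T)=X$.
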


Actually, it is known (see for instance Proposition \ref{psp} below) that $T$ even has a perfectly spanning set of unimodular eigenvectors if it satisfies the assumptions of Corollary \ref{corochinese}. On the other hand, it is not clear to us whether this is true or not when $T$ satisfies the \emph{a priori} weaker assumptions of Proposition \ref{chinese}; but one can prove in a very indirect way that this is indeed true when $T$ is a \emph{Hilbert space} operator (see Section \ref{finalsection}).

\begin{proof}[Proof of Proposition \ref{chinese}] This is very similar to the proof of Theorem \ref{th6} (b), so we shall be rather sketchy.

\begin{fact} Given $v_1,\dots ,v_N\in\mathcal E^*(T)$, one can find periodic eigenvectors $u_1,\dots ,u_N$ arbitraily close to $v_1,\dots ,v_N$ whose associated eigenvalues have pairwise coprime orders.
\end{fact}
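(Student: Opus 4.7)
The plan is to proceed by induction on $N$, using the defining property of $\mathcal E^*(T)$ at each step with a carefully chosen integer $d$.

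The base case $N=1$ is immediate: by the definition of $\mathcal E^*(T)$ applied with $d=1$, one can find a periodic unimodular eigenvector $u_1$ arbitrarily close to $v_1$ (here the coprimality condition is vacuous).

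For the inductive step, suppose that we have already chosen periodic eigenvectors $u_1,\dots,u_k$ arbitrarily close to $v_1,\dots,v_k$, whose associated eigenvalues $\lambda_1,\dots,\lambda_k$ are roots of unity with pairwise coprime orders $N_1,\dots,N_k$. Set $d:=N_1 N_2\cdots N_k$. Since $v_{k+1}\in\mathcal E^*(T)$, the defining property applied to this particular integer $d$ and to any given neighbourhood $V_{k+1}$ of $v_{k+1}$ provides a periodic unimodular eigenvector $u_{k+1}\in V_{k+1}$ with associated eigenvalue $\lambda_{k+1}$ satisfying $\gcd(\mathrm o(\lambda_{k+1}),d)=1$. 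Since $\mathrm o(\lambda_{k+1})$ is coprime to the product $N_1\cdots N_k$, it is coprime to each individual $N_i$, so the family $(\mathrm o(\lambda_1),\dots,\mathrm o(\lambda_{k+1}))$ is still pairwise coprime, completing the induction.

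There is essentially no obstacle here: the whole argument is just a one-variable induction, and the defining property of $\mathcal E^*(T)$ is precisely tailored to make this work in one step. The only thing worth emphasizing is that at stage $k+1$ we must feed the definition with the product $N_1\cdots N_k$ (rather than with each $N_i$ separately), so that a single application of the definition already yields coprimality to \emph{all} previously chosen orders; it would of course also suffice to iterate the definition $k$ times, but using the product is cleaner.
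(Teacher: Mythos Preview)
Your proof is correct and follows essentially the same approach as the paper: one builds the $u_k$ one at a time, at each step applying the defining property of $\mathcal E^*(T)$ with $d$ equal to the product of the orders of the previously chosen eigenvalues. The paper's argument is phrased slightly less formally (``choose $u_1$, then $u_2$ with $\gcd(d_2,d_1)=1$, then $u_3$ with $\gcd(d_3,d_1d_2)=1$, and so on''), but the content is identical.
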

\begin{proof}[Preuve] Start with a periodic eigenvector $u_1$ close to $v_1$ whose 
eigenvalue has order $d_1$; then, choose a periodic eigenvector $u_2$ close to $v_2$ whose
eigenvalue has order $d_2$ with $\gcd(d_2,d_1)=1$; then take a periodic eigenvector $u_3$ close to $v_3$ whose 
eigenvalue has order $d_3$ with $\gcd(d_3,d_1d_2)=1$, and so on.
\end{proof}

\begin{fact}\label{factcoprime} Let $(u_k)_{k\in K}$ be a finite family of periodic eigenvectors for $T$ whose associated eigenvalues $\lambda_k$ have pairwise coprime orders. Let also $\Omega\subset X$ be a backward $T\,$-$\,$invariant nonempty open set, and assume that $\sum u_k \in\Omega$. For each $k\in K$, denote by $\Gamma_k$ the subgroup of $\TT$ generated by $\lambda_k$. Then
$$\textrm{for every } (\mu_k)\in \prod_{k\in K} \Gamma_k , \textrm{ we have } \sum_{k\in K} \mu_k u_k\in  \Omega\, .$$
\end{fact}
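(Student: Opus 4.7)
The plan is to mirror the proof of Fact \ref{factomega}, replacing Kronecker's theorem (which was the natural tool when the eigenvalues formed an independent family in $\mathbb{T}$) by the Chinese Remainder Theorem, which is the natural tool when the eigenvalues are roots of unity with pairwise coprime orders. The key observation is that the coprime-orders hypothesis is strong enough that we will actually obtain exact equality (rather than approximation), so we will not even need to use that $\Omega $ is open.

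First I would introduce the right notation: for each $k \in K$, let $d_k$ denote the order of $\lambda_k$ in $\mathbb{T}$, so that $\Gamma_k = \{\lambda_k^j ;\; 0 \le j < d_k\}$. Given $(\mu_k)_{k \in K} \in \prod_{k \in K} \Gamma_k$, I would write each $\mu_k$ as $\mu_k = \lambda_k^{n_k}$ for some $n_k \in \mathbb{Z}$. Since the integers $d_k$, $k \in K$, are pairwise coprime, the Chinese Remainder Theorem provides a nonnegative integer $n$ such that
$$ n \equiv -n_k \pmod{d_k} \quad \text{for every } k \in K. $$

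It then follows that $\lambda_k^n \mu_k = \lambda_k^{n+n_k} = 1$ for each $k \in K$, and hence
$$ T^n\Bigl( \sum_{k \in K} \mu_k u_k \Bigr) \;=\; \sum_{k \in K} \lambda_k^n \mu_k u_k \;=\; \sum_{k \in K} u_k \;\in\; \Omega. $$
Finally, since $\Omega$ is backward $T$-invariant we have $T^{-n}(\Omega) \subset \Omega$, whence $\sum_{k \in K} \mu_k u_k \in \Omega$, as desired.

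There is essentially no obstacle here: the only place where something could conceivably go wrong is the passage from the multiplicative to the additive/modular framework (replacing ``independent'' by ``coprime orders''), and this is exactly what the Chinese Remainder Theorem is designed for. In particular, in contrast with Fact \ref{factomega}, no topological approximation is needed, so the openness of $\Omega$ plays no role in this argument. The fact that the $u_k$ are periodic eigenvectors (so that their eigenvalues are roots of unity of finite order $d_k$) is what makes the CRT applicable at all, and pairwise coprimality of the $d_k$ is exactly the hypothesis that guarantees the existence of the simultaneous solution $n$.
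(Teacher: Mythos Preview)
Your proof is correct and follows essentially the same approach as the paper: both use the Chinese Remainder Theorem to find an $n\in\NN$ with $\lambda_k^n=\mu_k^{-1}$ for all $k$, then apply backward $T$-invariance of $\Omega$ to $T^n\bigl(\sum_k \mu_k u_k\bigr)=\sum_k u_k\in\Omega$. Your version simply unpacks the CRT step more explicitly via the orders $d_k$ and exponents $n_k$.
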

\begin{proof}  By the Chinese remainder theorem, the subgroup of $\TT^K$ generated by $(\lambda_k)_{k\in K}$ is equal to $\prod_k \Gamma_k$. So, given $(\mu_k)\in\prod_k\Gamma_k$, one can find $n\in\NN$ such that $\lambda_k^n=\mu_k^{-1}$ for all $k\in K$. Then $T^n\left(\sum \mu_ku_k\right)=\sum u_k\in\Omega$, and hence $\sum\mu_k u_k\in\Omega$ because $\Omega$ is backward $T\,$-$\,$invariant.
\end{proof}

With the above two Facts at hand, one can now proceed exactly as in the proof of Theorem \ref{th6} (b), and this yields Proposition \ref{chinese}.
\end{proof}

\begin{remark}\label{encoreune} The above proof shows that if $T$ satisfies the assumption of Proposition \ref{chinese}, then one can find a measure $m\in\overline{\mathcal S^*_T}(HC(T))$ such that $\int_X\Vert x\Vert^2<\infty$: this follows from Theorem \ref{th8}. Hence, if $X$ has type $2$ and $\overline{\rm span}\,(\mathcal E^*(T))=X$, then $T$ admits an ergodic Gaussian measure with full support.
\end{remark}

\smallskip
Our second result is the ``periodic" analogue of Proposition \ref{book}.
\begin{proposition}\label{chinese+type} Let the Banach space $X$ have type $p\in (1,2]$. Assume that there exists some finite constant $C$ such that the following holds:
\par\smallskip
{for any backward $T\,$-$\,$invariant open set $\Omega\neq\emptyset$, one can find a finite sequence of unimodular eigenvectors $(u_k)_{k\in K}$ whose eigenvalues 
have pairwise coprime orders, such that $\sum_k u_k\in \Omega$ and $\sum_k \Vert u_k\Vert^p\leq C$.} 
\par\smallskip
Then $\overline{\mathcal S_T^*}(HC(T))$ is a dense $G_\delta$ subset of $\overline{\mathcal S_T^*}(X)$. This holds in particular if  there exists a sequence of periodic unimodular eigenvectors 
$(x_k)_{k\in\NN}$ whose associated eigenvalues have pairwise coprime orders, such that $\sum_{1}^\infty \Vert x_k\Vert^p<\infty$ and $\sum_{1}^\infty \vert\langle x^*,x_k\rangle\vert=\infty$ for every nonzero linear functional $x^*\in X^*$.
\end{proposition}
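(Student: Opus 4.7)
The plan is to mimic closely the proof of Proposition \ref{book}, substituting the family $\mathcal{S}_T(X)$ by $\mathcal{S}_T^*(X)$ and invoking Fact \ref{factcoprime} in place of Fact \ref{factomega}. Concretely, the task reduces to producing, for any backward $T$-invariant nonempty open set $\Omega$, a measure $\mu \in \mathcal{S}_T^*(\Omega)$ whose $p$-th moment $\int_X \|x\|^p \, d\mu(x)$ is bounded by a universal constant; once this is achieved, Theorem \ref{th8} applied with $\mathcal{M} = \mathcal{S}_T^*(X)$ yields at once that $\overline{\mathcal{S}_T^*}(HC(T))$ is a dense $G_\delta$ subset of $\overline{\mathcal{S}_T^*}(X)$.

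The key ingredient is a type-$p$ estimate adapted to the variables appearing in the definition of $\mathcal{S}_T^*(X)$. Since $X$ has type $p$, a standard domination argument (relying on the contraction principle for independent bounded mean-zero scalar variables) provides a finite constant $C_p'$ such that
\[
\mathbb{E}\Bigl\| \sum_{k \in K} \theta_k u_k \Bigr\|^p \le C_p' \, \sum_{k \in K} \|u_k\|^p
\]
holds for any finite family $(u_k)_{k\in K}$ in $X$ and any independent family $(\theta_k)_{k \in K}$ of complex random variables with $|\theta_k|\le 1$ and $\mathbb{E}\theta_k = 0$. The variables $\theta_k$ uniformly distributed on the finite cyclic subgroup $\Gamma_k \subset \TT$ generated by a root of unity $\lambda_k \neq 1$ meet these hypotheses, while a possible $\lambda_k = 1$ can be pulled out of the random part without affecting the estimate. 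Thus, given $\Omega$ and the family $(u_k)_{k\in K}$ supplied by the hypothesis, the associated measure $\mu \sim \sum_{k \in K} \theta_k u_k$ is supported on $\Omega$ by Fact \ref{factcoprime} and satisfies $\int_X \|x\|^p \, d\mu \le C_p' C$, which is exactly what is needed to apply Theorem \ref{th8}.

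For the ``in particular'' statement, I would proceed as in the remark following Proposition \ref{book}. The assumption $\sum_{k} |\langle x^*, x_k\rangle| = \infty$ for every non-zero $x^* \in X^*$ implies, via a Hahn-Banach argument formalised in \cite[Lemma 11.11]{BM1}, that the set $\bigl\{\sum_{k=1}^K a_k x_k : K \ge 1,\; |a_k| \le 1\bigr\}$ is dense in $X$. Given a backward $T$-invariant nonempty open set $\Omega$, one can therefore find complex numbers $a_1, \ldots, a_K$ with $0 < |a_k| \le 1$ such that $\sum_{k=1}^K a_k x_k \in \Omega$; setting $u_k := a_k x_k$ yields unimodular eigenvectors with eigenvalues of pairwise coprime orders (inherited from the $x_k$'s), $\sum_k u_k \in \Omega$, and $\sum_k \|u_k\|^p \le \sum_{k=1}^{\infty} \|x_k\|^p =: C < \infty$, thereby recovering the general hypothesis of the proposition.

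The only delicate point, though not a deep one, is the justification of the type-$p$ inequality for the non-Rademacher variables $\theta_k$: one must upgrade the Rademacher definition of type $p$ to a uniform estimate valid for all independent, uniformly bounded, mean-zero complex variables. This is classical and follows from the standard contraction and comparison principles, but it is worth stating explicitly since the random variables used here (uniform on cyclic subgroups of $\TT$) are not of the Rademacher or Steinhaus type employed in Proposition \ref{book}.
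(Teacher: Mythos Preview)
Your proposal is correct and follows exactly the route the paper takes: the paper's proof consists of the single sentence ``One may proceed exactly as for the proof of Proposition~\ref{book}, using Fact~\ref{factcoprime} instead of Fact~\ref{factomega},'' and the ``in particular'' clause is handled precisely as in the remark following Proposition~\ref{book}. Your explicit justification of the type-$p$ inequality for the variables $\theta_k$ (via symmetrization and the contraction principle, since these variables are mean-zero and unimodular but not Rademacher or Steinhaus) is a genuine clarification that the paper leaves implicit.
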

\begin{proof} One may proceed exactly as for the proof of Proposition \ref{book}, using Fact \ref{factcoprime} instead of Fact \ref{factomega}.
\end{proof}

\smallskip
In view of the above two results, the following question is of course quite natural, since a positive answer to it would imply that every chaotic operator admits an ergodic measure with full support.
\begin{question} Does every chaotic operator satisfy the assumptions of Proposition \ref{chinese}, and/or that Proposition \ref{chinese+type}?
\end{question}

\section{The perfect spanning property}\label{finalsection} In this final section, we collect  various equivalent formulations of the perfect spanning property. There are essentially no new results here: everything is proved in \cite{BG2},  \cite{G} and \cite{BM2}, except Corollaries \ref{coroindirect} and \ref{presque}. However, for the convenience of the reader it seems worth giving self-contained proofs of the ``elementary" equivalences.

\smallskip
In what follows, $T$ is a continuous linear operator on a complex Polish topological vector space $X$. 

\smallskip Recall that $T$ is said to have a \emph{perfectly spanning set of unimodular eigenvectors} if $\overline{\rm span}\left[ \ker(T-\lambda I);\; \lambda\in \TT\setminus D\right]=X$ for every countable set $D\subset \TT$.

\smallskip
We denote by $\mathcal E(T)$ the set of all unimodular eigenvectors for $T$. If $x\in\mathcal E(T)$, we denote the associated eigenvalue by $\lambda(x)$. The following three subsets of $\mathcal E(T)$ will be of interest for us: 
\begin{enumerate}
\item[$\bullet$] the set $\mathcal E_{\rm cond} (T)$ of all $v\in\mathcal E(T)$ such that, for any neighbourhood $V$ of $v$, there are uncountably many $\lambda\in\TT$ such that 
$\ker(T-\lambda I)\cap V\neq\emptyset$;
\item[$\bullet$]  the set $\mathcal E_{\rm acc} (T)$ of all $v\in\mathcal E(T)$ such that, for any neighbourhood $V$ of $v$, one can find $u\in\mathcal E(T)\cap V$ with $\lambda(u)\neq\lambda(v)$;
\item[$\bullet$] the set $\mathcal E^*(T)$ of all $v\in\mathcal E(T)$ such that, for every neighbourhood $V$ of $v$ and any positive integer $d$, one can find a periodic unimodular eigenvector 
$u$ in $V$ such that $\gcd({\rm o}(\lambda(u)),d)=1$.
\end{enumerate}

The notations $\mathcal E_{\rm cond}$ and $\mathcal E_{\rm acc}$ are meant to be reminiscent of the well known topological notions of \emph{condensation points} and  
\emph{accumulation points}. (We have no justification for the notation $\mathcal E^*$, except simplicity).

\smallskip
By a \emph{$\TT\,$-$\,$eigenvectorfield for $T$}, we mean a map $E:\Lambda\to X$ defined on some (nonempty) set $\Lambda\subset\TT$, such that $TE(\lambda)=\lambda E(\lambda)$ for every $\lambda\in\Lambda$.

\smallskip
Finally, given a probability measure $\sigma$ on $\TT$, we say that the set $\mathcal E(T)$ of unimodular eigenvectors is \emph{$\sigma$-spanning} if $\overline{\rm span}\left[ \ker(T-\lambda I);\; \lambda\in \TT\setminus D\right]=X$ for every Borel set $D\subset \TT$ such that $\sigma(D)=0$.

\begin{proposition}\label{psp} The following assertions are equivalent.
\begin{enumerate}
\item[\rm (1)] $T$ has a perfectly spanning set of unimodular eigenvectors;
\item[\rm (2)] The linear span of $\mathcal E_{\rm cond} (T)$ is dense in $X$;
\item[\rm (3)] There exists a set $\mathcal E\subset\mathcal E(T)$ such that $\overline{\rm span}\,(\mathcal E)=X$ and, for every $v\in \mathcal E$, one can find $u\in\mathcal E$ arbitrarily close to $v$ such that $\lambda(u)\neq\lambda(v)$;
\item[\rm (4)] one can find a sequence $(E_i)_{i\in\NN}$ of {continuous} $\TT\,$-$\,$eigenvectorfields  for $T$, where each $E_i$ is defined on some perfect set 
$\Lambda_i\subset\TT$, such that 
$\overline{\rm span}\left(\bigcup_{i\in \NN} E_i(\Lambda_i)\right)=X$;
\item[\rm (5)] $\mathcal E(T)$ is $\sigma\,$-$\,$spanning for some continuous probability measure $\sigma$ on $\TT$.
\end{enumerate}
Moreover, if the Banach space $X$ has \emph{cotype $2$}, these conditions are also equivalent to 
\begin{itemize}
\item[\rm (6)] $T$ admits an ergodic Gaussian measure with full support.
\end{itemize}
\end{proposition}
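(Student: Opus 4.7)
The plan is to prove all the equivalences via a cycle centred on $(1)$. The easy implications are $(5) \Rightarrow (1) \Rightarrow (2) \Rightarrow (3)$ and $(4) \Rightarrow (1)$; the more substantive ones are $(3) \Rightarrow (1)$, $(1) \Rightarrow (4)$, $(1) \Rightarrow (5)$, and, under the cotype $2$ assumption, the equivalence with $(6)$.

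I would first establish $(1) \Leftrightarrow (2) \Leftrightarrow (3)$ by a separability/countability argument. For $(1) \Rightarrow (2)$, the key point is that the set $D_{0} := \{\lambda(v) : v \in \mathcal E(T) \setminus \mathcal E_{\rm cond}(T)\}$ is countable: by definition, each $v \notin \mathcal E_{\rm cond}(T)$ admits a neighbourhood $V_{v}$ in which eigenvectors occur for only countably many eigenvalues, and separability of $X$ lets us cover $\mathcal E(T) \setminus \mathcal E_{\rm cond}(T)$ by countably many such $V_{v}$; consequently, every $\ker(T-\lambda I) \setminus \{0\}$ with $\lambda \notin D_{0}$ lies in $\mathcal E_{\rm cond}(T)$, and applying $(1)$ to $D_{0}$ yields $(2)$. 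The converse $(2) \Rightarrow (1)$ is immediate from the definition of $\mathcal E_{\rm cond}$. For $(2) \Rightarrow (3)$, take $\mathcal E := \mathcal E_{\rm cond}(T)$: any $v \in \mathcal E_{\rm cond}(T)$ has uncountably many eigenvalues appearing in each of its neighbourhoods, and only countably many of them (those in $D_{0}$) can correspond to eigenvectors outside $\mathcal E_{\rm cond}(T)$, so some $u \in \mathcal E_{\rm cond}(T)$ close to $v$ has $\lambda(u) \neq \lambda(v)$. For the nontrivial $(3) \Rightarrow (1)$, let $\mathcal E$ be as given and $D \subset \TT$ countable; set $F := \overline{\rm span}\,[\ker(T-\lambda I) : \lambda \notin D]$. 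If some $v \in \mathcal E$ were not in $F$, pick $x^{*} \in X^{*}$ with $x^{*}|_{F} \equiv 0$ and $x^{*}(v) \neq 0$, and apply $(3)$ to obtain $v_{n} \to v$ in $\mathcal E$ with $\lambda(v_{n}) \neq \lambda(v)$. Either infinitely many $v_{n}$ have $\lambda(v_{n}) \notin D$, so that $x^{*}(v_{n}) = 0$ and the limit contradicts $x^{*}(v) \neq 0$; or $\lambda(v_{n}) \in D \setminus \{\lambda(v)\}$ eventually, and the countability of $D$ yields a subsequence of constant eigenvalue $\lambda' \neq \lambda(v)$, forcing $v \in \ker(T-\lambda' I) \cap \ker(T-\lambda(v) I) = \{0\}$, again a contradiction.

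The equivalences $(1) \Leftrightarrow (4) \Leftrightarrow (5)$ come next. The implications $(4) \Rightarrow (1)$ and $(5) \Rightarrow (1)$ are routine: removing a countable set (resp.\ a null set of a continuous measure) leaves each perfect set $\Lambda_{i}$ dense in itself, so the closed span of $E_{i}(\Lambda_{i})$ is unchanged by continuity of $E_{i}$. The implication $(4) \Rightarrow (5)$ is immediate upon picking an atomless probability measure $\sigma_{i}$ with full support on each $\Lambda_{i}$ and setting $\sigma := \sum_{i} 2^{-i} \sigma_{i}$. The main obstacle is $(1) \Rightarrow (4)$, for which I would carry out the Cantor-type construction of \cite{BG2} and \cite{G}: fix a countable dense sequence $(x_{n})$ in $X$, and at step $n$ produce a continuous $\TT$-eigenvectorfield $E_{n}$ on a perfect set $\Lambda_{n} \subset \TT$ such that the span of $\bigcup_{i \le n} E_{i}(\Lambda_{i})$ approximates $x_{n}$ within $2^{-n}$. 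Property $(2)$ is invoked at each branching stage to produce, arbitrarily close to a finite set of given unimodular eigenvalues, new unimodular eigenvalues whose eigenvectors contribute to the approximation; $\Lambda_{n}$ and $E_{n}$ are then obtained as continuous limits along a dyadic tree of such choices.

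Finally, under the cotype $2$ hypothesis, the equivalence with $(6)$ is proved by combining two results. For $(6) \Rightarrow (1)$, an ergodic Gaussian measure with full support has a continuous spectral measure $\sigma$ on $\TT$ which makes $\mathcal E(T)$ $\sigma$-spanning (see \cite[Chapter 5]{BM1}), whence $(5)$ and hence $(1)$. The converse $(1) \Rightarrow (6)$ is the main theorem of \cite{BM2}: from the eigenvectorfields of $(4)$ one constructs a Gaussian random series $\Phi = \sum_{j} g_{j}\, E_{i(j)}(\lambda_{j})$ for a suitable sample $(\lambda_{j})$ and independent standard complex Gaussian variables $g_{j}$, with the cotype $2$ inequality ensuring almost sure convergence; the distribution of $\Phi$ is then shown to be an ergodic, $T$-invariant Gaussian measure with full support. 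Verifying ergodicity and full support simultaneously is the main technical difficulty, handled in \cite{BM2}; the Baire-category proof of Theorem \ref{th6}~(b) gives an alternative (and arguably simpler) route to the existence of an ergodic measure with full support, at the cost of losing its Gaussian character.
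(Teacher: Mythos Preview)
Your argument for $(3)\Rightarrow(1)$ has a genuine gap in Case~2. You write that ``the countability of $D$ yields a subsequence of constant eigenvalue $\lambda'\neq\lambda(v)$'', but this is false: a sequence taking values in a countable set need not repeat any value (e.g.\ enumerate the roots of unity). Property~(3) only lets you move to a \emph{different} eigenvalue, not to one avoiding a prescribed countable set $D$; iterating it could in principle cycle through points whose eigenvalues remain forever in $D$. The paper sidesteps this entirely by never attempting $(3)\Rightarrow(1)$ directly: it proves $(3)\Rightarrow(4)$ via the Cantor-type construction (essentially the one you sketch for $(1)\Rightarrow(4)$), so that iterating~(3) along a dyadic tree produces a perfect set of eigenvalues, and then closes the cycle with the easy chain $(4)\Rightarrow(5)\Rightarrow(1)$. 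Your Cantor construction should therefore be launched from~(3), not from~(1) via~(2); once you do this, your flawed direct argument becomes unnecessary.

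A second point: you have the role of cotype~$2$ reversed in the equivalence with~(6). The implication $(1)\Rightarrow(6)$ is the main result of \cite{BM2} and holds on \emph{any} complex Banach space, with no cotype hypothesis; it is $(6)\Rightarrow(1)$ that requires cotype~$2$, and this is proved in \cite{BG2}. Your sketch of $(6)\Rightarrow(5)$ via the spectral measure is correct in spirit but does use cotype~$2$ implicitly (to pass from the Gaussian covariance structure back to a $\sigma$-spanning family of eigenvectors). The remaining implications in your outline --- $(1)\Rightarrow(2)$, your direct $(2)\Rightarrow(1)$, $(2)\Rightarrow(3)$, $(4)\Rightarrow(5)\Rightarrow(1)$ --- are fine and match the paper's arguments.
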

\begin{proof} $(1)\implies (2)$. Assume that $T$ has a perfectly spanning set of unimodular eigenvectors. Set 
$\mathbf V:=\{ (x,\lambda)\in X\times \TT;\; x\neq 0\;{\rm and}\; T(x)=\lambda x\}\,,$ and let us denote by $\mathbf O$ the union of all relatively open sets $O\subset \mathbf V$ such that 
$\pi_{\scriptscriptstyle \TT}(O)$ is countable, where $\pi_{\scriptscriptstyle \TT} :X\times\TT\to\TT$ is the canonical projection on the second coordinate. Then the set 
$D:=\pi_{\scriptscriptstyle \TT}(\mathbf O)$ is countable by the Lindel\"of property. So the linear span of $\mathcal E_{\TT\setminus D}:=\bigcup_{\lambda\in\TT\setminus D}\ker (T-\lambda)\setminus\{ 0\}$ 
is dense in $X$, by the perfect spanning property. Since $\mathcal E_{\TT\setminus D}$ is contained in $\mathcal E_{\rm cond} (T)$ by the definition of $D$, this proves (2).

\smallskip
$(2)\implies(3)$. Assume that $\overline{\rm span}\, (\mathcal E_{\rm cond} (T))=X$. We show that (3) is satisfied with $\mathcal E:= \mathcal E_{\rm cond} (T)$. Let us fix $v\in \mathcal E_{\rm cond} (T)$ and an open neighbourhood $V$ of $v$. Let us denote by $U_0$ the union of all open sets $U\subset V$ such that the set $\{ \lambda (u);\; u\in U\cap\mathcal E(T)\}$ is countable. Then the set $\{ \lambda(u);\; u\in U_0\cap\mathcal E(T)\}$ is countable by the Lindel\"of property. Since $v\in\mathcal E_{\rm cond} (T)$, it follows that $(V\setminus U_0)\cap\mathcal E(T)\neq\emptyset$. By the definition of $U_0$, any point $u\in(V\setminus U_0)\cap\mathcal E(T)$ belongs in fact to $\mathcal E_{\rm cond} (T)$; hence, (3) is indeed satisfied.

\smallskip
$(3)\implies (4)$. Assume that (3) is satisfied with some $\mathcal E\subset \mathcal E(T)$.

\begin{fact}\label{thelastone} For any $v\in\mathcal E$ and every neighbourhood $V$ of $v$, one can find a continuous $\TT\,$-$\,$eigenvectorfield $E:\Lambda\to X$ defined on some perfect set 
$\Lambda\subset\TT$ such that $E(\lambda)\in V$ for every $\lambda\in\Lambda$.
\end{fact}
\begin{proof} Fix some compatible complete metric on $X$. Set $v_\emptyset:=v$ and choose a neighbourhood $V_\emptyset$ of $v_\emptyset$ contained in 
$V$ such that ${\rm diam}(V_\emptyset)<2^{-0}$. Set also $\lambda_\emptyset:= \lambda(v_\emptyset)$, and choose an open neighbourhood $\Lambda_\emptyset$ of $\lambda_\emptyset$ in $\TT$ such that ${\rm diam}(\Lambda_\emptyset)<2^{-0}$. 

By (3), one can find a point $v_0\in V_\emptyset$ arbitrarily close to $v_\emptyset$ such that $\lambda(v_0)\neq\lambda(v_\emptyset)$. If we choose $v_0$ close enough to 
$v_\emptyset$, then $\lambda_0:=\lambda(v_0)\in \Lambda_\emptyset$. Next, since $v_0\in\mathcal E$, one can apply (3) again to find $v_1\in\mathcal E$ such that 
$\lambda(v_1)\neq \lambda(v_0)$ and close enough to $v_0$ to ensure that $v_1\in V_\emptyset$ and $\lambda_1:=\lambda(v_1)\in\Lambda_\emptyset$. Then, one may find neighbourhoods $V_0, V_1$ of $v_0, v_1$ with pairwise disjoint closures contained in $V_\emptyset$ and diameters less than $2^{-1}$, and neighbourhoods $\Lambda_0, \Lambda_1$ of $\lambda_0,\lambda_1$ with pairwise disjoint closures contained in $\Lambda_\emptyset$ and diameters less than $2^{-1}$.

Continue in the obvious way. In the end, this ``Cantor-like" construction produces a continuous and one-to-one map $\mathbf t\mapsto\lambda_{\mathbf t}$ from the Cantor space $\Delta:=\{ 0,1\}^\NN$ into $\TT$,  and a continuous map $F:\Delta\to X$ such that $F(\mathbf t)\in V$ for every $\mathbf t\in\Delta$ and $F(\mathbf t)$ is an eigenvector for $T$ with associated eigenvalue $\lambda_{\mathbf t}$. Then set $\Lambda:=\{ \mathbf\lambda_t;\; \mathbf t\in\Delta\}$ and $E(\lambda)=F(\mathbf t)$ for $\lambda=\lambda_{\bf t}\in\Lambda$.
\end{proof}

Now, let $(v_i)_{i\in\NN}$ be a sequence enumerating infinitely many times some countable dense subset of $\mathcal E$. By Fact \ref{thelastone}, one can find a sequence 
$(E_i)_{i\in\NN}$ of {continuous} $\TT\,$-$\,$eigenvectorfields  for $T$, where each $E_i$ is defined on some perfect set 
$\Lambda_i\subset\TT$, such that $\Vert E_i(\lambda)-v_i\Vert< 2^{-i}$ for each $i$ and every $\lambda\in\Lambda_i$. Then $\overline{\rm span}\left(\bigcup_{i\in\NN} E_i(\Lambda_i)\right)=X$ because $\overline{\rm span} \,(\mathcal E)=X$, and hence (4) is satisfied.

\smallskip
$(4)\implies (5)$. Let $(E_i)_{i\in\NN}$ be as in (4). For each $i\in\NN$, choose a continuous probability measure $\sigma_i$ on $\TT$ such that ${\rm supp}(\sigma_i)=\Lambda_i$; this is possible because $\Lambda_i$ is a perfect set. Set $\sigma:=\sum_{i\geq 1} 2^{-i} \sigma_i$. This is a continuous probability measure on $\TT$. If $D\subset\TT$ is a Borel set such that $\sigma(D)=0$, then $\sigma_i(D)=0$ for all $i$ and hence, $\Lambda_i\setminus D$ is dense in $\Lambda_i$ because the support of $\sigma_i$ is exactly equal to 
$\Lambda_i$. Since the $\TT\,$-$\,$eigenvectorfield $E_i$ is continuous, it follows that $\overline{E_i(\Lambda_i\setminus D) }=E_i(\Lambda_i)$ for each $i\in\NN$. So we have 
$\overline{\rm span}\,\bigcup_{i\in\NN} E_i(\Lambda_i\setminus D)=\overline{\rm span}\,\bigcup_{i\in\NN} E_i(\Lambda_i)=X$. Since 
$\bigcup_{i\in\NN} E_i(\Lambda_i\setminus D)\subset \bigcup_{\lambda\in\TT\setminus D} \ker(T-\lambda I)$, this shows that $\mathcal E(T)$ is $\sigma$-spanning.

\smallskip
$(5)\implies (1)$ is obvious since if $\sigma$ is a continuous measure on $\TT$, then $\sigma (D)=0$ for every countable set $D\subset\TT$.

\smallskip
Finally, $(1)\implies(6)$ is always true (without assumption on $X$) by \cite{BM2}, and it is shown in \cite{BG2} that the reverse implication holds true if $X$ has cotype $2$.
\end{proof}

\begin{corollary}\label{coroindirect} Assume that $X$ is a \emph{Hilbert space}. If the linear span of $\mathcal E^*(T)$ is dense in $X$, then $T$ has a perfectly spanning set of unimodular eigenvectors.
\end{corollary}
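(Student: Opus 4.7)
The plan is to take an indirect route through Gaussian measure theory, exploiting the fact that Hilbert space has both type $2$ and cotype $2$. A direct attempt to produce the perfect spanning property from $\mathcal E^*(T)$ alone looks hopeless: by definition every $v\in\mathcal E^*(T)$ has $\lambda(v)$ a root of unity, so $\{\lambda(v)\,;\,v\in\mathcal E^*(T)\}$ is a \emph{countable} subset of $\TT$. Thus $\mathcal E^*(T)$ cannot witness the spanning of $\ker(T-\lambda I)$ for $\lambda\in\TT\setminus D$ for arbitrary countable $D$ in any combinatorial way, and new eigenvectors must be manufactured. This dictates going through condition (6) of Proposition \ref{psp}.

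The first step is to apply Remark \ref{encoreune}. The hypothesis $\overline{\rm span}(\mathcal E^*(T))=X$ is exactly what is needed to run the proof of Proposition \ref{chinese}, and in fact the Facts on pairwise coprime orders used there together with Theorem \ref{th8} yield an invariant measure $m\in\overline{\mathcal S_T^*}(HC(T))$ satisfying the second-moment condition $\int_X\|x\|^2\,dm(x)<\infty$. Since $X$ is a Hilbert space it has type $2$, so the standard construction (cf.\ \cite[Remark 8.2]{BM2} and the discussion preceding Theorem \ref{th8}) associates to $m$ a $T$-invariant Gaussian measure $\widetilde m$ having the same support and the same ergodicity properties as $m$. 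Applying the ergodic decomposition theorem to $m$ (and then to the associated Gaussian), we obtain an ergodic Gaussian measure for $T$ with full support. Hence condition (6) of Proposition \ref{psp} is satisfied.

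The second step is to invoke the equivalence $(6)\Longleftrightarrow(1)$ in Proposition \ref{psp}. Since Hilbert spaces have cotype $2$, the implication $(6)\Longrightarrow(1)$ applies, and we conclude that $T$ has a perfectly spanning set of unimodular eigenvectors, as required.

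The main obstacle is really the input $(6)\Longrightarrow(1)$ of Proposition \ref{psp}, whose proof (due to \cite{BG2}) is substantial and genuinely uses cotype $2$: it is there that one extracts unimodular eigenvectors associated with eigenvalues in $\TT\setminus D$ for arbitrary countable $D$ out of a purely measure-theoretic piece of data. Everything else in the present corollary is bookkeeping and an application of already-established results, which is why the conclusion is established only on Hilbert space (or, with the same proof, on any Banach space that is simultaneously of type $2$ and cotype $2$); the general Banach space case remains open precisely because the cotype $2$ hypothesis is essential in step two.
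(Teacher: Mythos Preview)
Your proof is correct and follows exactly the paper's route: invoke Remark \ref{encoreune} (using that Hilbert space has type $2$) to obtain an ergodic Gaussian measure with full support, and then apply the cotype-$2$ implication $(6)\Rightarrow(1)$ of Proposition \ref{psp}. One small inaccuracy in your motivational paragraph: by definition a vector $v\in\mathcal E^*(T)$ is merely a unimodular eigenvector admitting nearby \emph{periodic} eigenvectors with the coprimality property, so $\lambda(v)$ itself need not be a root of unity---but this does not affect the argument.
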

\begin{proof} If $\overline{\rm span}\,(\mathcal E^*(T))=X$, then $T$ admits an ergodic Gaussian measure with full support by Remark \ref{encoreune}, because $X$ has type $2$. Since $X$ also has cotype $2$, it follows that $T$ has a perfectly spanning set of unimodular eigenvectors.
\end{proof}

\begin{corollary}[\cite{G}] If $\overline{\rm span}\,(\mathcal E(T))=X$ and $\mathcal E_{\rm acc}(T)=\mathcal E(T)$, then $T$ has a perfectly spanning set of unimodular eigenvectors. In words: if the unimodular eigenvectors for $T$ span a dense subspace of $X$ and if every unimodular eigenvector $v$ can be approximated by unimodular eigenvectors $u$ with 
$\lambda(u)\neq\lambda (v)$, then $T$ has a perfectly spanning set of unimodular eigenvectors.
\end{corollary}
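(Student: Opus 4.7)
The plan is essentially to observe that the hypotheses of the corollary amount to verifying condition (3) of Proposition \ref{psp} with the specific choice $\mathcal{E} := \mathcal{E}(T)$, and then to invoke the implication $(3)\implies(1)$ that has already been established. So there is no real work to do beyond a translation of the definitions.

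More precisely, I would proceed as follows. The first hypothesis $\overline{\rm span}\,(\mathcal{E}(T)) = X$ is exactly the density requirement of condition (3) for the set $\mathcal{E} := \mathcal{E}(T)$. The second hypothesis $\mathcal{E}_{\rm acc}(T) = \mathcal{E}(T)$ says that every $v \in \mathcal{E}(T)$ lies in $\mathcal{E}_{\rm acc}(T)$, which by definition means that every neighbourhood $V$ of $v$ contains some $u \in \mathcal{E}(T)$ with $\lambda(u) \neq \lambda(v)$. This is exactly the approximation property required by condition (3). Thus condition (3) of Proposition \ref{psp} holds, and hence so does condition (1), i.e.\ $T$ has a perfectly spanning set of unimodular eigenvectors.

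There is no obstacle here: the hard work is already hidden in the implication $(3) \Longrightarrow (4) \Longrightarrow (5) \Longrightarrow (1)$ inside Proposition \ref{psp}, where the approximation property is bootstrapped (via a Cantor-like construction) into a continuous $\TT\,$-eigenvectorfield defined on a perfect subset of $\TT$, and then a continuous spanning measure on $\TT$ is produced from a countable family of such fields. The present corollary simply packages those implications under a convenient and more easily checked pair of hypotheses.
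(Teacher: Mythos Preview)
Your proposal is correct and matches the paper's own proof exactly: the paper simply writes ``Condition (3) is satisfied with $\mathcal E=\mathcal E(T)$'', which is precisely the observation you spell out. Your additional commentary on where the real work lies (the chain $(3)\Rightarrow(4)\Rightarrow(5)\Rightarrow(1)$ in Proposition~\ref{psp}) is accurate but not needed for the proof itself.
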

\begin{proof} Condition (3) is satisfied with $\mathcal E=\mathcal E(T)$.
\end{proof}

\begin{corollary}\label{presque} Assume that the periodic vectors for $T$ are dense in $X$, and that any periodic eigenvector $v$ for $T$ can be approximated by periodic eigenvectors $u$ with $\lambda(u)\neq \lambda(v)$. Then $T$ has a perfectly spanning set of unimodular eigenvectors.
\end{corollary}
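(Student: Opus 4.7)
The plan is to verify condition~(3) of Proposition~\ref{psp} with $\mathcal E$ taken to be the set $\mathcal{E}_{\rm per}(T)$ of periodic unimodular eigenvectors of $T$ --- equivalently, those unimodular eigenvectors whose associated eigenvalue is a root of unity. The ``approximation clause'' of~(3), requiring that for every $v \in \mathcal E$ there exists $u \in \mathcal E$ arbitrarily close to $v$ with $\lambda(u)\ne\lambda(v)$, is precisely the second hypothesis of the corollary. So the whole task reduces to showing that $\overline{\rm span}\,\mathcal{E}_{\rm per}(T) = X$.

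For this, I would combine the density of the periodic vectors with a finite-dimensional diagonalization argument. Fix any periodic vector $x \in X$, say with $T^N x = x$ for some $N \ge 1$. Then the subspace $F := {\rm span}\{x, Tx, \dots, T^{N-1}x\}$ is finite-dimensional and $T$-invariant, and $T|_F$ satisfies $(T|_F)^N = {\rm Id}_F$. Its minimal polynomial therefore divides $z^N - 1$, which splits into distinct linear factors over $\C$; hence $T|_F$ is diagonalizable, and $F$ decomposes as a direct sum of eigenspaces of $T$ corresponding to (some) $N$-th roots of unity. Writing $x$ in this eigenbasis expresses it as a finite sum of periodic unimodular eigenvectors of $T$, so $x \in {\rm span}\,\mathcal{E}_{\rm per}(T)$. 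Since the periodic vectors are dense in $X$ by the first hypothesis, this yields $\overline{\rm span}\,\mathcal{E}_{\rm per}(T) = X$, as required.

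I do not anticipate any real obstacle: everything rests on the elementary observation that an operator satisfying $T^N = I$ on a finite-dimensional complex invariant subspace is diagonalizable with roots-of-unity eigenvalues. Once the spanning property is established, the conclusion follows directly from the implication $(3)\Rightarrow(1)$ in Proposition~\ref{psp}.
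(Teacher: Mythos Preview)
Your proof is correct and follows the same approach as the paper: both apply condition~(3) of Proposition~\ref{psp} with $\mathcal E$ taken to be the set of periodic eigenvectors. The paper's proof is a single line (``Take $\mathcal E$ to be the set of all periodic eigenvectors for $T$ in (3)''), leaving implicit the passage from ``periodic vectors are dense'' to ``the linear span of periodic eigenvectors is dense''; your diagonalization argument on the finite-dimensional cyclic subspace fills in exactly this gap and is the standard justification.
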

\begin{proof} Take $\mathcal E$ to be the set of all periodic eigenvectors for $T$ in (3).
\end{proof}

At first sight, this last result does not seem to be very far from saying that any chaotic operator has a perfectly spanning set of unimodular eigenvectors. However, we have been unable to show that if $T\in\mathfrak L(X)$ is chaotic, then it satisfies the additional assumption in Corollary \ref{presque}. This leads to 
\begin{question} Assume that $T$ is a chaotic operator. Is it true that any periodic eigenvector $v$ for $T$ can be approximated by periodic eigenvectors $u$ such that $\lambda(u)\neq \lambda(v)$?
\end{question}

\end{document}